\theoremstyle{plain}
\newtheorem{theorem}{Theorem}[section]
\newtheorem{lemma}[theorem]{Lemma}
\theoremstyle{remark}
\newtheorem{definition}[theorem]{Definition}
\newtheorem{hypothesis}{Hypothesis}[section]
\newtheorem{proposition}{Proposition}[section]
\newtheorem{remark}{Remark}[section]
\def\RR{\mathbb{R}}
\def\PP{\mathbb{P}}
\def\EE{\mathbb{E}}
\def\ep{\varepsilon}
\def \eref#1{\hbox{(\ref{#1})}}
\begin{document}

\begin{frontmatter}
\title{Multi-Scale McKean-Vlasov SDEs: Moderate Deviation Principle in Different Regimes}
\runtitle{MDP for Multi-Scale MVSDEs}

\begin{aug}
\author[A]{\inits{F.}\fnms{Wei}~\snm{Hong}\ead[label=e1]{weihong@jsnu.edu.cn}},
\author[B]{\inits{S.}\fnms{Ge}~\snm{Li}\ead[label=e2]{20231007@hbue.edu.cn}\orcid{0000-0003-2740-5107}}
\and
\author[A]{\inits{T.}\fnms{Shihu}~\snm{Li}\ead[label=e3]{shihuli@jsnu.edu.cn}\orcid{0000-0003-1401-4406}}
\address[A]{School of Mathematics and Statistics, Jiangsu Normal University, Xuzhou, 221116, China\printead[presep={,\ }]{e1,e3}}

\address[B]{School of Statistics and Mathematics, Hubei University of Economics, Wuhan, 430205, China\printead[presep={,\ }]{e2}}
\end{aug}

\begin{abstract}
The main aim of this paper is to study the moderate deviation principle for McKean-Vlasov stochastic differential equations   with multiple scales. Specifically, we are interested in the asymptotic estimates of the deviation processes $\frac{X^{\delta}-\bar{X}}{\lambda(\delta)}$ as $\delta\to 0$ in different regimes (i.e.~$\varepsilon=o(\delta)$ and $\varepsilon=O(\delta)$), where $\delta$ stands for the intensity of the noise and $\varepsilon:=\varepsilon(\delta)$ stands for the time scale separation. The rate functions in two regimes are different, in particular,  we show that it  is strongly affected by the noise of the fast component in  latter regime, which is essentially different from the former one and the case of large deviations (cf.~\cite{HLLS}). As a by-product, the explicit representation formulas of the rate functions in all of regimes are also given.
The main techniques are  based on the weak convergence approach and the functional occupation measure approach.
\end{abstract}


\begin{keyword}[class=MSC]
\kwd[Primary ]{60H10}
\kwd[; secondary ]{60F10}
\end{keyword}

\begin{keyword}
\kwd{McKean-Vlasov SDEs}
\kwd{moderate deviation principle}
\kwd{multi-scale}
\kwd{weak convergence approach}
\kwd{occupation measure}
\end{keyword}

\end{frontmatter}

\tableofcontents
\section{Introduction}

Large deviation principle (LDP)
mainly characterizes the exponential decay of the probability
distributions with respect to certain kinds of extreme or remote tail events, which is one of the important topics in probability theory and has been intensively studied in the literature. On the other hand, the moderate deviation principle (MDP)  bridge the gap between central limit theorem (CLT) and LDP, and in such a study, one is concerned with probabilities of deviations of a smaller order than those captured by LDP.
There are numerous results concerning the MDP for stochastic systems in the literature. For instance, the MDP for processes with independent increments has been established by De Acosta \cite{De92}, Chen \cite{Chen91} and  Ledoux \cite{Led92}.  The MDP for stochastic differential equations (SDEs) driven by a Poisson random measure was studied by Budhiraja et al. in \cite{BDG16}, and see also \cite{CL04,DXZZ,fw,WZZ15,Wu95} and references therein for more results on MDP within different settings. In the recent work \cite{BW17}, Budhiraja and Wu also studied the MDP for a class of weakly interacting particle systems by employing the well-known weak convergence approach,  see also \cite{DLR,DRW} for the related studies for interacting particle systems.

In the past several decades, large and moderate deviation principle for multi-scale stochastic dynamical systems have also been extensively studied by many experts. In a classical paper \cite{Ki92}, Kifer proved the large deviations bounds for the multi-scale dynamical systems with the fast term given by a Markov process. Liptser \cite{Li96} studied the following multi-scale SDEs,
\begin{equation}\left\{\begin{array}{l}\label{E00}
\displaystyle
d X^{\varepsilon}_t = b(X^{\varepsilon}_t, Y^{\varepsilon}_t)dt+\sqrt{\varepsilon}\sigma(X^{\varepsilon}_t,Y^{\varepsilon}_t)d W^{1}_t, \\
\displaystyle d Y^{\varepsilon}_t =\frac{1}{\varepsilon}f( Y^{\varepsilon}_t)dt+\frac{1}{\sqrt{\varepsilon}}g( Y^{\varepsilon}_t)d W^{2}_t,\\
\displaystyle X^{\varepsilon}_0=x,~Y^{\varepsilon}_0=y,
\end{array}\right.
\end{equation}
where $W^{1}_t$ and $W^{2}_t$ are mutually independent Brownian motions. The author  considered the LDP for the joint distribution of the slow process $X^{\varepsilon}$ and of the empirical process associated with the fast variable $Y^{\varepsilon}$. Recently, the results of \cite{Li96} was significantly extended by Puhalskii \cite{Pu16} to the fully coupled case in the sense that all of the coefficients may depend on both the variables $X^{\varepsilon},Y^{\varepsilon}$ and the diffusion terms may be correlated.
For the study of MDP in the multi-scale case, Guillin \cite{Gu01} established the MDP of inhomogeneous functionals where the ``fast" perturbation is a continuous time ergodic Markov chain. Later, in the work \cite{Gu03}, Guillin  also considered the following multi-scale SDEs
\begin{equation}\label{E01}
d X^{\varepsilon}_t = b(X^{\varepsilon}_t, Y_{t/\varepsilon})dt+\sqrt{\varepsilon}\sigma(X^{\varepsilon}_t,Y_{t/\varepsilon})d W_t,
\end{equation}
where the random environment ($Y_t$) is an exponentially ergodic Markov
process independent of the Wiener process ($W_t$), and proved the
MDP for the averaging principle of $X^{\varepsilon}$. Recently, Gasteratos et al.~\cite{gss} combined the weak convergence approach and the occupation measure approach introduced in \cite{ds} to studied the MDP for a system of
stochastic reaction-diffusion equations with a time-scale separation in slow and fast components and small noise in the slow component. Bezemek and Spiliopoulos \cite{bs1,bs2} also established the LDP and MDP of the empirical laws of multi-scale interacting particle system.
We refer the interested reader to \cite{ds,Gu05,HLL2,HLLS,LS99,ms,SWXY,ve99,ve00} and references therein for more recent results on this topic. We also mention that the multi-scale systems are very common in many fields such as  material sciences, fluids
dynamics, climate dynamics, etc, the reader can refer to \cite{An00,C09,C1,EE,MTV} for more practical background.

In the present work, we are interested in the  asymptotic behavior, in particular the MDP,  of the multi-scale McKean-Vlasov SDEs which can be used to describe stochastic systems whose evolution is influenced by both the microscopic location and the macroscopic distribution of particles, i.e., the coefficients depend  not only on the solution pointwisely but also on its time marginal law. Such kind of models typically arise as the limits of mean-field interaction particle systems, where the coefficients depend on the empirical measure
of the system. This macroscopic behavior is referred to as the {\it propagation
of chaos} by Kac \cite{K56}, which has been widely studied in the literature, see e.g. \cite{HRZ,JW,M96,M1,S91,W1}.

Notice that the multi-scale McKean-Vlasov SDEs can  be also derived by taking the mean-field limits of the multi-scale interaction particle system. For example,
Gomes and Pavliotis \cite{GP} studied the following multi-scale interacting diffusions
\begin{equation}\label{ree1}
d X_t^{\varepsilon,i} =-\nabla V^\varepsilon(X_t^{\varepsilon,i})dt-\frac{\theta}{N}\sum_{j=1}^N\nabla F(X_t^{\varepsilon,i}-X_t^{\varepsilon,j})dt+\sqrt{2\beta^{-1}}d W^i_t,
\end{equation}
where $V^\varepsilon(x):=V(x,x/\sqrt{\varepsilon}):\mathbb{R}\rightarrow\mathbb{R}$ is the confining potential with a fast fluctuating,
$F:\mathbb{R}\rightarrow\mathbb{R}$ is the interaction potential, $\theta,\beta$ are some constants. Let $Y_t^\varepsilon:=X_t^\varepsilon/\sqrt{\varepsilon}$, the mean field limit ($N\rightarrow\infty$) of (\ref{ree1}) is the following multi-scale McKean-Vlasov SDEs
\begin{equation*}
\left\{\begin{array}{l}
\displaystyle
d X^{\varepsilon}_t = \Big\{-\partial_x V(X_t^{\varepsilon},Y_t^{\varepsilon})-{\frac{1}{\sqrt{\varepsilon}} } \partial_y V(X_t^{\varepsilon},Y_t^{\varepsilon})-\theta \mathbb{E}\big[\nabla F(x-X_t^{\varepsilon})\big]|_{x=X_t^{\varepsilon}}\Big\}dt+\sqrt{2\beta^{-1}}d W_t, \\
d Y^{\varepsilon}_t = \Big\{-\frac{1}{\varepsilon}\partial_y V(X_t^{\varepsilon},Y_t^{\varepsilon})-\frac{1}{\sqrt{\varepsilon}}\partial_x V(X_t^{\varepsilon},Y_t^{\varepsilon})-\frac{\theta}{\sqrt{\varepsilon}} \mathbb{E}\big[\nabla F(x-X_t^{\varepsilon})\big]|_{x=X_t^{\varepsilon}}\Big\}dt+\sqrt{\frac{2\beta^{-1}}{\varepsilon}}d W_t.
\end{array}\right.
\end{equation*}
The authors in \cite{GP} proved that, although the mean field limit ($N\rightarrow\infty$) and homogenization limit ($\varepsilon\rightarrow0$) commute for finite time,  they do not commute in the long time limit in general. Delgadino at al.~in \cite{DGP} studied the following multi-scale interacting particle system
\begin{equation}\label{ree2}
d X_t^{\varepsilon,i} =-\frac{1}{\varepsilon} \nabla V(\varepsilon^{-1}X_t^{\varepsilon,i})dt-\frac{1}{N}\sum_{j\neq i}^N\frac{1}{\varepsilon}\nabla F(\varepsilon^{-1}(X_t^{\varepsilon,i}-X_t^{\varepsilon,j}))dt+\sqrt{2\beta^{-1}}d W^i_t,
\end{equation}
where $F:\mathbb{R}^{d}\rightarrow\mathbb{R}$ and $V:\mathbb{R}^{d}\rightarrow\mathbb{R}$ are smooth and 1-periodic interaction and confining potentials.
They proved that the mean field and homogenization limits  do not commute if the mean field system constrained to the torus undergoes a
phase transition, i.e. if multiple steady states exist.

We place ourselves in the setting of a multi-scale system as in \cite{HLLS,RSX1}  and consider the following multi-scale McKean-Vlasov stochastic dynamical system
\begin{equation}\left\{\begin{array}{l}\label{E2}
\displaystyle
d X^{\delta}_t = b(X^{\delta}_t, \mathcal{L}_{X^{\delta}_t}, Y^{\delta}_t)dt+\sqrt{\delta}\sigma(X^{\delta}_t, \mathcal{L}_{X^{\delta}_t})d W^{1}_t, \\
\displaystyle d Y^{\delta}_t =\frac{1}{\varepsilon}f(X^{\delta}_t, \mathcal{L}_{X^{\delta}_t}, Y^{\delta}_t)dt+\frac{1}{\sqrt{\varepsilon}}g( X^{\delta}_t, \mathcal{L}_{X^{\delta}_t}, Y^{\delta}_t)d W^{2}_t,\\
\displaystyle X^{\delta}_0=x,~Y^{\delta}_0=y,
\end{array}\right.
\end{equation}
where $\mathcal{L}_{X^{\delta}_t}$ is the law of $X^{\delta}_t$,
$\delta$ describes the intensity of the noise and $\varepsilon:=\varepsilon(\delta)$ describes the ratio of the time scale between the slow component $X^{\delta}$ and fast component $Y^{\delta}$. Here we define an $\RR^{d_1+ d_2}$-valued standard Brownian motion $W$  on a complete filtration probability space $(\Omega, \mathcal{F}, \{\mathcal{F}_{t}\}_{t\geq0}, \mathbb{P})$
 such that we can choose the projection operators $P_1:\RR^{d_1+ d_2}\to \RR^{d_1}$, $P_2:\RR^{d_1+ d_2}\to \RR^{d_2}$, and
\begin{equation*}\label{bm}
W_t^1:=P_1W_t,~W_t^2:=P_2W_t
\end{equation*}
are independent
$d_1$ and $d_2$ dimensional standard Brownian motions  respectively.

{Note that the system \eref{E2} does not rely on the law of the fast variable $\mathcal{L}_{Y^{\delta}_t}$  and the slow diffusion coefficient $\sigma$ does not depend on $Y^{\delta}_t$, mainly due to some {technical restrictions in the proofs}, for example, we need to utilize some results and estimates from \cite{RSX1}. Recently, the authors in \cite{LWX} considered the diffusion approximation problem of multi-scale McKean-Vlasov SDEs, which depends on $\mathcal{L}_{Y^{\delta}_t}$. However, it should be pointed out that the system in \cite{LWX} is not fully-coupled and cannot cover the model considered in this work.  Specifically, the major parts in fast component in \cite{LWX} do not depend on the slow process $X^{\varepsilon}_t$, which differ from the results of multi-scale SDEs in the distribution independent case. Additionally,
there is no order $\frac{1}{\sqrt{\varepsilon}}$ drift term in the dynamics of $X^{\delta}$ in \eref{E2}. Intuitively, if we add a singular (or homogenization) term, e.g., $\frac{1}{\sqrt{\varepsilon}}K(X^{\delta}_t,\mathcal{L}_{X^{\delta}_t},Y^{\delta}_t)$ in system (\ref{E2}), then the following term will appear in the formulation of the deviation process $Z_t^{\delta}$ (see \eref{z01} blow):
\begin{equation}\label{es1+}
\frac{1}{\lambda(\delta)\sqrt{\varepsilon}}K(X^{\delta}_t,\mathcal{L}_{X^{\delta}_t},Y^{\delta}_t).
\end{equation}
However, as $\delta\to 0$ (hence, $\varepsilon\to 0 $), the term (\ref{es1+}) will blow up, thus we could not involve such a homogenization term in the MDP topic.
In addition, Morse and Spiliopoulos \cite{ms} considered the MDP for the slow-fast system with additional homogenization term $\frac{\sqrt{\delta}}{\sqrt{\varepsilon}}K(X^{\delta}_t,Y^{\delta}_t)$.
In this case, one should not expect in general to have a strong convergence rate, or $L^2$ convergence at all, if one added the such term in the drift of the slow component and/or if one allowed for the diffusion
coefficient of the slow component to depend on the state of the fast component (see  \cite[Remark 4.4]{blp}).  However, as evidenced in \cite{ms} a weak convergence rate is sufficient in the setting without dependence on the law of the process to derive the MDP, which deserves further investigations for McKean-Vlasov case in the future work. 

For the study of asymptotic behavior for {multi-scale McKean-Vlasov SDEs}, R\"{o}ckner et al.~\cite{RSX1} first studied the averaging principle of system \eref{E2} for fixed $\delta=1$, more precisely,
they proved that the slow component in \eref{E2} converges strongly to the solution of the averaged equation, as $\varepsilon\rightarrow0$, with optimal convergence rate $1/2$,
which could be seen as the classical functional law of large numbers. The interested readers can see also \cite{bs3,HLL4,HLS,LWX,LX23,SXW,XLLM} for the averaging principle results for multi-scale McKean-Vlasov SDEs within different settings. Very recently, based on the results of \cite{RSX1}, the first and third named authors of this work with their collaborators in \cite{HLLS} further studied the CLT and LDP for system \eref{E2}.

Thus in order to bridge the gap between CLT scale and LDP scale as mentioned before, the main aim of this paper is to investigate the moderate deviations of $X^{\delta}_t$ in \eref{E2} from the averaged equation $\bar{X}_t$, as $\delta\rightarrow0$ (hence $\varepsilon\rightarrow0$). That is, consider the asymptotic behavior of the trajectory
\begin{equation}\label{z01}
Z_t^{\delta}:=\frac{X^{\delta}_t-\bar{X}_t}{\lambda(\delta)},
\end{equation}
where
\begin{equation*}\label{ave}
\frac{d\bar{X}_t}{dt}=\bar{b}(\bar{X}_t,\mathcal{L}_{\bar{X}_t}),~\bar{X}_0=x\in\RR^n,
\end{equation*}
the coefficient $\bar{b}$ is defined by
\begin{equation*}
\bar{b}(x,\mu)=\int_{\RR^m}b(x,\mu,y)\nu^{x,\mu}(dy),
\end{equation*}
and $\nu^{x,\mu}$  denotes the unique invariant measure associated with the generator of fast variable $\mathbf{L}^{2}_{x,\mu}$ (cf. \eref{inf1}). Here $\lambda(\delta)$ is a deviation scale that strongly influences the asymptotic behavior of $Z_t^{\delta}$, which satisfies
\begin{equation}\label{e10}
\lambda(\delta)\rightarrow0,\quad \delta/\lambda^2(\delta)\rightarrow0,\quad \mbox{as}\quad \delta\rightarrow0.
\end{equation}
It should be  pointed out that depending on the order that $\delta$ and $\varepsilon$ converge to zero, there are three different regimes of interaction, i.e.
\begin{equation}\label{regime}
\lim_{\delta\rightarrow0}\varepsilon/\delta=\left\{
  \begin{array}{ll}
    0, & \hbox{Regime 1;} \\
    \gamma\in(0,\infty), & \hbox{Regime 2;} \\
    \infty, & \hbox{Regime 3.}
  \end{array}
\right.
\end{equation}
In \cite{HLLS}, based on the time discretization technique and the weak convergence approach, we only proved that the LDP holds for system (\ref{E2}) in the case of Regime 1.
However, the main results of this article will show that $Z^{\delta}$ satisfies the LDP (i.e.~$X^{\delta}$ satisfies the MDP) in $C([0,T];\mathbb{R}^n)$  both for Regimes 1 and 2. As a by-product, the explicit representation formulas for the rate functions in Regimes 1 and 2
are  also given (see Theorems \ref{t3} and \ref{thj1} for the details).

We want to mention that large and moderate deviation principles for McKean-Vlasov SDEs have been extensively investigated in the single-scale case in recent years. For example, Herrmann et al. \cite{hp} considered the small noise limit of a self-stabilizing diffusion, and they first proved the LDP in path space with the uniform topology for the McKean-Vlasov SDEs with small Gaussian perturbation. Dos Reis et al. \cite{dst} obtained Freidlin-Wentzell's LDP results in both uniform and H\"{o}lder topologies for a more general type of McKean-Vlasov SDEs via assuming that the coefficients satisfy some extra time H\"{o}lder continuity conditions. Suo and Yuan \cite{sy} study the CLT and MDP for McKean-Vlasov SDEs under Lipschitz condition. In recent work \cite{liuw}, the authors proved the Freidlin-Wentzell's LDP and MDP for a class of McKean-Vlasov SDEs with small L\'{e}vy jumps.

Nevertheless,  there is no result concerning the  MDP of Freidlin-Wentzell type for multi-scale McKean-Vlasov SDEs in the literature.  Our strategies here are mainly based on the  weak convergence approach developed by Budhiraja et
al. in~\cite{bd3,BDM,de} and the occupation measure approach developed by Dupuis et
al. in~\cite{ds,gss}.
Compared with the related results for single-scale McKean-Vlasov SDEs (cf.~\cite{hp,dst,liuw,sy}), here we need to deal with the perturbations of fast variables and characterize the limits of the control equations (cf.~(\ref{esZ})) associated with $Z^{\delta}$ at different regimes, which is inherently difficult and quite different from the single-scale case. Moreover, it is interesting to see that the rate functions in two regimes are quite different (see Theorems \ref{t3} and \ref{thj1}). In particular, we show that at the second regime, the rate function depends not only on the diffusion coefficient $\sigma$ in (\ref{E2}), but also on the diffusion coefficient $g$ of fast component and the order $\gamma$ that rates the {ratio} of $\delta$ and $\varepsilon$ converging to zero.

\vspace{1mm}
In the following, we would like to give some detailed explanations of difficulties for different regimes and our strategies in the proof. In Regime 1, {since the homogenization occurs more quickly than the vanishing noise, we can adopt the weak convergence criterion (cf.~(i) and (ii) in Hypothesis \ref{h2} below) to give a direct way to prove the MDP for multi-scale McKean-Vlasov SDEs (\ref{E2}). Different from the LDP case (cf.~\cite{HLLS}), it is nontrivial to characterize the convergence limit of the control problem (\ref{esZ}) due to the MDP scale $\lambda(\delta)$. To this end, we will introduce an auxiliary Poisson equation depending on parameter measures (cf.~(\ref{PE})) and the corresponding regularities of its solutions
on Wasserstein space. Another difficulty lies in establishing some crucial estimates of the controlled processes \eref{e9} (cf. Lemmas \ref{PMY}), which is accurate in the sense of the MDP scale (\ref{e10}), and  that it is enough for our use in proving the convergence of solutions to the control problem (\ref{esZ}) and also has some independent interest.}

However, it should be pointed out that the above proof strategy is not applicable to {the case of Regime 2, where the } homogenization and the vanishing
noise occur at rates of the same order as shown in \cite{HLLS} (cf. Remark \ref{rem+1} for more details).
To solve this problem, we adopt the functional occupation measure approach and introduce
the notion of ``viable pair'' in the McKean-Vlasov structure, which is originally introduced in \cite{de} and then significantly developed in the recent works \cite{gss,hu2}, and is a pair of a trajectory and measure that captures both the
limit averaging dynamics of the {controlled processes \eref{e9}} and the invariant measure of the controlled fast
process {$ Y^{\delta,h^\delta}$} in \eref{e9}. We remark that it is a correct way
to study this problem because the  behavior of fast component will not converge pathwisely to
anything, but its occupation measure will converge to a limiting measure. We will show that the occupation measures $\{\mathbf{P}^{\delta,\Delta}\}_{\delta>0}$ defined by  (\ref{occupation}) are tight in a space of probability measures (thus there exists a weakly convergent subsequence),
and then, by proving the Laplace principle upper and lower bounds in terms of the variational representation formulas for functionals of a Brownian motion, the corresponding MDP is obtained. It should be pointed out that in order to prove the Laplace principle upper bound, we will replace the control by a feedback form and establish the nearly optimal control to achieve the aim, which is quite different from the proof of the Laplace principle lower bounds.
Finally, it is worth noting that the above two methods does not work for the case of Regime 3 anymore, such scale relationship makes the analysis very challenging and it is also an open issue even in the {distribution independent} case.

{To the best of our knowledge, this is the first result studying the MDP of multi-scale McKean-Vlasov SDEs in the literature. Both the form of the moderate deviations rate function and the method of proof differ between the two regimes. One of major contributions of this work is to see how the occupation measure and viable pair framework work on the context of McKean-Vlasov SDEs.  Furthermore, compared with previous works in the context of MDP for standard (i.e.~distribution independent) multi-scale SDE  (e.g.~\cite{gss,sm}), {there are also some results of independent interest in our paper.} For example,  we also drop a crucial condition, which is imposed in \cite{gss,sm}, of the fast component being
\begin{equation}\label{e4+}
f(x,y):=-\rho y+\zeta(x,y),
\end{equation}
where $\rho$ is a positive constant. In fact, the authors in \cite{gss,sm} essentially required (\ref{e4+}) to transform the fast equation to be a mild solution form (cf.~the proof in Lemma 3.1 of \cite{sm} and in Lemma 4.1 of \cite{gss}), in order to get the estimates of the controlled process associated with the fast component. In our paper, we give a different but more general proof (see Lemmas 3.1 and 3.2), which does not depend on the structure (\ref{e4+}), and we believe that it is also useful in the infinite-dimensional SPDE case. Moreover, in the case of Regime 1, we directly apply the powerful weak convergence criterion, which greatly simplifies the whole proof and allows us to remove the uniform ellipticity of $\sigma\sigma^*$. In the existing works, such a uniform ellipticity is strongly required in the study of MDP for multi-scale system in order to transform the control to the feedback form (see e.g.~\cite{gss}). As a cost, the slow diffusion coefficient $\sigma$ in our model cannot depend on $Y^{\delta}_t$.}

The rest of the paper is organized as follows. In Section 2, we  introduce some notations and definitions and state the main results. In Section 3, we first give some a priori estimates which are crucial in the weak convergence analysis.  Sections 4 and 5 are devoted to  the proof of the MDP for  Regimes 1 and 2 respectively.  We conclude with Section 6 where we put the appendix.

Note that throughout this paper $C$ and $C_T$  denote positive constants which may change from line to line, where
the subscript $T$  is used to emphasize that the constant depends on certain parameter.

\section{Definitions and main results}\label{sec.prelim}

\subsection{Definitions and main assumptions}
First we introduce some notations that will be used frequently throughout the present work. Denote by $|\cdot|$ the Euclidean vector norm, $\langle\cdot, \cdot\rangle$ the Euclidean inner product and $\|\cdot\|$ the matrix norm or the operator norm if there is no confusion possible. The tensor product $\mathbb{R}^{l_1}\otimes\mathbb{R}^{l_2}$ represents the space of all ${l_1}\times {l_2}$-dimensional matrix for $l_1,l_2\in \mathbb{N}_{+}$.
Define $\mathcal{P}$ the set of all probability measures on $(\RR^n, \mathcal{B}(\RR^n))$ and $\mathcal{P}_k$ ($k\geq1$) by
$$
\mathcal{P}_k:=\Big\{\mu\in \mathcal{P}: \mu(|\cdot|^k):=\int_{\RR^n}|x|^k\mu(dx)<\infty\Big\},
$$
 then $\mathcal{P}_k$ is a Polish space under the $L^k$-Wasserstein distance
$$
\mathbb{W}_k(\mu_1,\mu_2):=\inf_{\pi\in \mathcal{C}_{\mu_1,\mu_2}}\left[\int_{\RR^n\times \RR^n}|x-y|^k\pi(dx,dy)\right]^{1/k}, \quad \mu_1,\mu_2\in\mathcal{P}_k,
$$
where $\mathcal{C}_{\mu_1,\mu_2}$ is the set of all couplings for $\mu_1$ and $\mu_2$.


Let us define the following sets which are frequently used in the theory of LDP,
$$\mathcal{A}:=\left\lbrace  h:  h\  \text{is  $\RR^{d_1+ d_2}$-valued
 $\mathcal{F}_t$-predictable process and}\
  \int_0^T|{ h}_s|^2 d s<\infty,~\mathbb{P}\text{-a.s.}\right\rbrace, $$
$$S_M:=\left\lbrace  h\in L^2([0,T];\RR^{d_1+ d_2}):
\int_0^T|{ h}_s|^2  ds\leq M
 \right\rbrace,$$
and
$$\mathcal{A}_M:=\Big\{ h\in\mathcal{A}:  h(\omega)\in S_M,~\mathbb{P}\text{-a.s.}\Big\}.$$

We now collect  the definitions of LDP and Laplace principle from \cite{ds}. Let $\{X^\delta\}_{\delta>0}$ denote a family of
random variables defined on a probability space
$(\Omega,\mathcal{F},\mathbb{P})$ taking values in a Polish
space $\mathcal{E}$. Moreover, let {$\lambda(\delta)$} be a positive real-valued function going to 0 as $\delta\to 0$. The rate function is defined as follows.

\begin{definition}(Rate function) A function $I: \mathcal{E}\to [0,+\infty]$ is called
a rate function if $I$ is lower semicontinuous. Moreover, a rate function $I$
is called a {\it good rate function} if  the level set $\{x\in \mathcal{E}: I(x)\le
K\}$ is compact for each constant $K<\infty$.
\end{definition}

\begin{definition}(Large deviation principle) The random variable family
 $\{X^\delta\}_{\delta>0}$ is said to satisfy
 the large deviation principle on $\mathcal{E}$ with speed { $\lambda(\delta)$} and rate function
 $I$ if  the following lower and upper bound conditions hold.

(i) (LDP lower bound) For any open set $G\subset \mathcal{E}$,
$$\liminf_{\delta\to 0}
 {\lambda(\delta)} \log \mathbb{P}(X^{\delta}\in G)\geq -\inf_{x\in G}I(x).$$

(ii) (LDP upper bound) For any closed set $F\subset \mathcal{E}$,
$$ \limsup_{\delta\to 0}
   {\lambda(\delta)}\log \mathbb{P}(X^{\delta}\in F)\leq
  -\inf_{x\in F} I(x).
$$
\end{definition}

\begin{definition}(Laplace principle)
Let $\{X^\delta\}_{\delta>0}$   be a family of random variables taking values in a Polish space $\mathcal{E}$ and let $I$ be a rate function. We say that $\{X^\delta\}_{\delta>0}$  satisfies the Laplace principle with rate function $I$  if for every bounded and continuous function $\Lambda: \mathcal{E}\rightarrow \mathbb{R}$
\begin{equation}\label{req1}
 \lim_{\delta\rightarrow0}-{\lambda(\delta)}\log\mathbb{E}\Big[\exp\Big\{-{\lambda^{-1}(\delta)}\Lambda(X^\delta)\Big\}\Big]=\inf_{x\in \mathcal{E}}\Big[I(x)+\Lambda(x)\Big].
\end{equation}
\end{definition}

In the present work, we assume that the maps
\begin{eqnarray*}
&&b: \RR^n\times\mathcal{P}_2\times\RR^m \rightarrow \RR^{n};\\
&& \sigma: \RR^n\times \mathcal{P}_2\rightarrow \RR^{n}\otimes\RR^{d_1};\\
&&f:\RR^n\times\mathcal{P}_2\times\RR^m\rightarrow \RR^{m};\\
&&g:\RR^n\times\mathcal{P}_2\times\RR^m\rightarrow \RR^{m}\otimes\RR^{d_2}
\end{eqnarray*}
satisfy the following conditions.

\vspace{2mm}
\begin{enumerate}

\item [$({\mathbf{A}}{\mathbf{1}})$]\label{A1} Suppose that there exist constants $C,\kappa>0$ such that for all $x,x_1,x_2\in\RR^n, \mu,\mu_1,\mu_2\in \mathcal{P}_2, y,y_1,y_2\in\RR^m$,
\begin{eqnarray}
\!\!\!\!\!\!\!\!&&|b(x_1, \mu_1, y_1)-b(x_2, \mu_2, y_2)|+\|\sigma(x_1,\mu_1)-\sigma(x_2,\mu_2)\|\nonumber\\
\!\!\!\!\!\!\!\!&&\leq C\big[|x_1-x_2|+|y_1-y_2|+\mathbb{W}_2(\mu_1, \mu_2)\big] , \label{A11}
\end{eqnarray}
\begin{eqnarray}
\!\!\!\!\!\!\!\!&&|f(x_1,\mu_1, y_1)-f(x_2, \mu_2,y_2)|+\|g(x_1, \mu_1,y_1)-g(x_2, \mu_2, y_2)\|\nonumber\\
\!\!\!\!\!\!\!\!&&\leq C\big[|x_1-x_2|+|y_1-y_2|+\mathbb{W}_2(\mu_1, \mu_2)\big].\label{A21}
\end{eqnarray}
Moreover,
\begin{equation}
2\langle f(x,\mu, y_1)-f(x, \mu,y_2), y_1-y_2\rangle\!+3\|g(x, \mu,y_1)-g(x,\mu, y_2)\|^2\!\leq -\kappa|y_1-y_2|^2.\label{sm}
\end{equation}

\item [$({\mathbf{A}}{\mathbf{2}})$]\label{A2}
Suppose that $b\in C_b^{2,(1,1),2}(\RR^n\times \mathcal{P}_2\times\RR^m;\RR^n)$, $\sigma\in C_b^{2,(1,1)}(\RR^n\times \mathcal{P}_2;\RR^{n}\otimes\RR^{d_1})$, $f\in C^{2,(1,1),2}_b(\RR^n\times \mathcal{P}_2\times\RR^m;\RR^m)$ and $g\in C^{2,(1,1),2}_b(\RR^n\times \mathcal{P}_2\times\RR^m;\RR^{m}\otimes\RR^{d_2})$, where the notations $C_b^{2,(1,1)},C_b^{2,(1,1),2}$ are defined in Definitions \ref{de1} and \ref{de2} respectively.  Moreover, there exist constants $C>0$  and $\gamma_1\in (0,1]$ such that for all $y_1,y_2\in\RR^m$,
\begin{eqnarray*}
\sup_{x\in\RR^n,\mu\in\mathcal{P}_2}\|\partial_{\mu} F(x, \mu, y_1)-\partial_{\mu} F(x, \mu, y_2)\|_{L^2(\mu)}\leq C|y_1-y_2|^{\gamma_1} , \label{A40}
\end{eqnarray*}
\begin{eqnarray*}
\sup_{x\in\RR^n,\mu\in\mathcal{P}_2}\|\partial^2_{xx} F(x, \mu, y_1)-\partial^2_{xx} F(x, \mu, y_2)\|\leq C|y_1-y_2|^{\gamma_1} , \label{A41}
\end{eqnarray*}
\begin{eqnarray*}
\sup_{x\in\RR^n,\mu\in\mathcal{P}_2}\|\partial^2_{xy} F(x, \mu, y_1)-\partial^2_{xy} F(x, \mu, y_2)\|\leq C|y_1-y_2|^{\gamma_1} , \label{A42}
\end{eqnarray*}
\begin{eqnarray*}
\sup_{x\in\RR^n,\mu\in\mathcal{P}_2}\|\partial^2_{yy} F(x, \mu, y_1)-\partial^2_{yy} F(x, \mu, y_2)\|\leq C|y_1-y_2|^{\gamma_1} , \label{A431}
\end{eqnarray*}
\begin{eqnarray*}
\sup_{x\in\RR^n,\mu\in\mathcal{P}_2}\|\partial_{\mu}\partial_{y} F(x, \mu, y_1)-\partial_{z}\partial_{\mu} F(x, \mu, y_2)\|_{L^2(\mu)}\leq C|y_1-y_2|^{\gamma_1} ,
\end{eqnarray*}
\begin{eqnarray*}
\sup_{x\in\RR^n,\mu\in\mathcal{P}_2}\|\partial_{z}\partial_{\mu} F(x, \mu, y_1)-\partial_{z}\partial_{\mu} F(x, \mu, y_2)\|_{L^2(\mu)}\leq C|y_1-y_2|^{\gamma_1} , \label{A44}
\end{eqnarray*}
where $F$ represents $b, f$ and $g$ respectively {and the notations $L^2(\mu)$ and the $\partial_{\mu}$ are defined in Appendix \ref{appendix1}.}

\item [$({\mathbf{A}}{\mathbf{3}})$]\label{A3}
Suppose that $\sigma$, $f$ and $g$ satisfy
\begin{equation*}\label{32}
\sup_{x\in\RR^n}\|\sigma(x,\delta_0)\|<\infty, ~\sup_{x\in\RR^n,\mu\in\mathcal{P}_2}|f(x,\mu,0)|<\infty, ~
\sup_{x\in\RR^n,\mu\in\mathcal{P}_2,y\in\RR^m}\|g(x,\mu,y)\|<\infty.
\end{equation*}

\item [$({\mathbf{A}}{\mathbf{4}})$]\label{A4}
There exist $c_1,c_2>0$ such that for all $x\in\RR^n$ and $\mu\in\mathcal{P}_2$, $\xi\in\mathbb{R}^n$,
$$c_1|\xi|^2\leq\langle\sigma\sigma^*(x,\mu)\xi,\xi\rangle\leq c_2|\xi|^2.$$
\end{enumerate}

We give some comments for the conditions above.
\begin{remark}\label{re1}
(i) Conditions \eref{A11} and (\ref{A21}) is used to guarantee the existence and uniqueness of strong solutions to system \eref{E2}.
 Conditions  \eref{sm} and $({\mathbf{A}}{\mathbf{3}})$ imply that for any $\beta\in (0,\kappa)$, there exists $C_{\beta}>0$  such that for any $x\in\RR^n, y\in\RR^m$, $\mu\in\mathcal{P}_2$,
\begin{eqnarray}
 2\langle f(x,\mu,y), y\rangle+3\|g(x,\mu, y)\|^2\leq -\beta|y|^2+C_{\beta}.\label{RE3}
\end{eqnarray}
{In addition, the dissipative condition \eref{sm} is also used to guarantee the existence and uniqueness of invariant measures to the frozen equation (\ref{FEQ2}).}

{(ii) Condition $({\mathbf{A}}{\mathbf{2}})$ is mainly used to derive some regularities of solutions to Poisson equation (\ref{PE}) (cf.~\cite[Proposition 4.1]{RSX1}), which plays a crucial role in characterizing the rate function of the MDP to system \eref{E2}.
Note that \cite{HLLS} imposed some stronger regularity conditions in the study of the central limit theorem, which is mainly used to show the convergence of  the quadratic variational process
 (cf.~Lemma 3.4 in \cite{HLLS}),
whereas in the proof of  MDP we do not follow the argument of  \cite{HLLS} and thus do not need e.g.~the condition (2.10) in \cite{HLLS}. From the same reason, we only need a weaker condition (\ref{sm}) compared with \cite{HLLS}.}

(iii) Condition $({\mathbf{A}}{\mathbf{3}})$ is a key assumption  to estimate $\mathbb{E}\Big[\sup_{t\in [0, T]}|Y_{t}^{\delta,h^\delta}|^{2}\Big]$ of controlled process $Y^{\delta,h^\delta}$ (see details in Lemma \ref{PMY} below).
Condition $({\mathbf{A}}{\mathbf{4}})$ is essentially required for the use of the ergodic theorem (see (\ref{eser}) below) and for the regularity of $Q_2$ and hence $\bar{h}$ (see Subsection \ref{app2} in Appendix).

\end{remark}

\subsection{Weak convergence analysis}
In this subsection, we intend to review the weak convergence approach  that is a powerful technique to study the LDP for various stochastic dynamical systems and has been systematically developed in \cite{bd,de}. The cores of this approach are that the LDP is equivalent to the Laplace principle (cf.~\cite{ds}), and the use of a variational representation of exponential functionals of Brownian motiones (cf.~\cite{bd2,bd3}). Thus instead of proving the LDP  we can prove the Laplace principle.

{Let $\mathcal{E}$ be the  space of all continuous functions on $\mathbb{R}^{d}$.} In the present work, we aim to prove that $\{Z^\delta\}_{\delta>0}$ defined by (\ref{z01}) satisfy the Laplace principle with speed $\frac{\delta}{\lambda^2(\delta)}$, i.e. for every bounded and continuous function $\Lambda: \mathcal{E}\rightarrow \mathbb{R}$
\begin{equation}\label{req1}
 \lim_{\delta\rightarrow0}-\frac{\delta}{\lambda^2(\delta)}\log\mathbb{E}\left[\exp\left\{-\frac{\lambda^2(\delta)}{\delta}\Lambda(Z^\delta)\right\}\right]=\inf_{x\in \mathcal{E}}\Big[I(x)+\Lambda(x)\Big].
\end{equation}
The derivation of the Laplace principle  is based on a variational representation for functionals of Brownian motions that allows to rewrite the prelimit on the left hand side of (\ref{req1}). Let  $F(\cdot)$ a bounded and measurable real-valued function defined on $C([0,T];\mathbb{R}^{d})$. By \cite{bd2,bd3}, we have
\begin{equation*}\label{req2}
-\log\mathbb{E}\Big[\exp\Big\{-F(W)\Big\}\Big]=\inf_{h\in\mathcal{A}}\mathbb{E}\left[\int_0^T|h_s|^2ds+F\left(W+\int_0^\cdot h_sds\right)\right],
\end{equation*}
where $W$ is a standard $d$-dimensional Brownian motion.

In the current setting, let $W=(W^{1},W^{2})$ and $d=d_1+d_2$. {Under the assumption $(\mathbf{A1})$, since both the coefficients of slow and fast components are globally Lipschitz continuous, the strong well-posedness result has been proved in Theorem 2.2 in
\cite{RSX1}.} Then by a  decoupled argument (cf.~Section 4.1 in \cite{HLLS}), we know that there exists a measurable map $\mathcal{G}^\delta: C([0,T]; \RR^{d_1+d_2})\rightarrow C([0,T]; \RR^n)$ such that we have the representation
$$X^{\delta}=\mathcal{G}^{\delta}\big(\sqrt{\delta}W_{\cdot}\big).$$
Then set
\begin{equation}\label{e11}
\Upsilon^{\delta}(\cdot):=\frac{\mathcal{G}^{\delta}(\cdot)-\bar{X}}{\lambda(\delta)}.
\end{equation}
By the properties of the map $\mathcal{G}^{\delta}$, we can get that $\Upsilon^{\delta}$ is a measurable map from $C([0,T]; \RR^{d_1+d_2})$ to $C([0,T]; \RR^n)$ such that
$$Z^{\delta}=\Upsilon^{\delta}(\sqrt{\delta}W_{\cdot}).$$
For any $ h^\delta\in \mathcal{A}_M$, let us define
$$Z^{\delta, h^\delta}:=\Upsilon^\delta\Big(\sqrt{\delta}W_{\cdot}
+\lambda(\delta)\int_0^{\cdot}h^\delta_sd s\Big),$$
then it is the solution of the following stochastic control problem
\begin{equation}\label{esZ}
\left\{ \begin{aligned}
dZ_t^{\delta,h^\delta}=&\frac{1}{\lambda(\delta)}\Big[b\big(\lambda(\delta)Z_t^{\delta,h^\delta} +\bar{X}_t ,\mathcal{L}_{X^{\delta}_t},Y^{\delta,h^\delta}_t\big)-\bar{b}\big(\lambda(\delta)Z_t^{\delta,h^\delta} +\bar{X}_t ,\mathcal{L}_{X^{\delta}_t}\big)\Big]dt\\
&
+\frac{1}{\lambda(\delta)}\Big[\bar{b}\big(\lambda(\delta)Z_t^{\delta,h^\delta} +\bar{X}_t ,\mathcal{L}_{X^{\delta}_t}\big)-\bar{b}(\bar{X}_t,\mathcal{L}_{X^{\delta}_t})\Big]dt\\
&+\frac{1}{\lambda(\delta)}\Big[\bar{b}(\bar{X}_t,\mathcal{L}_{X^{\delta}_t})-\bar{b}(\bar{X}_t,\mathcal{L}_{\bar{X}_t})\Big]dt\\
&+\frac{\sqrt{\delta}}{\lambda(\delta)}\sigma\big(\lambda(\delta)Z_t^{\delta,h^\delta} +\bar{X}_t ,\mathcal{L}_{X^{\delta}_t}\big)dW_t^1\\
&+\sigma\big(\lambda(\delta)Z_t^{\delta,h^\delta} +\bar{X}_t ,\mathcal{L}_{X^{\delta}_t}\big){h}^{1,\delta}_t dt,\\
Z_0^{\delta,h^\delta}=0,&
\end{aligned} \right.
\end{equation}
where the controls $h^{1,\delta}_t:=P_1h^{\delta}_t,h^{2,\delta}_t:=P_2h^{\delta}_t$,      the controlled processes $(X^{\delta,h^\delta},Y^{\delta,h^\delta})$ solve
\begin{equation}\label{e9}
\left\{ \begin{aligned}
dX^{\delta,h^\delta}_t=&b(X^{\delta,h^\delta}_t,\mathcal{L}_{X^{\delta}_t},Y^{\delta,h^\delta}_t)dt
+\lambda(\delta)\sigma(X^{\delta,h^\delta}_t,\mathcal{L}_{X^{\delta}_t})h^{1,\delta}_t dt\\
&+\sqrt{\delta}\sigma(X^{\delta,h^\delta}_t,\mathcal{L}_{X^{\delta}_t})dW_t^1,\\
dY^{\delta,h^\delta}_t=&\frac{1}{\varepsilon}f(X^{\delta,h^\delta}_t,\mathcal{L}_{X^{\delta}_t},Y^{\delta,h^\delta}_t)dt
+\frac{\lambda(\delta)}{\sqrt{\delta\varepsilon}}g(X^{\delta,h^\delta}_t,\mathcal{L}_{X^{\delta}_t},Y^{\delta,h^\delta}_t)h^{2,\delta}_t dt\\
&+\frac{1}{\sqrt{\varepsilon}}g(X^{\delta,h^\delta}_t,\mathcal{L}_{X^{\delta}_t},Y^{\delta,h^\delta}_t)dW_t^2,\\
X^{\delta,h^\delta}_0=&x, Y^{\delta,h^\delta}_0=y.
\end{aligned} \right.
\end{equation}

After setting $F(W)= \frac{\lambda^2(\delta)}{\delta}\Lambda(Z^{\delta})$   and rescaling
the controls by $\frac{\sqrt{\delta}}{\lambda(\delta)}$ we get the representation
\begin{equation}\label{j4}
-\frac{\delta}{\lambda^2(\delta)}\log\mathbb{E}\left[\exp\left\{-\frac{\lambda^2(\delta)}{\delta}\Lambda(Z^\delta)\right\}\right]=\inf_{h\in \mathcal{A}}\mathbb{E}\left[\frac{1}{2}\int_0^T|h_s|^2ds+\Lambda(Z^{\delta, h})\right],
\end{equation}
where  $Z^{\delta, h}$  is defined by  (\ref{esZ}) with $h$ replacing $h^{\delta}$. Therefore, it is crucial to  study the  limiting behaviors of the controlled deviation process  $Z^{\delta, h^\delta}$ in the weak convergence analysis.

We now recall the following  sufficient condition for the Laplace principle to hold for $Z^{\delta}$ (cf.~Theorem 4.6 in \cite{liuw}). For a specific proof we also refer the reader to \cite[Theorem 9.9]{bd} or \cite[Theorem 3.2]{msz}.

For any $\delta>0$, suppose that $\Upsilon^\delta: C([0,T]; \RR^{d_1+d_2})\rightarrow
\mathcal{E}$ is given by (\ref{e11}).

\begin{hypothesis}\label{h2}
 There exists a measurable map $\Upsilon^0: C([0,T];
\mathbb{R}^{d_1+d_2})\rightarrow \mathcal{E}$ for which the following two conditions hold.

(i) Let $\{h^\delta\}_{\delta>0}\subset S_M$ for some $M<\infty$ such that $h^\delta$ converges weakly to element $h$ in $S_M$ as $\delta\to0$, then
$\Upsilon^0\big(\int_0^{\cdot}{h}^\delta_s ds\big)$ converges to $\Upsilon^0\big(\int_0^{\cdot}{h}_s ds\big)$ in $\mathcal{E}$.

(ii) Let $\{h^\delta\}_{\delta>0}\subset \mathcal{A}_M$ for
some $M<\infty$. For any $\varepsilon_0>0$, we have
$$\lim_{\delta\to 0}\mathbb{P}\Big(d\Big(\Upsilon^\delta\Big(\sqrt{\delta}W_{\cdot}
+\lambda(\delta)\int_0^{\cdot}h^\delta_sd s\Big),\Upsilon^0\big(\int_0^{\cdot}{h}^\delta_s ds\big)\Big)>\varepsilon_0\Big)=0, $$
where $d(\cdot,\cdot)$ denotes the metric in $\mathcal{E}$.
\end{hypothesis}

Then we have the following result.
\begin{lemma}\label{app1}  Suppose that Hypothesis \ref{h2}
holds, then $\{Z^\delta\}_{\delta>0}$ satisfies the LDP in $\mathcal{E}$ with speed $\delta/\lambda(\delta)^2$ and  good
rate function $I$ given by
\begin{equation}\label{rf}
I(f):=\inf_{\left\{ h \in L^2([0,T];\RR^{d_1+ d_2}):\  f=\Upsilon^0(\int_0^\cdot
{h}_t dt)\right\}}\left\lbrace\frac{1}{2}
\int_0^T|{h}_t|^2 d t \right\rbrace,
\end{equation}
where infimum over an empty set is taken as $+\infty$.
\end{lemma}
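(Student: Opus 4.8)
The plan is to reduce the claim to the Laplace principle \eref{req1} and then, working from the variational representation \eref{j4}, to establish the two Laplace bounds together with the goodness of $I$. Since $\mathcal{E}$ is Polish, once $I$ given by \eref{rf} is known to be a good rate function the LDP with speed $\delta/\lambda^2(\delta)$ is equivalent to \eref{req1} (cf.~\cite{ds}), so it suffices to prove: (a) the level set $\{f\in\mathcal{E}:I(f)\le K\}$ is compact for every $K<\infty$ (and $I$ is lower semicontinuous); (b) the Laplace upper bound, i.e.~$\limsup_{\delta\to0}$ of the left-hand side of \eref{j4} is $\le\inf_{f\in\mathcal{E}}[I(f)+\Lambda(f)]$; (c) the matching lower bound. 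Throughout I would use three standard facts: $S_M$ with the weak topology of $L^2([0,T];\RR^{d_1+d_2})$ is compact and metrizable; $h\mapsto\int_0^\cdot h_s\,ds$ is continuous from $S_M$ (with that topology) into $C([0,T];\RR^{d_1+d_2})$, so by condition (i) of Hypothesis \ref{h2} the map $h\mapsto\Upsilon^0(\int_0^\cdot h_s\,ds)$ is continuous there; and $h\mapsto\int_0^T|h_s|^2\,ds$ is weakly lower semicontinuous. For (a), given $\{f_j\}$ with $I(f_j)\le K$ I would pick controls $h_j$ with $f_j=\Upsilon^0(\int_0^\cdot h_{j,s}\,ds)$ and $\frac12\int_0^T|h_{j,s}|^2\,ds\le I(f_j)+1/j$ (so $\{h_j\}\subset S_{2(K+1)}$), extract by compactness a weakly convergent subsequence $h_{j_k}\to h$ with $\frac12\int_0^T|h_s|^2\,ds\le K$ by weak lower semicontinuity, and conclude $f_{j_k}\to\Upsilon^0(\int_0^\cdot h_s\,ds)=:f$ with $I(f)\le K$ by the continuity just recalled; this gives sequential and hence topological compactness, and lower semicontinuity of $I$ follows by the same reasoning applied to a minimizing sequence of controls.

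For the Laplace upper bound (b), I would fix $\eta>0$, choose $f^*$ with $I(f^*)+\Lambda(f^*)\le\inf_f[I(f)+\Lambda(f)]+\eta<\infty$, and then a deterministic control $h\in\mathcal{A}_M$ with $\Upsilon^0(\int_0^\cdot h_s\,ds)=f^*$ and $\frac12\int_0^T|h_s|^2\,ds\le I(f^*)+\eta$. Applying condition (ii) of Hypothesis \ref{h2} to the constant sequence $h^\delta\equiv h$ shows that $Z^{\delta,h}=\Upsilon^\delta(\sqrt{\delta}W_{\cdot}+\lambda(\delta)\int_0^\cdot h_s\,ds)$ converges to $f^*$ in probability, hence $\mathbb{E}[\Lambda(Z^{\delta,h})]\to\Lambda(f^*)$ since $\Lambda$ is bounded and continuous. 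Bounding the infimum on the right-hand side of \eref{j4} by this particular control and letting $\delta\to0$ then $\eta\downarrow0$ gives (b).

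For the Laplace lower bound (c), I would choose for each $\delta$ a $\delta$-optimal control $h^\delta\in\mathcal{A}$ in \eref{j4}; boundedness of $\Lambda$ forces $\sup_{\delta\in(0,1]}\mathbb{E}\int_0^T|h^\delta_s|^2\,ds<\infty$, and a standard localization argument (cf.~\cite{bd,de}) then permits, at the price of an arbitrarily small error, the reduction to $h^\delta\in\mathcal{A}_M$ for a fixed $M<\infty$. Since $\{\Upsilon^0(\int_0^\cdot h_s\,ds):h\in S_M\}$ is a compact subset of $\mathcal{E}$, the family $\{\Upsilon^0(\int_0^\cdot h^\delta_s\,ds)\}_\delta$ is tight, and condition (ii) of Hypothesis \ref{h2} then makes $\{Z^{\delta,h^\delta}\}_\delta$ tight in $\mathcal{E}$; consequently the joint laws of the triples $(h^\delta,Z^{\delta,h^\delta},\Upsilon^0(\int_0^\cdot h^\delta_s\,ds))$ on $S_M\times\mathcal{E}\times\mathcal{E}$ are tight and converge along a subsequence. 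By the Skorokhod representation theorem I would realize these on one probability space as $(\tilde h^\delta,\tilde Z^\delta,\tilde V^\delta)\to(\tilde h,\tilde Z,\tilde V)$ almost surely with $\tilde h^\delta\to\tilde h$ in the weak topology of $S_M$; the continuity of $h\mapsto\Upsilon^0(\int_0^\cdot h_s\,ds)$ gives $\tilde V=\Upsilon^0(\int_0^\cdot\tilde h_s\,ds)$ a.s., while condition (ii) (applied along the subsequence, using that $d(\tilde Z^\delta,\tilde V^\delta)\to0$ in probability) gives $\tilde Z=\tilde V$ a.s., i.e.~$\tilde Z=\Upsilon^0(\int_0^\cdot\tilde h_s\,ds)$. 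Combining Fatou's lemma, weak lower semicontinuity of the squared $L^2$-norm, bounded convergence for $\Lambda(\tilde Z^\delta)\to\Lambda(\tilde Z)$, and the definition \eref{rf} of $I$ yields
\begin{equation*}
\liminf_{\delta\to0}\mathbb{E}\Big[\tfrac12\int_0^T|h^\delta_s|^2\,ds+\Lambda(Z^{\delta,h^\delta})\Big]\ \ge\ \mathbb{E}\big[I(\tilde Z)+\Lambda(\tilde Z)\big]\ \ge\ \inf_{f\in\mathcal{E}}\big[I(f)+\Lambda(f)\big],
\end{equation*}
which, together with the $\delta$-optimality and \eref{j4}, proves (c).

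The abstract equivalence recalled above is routine; the genuine difficulty of the paper is the verification of Hypothesis \ref{h2}, above all of part (ii), in each regime. That is where the a priori estimates for the (possibly blowing-up) controlled fast process $Y^{\delta,h^\delta}$, the regularity theory for the auxiliary Poisson equation \eref{PE}, and — in Regime 2 — the functional occupation measure analysis needed to control the term \eref{e1} are required, and it is carried out in Sections 3--5. Within the present lemma the only mildly delicate point is the localization reducing arbitrary controls in $\mathcal{A}$ to controls in $\mathcal{A}_M$ for the lower bound, together with the accompanying tightness and Skorokhod bookkeeping.
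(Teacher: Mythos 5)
The paper does not prove Lemma~\ref{app1}; it is cited verbatim from \cite{liuw}, \cite{bd}, and \cite{msz}, and the body of the paper is devoted to verifying Hypothesis~\ref{h2}. Your proof is a correct reconstruction of the argument in those references and follows the same route: weak compactness of $S_M$ together with Hypothesis~\ref{h2}(i) for the goodness of $I$; a deterministic near-optimal control plus Hypothesis~\ref{h2}(ii) for the Laplace upper bound; and localization to $\mathcal{A}_M$, tightness, Skorokhod representation, and Fatou for the lower bound. Nothing substantive to correct.
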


\subsection{Main results}
\vspace{0.1cm}

We first introduce the following Poisson equation depending on parameters $(x,\mu)\in \RR^n\times \mathcal{P}_2$,
\begin{equation}
-\mathbf{L}^{2}_{x,\mu}\Phi(x,\mu,y)=b(x,\mu,y)-\bar{b}(x,\mu),\label{PE}
\end{equation}
where $\Phi(x,\mu,y):=(\Phi_1(x,\mu,y),\ldots, \Phi_n(x,\mu,y))$,
$$\mathbf{L}^{2}_{x,\mu}\Phi(x,\mu,y):=(\mathfrak{L}^{2}_{x,\mu}\Phi_1(x,\mu,y),\ldots, \mathfrak{L}^{2}_{x,\mu}\Phi_n(x,\mu,y))$$
and for any $k=1,\ldots,n,$
\begin{eqnarray}\label{inf1}
\mathfrak{L}^{2}_{x,\mu}\Phi_k(x,\mu,y):=\langle f(x,\mu,y), \partial_y \Phi_k(x,\mu,y)\rangle+\frac{1}{2}\text{Tr}[g g^{*}(x,\mu,y)\partial^2_{yy} \Phi_k(x,\mu,y)].
\end{eqnarray}
According to \cite[Proposition 4.1]{RSX1}, under the conditions $(\mathbf{A1})$ and $(\mathbf{A2})$, (\ref{PE}) admits a solution $\Phi(x,\mu,y)$ satisfying $\Phi(\cdot,\mu,\cdot)\in C^{2,2}(\RR^n\times\RR^m;\RR^n)$ and $\Phi(x,\cdot,y)\in C^{(1,1)}(\mathcal{P}_2; \RR^n)$.


In view of  the theory of the averaging principle (cf.~\cite{RSX1}), let $\delta\to 0$ (hence $\varepsilon\to0$) in (\ref{E2}), we can get the following differential equation
\begin{equation}\label{e7}
\frac{d\bar{X}_t}{dt}=\bar{b}(\bar{X}_t,\mathcal{L}_{\bar{X}_t}),~\bar{X}_0=x_0\in\RR^n,
\end{equation}
where the coefficient $\bar{b}$ is defined by
\begin{equation}\label{nu1}
\bar{b}(x,\mu)=\int_{\RR^m}b(x,\mu,y)\nu^{x,\mu}(dy),
\end{equation}
and $\nu^{x,\mu}$  denotes the unique invariant measure associated with the operator $\mathbf{L}^{2}_{x,\mu}$ {(as stated in Remark \ref{re1} (i), $(\mathbf{A1})$ guarantees the existence and uniqueness of invariant measures to the frozen equation (\ref{FEQ2}), see \cite[Theorem 4.3.9]{LR1}).
It is clear that (\ref{e7}) admits a unique solution (cf.~Lemma \ref{C1} below),} always denoted by $\bar{X}$ throughout this work, which satisfies $\bar{X}\in C([0,T];\RR^n)$. In addition, we mention that the solution $\bar{X}$ of (\ref{e7}) is a deterministic path and its distribution $\mathcal{L}_{\bar{X}_t}=\delta_{\bar{X}_t}$, where $\delta_{\bar{X}_t}$ is the Dirac measure of $\bar{X}_t$.

In this paper, we intend to show that the deviation process
\begin{eqnarray*}
Z_t^{\delta}:=\frac{X^{\delta}_t-\bar{X}_t}{\lambda(\delta)}
=\!\!\!\!\!\!\!\!&&~~\int_0^t\frac{1}{\lambda(\delta)}\Big[b(\lambda(\delta)Z_s^{\delta} +\bar{X}_s ,\mathcal{L}_{X^{\delta}_s},Y^{\delta}_s)-\bar{b}(\bar{X}_s,\mathcal{L}_{\bar{X}_s})\Big]ds
\\
\!\!\!\!\!\!\!\!&&+\int_0^t\frac{\sqrt{\delta}}{\lambda(\delta)}\sigma(\lambda(\delta)Z_s^{\delta} +\bar{X}_s ,\mathcal{L}_{X^{\delta}_s})dW_s^1
\end{eqnarray*}
satisfies the LDP (equivalently, $X^{\delta}$ satisfies the MDP).

\vspace{2mm}
Now we present our main results of this work considering the MDP for \eref{E2} in different regimes.

\vspace{2mm}
\textbf{Regime 1: $\lim_{\delta\rightarrow0}\varepsilon/\delta=0$.} Let us define a map $\Upsilon^0: C([0,T]; \RR^{d_1+d_2})\rightarrow C([0,T]; \RR^n)$  by
\begin{equation*}\label{g1}
\Upsilon^0(\phi):=\left\{ \begin{aligned}
&Z^{h},~~\text{if}~\phi=\int_0^{\cdot}h_sds~~\text{for some}~h\in L^2([0,T];\RR^{d_1+ d_2}),\\
&0,~~~~\text{otherwise}.
\end{aligned} \right.
\end{equation*}
Here $Z^{h}$ is the solution of the skeleton equation
\begin{eqnarray}\label{ske1}
dZ_t^h= \!\!\!\!\!\!\!\!&&~~~ \partial_x\bar{b}(\bar{X}_t,\mathcal{L}_{\bar{X}_t})\cdot Z_t^hdt+\sigma(\bar{X}_t,\mathcal{L}_{\bar{X}_t}){h}^1_tdt,~Z_0^h=0,
\end{eqnarray}
where $h^1_t:=P_1 h_t$.

\vspace{2mm}
The first result of MDP is the following.
\begin{theorem}\label{t3}
Suppose that the conditions $(\mathbf{A1})$-$(\mathbf{A3})$ hold.
Then for any initial values $x\in\RR^n,y\in\RR^m$, $\{Z^{\delta}\}_{\delta>0}$
satisfies the LDP in $C([0,T]; \RR^n)$ in Regime 1 of (\ref{regime}) with speed $\delta/\lambda(\delta)^2$ and
good rate function $I$ given by
\begin{equation}\label{rf1}
I(\varphi):=\inf_{\left\{h\in L^2([0,T];\RR^{d_1+ d_2}):\  \varphi=Z^h\right\}}\left\lbrace\frac{1}{2}
\int_0^T|{h}^1_t|^2 d t \right\rbrace,
\end{equation}
with the convention that the infimum over the empty set is $\infty$.

Furthermore, if the condition $(\mathbf{A4})$ holds, let
$$Q_1(x,\mu):=\sigma(x,\mu)\sigma^*(x,\mu),$$
then we obtain an explicit representation of the rate function
{\begin{equation}\label{thjj2}
I(\varphi)=
   \left\{
  \begin{array}{ll}
\displaystyle \frac{1}{2}\int_0^T|Q_1^{-1/2}(\bar{X}_s,\mathcal{L}_{\bar{X}_s})(\dot{\varphi}_s-\partial_x \bar{b}(\bar{X}_s,\mathcal{L}_{\bar{X}_s})\varphi_s)|^2ds,~ &\mbox{$\varphi(0)=0$, $\varphi$ is absolutely continuous},\\
\displaystyle +\infty, \quad &\mbox{otherwise}.
\end{array}
\right.
\end{equation}}
\end{theorem}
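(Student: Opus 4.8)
The strategy is to verify the two conditions of Hypothesis \ref{h2} for the map $\Upsilon^0$ defined through the skeleton equation \eqref{ske1}, which by Lemma \ref{app1} yields the LDP with rate function \eqref{rf1}; the explicit formula \eqref{thjj2} is then obtained by solving the variational problem \eqref{rf1} explicitly under $(\mathbf{A4})$. The core analytic work lies in condition (ii), i.e.~showing that $Z^{\delta,h^\delta}$ (the solution of \eqref{esZ}) converges in probability in $C([0,T];\RR^n)$ to $Z^{h^\delta}$ as $\delta\to 0$, uniformly over controls in $\mathcal{A}_M$. First I would record the a priori estimates from Section 3 — in particular the (blow-up-rate) bound $\EE[\sup_{t\le T}|Y^{\delta,h^\delta}_t|^2]\le C(1+|y|^2)T/\varepsilon + C\lambda(\delta)^2/\delta$ and the uniform-in-$\delta$ bound $\EE[\sup_{t\le T}|Z^{\delta,h^\delta}_t|^2]\le C_M$ — together with tightness of $\{Z^{\delta,h^\delta}\}$ in $C([0,T];\RR^n)$ and the uniform closeness $\mathcal{L}_{X^\delta_t}\to\delta_{\bar X_t}$ coming from the averaging principle of \cite{RSX1}.

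The heart of condition (ii) is to pass to the limit in \eqref{esZ}. The drift splits into three telescoped pieces plus the controlled and Brownian terms. The middle piece $\tfrac1{\lambda(\delta)}[\bar b(\lambda(\delta)Z^{\delta,h^\delta}_t+\bar X_t,\mathcal{L}_{X^\delta_t})-\bar b(\bar X_t,\mathcal{L}_{X^\delta_t})]$ converges to $\partial_x\bar b(\bar X_t,\mathcal{L}_{\bar X_t})\cdot Z^{h^\delta}_t$ by a first-order Taylor expansion using $\bar b\in C^2$ (from $(\mathbf{A1})$–$(\mathbf{A2})$ and the regularity of $\Phi$), the third piece vanishes since $\mathbb{W}_2(\mathcal{L}_{X^\delta_t},\delta_{\bar X_t})=o(\lambda(\delta))$, the $\sqrt\delta/\lambda(\delta)$-Brownian term vanishes in $L^2$ since $\delta/\lambda^2(\delta)\to0$ and $\sigma$ is bounded, and $\sigma(\lambda(\delta)Z^{\delta,h^\delta}_t+\bar X_t,\mathcal{L}_{X^\delta_t})h^{1,\delta}_t\to\sigma(\bar X_t,\mathcal{L}_{\bar X_t})h^{1,\delta}_t$ in the appropriate weak sense. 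The genuinely delicate term is the first one, $\tfrac1{\lambda(\delta)}\int_0^t[b(\cdot,Y^{\delta,h^\delta}_s)-\bar b(\cdot,\mathcal{L}_{X^\delta_s})]ds$: here I would substitute the Poisson equation \eqref{PE}, $b-\bar b=-\mathbf{L}^2_{x,\mu}\Phi$, apply Itô's formula to $\Phi(X^{\delta,h^\delta}_s,\mathcal{L}_{X^\delta_s},Y^{\delta,h^\delta}_s)$ (using the $C^{2,2}$ and $C^{(1,1)}$ regularity of $\Phi$ on $\RR^n\times\RR^m\times\mathcal{P}_2$ from \cite[Proposition 4.1]{RSX1}), and check that all resulting boundary and remainder terms — the $\varepsilon\,\Phi|_0^t$ term, the slow-drift and control corrections of order $\varepsilon\lambda(\delta)$, $\varepsilon/\sqrt\delta$, $\sqrt{\varepsilon}$, the $\partial_\mu\Phi$ term driven by $d\mathcal{L}_{X^\delta_s}$, and the martingale part — are $o(\lambda(\delta))$ in the relevant norm, crucially exploiting $\varepsilon=o(\delta)$ so that ratios like $\varepsilon/\lambda(\delta)^2$ and $\sqrt{\varepsilon}/\lambda(\delta)$ tend to zero, while the blow-up of $\EE|Y^{\delta,h^\delta}|^2$ is absorbed because $\Phi$, $\partial_y\Phi$, $\partial^2_{yy}\Phi$ grow at most linearly and the offending bound carries a compensating factor of $\varepsilon$. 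Condition (i) is the deterministic analogue and is easier: it follows from Gronwall's inequality applied to the difference of two skeleton equations together with the fact that $h\mapsto\int_0^\cdot\sigma(\bar X_s,\mathcal{L}_{\bar X_s})h^1_s\,ds$ is continuous from $S_M$ with the weak topology to $C([0,T];\RR^n)$.

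For the explicit formula \eqref{thjj2}, under $(\mathbf{A4})$ the matrix $Q_1(\bar X_s,\mathcal{L}_{\bar X_s})=\sigma\sigma^*$ is uniformly positive definite, so \eqref{ske1} can be inverted: $\varphi=Z^h$ forces $\varphi$ absolutely continuous with $\sigma(\bar X_s,\mathcal{L}_{\bar X_s})h^1_s=\dot\varphi_s-\partial_x\bar b(\bar X_s,\mathcal{L}_{\bar X_s})\varphi_s$ a.e., and for fixed $\varphi$ the infimum in \eqref{rf1} over $h$ (with only $h^1$ constrained, $h^2$ free and hence zero at the optimum) is the classical least-norm pseudo-inverse problem, solved by $h^1_s=\sigma^*(Q_1^{-1})(\dot\varphi_s-\partial_x\bar b\,\varphi_s)$, giving $\tfrac12\int_0^T|Q_1^{-1/2}(\dot\varphi_s-\partial_x\bar b\,\varphi_s)|^2ds$; if $\varphi$ is not absolutely continuous the constraint set is empty and $I(\varphi)=+\infty$. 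I expect the main obstacle to be the bookkeeping in the Itô expansion of $\Phi$: one must track every error term, confirm each is controlled by the a priori estimates, and verify that the $\varepsilon=o(\delta)$ scaling genuinely kills them — this is exactly the step where the regularity of the Poisson equation and the sharpness of the $Y^{\delta,h^\delta}$ estimate are both indispensable, and it is the place the argument would break in Regime 2.
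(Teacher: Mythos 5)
Your proposal is correct and follows essentially the same route as the paper: verify Hypothesis~\ref{h2} via the telescoped decomposition of \eqref{esZ}, handle the term $\tfrac1{\lambda(\delta)}(b-\bar b)$ by substituting the Poisson equation \eqref{PE} and applying It\^o's formula to $\Phi$, use the a~priori bounds (in particular the sharp blow-up rate of $\EE\sup_t|Y^{\delta,h^\delta}_t|^2$) together with $\varepsilon=o(\delta)$ to kill all remainder terms, prove condition~(i) by weak continuity plus Gronwall, and derive \eqref{thjj2} by the least-norm pseudo-inverse argument under $(\mathbf{A4})$ with $h^2$ free. The only organizational difference is that the paper inserts an explicit auxiliary process $\psi^{\delta,h^\delta}$ between $Z^{\delta,h^\delta}$ and $Z^{h^\delta}$ (Lemma~\ref{lemma1}) so that the $\partial_x\bar b\cdot Z^{\delta,h^\delta}$ term can be replaced by $\partial_x\bar b\cdot\psi^{\delta,h^\delta}$ before closing Gronwall, whereas you fold this into a single estimate; this is a cosmetic choice and the underlying computations coincide.
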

\begin{remark}
(i) {In order to prove Theorem \ref{t3}, it suffices to check the conditions (i) and (ii) in Hypothesis \ref{h2} hold, which greatly simplified the whole proof instead of using the method of functional occupation measures in Regime 2.  It is worth noting that, it is unnecessary to impose the uniform ellipticity of $\sigma\sigma^*$, whereas in Regime 2, the uniform ellipticity is strongly required in order to transform the control to the  feedback form. To the best of our knowledge, the aforementioned points seem also new in the study of MDP for the standard multi-scale system (see e.g.~\cite{gss}).}

(ii) If the uniform ellipticity condition $(\mathbf{A4})$ holds, the representation of the rate function {(\ref{thjj2})} easily follows from the definition of the skeleton equation (\ref{ske1}).

\end{remark}

{Below, we will provide a remark to explain why the weak convergence criterion is not applicable to the Regime 2.
\begin{remark}\label{rem+1}
The main difficulty of using the weak convergence method lies in characterizing the limit in distribution of the controlled process
$Z_t^{\delta,h^\delta}$ in \eref{esZ}, due to the appearance of the term
\begin{equation}\label{e1}
\frac{\sqrt{\varepsilon}}{\sqrt{\delta}}\int^t_0\partial_y \Phi(X_s^{\delta,h^\delta} ,\mathcal{L}_{X^{\delta}_s},Y^{\delta,h^\delta}_s)\cdot g(X^{\delta,h^\delta}_s,\mathcal{L}_{X^{\delta}_s},Y^{\delta,h^\delta}_s)h^{2,\delta}_s ds.
\end{equation}

 In the case of Regime 1, it is clear that (\ref{e1}) tends to $0$, as $\delta\to 0$, as long as the terms $\partial_y \Phi$ and $g$ are bounded and $h^{2,\delta}$ is $L^2$-bounded. Nevertheless, it is quite involved to prove the convergence of (\ref{e1}) in Regime 2 due to the joint perturbations of  controlled fast process $Y^{\delta,h^\delta}$ and  control function $h^{2,\delta}$.

\end{remark}
}

\vspace{2mm}
\textbf{Regime 2: $\lim_{\delta\rightarrow0}\varepsilon/\delta=\gamma$.}
Before introducing our second result of MDP, we need some additional notations and definitions. We use $\mathcal{Y}:=\mathbb{R}^m$ to emphasize the state space of the fast component, and  $\mathcal{Z}_1:=\mathbb{R}^{d_1}, \mathcal{Z}_2:=\mathbb{R}^{d_2}$ to emphasize the spaces in which the controls take values.   Let $A_1,A_2,A_3,A_4$ be Borel subsets of $\mathcal{Z}_1,\mathcal{Z}_2,\mathcal{Y}, [0,T]$ respectively. Let  $\Delta:=\Delta(\delta)$ be a time-scale separation such that
\begin{equation}\label{delta}
  \Delta(\delta) \rightarrow0,~ \frac{\lambda^2(\delta)}{\Delta} \rightarrow0,~\text{as}~ \delta \rightarrow0.
\end{equation}

Due to the involved controls in multi-scale systems, it is convenient to introduce the following occupation measure:
\begin{equation}\label{occupation}
\mathbf{P}^{\delta,\Delta}(A_1\times A_2 \times A_3\times A_4):=\int_{A_4}\frac{1}{\Delta}\int_t^{t+\Delta}\mathbf{1}_{A_1}(h^{1,\delta}_s)\mathbf{1}_{A_2}(h^{2,\delta}_s){\mathbf{1}_{A_3}(Y^{\delta, h^\delta}_s)}dsdt,
\end{equation}
where the controlled process $(X^{\delta, h^\delta},Y^{\delta, h^\delta})$ is defined by (\ref{e9}).
Throughout this work, we adopt the convention  that the control $h_t=h^{\delta}_t=0$ if $t>T$.
We also remark that
 for any bounded continuous functions $\xi$, we have
\begin{align}\label{p1}
 &\int_{\mathcal{Z}_1\times\mathcal{Z}_2\times\mathcal{Y}\times[0,T]}\xi(X^{\delta, h^\delta}_t,\mathcal{L}_{X^{\delta}_t},y,h_1,h_2) \mathbf{P}^{\delta,\Delta}(dh_1dh_2dydt)\nonumber\\
 &=\int_0^T\frac{1}{\Delta}\int_t^{t+\Delta}\xi(X^{\delta, h^\delta}_t,\mathcal{L}_{X^{\delta}_t},Y^{\delta, h^\delta}_s,h^{1,\delta}_s,h^{2,\delta}_s)dsdt.
\end{align}

Define a map $\Theta:\mathbb{R}^n\times \mathbb{R}^n\times \mathcal{P}_2\times\mathcal{Y}\times\mathcal{Z}_1\times\mathcal{Z}_2\rightarrow\mathbb{R}^n$  by
\begin{equation}\label{theta}
 \Theta(\varphi,x,\mu,y,h^1,h^2):=\partial_x \bar{b}(x,\mu)\varphi+\sigma(x,\mu)h^1+\sqrt{\gamma}\partial_y\Phi_g(x,\mu,y)h^2,
\end{equation}
where $\Phi$ is a solution of Poisson equation (\ref{PE}), and we denote
\begin{eqnarray}
\partial_y\Phi_g(x,\mu,y)u:=\partial_y\Phi(x,\mu,y)\cdot g(x,\mu,y)u,\quad u\in\RR^{d_2}. \label{partial Phig}
\end{eqnarray}

We now introduce the notion of {\it viable pair}, which is from \cite{gss,hu2} but with slight modification to the McKean-Vlasov structure.
\begin{definition}\label{defj1}(Viable pair)
Let $\Theta:\mathbb{R}^n\times \mathbb{R}^n\times \mathcal{P}_2\times\mathcal{Y}\times\mathcal{Z}_1\times\mathcal{Z}_2\rightarrow\mathbb{R}^n$ and $\bar{X}$ solve (\ref{e7}).   A pair $(\varphi,\mathbf{P})\in C([0,T];\mathbb{R}^n)\times \mathcal{P}(\mathcal{Z}_1\times\mathcal{Z}_2\times\mathcal{Y}\times[0,T])$  is called viable w.r.t.~$(\Theta,\nu^{\bar{X},\mathcal{L}_{\bar{X}}})$, where $\nu$ is  defined in (\ref{nu1}),  and write $(\varphi,\mathbf{P})\in\mathcal{V}_{(\Theta,\nu^{\bar{X},\mathcal{L}_{\bar{X}}})}$, if the following statements hold:

\vspace{1mm}

\vspace{1mm}
{\rm(i)} The measure $\mathbf{P}$ has finite second moments, i.e.
$$\int_{\mathcal{Z}_1\times\mathcal{Z}_2\times\mathcal{Y}\times [0,T]}\Big[|h^1|^2+|h^2|^2+|y|^2\Big]\mathbf{P}(dh^1dh^2dyds)<\infty.$$

\vspace{1mm}
{\rm(ii)} For all $t\in[0,T]$,
\begin{equation}\label{j1}
 \varphi_t=\int_{\mathcal{Z}_1\times\mathcal{Z}_2\times\mathcal{Y}\times [0,t]}\Theta(\varphi_s,\bar{X}_s,\mathcal{L}_{\bar{X}_s},y,h^1,h^2)\mathbf{P}(dh^1dh^2dyds).
\end{equation}

\vspace{1mm}
{\rm(iii)} For all $A_1\times A_2\times A_3\times A_4\in \mathcal{B}(\mathcal{Z}_1\times\mathcal{Z}_2\times\mathcal{Y}\times[0,T])$,
\begin{equation}\label{j2}
\mathbf{P}(A_1\times A_2\times A_3\times A_4)=\int_{A_4}\int_{A_3}\eta(A_1\times A_2|y,t)\nu^{\bar{X}_t,\mathcal{L}_{\bar{X}_t}}(dy)dt,
\end{equation}
where $\eta$ is  a stochastic kernel (cf.~\cite[Appendix B.2]{bd}) given $\mathcal{Y}\times[0,T]$. This  implies that the
last marginal of $\mathbf{P}$ is Lebesgue measure on $[0,T]$ and in particular
 for all $t\in[0,T]$,
\begin{equation}\label{j3}
\mathbf{P}(\mathcal{Z}_1\times\mathcal{Z}_2\times\mathcal{Y}\times[0,t])=t.
\end{equation}
\end{definition}



Now we state the MDP result in the second regime.
\begin{theorem}\label{thj1}
Suppose that the conditions $(\mathbf{A1})$-$(\mathbf{A4})$ hold. Let  $\Theta$ be defined by (\ref{theta}), $\bar{X}$ be the solution to (\ref{e7}).
Then
$\{Z^\delta\}_{\delta>0}$ satisfies the LDP  with speed $\delta/\lambda(\delta)^2$ and rate function $I$ given by
\begin{equation}\label{thjj1}
I(\varphi):=\inf_{(\varphi,\mathbf{P})\in\mathcal{V}_{(\Theta,\nu^{\bar{X},\mathcal{L}_{\bar{X}}})}}\Bigg\{\frac{1}{2}\int_{\mathcal{Z}_1\times\mathcal{Z}_2\times\mathcal{Y}\times [0,T]}\Big[|h^1|^2+|h^2|^2\Big]\mathbf{P}(dh^1dh^2dydt)\Bigg\},
\end{equation}
with the convention that the infimum over the empty set is $\infty$.

As a consequence, the rate function $I$ has the following explicit representation
 {\begin{equation}\label{thjjj2}
  I(\varphi)=
   \left\{
  \begin{array}{ll}
\displaystyle \frac{1}{2}\int_0^T|Q_2^{-1/2}(\bar{X}_s,\mathcal{L}_{\bar{X}_s})(\dot{\varphi}_s-\partial_x \bar{b}(\bar{X}_s,\mathcal{L}_{\bar{X}_s})\varphi_s)|^2ds,~ &\mbox{$\varphi(0)=0$, $\varphi$ is absolutely continuous},\\
\displaystyle +\infty, \quad &\mbox{otherwise},
\end{array}
\right.
\end{equation}}
where
\begin{equation}\label{eqQ}
Q_2(x,\mu):=\int_{\mathcal{Y}}\sigma\sigma^*(x,\mu)+\gamma(\partial_y\Phi_g)(\partial_y\Phi_g)^*(x,\mu,y)\nu^{x,\mu}(dy),
 \end{equation}
 {and $\partial_y\Phi_g$ is defined by (\ref{partial Phig}).}
\end{theorem}

\begin{remark}
We mention that the operator $Q_2$ defined by (\ref{eqQ}) is invertible by the condition $(\mathbf{A4})$ and the fact that $\gamma(\partial_y\Phi_g)(\partial_y\Phi_g)^*$ is positive semi-definite.
\end{remark}

We also give some comments for the main results above.

\begin{remark}
(i) The LDP of multi-scale McKean-Vlasov SDEs was first studied in \cite{HLLS}, recently the authors in \cite{SZW23,WHY23} also established the LDP for multi-scale McKean-Vlasov SDEs with fractional noise, i.e. the slow component is driven by fractional Brownian motions and the fast one is driven by standard Brownian motions.
While, all the works \cite{HLLS,SZW23,WHY23} only consider the case of Regime 1, here we prove that the MDP holds both in Regime 1 and Regime 2, which is more general than those works.

(ii) The rate functions in two regimes are different.  We note that at the second regime, the rate function depends not only on the diffusion coefficient $\sigma$ in slow component, but also on the coefficient $g$ in fast component and the order $\gamma$ that rates the radio of $\delta$ and $\varepsilon$ converging to zero, which is essentially different from the Regime 1 and the case of large deviations (cf.~\cite{HLLS,SZW23,WHY23}). In particular, the dependence of the diffusion coefficient $g$ in the rate function reveal the influence induced by the noise of the fast equation in \eref{E2}.

(iii) For the examples of multi-scale McKean-Vlasov SDEs satisfying our assumptions, the reader can refer to \cite[Example 2.1]{HLLS}, where the authors have gave some detailed discussion on the applications of multi-scale system \eref{E2}.

\end{remark}

\section{Preliminaries}

In this section, we make some preparations before proving Theorems \ref{t3} and \ref{thj1}. In Subsection \ref{sec3.1}, we recall the regularities of the auxiliary Poisson equation on Wasserstein space, which play an important role in characterizing the convergence of the controlled processes. In Subsection \ref{sec3.2}, we give some a priori estimates for the solutions of the controlled processes.
\subsection{Poisson equation on Wasserstein space}\label{sec3.1}
Recall the frozen equation for any fixed $x\in\RR^n$ and $\mu\in\mathcal{P}_2$,
\begin{eqnarray}\label{FEQ2}
  \left\{
  \begin{aligned}
dY_{t}&=f(x, \mu, Y_{t})dt+g(x, \mu, Y_{t})d\tilde{W}_{t}^{2},\\
Y_{0}&=y\in\RR^m,
  \end{aligned}
\right.
\end{eqnarray}
where $\{\tilde{W}_{t}^{2}\}_{t\geq 0}$ is a $d_2$-dimensional Brownian motion on another complete filtered probability space $(\tilde{\Omega}, \tilde{\mathcal{F}}, \tilde{\mathcal{F}}_t, \tilde{\mathbb{P}})$. Under the conditions \eref{A21} and \eref{sm}, it is clear that
$(\ref{FEQ2})$ has a unique (strong) solution $\{Y_{t}^{x,\mu,y}\}_{t\geq 0}$ and admits a unique invariant probability measure $\nu^{x,\mu}$.

We recall the following regularities of solution to Poisson equation (\ref{PE}) w.r.t.~parameters $(x,\mu)$, which have been established in \cite{RSX1} (see also \cite{HLLS}).

\begin{proposition}\label{P3.6}
Suppose that $(\mathbf{A1})$ and $(\mathbf{A2})$ hold. Define
\begin{eqnarray}
\Phi(x,\mu,y)=\int^{\infty}_{0} \left[\tilde\EE b(x,\mu,Y^{x,\mu,y}_s)-\bar{b}(x,\mu)\right]ds,\label{SPE}
\end{eqnarray}
where $\tilde{\EE}$ is the expectation on $(\tilde{\Omega}, \tilde{\mathcal{F}}, \tilde{\mathbb{P}})$. Then $\Phi(x,\mu,y)$ is a solution of \eref{PE} and  $\Phi\in C^{2,(1,1),2}(\RR^n\times\mathcal{P}_2 \times\RR^m; \RR^n)$. Moreover, there exists $C>0$ such that
\begin{eqnarray}\label{PHI1}
&&\!\!\!\!\!\!\!\! \sup_{x\in\RR^n,\mu\in\mathcal{P}_2}\max\Big\{|\Phi(x,\mu,y)|,\|\partial_x \Phi(x,\mu,y)\|, \|\partial_{\mu}\Phi(x,\mu,y)\|_{L^2(\mu)},\|\partial^2_{xx} \Phi(x,\mu,y)\|\nonumber\\
&& \quad \|\partial_{z}\partial_{\mu}\Phi(x,\mu,y)(\cdot)\|_{L^2(\mu)}\Big \}\leq C(1+|y|)~
\end{eqnarray}
and
\begin{eqnarray}\label{PHI3}
&&\!\!\!\!\!\!\!\!\sup_{x\in\RR^n,\mu\in\mathcal{P}_2, y\in \RR^m}\max\Big\{\|\partial_y \Phi(x,\mu,y)\|,\|\partial^2_{yy} \Phi(x,\mu,y)\|,\|\partial^2_{xy} \Phi(x,\mu,y)\|,  { \|\partial_{\mu}\partial_{y}\Phi(x,\mu,y)\|_{L^2(\mu)}}  \Big\}\leq C.
\end{eqnarray}
\end{proposition}

As preparations, we also collect some properties of $\partial_x\bar{b},\partial_{\mu}\bar{b}$, which have been established in Proposition 3.2 and Lemma 5.1 in \cite{HLLS}.

\begin{lemma} \label{C1} Suppose that $(\mathbf{A1})$ and $(\mathbf{A2})$ hold. Then
\begin{eqnarray*}
\sup_{x\in\mathbb{R}^n, \mu\in\mathcal{P}_2}\max\{\|\partial_x\bar{b}(x,\mu)\|,\|\partial_{\mu}\bar{b}(x,\mu)\|_{L^2(\mu)}\}<\infty,
\end{eqnarray*}
and for any fixed $x\in\mathbb{R}^n$, if sequence $\{(\mu_n,z_n)\}_{n\geq1}\subset \mathcal{P}_2\times \mathbb{R}^n$ satisfy $\mu_n\to\mu$ in $\mathbb{W}_2$ and $|z_n- z|\to 0$ as $n\to \infty$,
\begin{eqnarray*}
&&\lim_{n\to\infty}\big\|\partial_\mu \bar{b}(x,\mu_n)(z_n)-\partial_\mu \bar{b}(x,\mu)(z)\big\|=0,\\
&&\lim_{n\to\infty}\big\|\partial_x \bar{b}(z_n,\mu_n)-\partial_x \bar{b}(z,\mu)\big\|=0.
\end{eqnarray*}
\end{lemma}

\begin{remark}
We mention that by the boundedness of $\partial_{\mu}\bar{b}$ in $L^2(\mu)$, we can directly derive the Lipschitz continuity of $\bar{b}$ in $\mu$, which follows from the definition of Lions derivative and the Taylor argument for the Fr\'echet derivative.
In fact, for some $u: \mathcal{P}_2\rightarrow \mathbb{R}$ we denote by $U$ its ``extension" to $L^2(\Omega, \mathbb{P};\mathbb{R}^n)$ such that
$$
U(X)=u(\mathcal{L}_{X}),\quad X\in L^2(\Omega,\mathbb{P};\mathbb{R}^n).
$$
Then by (\ref{esfre}) and  using the Taylor argument for the function $U$ in the sense of Fr\'echet derivative, we have for any $X,Y\in L^2(\Omega,\mathbb{P};\mathbb{R}^n)$,
\begin{eqnarray*}
&&u(\mathcal{L}_{Y})-u(\mathcal{L}_{X})=U(Y)-U(X)=\int^1_0 DU(X+r(Y-X))\cdot (Y-X) dr\\
=\!\!\!\!\!\!\!\!&&~~~~~\int^1_0 \partial_{\mu}u(\mathcal{L}_{X+r(Y-X)})(X+r(Y-X))\cdot (Y-X) dr.
\end{eqnarray*}
Thus we have
\begin{eqnarray*}
|u(\mathcal{L}_{X})-u(\mathcal{L}_{Y})|\leq\!\!\!\!\!\!\!\!&&~~~~~ \sup_{\mu\in\mathcal{P}_2}\|\partial_{\mu}u(\mu)\|_{L^2(\mu)}\mathbb{W}_2(\mathcal{L}_{X},\mathcal{L}_{Y})
\nonumber\\
\leq\!\!\!\!\!\!\!\!&&~~~~~ C\mathbb{W}_2(\mathcal{L}_{X},\mathcal{L}_{Y}).
\end{eqnarray*}
\end{remark}

\subsection{A priori estimates}\label{sec3.2}
In this subsection, we present some necessary a priori estimates for the controlled processes, which will be used frequently in the proof of the main theorems.

Recall the following averaging principle result of stochastic system \eref{E2},
\begin{eqnarray}\label{33}
\EE\Big[\sup_{t\in[0,T]}|X^{\delta}_t-\bar{X}_t|^2\Big]\leq C_T\big(1+|x|^4+|y|^4\big)(\varepsilon+\delta),
\end{eqnarray}
whose proof could refer to \cite{HLLS}.
Hence, there exists a constant $C_T>0$ independent of $\varepsilon$ such that
\begin{eqnarray}
\EE\Big[\sup_{t\in [0,T]}|Z_t^{\delta}|^2\Big]\leq C_T\big(1+|x|^4+|y|^4\big)\frac{\varepsilon+\delta}{\lambda(\delta)^2}.\label{supZvare}
\end{eqnarray}

The estimates of solutions $(X^{\delta,h^\delta},Y^{\delta,h^\delta})$ to the control problem (\ref{e9}) are given as follows.
\begin{lemma}
Under $(\mathbf{A1})$ and $(\mathbf{A3})$, for any $\{h^{\delta}\}_{\delta>0}\subset\mathcal{A}_M$, there exists a constant $C_{T,M}>0$ and {is independent of $\delta$, }such that
\begin{equation}\label{X23}
\mathbb{E}\Big[\sup_{t\in[0,T]}|X^{\delta,h^\delta}_t|^2\Big]\leq C_{T,M}(1+|x|^{2}+|y|^2),
\end{equation}
and for any $p\geq 1$, there exists $C_{p,T,M}>0$
\begin{equation}\label{Y24}
\mathbb{E}\Big[\Big(\int_0^T|Y^{\delta,h^\delta}_t|^2dt\Big)^p\Big]\leq C_{p,T,M}(1+|y|^{2p}).
\end{equation}
\end{lemma}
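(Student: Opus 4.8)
The plan is to establish the two estimates \eref{X23} and \eref{Y24} by a coupled Gronwall-type argument, treating the fast equation first since its dissipativity is what makes the slow estimate work. First I would apply It\^o's formula to $|Y^{\delta,h^\delta}_t|^2$. Using the dissipativity consequence \eref{RE3} (with some $\beta\in(0,\kappa)$), the boundedness of $g$ from $(\mathbf{A3})$, and Young's inequality to absorb the control term $\frac{\lambda(\delta)}{\sqrt{\delta\varepsilon}}\langle Y^{\delta,h^\delta}_t, g\,h^{2,\delta}_t\rangle$, one gets a differential inequality of the schematic form
\[
d|Y^{\delta,h^\delta}_t|^2 \le \Big(-\frac{\beta}{\varepsilon}|Y^{\delta,h^\delta}_t|^2 + \frac{C}{\varepsilon} + \frac{C\lambda(\delta)^2}{\delta\varepsilon}|h^{2,\delta}_t|^2\Big)dt + \frac{2}{\sqrt{\varepsilon}}\langle Y^{\delta,h^\delta}_t, g\,dW^2_t\rangle .
\]
Multiplying by the integrating factor $\e^{\beta t/\varepsilon}$ and integrating, the exponential kernel $\frac{\beta}{\varepsilon}\int_0^t \e^{-\beta(t-s)/\varepsilon}ds \le 1$ kills the $1/\varepsilon$ prefactors on the deterministic terms, leaving
\[
|Y^{\delta,h^\delta}_t|^2 \le \e^{-\beta t/\varepsilon}|y|^2 + \frac{C}{\beta} + \frac{C\lambda(\delta)^2}{\delta}\int_0^t \frac{\beta}{\varepsilon}\e^{-\beta(t-s)/\varepsilon}|h^{2,\delta}_s|^2\,ds + (\text{martingale}).
\]
Since $h^\delta\in\mathcal{A}_M$, $\int_0^T|h^{2,\delta}_s|^2ds\le M$, so integrating the above in $t$ over $[0,T]$ and using Fubini on the convolution term (whose $t$-integral of the kernel is again $\le 1$) yields $\int_0^T|Y^{\delta,h^\delta}_t|^2dt \le C_{T,M}(1+|y|^2) + (\text{martingale contribution})$ pathwise-in-expectation; taking expectation of the $p$-th power and applying the Burkholder-Davis-Gundy inequality to the martingale part, then Young's inequality to absorb, gives \eref{Y24}. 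The higher-moment version requires applying It\^o to $\big(\int_0^t|Y^{\delta,h^\delta}_s|^2ds\big)^p$ or raising to the power before integrating, but this is routine once the exponential-kernel bookkeeping is in place.

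For \eref{X23}, I would apply It\^o's formula to $|X^{\delta,h^\delta}_t|^2$ and use the linear growth of $b$ in $(x,y,\mu)$ (from \eref{A11} together with $(\mathbf{A3})$), the linear growth of $\sigma$, Young's inequality on the control term $\lambda(\delta)\langle X^{\delta,h^\delta}_t,\sigma h^{1,\delta}_t\rangle$ (bounded since $\lambda(\delta)\to0$), and BDG on the stochastic integral. One obtains
\[
\mathbb{E}\Big[\sup_{s\le t}|X^{\delta,h^\delta}_s|^2\Big] \le C(1+|x|^2) + C\int_0^t \mathbb{E}\Big[\sup_{r\le s}|X^{\delta,h^\delta}_r|^2\Big]ds + C\,\mathbb{E}\int_0^T|Y^{\delta,h^\delta}_s|^2ds + C_M ,
\]
where the key point is that the $Y$-dependence enters only through $\int_0^T|Y^{\delta,h^\delta}_s|^2ds$, which by the $p=1$ case of \eref{Y24} is bounded by $C_{T,M}(1+|y|^2)$. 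Also $\mathcal{L}_{X^\delta_t}\in\mathcal{P}_2$ uniformly with $\mathbb{W}_2(\mathcal{L}_{X^\delta_t},\delta_0)^2 = \mathbb{E}|X^\delta_t|^2$ bounded (standard a priori bound on the original system). Gronwall's lemma then closes \eref{X23}.

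The main obstacle is the fast-variable estimate \eref{Y24}: one must be careful that the control term contributes a factor $\lambda(\delta)^2/\delta$, which does \emph{not} tend to zero (indeed $\delta/\lambda(\delta)^2\to0$ by the standing moderate-deviation scaling), so it cannot simply be discarded — it must be tamed by the $\varepsilon^{-1}$-rate dissipativity via the integrating-factor computation, exploiting that $\int_0^T\frac{\beta}{\varepsilon}\e^{-\beta(T-s)/\varepsilon}ds\le1$ uniformly in $\varepsilon$, so that after integration in $t$ the dangerous prefactor multiplies only $\int_0^T|h^{2,\delta}_s|^2ds\le M$ rather than something that blows up. This is exactly the mechanism behind the sharp pointwise bound $\mathbb{E}[\sup_{t\le T}|Y^{\delta,h^\delta}_t|^2]\le C(1+|y|^2)T/\varepsilon + C\lambda(\delta)^2/\delta$ flagged in the introduction; for \eref{Y24} we only need the time-integrated version, which stays bounded, and that is what I would prove. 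A secondary technical point is handling the law $\mathcal{L}_{X^\delta_t}$ that appears frozen in the controlled equations \eref{e9}: its second moment is controlled by the a priori estimate on the uncontrolled system \eref{E2}, so it contributes only a constant and does not create a genuine coupling.
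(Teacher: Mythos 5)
Your treatment of \eref{X23} follows the paper's argument essentially verbatim (It\^o's formula for $|X^{\delta,h^\delta}_t|^2$, Young's inequality on the control term, Burkholder--Davis--Gundy, the $p=1$ case of \eref{Y24} for the $Y$-dependence, Gronwall), so the only step that needs scrutiny is the fast-variable estimate, and there your bookkeeping contains a genuine error. Applying Young's inequality to $\frac{2\lambda(\delta)}{\sqrt{\delta\varepsilon}}\langle g\,h^{2,\delta}_t, Y^{\delta,h^\delta}_t\rangle$ while absorbing into $\frac{\tilde\beta}{\varepsilon}|Y^{\delta,h^\delta}_t|^2$ produces the coefficient $\frac{C\lambda(\delta)^2}{\delta}$ in front of $|h^{2,\delta}_t|^2$, not $\frac{C\lambda(\delta)^2}{\delta\varepsilon}$ as in your displayed differential inequality. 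With your (too large) coefficient, the convolution term after the integrating factor is $\frac{C\lambda(\delta)^2}{\delta}\int_0^t\frac{\beta}{\varepsilon}e^{-\beta(t-s)/\varepsilon}|h^{2,\delta}_s|^2ds$, and integrating in $t$ with the normalized kernel integrating to at most $1$ leaves $\frac{CM\lambda(\delta)^2}{\delta}$ --- which \emph{diverges}, since $\delta/\lambda(\delta)^2\to0$ is exactly the moderate-deviation scaling you yourself flag. So, as written, your argument does not deliver a $\delta$-uniform bound in \eref{Y24}. With the correct coefficient $\frac{C\lambda(\delta)^2}{\delta}$ the kernel is the unnormalized $e^{-\beta(t-s)/\varepsilon}$, whose $t$-integral contributes an extra factor $\varepsilon/\beta$, and the control contribution to $\mathbb{E}\int_0^T|Y^{\delta,h^\delta}_t|^2dt$ becomes $\frac{CM\varepsilon\lambda(\delta)^2}{\delta}$; this is bounded only after invoking the regime restriction --- the paper explicitly assumes $(\varepsilon/\delta)\lambda(\delta)^2<1$, valid in Regimes 1 and 2 --- which you never use.

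Two further remarks. First, the paper reaches \eref{Y24} more directly: it keeps the dissipative term $\frac{\kappa_0}{\varepsilon}\int_0^T|Y^{\delta,h^\delta}_s|^2ds$ on the left-hand side of the It\^o identity, multiplies through by $\varepsilon$, and only then raises to the $p$-th power; no integrating factor, no Fubini, and no stochastic convolution are needed. Second, your route forces you to control $\sup_t\big|\int_0^t e^{-\beta(t-s)/\varepsilon}dM_s\big|$, which is not a martingale in $t$ and cannot be handled directly by Burkholder--Davis--Gundy; the paper's rearrangement sidesteps this by bounding $\sup_t|M_t|$ itself and absorbing $C\varepsilon^{p/2}\mathbb{E}\big(\int_0^T|Y^{\delta,h^\delta}_s|^2ds\big)^{p/2}$ via Young's inequality. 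I would recommend abandoning the integrating factor for \eref{Y24} (it is the right tool for the pointwise bound \eref{Y11}, which is where the paper uses it) and adopting the direct rearrangement, together with an explicit appeal to the regime assumption.
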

\begin{proof}
Using It\^{o}'s formula for $|Y^{\delta,h^\delta}_t|^2$, we have
\begin{eqnarray}\label{eqy1}
|Y^{\delta,h^\delta}_t|^2
=\!\!\!\!\!\!\!\!&&~~~~{|y|^2}+\frac{1}{\varepsilon}\int_0^t\Big[2\langle f(X^{\delta,h^\delta}_s,\mathcal{L}_{X^{\delta}_s},Y^{\delta,h^\delta}_s),Y^{\delta,h^\delta}_s\rangle+\|g(X^{\delta,h^\delta}_s,\mathcal{L}_{X^{\delta}_s},Y^{\delta,h^\delta}_s)\|^2\Big]ds
\nonumber\\
\!\!\!\!\!\!\!\!&&~~~+\frac{2\lambda(\delta)}{\sqrt{\delta\varepsilon}}\int_0^t\langle g(X^{\delta,h^\delta}_s,\mathcal{L}_{X^{\delta}_s},Y^{\delta,h^\delta}_s)h^{2,\delta}_s,Y^{\delta,h^\delta}_s\rangle ds+M_t,
\end{eqnarray}
where
$$M_t:=\frac{2}{\sqrt{\varepsilon}}\int_0^t\langle Y^{\delta,h^\delta}_s, g(X^{\delta,h^\delta}_s,\mathcal{L}_{X^{\delta}_s},Y^{\delta,h^\delta}_s)dW_s^2\rangle.$$

By $(\mathbf{A3})$, we also have
\begin{eqnarray}\label{eqy2}
\frac{2\lambda(\delta)}{\sqrt{\delta\varepsilon}}\langle g(X^{\delta,h^\delta}_s,\mathcal{L}_{X^{\delta}_s},Y^{\delta,h^\delta}_s)h^{2,\delta}_s,Y^{\delta,h^\delta}_s\rangle
~~~\!\!\!\!\!\!\!\!&&\leq\frac{2\lambda(\delta)}{\sqrt{\delta\varepsilon}}|h^{2,\delta}_s||Y^{\delta,h^\delta}_s|
\nonumber\\
~~~\!\!\!\!\!\!\!\!&&\leq\frac{C\lambda(\delta)^2}{\delta}|h^{2,\delta}_s|^2+\frac{\tilde{\beta}}{\varepsilon}|Y^{\delta,h^\delta}_s|^2,
\end{eqnarray}
where we used Young's inequality in the last step for a small constant $\tilde{\beta}\in(0,\beta)$, $\beta$ is defined in (\ref{RE3}).

Combining (\ref{RE3}) and (\ref{eqy1})-(\ref{eqy2}), it follows that
\begin{eqnarray*}
\frac{\kappa_0}{\varepsilon}\int_0^T|Y^{\delta,h^\delta}_s|^2ds
\leq\!\!\!\!\!\!\!\!&&~~~|y|^2+\frac{C_{T}}{\varepsilon}+\frac{C\lambda(\delta)^2}{\delta}\int_0^T|h^{2,\delta}_s|^2ds+ \sup_{t\in[0,T]}|M_t|,
\end{eqnarray*}
where $\kappa_0:=\beta-\tilde{\beta}>0$. Then
\begin{eqnarray*}
\mathbb{E}\Big[\Big(\int_0^T|Y^{\delta,h^\delta}_s|^2ds\Big)^p\Big]
\leq\!\!\!\!\!\!\!\!&&~~~C_p\varepsilon^p|y|^{2p}+C_{p,T}+\frac{C_p\lambda(\delta)^{2p}\varepsilon^p}{\delta^p}\mathbb{E}\Big[\Big(\int_0^T|h^{2,\delta}_s|^2ds\Big)^p\Big]+C_p\varepsilon^p\mathbb{E}\Big[\Big(\sup_{t\in[0,T]}|M_t|\Big)^p\Big],
\nonumber\\
\leq\!\!\!\!\!\!\!\!&&~~~\frac{1}{2}\mathbb{E}\Big[\Big(\int_0^T|Y^{\delta,h^\delta}_s|^2ds\Big)^p\Big]+C_{p,T}(1+|y|^{2p})+\frac{C_{p,T,M}\lambda(\delta)^{2p}\varepsilon^p}{\delta^p},
\end{eqnarray*}
where we used the fact that $h^{\delta}\in\mathcal{A}_M$ and the following estimate in the second step
\begin{eqnarray*}
C_p\varepsilon^p\mathbb{E}\Big[\Big(\sup_{t\in[0,T]}|M_t|\Big)^p\Big]\leq\!\!\!\!\!\!\!\!&&~~~ C_p\varepsilon^{\frac{p}{2}}\mathbb{E}\Big[\Big(\int_0^T|Y^{\delta,h^\delta}_s|^2ds\Big)^{\frac{p}{2}}\Big]
\nonumber\\
\leq\!\!\!\!\!\!\!\!&&~~~ \frac{1}{2}\mathbb{E}\Big[\Big(\int_0^T|Y^{\delta,h^\delta}_s|^2ds\Big)^{p}\Big]+C_p.
\end{eqnarray*}
Note that $\lambda(\delta)\to 0$ as $\delta\to 0$, in view of Regimes 1 and 2, without loss of generality we can assume $(\frac{\varepsilon}{\delta})\lambda(\delta)^2< 1$, thus (\ref{Y24}) holds.

Applying It\^{o}'s formula to $|X^{\delta,h^\delta}_t|^2$, we have
\begin{eqnarray*}
\!\!\!\!\!\!\!\!&&|X^{\delta,h^\delta}_t|^2
\nonumber\\
=\!\!\!\!\!\!\!\!&&~~~|x|^2+2\int_0^t\langle b(X^{\delta,h^\delta}_s,\mathcal{L}_{X^{\delta}_s},Y^{\delta,h^\delta}_s),X^{\delta,h^\delta}_s\rangle ds
+2\lambda(\delta)\int_0^t\langle \sigma(X^{\delta,h^\delta}_s,\mathcal{L}_{X^{\delta}_s})h^{1,\delta}_s,X^{\delta,h^\delta}_s\rangle ds
\nonumber\\
\!\!\!\!\!\!\!\!&&~~~
+\delta\int_0^t\|\sigma(X^{\delta,h^\delta}_s,\mathcal{L}_{X^{\delta}_s})\|^2ds
+2\sqrt{\delta}\int_0^t\langle \sigma(X^{\delta,h^\delta}_s,\mathcal{L}_{X^{\delta}_s})dW_s^1,X^{\delta,h^\delta}_s\rangle.
\end{eqnarray*}
Due to the  condition $(\mathbf{A1})$,
\begin{eqnarray}\label{27}
\mathbb{E}\Big[\sup_{t\in[0,T]}|X^{\delta,h^\delta}_t|^2\Big]\leq \!\!\!\!\!\!\!\!&&~~~ |x|^2+C\mathbb{E}\int_0^T\Big(1+|X^{\delta,h^\delta}_t|^2+\mathcal{L}_{X^{\delta}_t}(|\cdot|^2)\Big)dt+C\mathbb{E}\int_0^T|Y^{\delta,h^\delta}_t|^2dt
\nonumber\\
\!\!\!\!\!\!\!\!&&~~~+2\lambda(\delta)\mathbb{E}\int_0^T\Big|\langle \sigma(X^{\delta,h^\delta}_t,\mathcal{L}_{X^{\delta}_t})h^{1,\delta}_t,X^{\delta,h^\delta}_t\rangle\Big|dt
\nonumber\\
\!\!\!\!\!\!\!\!&&~~~
+2\sqrt{\delta}\mathbb{E}\Bigg[\sup_{t\in[0,T]}\Big|\int_0^t\langle \sigma(X^{\delta,h^\delta}_s,\mathcal{L}_{X^{\delta}_s})dW_s^1,X^{\delta,h^\delta}_s\rangle\Big|\Bigg].
\end{eqnarray}
The fourth term on the right hand side of (\ref{27}) can be estimated as follows,
\begin{eqnarray}\label{28}
\!\!\!\!\!\!\!\!&&2\lambda(\delta)\mathbb{E}\int_0^T\Big|\langle \sigma(X^{\delta,h^\delta}_t,\mathcal{L}_{X^{\delta}_t})h^{1,\delta}_t,X^{\delta,h^\delta}_t\rangle\Big|dt
\nonumber\\
~\leq\!\!\!\!\!\!\!\!&&~~~\frac{1}{4}\mathbb{E}\Big[\sup_{t\in[0,T]}|X^{\delta,h^\delta}_t|^2\Big]+C\lambda(\delta)^2\mathbb{E}\Big(\int_0^T\|\sigma(X^{\delta,h^\delta}_t,\mathcal{L}_{X^{\delta}_t})\|\cdot|h^{1,\delta}_t|dt\Big)^2
\nonumber\\
~\leq\!\!\!\!\!\!\!\!&&~~~\frac{1}{4}\mathbb{E}\Big[\sup_{t\in[0,T]}|X^{\delta,h^\delta}_t|^2\Big]+C\lambda(\delta)^2\mathbb{E}\Big[\Big(\int_0^T\|\sigma(X^{\delta,h^\delta}_t,\mathcal{L}_{X^{\delta}_t})\|^2dt\Big)\Big(\int_0^T|h^{1,\delta}_t|^2dt\Big)\Big]
\nonumber\\
~\leq\!\!\!\!\!\!\!\!&&~~~\frac{1}{4}\mathbb{E}\Big[\sup_{t\in[0,T]}|X^{\delta,h^\delta}_t|^2\Big]+C_{M,T}+C_M\mathbb{E}\int_0^T|X^{\delta,h^\delta}_t|^2dt+C_M\mathbb{E}\int_0^T|X^{\delta}_t|^2dt.
\end{eqnarray}
Moreover, by Burkholder-Davis-Gundy's inequality, we obtain
\begin{eqnarray}\label{29}
\!\!\!\!\!\!\!\!&&2\sqrt{\delta}\mathbb{E}\Bigg[\sup_{t\in[0,T]}\Big|\int_0^t\langle \sigma(X^{\delta,h^\delta}_s,\mathcal{L}_{X^{\delta}_s})dW_s^1,X^{\delta,h^\delta}_s\rangle\Big|\Bigg]
\nonumber\\
~\leq\!\!\!\!\!\!\!\!&&~~~8\sqrt{\delta}\mathbb{E}\Bigg[\int_0^T\|\sigma(X^{\delta,h^\delta}_t,\mathcal{L}_{X^{\delta}_t})\|^2|X^{\delta,h^\delta}_t|^2dt\Bigg]^{\frac{1}{2}}
\nonumber\\
~\leq\!\!\!\!\!\!\!\!&&~~~\frac{1}{4}\mathbb{E}\Big[\sup_{t\in[0,T]}|X^{\delta,h^\delta}_t|^2\Big]+C\mathbb{E}\int_0^T|X^{\delta,h^\delta}_t|^2dt+C\mathbb{E}\int_0^T|X^{\delta}_t|^2dt+C_T,
\end{eqnarray}
where we used Young's inequality in the last step.

Combining (\ref{27})-(\ref{29}) yields that
\begin{eqnarray*}
\!\!\!\!\!\!\!\!&&\mathbb{E}\Big[\sup_{t\in[0,T]}|X^{\delta,h^\delta}_t|^2\Big]
\nonumber\\
~\leq \!\!\!\!\!\!\!\!&& ~~~ 2|x|^2+C_{M,T}+C_{M,T}\mathbb{E}\int_0^T|X^{\delta,h^\delta}_t|^2dt+C_{M,T}\mathbb{E}\int_0^T|X^{\delta}_t|^2dt+C_{M,T}\mathbb{E}\int_0^T|Y^{\delta,h^\delta}_t|^2dt
\nonumber\\
~\leq \!\!\!\!\!\!\!\!&&~~~ C_{M,T}(1+|y|^2+|x|^2)+C_{M,T}\Big(\frac{\varepsilon}{\delta}\Big)\lambda(\delta)+C_{M,T}\mathbb{E}\int_0^T|X^{\delta,h^\delta}_t|^2dt+C_{M,T}\mathbb{E}\int_0^T|X^{\delta}_t|^2dt.
\end{eqnarray*}
By Gronwall's lemma, we immediately get
\begin{eqnarray*}
\mathbb{E}\Big[\sup_{t\in[0,T]}|X^{\delta,h^\delta}_t|^2\Big]
~\leq \!\!\!\!\!\!\!\!&&~~~C_{M,T}\Big(1+|y|^2+|x|^2+\Big(\frac{\varepsilon}{\delta}\Big)\lambda(\delta)\Big),
\end{eqnarray*}
where we used the uniform estimate of $X^{\delta}_t$  from \cite[Lemma 3.1]{HLLS}.
Without loss of generality we can assume $(\frac{\varepsilon}{\delta})\lambda(\delta)< 1$, thus (\ref{X23}) holds.
\end{proof}

\begin{remark}
It should be pointed out that the integral estimate (\ref{Y24}) is stronger than that of Lemma 4.3 in \cite{HLLS}, and is mainly used to get the second moment (cf.~Subsection \ref{appendix2} below) of the controlled process $Z^{\delta,h^\delta}$ and the convergence (cf.~(\ref{ess4}) below) of the term $\text{I}_{14}^{\delta}(t)$ in the proof of Proposition \ref{p5}.

\end{remark}

\vspace{2mm}
The following lemma is crucial in proving the convergence of solution to the control problem (\ref{e9}), which states that the uniform (w.r.t.~$t\in[0,T]$) moment estimates or even pointwise estimates of controlled process $ Y_{t}^{\delta,h^\delta}$ is blow up when $\delta\to 0$ (hence $\varepsilon\to 0$). However, it is enough for our use in the proof of the convergence of solutions to the control problem (\ref{e9}).

\begin{lemma} \label{PMY}
Under $(\mathbf{A1})$ and $(\mathbf{A3})$, there exists $C>0$ such that for small enough $\ep,\delta>0$ we have
\begin{eqnarray}\label{Y11}
\mathbb{E}\Big[\sup_{t\in [0, T]}|Y_{t}^{\delta,h^\delta}|^{2}\Big]\leq \frac{C(1+|y|^2)T}{\ep}+\frac{C_M\lambda(\delta)^2}{\delta}.
\end{eqnarray}
\end{lemma}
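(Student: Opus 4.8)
\textbf{Proof proposal for Lemma \ref{PMY}.}
The plan is to run an It\^o/Gronwall argument on $|Y^{\delta,h^\delta}_t|^2$, but because the drift contains the singular factor $1/\varepsilon$ and the control term carries the factor $\lambda(\delta)/\sqrt{\delta\varepsilon}$, I expect the supremum bound to genuinely blow up like $T/\varepsilon + \lambda(\delta)^2/\delta$, as stated. First I would apply It\^o's formula to $|Y^{\delta,h^\delta}_t|^2$, exactly as in \eref{eqy1}, to write it as the sum of the $\tfrac1\varepsilon$-drift term, the control cross-term, and the martingale $M_t=\tfrac{2}{\sqrt\varepsilon}\int_0^t\langle Y^{\delta,h^\delta}_s,\,g(\cdots)dW^2_s\rangle$. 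Using the dissipativity consequence \eref{RE3} on the drift term and Young's inequality on the control cross-term as in \eref{eqy2} (splitting off a small $\tilde\beta/\varepsilon$ piece so that $\kappa_0:=\beta-\tilde\beta>0$), I get the pointwise bound
\begin{eqnarray*}
|Y^{\delta,h^\delta}_t|^2+\frac{\kappa_0}{\varepsilon}\int_0^t|Y^{\delta,h^\delta}_s|^2ds
\le |y|^2+\frac{C_\beta T}{\varepsilon}+\frac{C\lambda(\delta)^2}{\delta}\int_0^T|h^{2,\delta}_s|^2ds+\sup_{r\in[0,T]}|M_r|,
\end{eqnarray*}
where the $h^{2,\delta}$ integral is $\le M$ since $h^\delta\in\mathcal A_M$.

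Next I would take $\sup_{t\in[0,T]}$ and then expectations. The only delicate term is $\EE\big[\sup_{t}|M_t|\big]$: by Burkholder--Davis--Gundy and boundedness of $g$ (condition $(\mathbf A3)$, which gives $\|g\|\le C$),
\begin{eqnarray*}
\EE\Big[\sup_{t\in[0,T]}|M_t|\Big]\le \frac{C}{\sqrt\varepsilon}\,\EE\Big[\int_0^T|Y^{\delta,h^\delta}_s|^2ds\Big]^{1/2}
\le \frac{C}{\sqrt\varepsilon}\Big(\EE\int_0^T|Y^{\delta,h^\delta}_s|^2ds\Big)^{1/2}.
\end{eqnarray*}
By the already-established estimate \eref{Y24} with $p=1$, $\EE\int_0^T|Y^{\delta,h^\delta}_s|^2ds\le C_{T,M}(1+|y|^2)$, so $\EE[\sup_t|M_t|]\le \tfrac{C_{T,M}(1+|y|)}{\sqrt\varepsilon}\le \tfrac{C_{T,M}(1+|y|^2)}{\sqrt\varepsilon}$ (absorbing the linear-in-$|y|$ term). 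Since $\varepsilon\to0$, $1/\sqrt\varepsilon\le 1/\varepsilon$ for small $\varepsilon$, so this contribution is dominated by $\tfrac{C(1+|y|^2)T}{\varepsilon}$. Combining everything and discarding the nonnegative $\tfrac{\kappa_0}{\varepsilon}\int_0^T|Y|^2$ term yields
\begin{eqnarray*}
\EE\Big[\sup_{t\in[0,T]}|Y^{\delta,h^\delta}_t|^2\Big]\le |y|^2+\frac{C_\beta T}{\varepsilon}+\frac{C_M\lambda(\delta)^2}{\delta}+\frac{C_{T,M}(1+|y|^2)}{\sqrt\varepsilon}\le \frac{C(1+|y|^2)T}{\varepsilon}+\frac{C_M\lambda(\delta)^2}{\delta},
\end{eqnarray*}
which is \eref{Y11}.

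The main obstacle — and the reason the earlier integral estimate \eref{Y24} must be proved first — is precisely handling the martingale term $M_t$: one cannot close a Gronwall loop on $\EE[\sup_t|Y_t|^2]$ directly because the BDG bound produces $\varepsilon^{-1/2}(\int_0^T|Y_s|^2ds)^{1/2}$, and without an a priori control on $\EE\int_0^T|Y_s|^2ds$ this would only give a self-referential inequality at the blown-up scale. Feeding in \eref{Y24} breaks the circularity. A minor point to be careful about is the bookkeeping of the small constants $\tilde\beta<\beta<\kappa$ so that $\kappa_0>0$, and the standing assumption (legitimate since $\lambda(\delta)\to0$ and we are in Regime 1 or 2) that $(\varepsilon/\delta)\lambda(\delta)^2<1$ for small $\delta$, which is what lets us treat $\lambda(\delta)^2/\delta$ and $1/\varepsilon$ as the two genuinely separate scales appearing on the right-hand side.
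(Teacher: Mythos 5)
Your proof is correct, and it takes a genuinely different route from the paper's. You work directly with $Y^{\delta,h^\delta}_t$ and break the would-be circular Gronwall loop by feeding the previously established $L^2$-in-time estimate \eqref{Y24} (with $p=1$) into the Burkholder--Davis--Gundy bound for the martingale term $M_t$. The paper instead passes to the time-rescaled process $\tilde{Y}^{\delta,h^\delta}_t:=Y^{\delta,h^\delta}_{\varepsilon t}$, which solves an $\varepsilon$-independent equation; it then derives a linear-in-$T'$ bound on $\mathbb{E}\big[\sup_{t\le T'}|\tilde Y_t|^2\big]$ via an exponential-decay (Gronwall) estimate, and finally substitutes $T'=T/\varepsilon$. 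What your approach buys: it is more elementary, needs no rescaling, and illustrates precisely why \eqref{Y24} must be proved first (breaking the circularity, as you say). What the paper's approach buys: a cleaner constant -- the $T$-dependence appears as an explicit factor $C\,T/\varepsilon$ (which is what the lemma states), whereas your bound has the less transparent $C_{T,M}/\varepsilon$. For the applications in this paper (with $T$ fixed) this is immaterial, but it is worth being aware that your constants are nominally weaker.

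One small remark: you actually do not even need \eqref{Y24}. Since your intermediate inequality carries the nonnegative term $\tfrac{\kappa_0}{\varepsilon}\int_0^T|Y_s|^2\,ds$ on the left, you can absorb the BDG contribution by Young's inequality,
\begin{equation*}
\frac{C}{\sqrt\varepsilon}\Big(\mathbb{E}\int_0^T|Y_s|^2\,ds\Big)^{1/2}
\le \frac{C^2}{4\kappa_0}+\frac{\kappa_0}{2\varepsilon}\,\mathbb{E}\int_0^T|Y_s|^2\,ds,
\end{equation*}
leaving half of the integral term on the left-hand side and yielding
\begin{equation*}
\mathbb{E}\Big[\sup_{t\in[0,T]}|Y_t^{\delta,h^\delta}|^2\Big]\le |y|^2+\frac{C_\beta T}{\varepsilon}+\frac{C_M\lambda(\delta)^2}{\delta}+C,
\end{equation*}
which recovers the linear-in-$T$ form without invoking \eqref{Y24} at all. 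Also, your closing aside about assuming $(\varepsilon/\delta)\lambda(\delta)^2<1$ is not actually used anywhere in your argument; it is harmless but superfluous here.
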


\begin{proof}
We consider process $\tilde{Y}^{\delta,h^\delta}_t:=Y^{\delta,h^\delta}_{\varepsilon t}$ that solves the following equation
\begin{eqnarray}
d\tilde{Y}^{\delta,h^\delta}_t=~~\!\!\!\!\!\!\!\!&&f(X_{t\varepsilon}^{\delta,h^\delta},\mathcal{L}_{X_{t\varepsilon}^{\delta}},\tilde{Y}^{\delta,h^\delta}_t)dt+g(X_{t\varepsilon}^{\delta,h^\delta},\mathcal{L}_{X_{t\varepsilon}^{\delta}},\tilde{Y}^{\delta,h^\delta}_t)d\tilde{W}^2_t,
\nonumber\\
\!\!\!\!\!\!\!\!&&+\frac{\lambda(\delta)\sqrt{\varepsilon}}{\sqrt{\delta}}g(X_{t\varepsilon}^{\delta,h^\delta},\mathcal{L}_{X_{t\varepsilon}^{\delta}},\tilde{Y}^{\delta,h^\delta}_t)h_{t\varepsilon}^{2,\delta}dt,
~\tilde{Y}^{\delta,h^\delta}_0=y,
\end{eqnarray}
where $\tilde{W}^2_t:=\frac{1}{\sqrt{\varepsilon}}W^2_{t\varepsilon}$ that coincides in law with $W^2_t$.

By It\^{o}'s formula, we have
\begin{eqnarray}
|\tilde{Y}_{t}^{\delta,h^\delta}|^{2}=~~\!\!\!\!\!\!\!\!&&|y|^{2}+2\int_{0} ^{t}\langle f(X_{s\varepsilon}^{\delta,h^\delta},\mathcal{L}_{X_{s\varepsilon}^{\delta}},\tilde{Y}_{s}^{\delta,h^\delta}),\tilde{Y}_{s}^{\delta,h^\delta}\rangle ds
\nonumber\\
\!\!\!\!\!\!\!\!&&
+2\int_{0} ^{t}\langle \tilde{Y}_{s}^{\delta,h^\delta}, g(X_{s\varepsilon}^{\delta,h^\delta},\mathcal{L}_{X_{s\varepsilon}^{\delta}},\tilde{Y}_{s}^{\delta,h^\delta})d\tilde{W}^2_s\rangle  \nonumber\\
\!\!\!\!\!\!\!\!&&+\int_{0} ^{t}\|g(X_{s\varepsilon}^{\delta,h^\delta}, \mathcal{L}_{X_{s\varepsilon}^{\delta}}, \tilde{Y}_{s}^{\delta,h^\delta})\|^2ds\!
\nonumber\\
\!\!\!\!\!\!\!\!&&
+\frac{2\lambda(\delta)\sqrt{\varepsilon}}{\sqrt{\delta}}\int_{0} ^{t}\langle g(X_{s\varepsilon}^{\delta,h^\delta},\mathcal{L}_{X_{s\varepsilon}^{\delta}},\tilde{Y}_{s}^{\delta,h^\delta})h_{t\varepsilon}^{2,\delta},\tilde{Y}_{s}^{\delta,h^\delta}\rangle ds.\label{ITO}
\end{eqnarray}
Taking expectation on both sides of (\ref{ITO}), we get
\begin{eqnarray*}
\frac{d}{dt}\mathbb{E}\Big[|\tilde{Y}_{t}^{\delta,h^\delta}|^{2}\Big]=~~\!\!\!\!\!\!\!\!&&   \mathbb{E}\Big[2\langle f(X_{t\varepsilon}^{\delta,h^\delta},\mathcal{L}_{X_{t\varepsilon}^{\delta}},\tilde{Y}_{t}^{\delta,h^\delta}),\tilde{Y}_{t}^{\delta,h^\delta}\rangle+\|g(X_{t\varepsilon}^{\delta,h^\delta}, \mathcal{L}_{X_{t\varepsilon}^{\delta}}, \tilde{Y}_{t}^{\delta,h^\delta})\|^2\Big]\nonumber\\
\!\!\!\!\!\!\!\!&&+\frac{2\lambda(\delta)\sqrt{\varepsilon}}{\sqrt{\delta}}\mathbb{E}\langle g(X_{t\varepsilon}^{\delta,h^\delta},\mathcal{L}_{X_{t\varepsilon}^{\delta}},\tilde{Y}_{t}^{\delta,h^\delta})h_{t\varepsilon}^{2,\delta},\tilde{Y}_{t}^{\delta,h^\delta}\rangle .
\end{eqnarray*}

Note that
$$\frac{2\lambda(\delta)\sqrt{\varepsilon}}{\sqrt{\delta}}|\langle g(X_{t\varepsilon}^{\delta,h^\delta},\mathcal{L}_{X_{t\varepsilon}^{\delta}},\tilde{Y}_{t}^{\delta,h^\delta})h_{t\varepsilon}^{2,\delta},\tilde{Y}_{t}^{\delta,h^\delta}\rangle| \leq\frac{\beta}{2}|\tilde{Y}_{t}^{\delta,h^\delta}|^{2}+\frac{C_{\beta}\lambda(\delta)^2\varepsilon}{\delta}|h_{t\varepsilon}^{2,\delta}|^2.$$
Then by (\ref{RE3}) and Young's inequality, we have
\begin{eqnarray}
\frac{d}{dt}\mathbb{E}\Big[|\tilde{Y}_{t}^{\delta,h^\delta}|^{2}\Big]\leq~~\!\!\!\!\!\!\!\!&&-\frac{\beta}{2}\mathbb{E}|\tilde{Y}_{t}^{\delta,h^\delta}|^{2}+C_{\beta}+\frac{C_{\beta}\lambda(\delta)^2\varepsilon}{\delta}\mathbb{E}|h_{t\varepsilon}^{2,\delta}|^2. \nonumber
\end{eqnarray}
The Gronwall's lemma implies that
\begin{eqnarray*}
\mathbb{E}\Big[|\tilde{Y}_{t}^{\delta,h^\delta}|^{2}\Big]\leq~~\!\!\!\!\!\!\!\!&& |y|^{2}e^{-\frac{\beta}{2} t}+C_{\beta}\int^t_0 e^{-\frac{\beta}{2}(t-s)}ds+\frac{C_{\beta}\lambda(\delta)^2\varepsilon}{\delta}\int^t_0 e^{-\frac{\beta}{2}(t-s)}\mathbb{E}|h_{s\varepsilon}^{2,\delta}|^2ds
\nonumber\\
\leq~~\!\!\!\!\!\!\!\!&&C_{\beta}(1+|y|^2)+\frac{C_{\beta}\lambda(\delta)^2\varepsilon}{\delta}\mathbb{E}\int^t_0 e^{-\frac{\beta}{2}(t-s)}|h_{s\varepsilon}^{2,\delta}|^2ds.
\end{eqnarray*}

Note that \eref{ITO} and \eref{RE3} also imply that
\begin{eqnarray}
|\tilde{Y}_{t}^{\delta,h^\delta}|^{2}\leq~~\!\!\!\!\!\!\!\!&& |y|^{2}+C_{\beta}t+C\left|\int_{0} ^{t}\langle \tilde{Y}_{s}^{\delta,h^\delta}, g(X_{s\varepsilon}^{\delta,h^\delta},\mathcal{L}_{X_{s\varepsilon}^{\delta}},\tilde{Y}_{s}^{\delta,h^\delta})d\tilde{W}^2_s\rangle \right|
\nonumber\\
\!\!\!\!\!\!\!\!&&+C\int_0^t|\tilde{Y}_{s}^{\delta,h^\delta}|^{2}ds+\frac{C\lambda(\delta)^2\varepsilon}{\delta}\int_0^t|h_{s\varepsilon}^{2,\delta}|^2ds.
\end{eqnarray}
By Burkholder-Davis-Gundy's inequality and Young's inequality we get
\begin{eqnarray*}
\mathbb{E}\Big[\sup_{t\in[0,T]}|\tilde{Y}_{t}^{\delta,h^\delta}|^{2}\Big]\leq~~\!\!\!\!\!\!\!\!&&|y|^{2}+C_{\beta}T +C\EE\left[\int_0^T|\tilde{Y}^{\delta,h^\delta}_s|^{2}ds\right]^{\frac{1}{2}}
\nonumber\\
\!\!\!\!\!\!\!\!&&
+C\mathbb{E}\int_0^T|\tilde{Y}_{s}^{\delta,h^\delta}|^{2}ds+\frac{C\lambda(\delta)^2\varepsilon}{\delta}\mathbb{E}\int_0^T|h_{s\varepsilon}^{2,\delta}|^2ds
\nonumber\\
\leq~~\!\!\!\!\!\!\!\!&&|y|^{2}+C_{\beta}T
+C\int_0^T\mathbb{E}|\tilde{Y}_{s}^{\delta,h^\delta}|^{2}ds+\frac{C\lambda(\delta)^2\varepsilon}{\delta}\mathbb{E}\int_0^T|h_{s\varepsilon}^{2,\delta}|^2ds.
\end{eqnarray*}
Consequently, we obtain that for any $T\geq 1$,
\begin{eqnarray*}
\mathbb{E}\Big[\sup_{t\in[0,T]}|\tilde{Y}_{t}^{\delta,h^\delta}|^{2}\Big]\leq~~\!\!\!\!\!\!\!\!&& C_{\beta}(1+|y|^2)T+\frac{C_{\beta}\lambda(\delta)^2\varepsilon}{\delta}\mathbb{E}\int_0^T\int^s_0 e^{-\frac{\beta}{2}(s-r)}|h_{r\varepsilon}^{2,\delta}|^2drds
\nonumber\\
\!\!\!\!\!\!\!\!&&
+\frac{C\lambda(\delta)^2\varepsilon}{\delta}\mathbb{E}\int_0^T|h_{s\varepsilon}^{2,\delta}|^2ds
\nonumber\\
\leq~~\!\!\!\!\!\!\!\!&& C_{\beta}(1+|y|^2)T
+\frac{C_{\beta}\lambda(\delta)^2\varepsilon}{\delta}\mathbb{E}\int_0^T|h_{s\varepsilon}^{2,\delta}|^2ds,
\end{eqnarray*}
{where the last inequality follows from the fact that
\begin{align*}
 &\mathbb{E}\int_0^T\int^s_0 e^{-\frac{\beta}{2}(s-r)}|h_{r\varepsilon}^{2,\delta}|^2drds\\
  = & -\frac{2}{\beta}e^{-\frac{\beta s}{2}}\mathbb{E}\int^s_0 e^{\frac{\beta r}{2}}|h_{r\varepsilon}^{2,\delta}|^2dr+\frac{2}{\beta}\mathbb{E}\int_0^T|h_{s\varepsilon}^{2,\delta}|^2ds\\
 = &-\frac{2}{\beta}\mathbb{E}\int^s_0 e^{\frac{-\beta (s-r)}{2}}|h_{r\varepsilon}^{2,\delta}|^2dr+\frac{2}{\beta}\mathbb{E}\int_0^T|h_{s\varepsilon}^{2,\delta}|^2ds\\
\leq &~C\mathbb{E}\int_0^T|h_{s\varepsilon}^{2,\delta}|^2ds.
\end{align*}}
Hence,  for any $T>0$ and $\varepsilon$ small enough,
\begin{eqnarray*}
\mathbb{E}\Big[\sup_{t\in[0,T]}|Y_{t}^{\delta,h^\delta}|^{2}\Big]=~~\!\!\!\!\!\!\!\!&&\mathbb{E}\Big[\sup_{t\in\left[0,\frac{T}{\varepsilon}\right]}|\tilde{Y}_{t}^{\delta,h^\delta}|^{2}\Big]
\nonumber\\
\leq~~\!\!\!\!\!\!\!\!&&
\frac{C_{\beta}(1+|y|^2)T}{\varepsilon}
+\frac{C_{\beta}\lambda(\delta)^2\varepsilon}{\delta}\mathbb{E}\int_0^{\frac{T}{\varepsilon}}|h_{s\varepsilon}^{2,\delta}|^2ds
\nonumber\\
\leq~~\!\!\!\!\!\!\!\!&&
\frac{C_{\beta}(1+|y|^2)T}{\varepsilon}
+\frac{C_{\beta,M}\lambda(\delta)^2}{\delta},
\end{eqnarray*}
which yields (\ref{Y11}).  The proof is complete.
\end{proof}

{The following lemma is the time H\"{o}lder continuity estimate of controlled process $X^{\delta,h^\delta}$, which is useful in the proof of the existence of viable pair later.
\begin{lemma}\label{xg}
Under $(\mathbf{A1})$ and $(\mathbf{A3})$, there exists $C_T>0$ such that for any $0\leq t\leq t+\Delta\leq T$,
\begin{equation}\label{Holder}
\mathbb{E}\Big[|X_{t+\Delta}^{\delta,h^\delta}-X_{t}^{\delta,h^\delta}|^2\Big]\leq C_{M,T}(1+|x|^2+|y|^2)\Delta.
\end{equation}
\end{lemma}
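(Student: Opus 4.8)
The plan is to estimate the increment directly from the integral form of the controlled equation \eqref{e9}. For $0\le t\le t+\Delta\le T$ one writes
\[
X_{t+\Delta}^{\delta,h^\delta}-X_{t}^{\delta,h^\delta}=\int_t^{t+\Delta}b(X^{\delta,h^\delta}_s,\mathcal{L}_{X^{\delta}_s},Y^{\delta,h^\delta}_s)\,ds+\lambda(\delta)\int_t^{t+\Delta}\sigma(X^{\delta,h^\delta}_s,\mathcal{L}_{X^{\delta}_s})h^{1,\delta}_s\,ds+\sqrt{\delta}\int_t^{t+\Delta}\sigma(X^{\delta,h^\delta}_s,\mathcal{L}_{X^{\delta}_s})\,dW_s^1,
\]
and then, using $(a+b+c)^2\le 3(a^2+b^2+c^2)$, bounds $\mathbb{E}|\cdot|^2$ of the three terms on the right separately.

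For the drift term I would first apply the Cauchy--Schwarz inequality in time to extract one factor of $\Delta$, namely $\mathbb{E}\big|\int_t^{t+\Delta}b(\cdots)\,ds\big|^2\le\Delta\,\mathbb{E}\int_t^{t+\Delta}|b(X^{\delta,h^\delta}_s,\mathcal{L}_{X^{\delta}_s},Y^{\delta,h^\delta}_s)|^2\,ds$. The linear growth of $b$ implied by $(\mathbf{A1})$ gives $|b(x,\mu,y)|^2\le C(1+|x|^2+|y|^2+\mu(|\cdot|^2))$, so the right-hand side is at most $C\Delta\big(T+\mathbb{E}\int_0^T|X^{\delta,h^\delta}_s|^2ds+\mathbb{E}\int_0^T|Y^{\delta,h^\delta}_s|^2ds+\int_0^T\mathbb{E}|X^{\delta}_s|^2ds\big)$. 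Each of these integrals is bounded uniformly in $\delta$ by $C_{M,T}(1+|x|^2+|y|^2)$, thanks respectively to \eqref{X23}, the integral estimate \eqref{Y24}, and the uniform second-moment bound for $X^\delta$ from \cite[Lemma 3.1]{HLLS}; hence the drift term contributes $\le C_{M,T}(1+|x|^2+|y|^2)\Delta$.

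For the control term I would use $\big|\int_t^{t+\Delta}fg\,ds\big|^2\le\big(\int_t^{t+\Delta}|f|^2ds\big)\big(\int_t^{t+\Delta}|g|^2ds\big)$ with $f=\|\sigma\|$ and $g=|h^{1,\delta}|$: from $(\mathbf{A1})$ and $(\mathbf{A3})$ one has $\|\sigma(x,\mu)\|^2\le C(1+\mu(|\cdot|^2))$, so $\int_t^{t+\Delta}\|\sigma(\cdots)\|^2ds\le C\big(1+\sup_{s\le T}\mathbb{E}|X^\delta_s|^2\big)\Delta\le C_T(1+|x|^2+|y|^2)\Delta$, while $\int_t^{t+\Delta}|h^{1,\delta}_s|^2ds\le\int_0^T|h^\delta_s|^2ds\le M$ since $h^\delta\in\mathcal{A}_M$; together with the boundedness of $\lambda(\delta)$ as $\delta\to0$ this term is again $O\big((1+|x|^2+|y|^2)\Delta\big)$. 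For the stochastic term It\^{o}'s isometry gives $\mathbb{E}\big|\sqrt{\delta}\int_t^{t+\Delta}\sigma(\cdots)\,dW_s^1\big|^2=\delta\,\mathbb{E}\int_t^{t+\Delta}\|\sigma(\cdots)\|^2ds$, and the same growth bound together with the boundedness of $\delta$ yields $\le C_T(1+|x|^2+|y|^2)\Delta$. Summing the three contributions gives \eqref{Holder}.

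The argument is essentially routine; the only point requiring care — and the reason \eqref{Y24} rather than a pointwise bound is used — is the fast variable. By Lemma~\ref{PMY} the pointwise second moment of $Y^{\delta,h^\delta}$ may blow up like $1/\varepsilon$, so one cannot dominate $\mathbb{E}\int_t^{t+\Delta}|Y^{\delta,h^\delta}_s|^2ds$ by $\Delta\sup_{s}\mathbb{E}|Y^{\delta,h^\delta}_s|^2$; instead one uses the $\delta$-uniform bound \eqref{Y24} over the whole interval $[0,T]$ and lets the factor of $\Delta$ produced by the Cauchy--Schwarz step on the drift do the work. Beyond this, one only has to keep every constant uniform in $\delta$, which is automatic since $\lambda(\delta)\to0$, $\delta\to0$, and $(\varepsilon/\delta)\lambda(\delta)^2<1$ may be assumed exactly as in the proof of \eqref{X23}.
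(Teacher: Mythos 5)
Your proof is correct and follows essentially the same route as the paper: the same decomposition into drift, control, and stochastic terms, the same Cauchy--Schwarz step to pull out a factor of $\Delta$ from the drift, the same use of the integral estimate \eqref{Y24} (rather than a pointwise bound on $Y^{\delta,h^\delta}$) together with \eqref{X23} and the uniform bound on $X^\delta$, and the same handling of the control term. The only cosmetic difference is that you invoke It\^{o}'s isometry for the stochastic increment where the paper cites Burkholder--Davis--Gundy; for a single time increment without a supremum these give the same bound.
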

\begin{proof}
By (\ref{e9}), we know that
\begin{eqnarray}\label{ho1}
\mathbb{E}\Big[|X_{t+\Delta}^{\delta,h^\delta}-X_{t}^{\delta,h^\delta}|^2\Big]=~~\!\!\!\!\!\!\!\!&&\mathbb{E}\Big[\Big|\int_t^{t+\Delta} b(X^{\delta,h^\delta}_s,\mathcal{L}_{X^{\delta}_s},Y^{\delta,h^\delta}_s) ds\Big|^2\Big]
\nonumber\\
\!\!\!\!\!\!\!\!&&
+\lambda(\delta)^2\mathbb{E}\Big[\Big|\int_t^{t+\Delta} \sigma(X^{\delta,h^\delta}_s,\mathcal{L}_{X^{\delta}_s})h^{1,\delta}_s ds\Big|^2\Big]
\nonumber\\
\!\!\!\!\!\!\!\!&&
+\delta\mathbb{E}\Big[\Big|\int_t^{t+\Delta} \sigma(X^{\delta,h^\delta}_s,\mathcal{L}_{X^{\delta}_s})dW_s^1\Big|^2\Big]
\nonumber\\
=:~~\!\!\!\!\!\!\!\!&&\text{I}+\text{II}+\text{III}.
\end{eqnarray}
For the term $\text{I}$, by (\ref{A11}), (\ref{X23}), (\ref{Y24}) and H\"{o}lder's inequality,
\begin{eqnarray}\label{ho2}
\text{I}\leq~~\!\!\!\!\!\!\!\!&&\Delta\mathbb{E}\int_t^{t+\Delta}|b(X^{\delta,h^\delta}_s,\mathcal{L}_{X^{\delta}_s},Y^{\delta,h^\delta}_s)|^2ds
\nonumber\\
\leq~~\!\!\!\!\!\!\!\!&&\Delta\Bigg(1+\mathbb{E}\int_t^{t+\Delta}|X^{\delta,h^\delta}_s|^2ds+\int_t^{t+\Delta}\mathbb{E}|X^{\delta}_s|^2ds+\mathbb{E}\int_t^{t+\Delta}|Y^{\delta,h^\delta}_s|^2ds\Bigg)
\nonumber\\
\leq~~\!\!\!\!\!\!\!\!&&C_{M,T}(1+|x|^2+|y|^2)\Delta.
\end{eqnarray}
For the term $\text{II}$, by $(\mathbf{A3})$ we have
\begin{eqnarray}\label{ho3}
\text{II}\leq~~\!\!\!\!\!\!\!\!&&\lambda(\delta)^2\mathbb{E}\Bigg(\int_t^{t+\Delta} \|\sigma(X^{\delta,h^\delta}_s,\mathcal{L}_{X^{\delta}_s})\|^2ds\cdot \int_0^{T}|h^{1,\delta}_s|^2 ds\Bigg)
\nonumber\\
\leq~~\!\!\!\!\!\!\!\!&&C_{M,T}(1+|x|^2+|y|^2)\lambda(\delta)^2\Delta.
\end{eqnarray}
Similarly, for the term $\text{III}$, by Burkholder-Davis-Gundy's inequality we also have
\begin{eqnarray}\label{ho4}
\text{III}\leq C_{M,T}(1+|x|^2+|y|^2)\delta\Delta.
\end{eqnarray}
Combining (\ref{ho1})-(\ref{ho4}) implies (\ref{Holder}) holds. We complete the proof.
\end{proof}

\section{Proof Theorem \ref{t3}}

 In this section, we aim to give the proof of Theorem \ref{t3}, which will be divided into three parts. In Subsection  \ref{sec4.1}, we construct an auxiliary process associated with the stochastic control problem (\ref{esZ}). In Subsection  \ref{sec4.2}, we intend to prove the criterion (ii) in Hypothesis \ref{h2}.
 In Subsection  \ref{sec4.3}, we show the  criterion (i) in Hypothesis \ref{h2}.

We first show the existence and uniqueness of solutions with some estimates to the skeleton equations (\ref{ske1}).
\begin{lemma}\label{existence of skeleton}
Under $(\mathbf{A1})$ and $(\mathbf{A3})$, for any $x\in\RR^n$, there exists a unique solution to (\ref{ske1})  satisfying
\begin{equation}\label{es19}
\sup_{h\in S_M}\left\{\sup_{t\in[0,T]}|Z^h_t|\right\}\leq C_{T,M}(1+|x|).
\end{equation}
\end{lemma}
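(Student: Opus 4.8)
The plan is to regard the skeleton equation (\ref{ske1}) as a linear inhomogeneous ordinary differential equation along the fixed deterministic path $\bar X$, and to combine the standard Carath\'eodory existence theory with Gronwall's inequality. First I would record the facts about $\bar X$ that are needed: by $(\mathbf{A1})$ equation (\ref{e7}) has a unique solution $\bar X\in C([0,T];\RR^n)$ with $\mathcal{L}_{\bar X_t}=\delta_{\bar X_t}$, and since $\bar b$ has linear growth (a consequence of its Lipschitz continuity, which follows from $(\mathbf{A1})$ together with the contraction (\ref{sm}) for the frozen dynamics), a Gronwall estimate applied to $\dot{\bar X}_t=\bar b(\bar X_t,\delta_{\bar X_t})$ gives $\sup_{t\in[0,T]}|\bar X_t|\le C_T(1+|x|)$. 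Hence $t\mapsto\partial_x\bar b(\bar X_t,\delta_{\bar X_t})$ is a bounded, measurable matrix-valued map (its norm controlled by the Lipschitz constant of $\bar b$), and $t\mapsto\sigma(\bar X_t,\delta_{\bar X_t})$ is continuous with $\|\sigma(\bar X_t,\delta_{\bar X_t})\|\le\|\sigma(\bar X_t,\delta_0)\|+C\,\mathbb{W}_2(\delta_{\bar X_t},\delta_0)\le C+C|\bar X_t|\le C_T(1+|x|)$ by $(\mathbf{A1})$ and $(\mathbf{A3})$.

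Next, for a fixed $h\in S_M$ the forcing term $t\mapsto\sigma(\bar X_t,\delta_{\bar X_t})h^1_t$ belongs to $L^2([0,T];\RR^n)\subset L^1([0,T];\RR^n)$, so (\ref{ske1}) has the form $\dot Z_t=A(t)Z_t+F(t)$ with $A\in L^\infty([0,T])$ and $F\in L^1([0,T])$. Existence and uniqueness of an absolutely continuous solution $Z^h$ on $[0,T]$ then follow either from the variation-of-constants formula $Z^h_t=\int_0^t\Psi(t)\Psi(s)^{-1}\sigma(\bar X_s,\delta_{\bar X_s})h^1_s\,ds$, where $\Psi$ is the fundamental matrix solving $\dot\Psi(t)=A(t)\Psi(t)$ with $\Psi(0)=\mathrm{Id}$, or directly from a Picard iteration in $C([0,T];\RR^n)$ whose contraction constant is governed by $\|A\|_{L^\infty}T$.

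For the uniform bound I would use the integral form $Z^h_t=\int_0^t\partial_x\bar b(\bar X_s,\delta_{\bar X_s})Z^h_s\,ds+\int_0^t\sigma(\bar X_s,\delta_{\bar X_s})h^1_s\,ds$. Estimating the second integral by the Cauchy--Schwarz inequality,
\[
\Big|\int_0^t\sigma(\bar X_s,\delta_{\bar X_s})h^1_s\,ds\Big|\le\Big(\int_0^T\|\sigma(\bar X_s,\delta_{\bar X_s})\|^2ds\Big)^{1/2}\Big(\int_0^T|h^1_s|^2ds\Big)^{1/2}\le C_T(1+|x|)\sqrt{M},
\]
since $h\in S_M$ forces $\int_0^T|h^1_s|^2ds\le\int_0^T|h_s|^2ds\le M$. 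Therefore $|Z^h_t|\le C_T(1+|x|)\sqrt{M}+C\int_0^t|Z^h_s|\,ds$, and Gronwall's lemma yields $\sup_{t\in[0,T]}|Z^h_t|\le C_T(1+|x|)\sqrt{M}\,e^{CT}=:C_{T,M}(1+|x|)$ with $C_{T,M}$ independent of $h$; taking the supremum over $h\in S_M$ gives (\ref{es19}).

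Since every ingredient is elementary, I do not expect a genuine obstacle; the two points that merit a line of care are (a) justifying that $\partial_x\bar b$ is a well-defined bounded object under $(\mathbf{A1})$ alone --- which one can either deduce from the Lipschitz continuity of $\bar b$ (so $\partial_x\bar b$ is the a.e.-defined, essentially bounded derivative) or import directly from the regularity of $\bar b$ established in \cite{RSX1,HLLS} --- and (b) ensuring that the constant $C_{T,M}$ coming out of Gronwall's inequality is independent of the particular control $h$, which holds because $h$ enters only through its $L^2$-norm, bounded uniformly by $M$ on $S_M$.
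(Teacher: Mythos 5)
Your proof is correct and follows essentially the same route as the paper's: bound the forcing term $\sigma(\bar X_\cdot,\mathcal L_{\bar X_\cdot})h^1_\cdot$ in $L^1([0,T])$ via Cauchy--Schwarz and $h\in S_M$, invoke linearity for existence/uniqueness, and close with Gronwall. The paper states this more tersely (it records only the Cauchy--Schwarz estimate and then says the rest is standard), whereas you spell out the intermediate facts --- the growth of $\bar X$, the boundedness of $\partial_x\bar b$, the linear-growth bound on $\sigma$ along $\bar X$ --- but the argument is the same.
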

\begin{proof}
{By (\ref{A11}) and the fact that $h\in S_M$,} we know
\begin{eqnarray*}
\int_0^T |\sigma(t,\bar{X}_t,\mathcal{L}_{\bar{X}_t})h^1_t|dt\leq~~\!\!\!\!\!\!\!\!&&\Bigg(\int_0^T \|\sigma(t,\bar{X}_t,\mathcal{L}_{\bar{X}_t})\|^2dt\Bigg)^{\frac{1}{2}}\Bigg(\int_0^T|h^1_t|^2dt\Bigg)^{\frac{1}{2}}
\nonumber\\
\leq~~\!\!\!\!\!\!\!\!&&C_{T,M}(1+|x|)<\infty.
\end{eqnarray*}
With this estimate, it is standard to show the existence and uniqueness of solutions to (\ref{ske1}) since it is a linear equation. The estimate (\ref{es19}) follows from Gronwall's inequality.
\end{proof}

\subsection{Construction of the auxiliary process}\label{sec4.1}


Now for any $\delta>0$ we introduce an auxiliary process $\psi^{\delta,h^\delta}$ by
\begin{equation}\label{e6}
\left\{ \begin{aligned}
d\psi_t^{\delta,h^\delta}=&\frac{1}{\lambda(\delta)}\Big[b\big(\lambda(\delta)Z_t^{\delta,h^\delta} +\bar{X}_t ,\mathcal{L}_{X^{\delta}_t},Y^{\delta,h^\delta}_t\big)-\bar{b}\big(\lambda(\delta)Z_t^{\delta,h^\delta} +\bar{X}_t ,\mathcal{L}_{X^{\delta}_t}\big)\Big]dt\\ \vspace{3mm}
&+\partial_x\bar{b}(\bar{X}_t,\mathcal{L}_{\bar{X}_t})\cdot\psi_t^{\delta,h^\delta} dt+\sigma(\bar{X}_t,\mathcal{L}_{\bar{X}_t})h_t^{1,\delta} dt,\\
\psi_0^{\delta,h^\delta}=0.&
\end{aligned} \right.
\end{equation}

In the following lemma, we show that the difference $Z^{\delta,h^\delta}-\psi^{\delta,h^\delta}$ converges in probability to zero in $C([0,T];\RR^n)$, as $\delta\to0 $.
\begin{lemma}\label{lemma1}
Suppose that conditions $(\mathbf{A1})$-$(\mathbf{A3})$ hold, then  we have
\begin{eqnarray}
\lim_{\delta\rightarrow 0}\EE\Big[\sup_{t\in [0,T]}|Z_t^{\delta,h^\delta}-\psi_t^{\delta,h^\delta}|\Big]=0.
\end{eqnarray}
\end{lemma}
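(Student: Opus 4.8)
The plan is to estimate the difference $\xi_t^\delta := Z_t^{\delta,h^\delta}-\psi_t^{\delta,h^\delta}$ by subtracting \eqref{esZ} from \eqref{e6} and using the dynamics to derive a Gronwall-type inequality, while isolating the genuinely small "fluctuation" term. Subtracting, the first line of \eqref{esZ} cancels against the first line of \eqref{e6}, so
\begin{align*}
\xi_t^\delta=&\int_0^t\partial_x\bar b(\bar X_s,\mathcal{L}_{\bar X_s})\xi_s^\delta\,ds
+\int_0^t\frac{1}{\lambda(\delta)}\Big[\bar b\big(\lambda(\delta)Z_s^{\delta,h^\delta}+\bar X_s,\mathcal{L}_{X^\delta_s}\big)-\bar b(\bar X_s,\mathcal{L}_{X^\delta_s})\Big]ds\\
&-\int_0^t\partial_x\bar b(\bar X_s,\mathcal{L}_{\bar X_s})Z_s^{\delta,h^\delta}\,ds
+\int_0^t\frac{1}{\lambda(\delta)}\Big[\bar b(\bar X_s,\mathcal{L}_{X^\delta_s})-\bar b(\bar X_s,\mathcal{L}_{\bar X_s})\Big]ds\\
&+\int_0^t\Big[\sigma\big(\lambda(\delta)Z_s^{\delta,h^\delta}+\bar X_s,\mathcal{L}_{X^\delta_s}\big)-\sigma(\bar X_s,\mathcal{L}_{\bar X_s})\Big]h^{1,\delta}_s\,ds
+\frac{\sqrt\delta}{\lambda(\delta)}\int_0^t\sigma\big(\lambda(\delta)Z_s^{\delta,h^\delta}+\bar X_s,\mathcal{L}_{X^\delta_s}\big)dW_s^1.
\end{align*}
The term on the first line involving $\xi_s^\delta$ is handled by Gronwall at the end. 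For the remaining terms, I would combine the second term and the third term via a first-order Taylor expansion of $\bar b$ in its spatial variable: writing $\frac{1}{\lambda(\delta)}[\bar b(\lambda(\delta)Z_s^{\delta,h^\delta}+\bar X_s,\cdot)-\bar b(\bar X_s,\cdot)]=\int_0^1\partial_x\bar b(\theta\lambda(\delta)Z_s^{\delta,h^\delta}+\bar X_s,\cdot)Z_s^{\delta,h^\delta}\,d\theta$, so that the difference against $\partial_x\bar b(\bar X_s,\mathcal{L}_{\bar X_s})Z_s^{\delta,h^\delta}$ is controlled by the modulus of continuity of $\partial_x\bar b$ — which is bounded by $\mathbf{A2}$ and Proposition \ref{P3.6} (since $\bar b(x,\mu)=\int b(x,\mu,y)\nu^{x,\mu}(dy)$ inherits $C^2$-regularity) — times $(\lambda(\delta)|Z_s^{\delta,h^\delta}|+\mathbb{W}_2(\mathcal{L}_{X^\delta_s},\mathcal{L}_{\bar X_s}))$ times $|Z_s^{\delta,h^\delta}|$.

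Next, the measure-difference term $\frac{1}{\lambda(\delta)}[\bar b(\bar X_s,\mathcal{L}_{X^\delta_s})-\bar b(\bar X_s,\mathcal{L}_{\bar X_s})]$ is bounded, using Lipschitz continuity of $\bar b$ in $\mathbb{W}_2$, by $\frac{C}{\lambda(\delta)}\mathbb{W}_2(\mathcal{L}_{X^\delta_s},\mathcal{L}_{\bar X_s})\le\frac{C}{\lambda(\delta)}(\mathbb{E}|X^\delta_s-\bar X_s|^2)^{1/2}$, and by the averaging estimate \eqref{33} this is $\le\frac{C_T(1+|x|^2+|y|^2)^{1/2}\sqrt{\varepsilon+\delta}}{\lambda(\delta)}$, which tends to $0$ since in Regime 1 we have $\varepsilon/\delta\to0$ and $\delta/\lambda(\delta)^2\to0$, hence $(\varepsilon+\delta)/\lambda(\delta)^2\to0$. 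The $\sigma$-difference term multiplied by $h^{1,\delta}$ is bounded using $\mathbf{A1}$ by $C\int_0^t(\lambda(\delta)|Z_s^{\delta,h^\delta}|+\mathbb{W}_2(\mathcal{L}_{X^\delta_s},\mathcal{L}_{\bar X_s}))|h^{1,\delta}_s|\,ds$; since $h^{\delta}\in\mathcal{A}_M$, by Cauchy–Schwarz this is controlled by $C\sqrt M(\int_0^t(\lambda(\delta)|Z_s^{\delta,h^\delta}|+\mathbb{W}_2(\cdots))^2ds)^{1/2}$. Finally, the stochastic integral is handled by Burkholder–Davis–Gundy: $\mathbb{E}\sup_{t\le T}|\frac{\sqrt\delta}{\lambda(\delta)}\int_0^t\sigma\,dW^1_s|^2\le C\frac{\delta}{\lambda(\delta)^2}\mathbb{E}\int_0^T\|\sigma(\lambda(\delta)Z_s^{\delta,h^\delta}+\bar X_s,\mathcal{L}_{X^\delta_s})\|^2ds$, and by $\mathbf{A1}$ and $\mathbf{A3}$ this is $\le C_T\frac{\delta}{\lambda(\delta)^2}(1+\mathbb{E}\sup_{s\le T}|Z_s^{\delta,h^\delta}|^2+\cdots)\to0$, using $\delta/\lambda(\delta)^2\to0$ together with the a priori bounds \eqref{supZvare}, \eqref{X23}.

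Assembling these estimates, I would first prove the uniform-in-$\delta$ bound $\mathbb{E}\sup_{t\le T}|Z_t^{\delta,h^\delta}|^2\le C_{T,M}(1+|x|^2+|y|^2)$ — this follows from the same decomposition applied to $Z^{\delta,h^\delta}$ alone, or can be extracted from the analysis of \eqref{esZ} using \eqref{X23}, \eqref{Y24} and a Gronwall argument — so that all the products $\lambda(\delta)|Z^{\delta,h^\delta}|\cdot|Z^{\delta,h^\delta}|$ and $\lambda(\delta)|Z^{\delta,h^\delta}|\cdot|h^{1,\delta}|$ type terms carry an explicit factor $\lambda(\delta)\to0$. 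Collecting the error terms into a quantity $R^\delta$ with $\mathbb{E}\sup_{t\le T}|R^\delta_t|\to0$, one arrives at $\mathbb{E}\sup_{s\le t}|\xi_s^\delta|\le \mathbb{E}\sup_{s\le T}|R^\delta_s|+C\int_0^t\mathbb{E}\sup_{r\le s}|\xi_r^\delta|\,ds$ (the last integral term also absorbing the contribution of $\int_0^s\partial_x\bar b(\cdots)\xi_r^\delta dr$ after the Taylor step, since $\partial_x\bar b$ is bounded), and Gronwall's inequality finishes the argument. The main obstacle is bookkeeping: the term $\frac{1}{\lambda(\delta)}[\bar b(\lambda(\delta)Z_s^{\delta,h^\delta}+\bar X_s,\mathcal{L}_{X^\delta_s})-\bar b(\bar X_s,\mathcal{L}_{X^\delta_s})]$ is itself of order $|Z_s^{\delta,h^\delta}|$ (not small), so one must be careful to Taylor-expand and match it against $\partial_x\bar b(\bar X_s,\mathcal{L}_{\bar X_s})Z_s^{\delta,h^\delta}$ rather than bound it crudely, and to verify that every genuinely $O(1)$-in-$Z^{\delta,h^\delta}$ contribution either cancels or ends up inside the Gronwall integral. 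The perturbation of the fast variable $Y^{\delta,h^\delta}$ enters only implicitly through $Z^{\delta,h^\delta}$ here (the first lines cancel exactly), which is precisely why this Regime 1 argument works; the blow-up bound \eqref{Y11} is not needed for this particular lemma.
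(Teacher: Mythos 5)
Your decomposition (after cancelling the $b-\bar b$ terms in the first lines of (\ref{esZ}) and (\ref{e6})), the Taylor expansion of $\bar b$ around $\bar X_s$ producing a modulus-of-continuity factor for $\partial_x\bar b$, the estimates of the measure-difference, stochastic (BDG), and control-drift terms, and the Gronwall closure reproduce the paper's proof of Lemma \ref{lemma1} essentially step for step; the paper merely writes the $\psi^{\delta,h^\delta}$-term inside $J_1(T)$ as $Z^{\delta,h^\delta}-(Z^{\delta,h^\delta}-\psi^{\delta,h^\delta})$, which is precisely your split into $\partial_x\bar b\cdot\xi^\delta$ plus $\partial_x\bar b\cdot Z^{\delta,h^\delta}$.

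One caveat on a side remark: you claim (\ref{Y11}) is not needed and that (\ref{ess16}) can be extracted from (\ref{X23}), (\ref{Y24}) and a direct Gronwall argument. That crude route does not close: bounding $|b(\cdot,\cdot,Y^{\delta,h^\delta}_s)-\bar b(\cdot,\cdot)|\leq C(1+|Y^{\delta,h^\delta}_s|)$ and invoking (\ref{Y24}) only yields $\mathbb{E}\sup_{t\leq T}|Z^{\delta,h^\delta}_t|^2\leq C_{T,M}\lambda(\delta)^{-2}$, which diverges. The paper's appendix proof of (\ref{ess16}) runs the Poisson-equation decomposition (the same $\text{I}_1^\delta$ machinery as in Proposition \ref{p5}), and there the boundary term $\text{I}_{11}^\delta$ is controlled precisely by (\ref{Y11}) via (\ref{ess1}). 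So (\ref{Y11}) does enter Lemma \ref{lemma1}, through the ingredient (\ref{ess16}); it is fine to take (\ref{ess16}) as a black box here, but not to assert it follows from (\ref{Y24}) alone.
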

\begin{proof}
Note that
\begin{equation*}
\left\{ \begin{aligned}
d(Z_t^{\delta,h^\delta}-\psi_t^{\delta,h^\delta})=
&
\Bigg(\frac{1}{\lambda(\delta)}\Big(\bar{b}\big(\lambda(\delta)Z_t^{\delta,h^\delta} +\bar{X}_t ,\mathcal{L}_{X^{\delta}_t}\big)-\bar{b}(\bar{X}_t,\mathcal{L}_{X^{\delta}_t})\Big)\\
&
-\partial_x\bar{b}(\bar{X}_t,\mathcal{L}_{\bar{X}_t})\cdot\psi_t^{\delta,h^\delta} \Bigg)dt\\
&+\frac{1}{\lambda(\delta)}\Big[\bar{b}(\bar{X}_t,\mathcal{L}_{X^{\delta}_t})-\bar{b}(\bar{X}_t,\mathcal{L}_{\bar{X}_t})\Big]dt\\
&+\frac{\sqrt{\delta}}{\lambda(\delta)}\sigma\big(\lambda(\delta)Z_t^{\delta,h^\delta} +\bar{X}_t ,\mathcal{L}_{X^{\delta}_t}\big)dW_t^1\\
&+\Big(\sigma\big(\lambda(\delta)Z_t^{\delta,h^\delta} +\bar{X}_t ,\mathcal{L}_{X^{\delta}_t}\big)-\sigma(\bar{X}_t,\mathcal{L}_{\bar{X}_t})\Big)h^{1,\delta}_t dt,\\
Z_0^{\delta,h^\delta}-\psi_0^{\delta,h^\delta}=0.&
\end{aligned} \right.
\end{equation*}
It follows that
\begin{eqnarray}\label{ess11}
\!\!\!\!\!\!\!\!&&\mathbb{E}\Big[\sup_{t\in [0,T]}|Z_t^{\delta,h^\delta}-\psi_t^{\delta,h^\delta}|\Big]
\nonumber\\
\leq~~\!\!\!\!\!\!\!\!&&\mathbb{E}\Bigg(\int_0^T\Big|
\frac{1}{\lambda(\delta)}\Big(\bar{b}\big(\lambda(\delta)Z_s^{\delta,h^\delta} +\bar{X}_s ,\mathcal{L}_{X^{\delta}_s}\big)-\bar{b}(\bar{X}_s,\mathcal{L}_{X^{\delta}_s})\Big)
\nonumber\\
\!\!\!\!\!\!\!\!&&
-\partial_x\bar{b}(\bar{X}_s,\mathcal{L}_{\bar{X}_s})\cdot\psi_s^{\delta,h^\delta}\Big|ds \Bigg)
\nonumber\\
\!\!\!\!\!\!\!\!&&+\frac{1}{\lambda(\delta)}\mathbb{E}\int_0^T|\bar{b}(\bar{X}_t,\mathcal{L}_{X^{\delta}_t})-\bar{b}(\bar{X}_t,\mathcal{L}_{\bar{X}_t})|dt
\nonumber\\
\!\!\!\!\!\!\!\!&&+\frac{\sqrt{\delta}}{\lambda(\delta)}\mathbb{E}\Bigg(\sup_{t\in [0,T]}\Big|\int_0^t\sigma\big(\lambda(\delta)Z_s^{\delta,h^\delta} +\bar{X}_s ,\mathcal{L}_{X^{\delta}_s}\big)dW_s^1\Big|\Bigg)
\nonumber\\
\!\!\!\!\!\!\!\!&&+\mathbb{E}\Bigg(\int_0^T\|\sigma\big(\lambda(\delta)Z_t^{\delta,h^\delta} +\bar{X}_t ,\mathcal{L}_{X^{\delta}_t}\big)-\sigma(\bar{X}_t,\mathcal{L}_{\bar{X}_t})\|\cdot|h^{1,\delta}_t|dt\Bigg)
\nonumber\\
=:~~\!\!\!\!\!\!\!\!&&\sum_{i=1}^4 J_i(T).
\end{eqnarray}
First, notice that
\begin{eqnarray*}
\!\!\!\!\!\!\!\!&&\mathbb{E}\Big|
\bar{b}\big(\lambda(\delta)Z_s^{\delta,h^\delta} +\bar{X}_s ,\mathcal{L}_{X^{\delta}_s}\big)-\bar{b}(\bar{X}_s,\mathcal{L}_{X^{\delta}_s})
-\partial_x\bar{b}(\bar{X}_s,\mathcal{L}_{\bar{X}_s})\cdot\lambda(\delta)Z_s^{\delta,h^\delta}\Big|
\nonumber\\
=~~\!\!\!\!\!\!\!\!&&\mathbb{E}\Big|\int_0^1\partial_r\bar{b}\big(r\lambda(\delta)Z_s^{\delta,h^\delta} +\bar{X}_s ,\mathcal{L}_{X^{\delta}_s}\big)dr-\partial_x\bar{b}(\bar{X}_s,\mathcal{L}_{\bar{X}_s})\cdot\lambda(\delta)Z_s^{\delta,h^\delta}\Big|
\nonumber\\
\leq~~\!\!\!\!\!\!\!\!&&\lambda(\delta)\mathbb{E}\int_0^1\|\partial_x\bar{b}\big(r\lambda(\delta)Z_s^{\delta,h^\delta} +\bar{X}_s ,\mathcal{L}_{X^{\delta}_s}\big)-\partial_x\bar{b}(\bar{X}_s,\mathcal{L}_{\bar{X}_s})\|\cdot|Z_s^{\delta,h^\delta}|dr
\nonumber\\
\leq~~\!\!\!\!\!\!\!\!&&\lambda(\delta)\int_0^1\Big(\mathbb{E}\|\partial_x\bar{b}\big(r\lambda(\delta)Z_s^{\delta,h^\delta} +\bar{X}_s ,\mathcal{L}_{X^{\delta}_s}\big)-\partial_x\bar{b}(\bar{X}_s,\mathcal{L}_{\bar{X}_s})\|^2\Big)^{\frac{1}{2}}dr\cdot\Big(\mathbb{E}|Z_s^{\delta,h^\delta}|^2\Big)^{\frac{1}{2}}.
\end{eqnarray*}
Thus for $J_1(T)$ we can get
\begin{eqnarray}\label{ess12}
\!\!\!\!\!\!\!\!&&J_1(T)
\nonumber\\
\leq~~\!\!\!\!\!\!\!\!&& \frac{1}{\lambda(\delta)}\mathbb{E}\Bigg(\int_0^T\Big|
\bar{b}\big(\lambda(\delta)Z_s^{\delta,h^\delta} +\bar{X}_s ,\mathcal{L}_{X^{\delta}_s}\big)-\bar{b}(\bar{X}_s,\mathcal{L}_{X^{\delta}_s})
\nonumber\\
\!\!\!\!\!\!\!\!&&
-\partial_x\bar{b}(\bar{X}_s,\mathcal{L}_{\bar{X}_s})\cdot\lambda(\delta)Z_s^{\delta,h^\delta}\Big|ds \Bigg)+\mathbb{E}\Bigg(\int_0^T\|\partial_x\bar{b}(\bar{X}_s,\mathcal{L}_{\bar{X}_s})\|\cdot|Z_s^{\delta,h^\delta}-\psi_s^{\delta,h^\delta}|ds\Bigg)
\nonumber\\
\leq~~\!\!\!\!\!\!\!\!&&\int_0^T\Bigg(\int_0^1\Big(\mathbb{E}\|\partial_x\bar{b}\big(r\lambda(\delta)Z_s^{\delta,h^\delta} +\bar{X}_s ,\mathcal{L}_{X^{\delta}_s}\big)-\partial_x\bar{b}(\bar{X}_s,\mathcal{L}_{\bar{X}_s})\|^2\Big)^{\frac{1}{2}}dr
\nonumber\\
\!\!\!\!\!\!\!\!&&
\cdot\Big(\mathbb{E}|Z_s^{\delta,h^\delta}|^2\Big)^{\frac{1}{2}}\Bigg)ds
+C\mathbb{E}\Bigg(\int_0^T|Z_s^{\delta,h^\delta}-\psi_s^{\delta,h^\delta}|ds\Bigg).
\end{eqnarray}
As for $J_2(T)$, by {Lemma \ref{C1}} and the convergence (\ref{33}), we deduce
\begin{eqnarray}\label{ess13}
J_2(T)\leq \frac{1}{\lambda(\delta)}\int_0^T\mathbb{W}_2(\mathcal{L}_{X^{\delta}_t},\mathcal{L}_{\bar{X}_t})dt
\leq \frac{C_T(\sqrt{\delta}+\sqrt{\varepsilon})}{\lambda(\delta)}(1+|x|^2+|y|^2).
\end{eqnarray}
As for $J_3(T)$, by Burkholder-Davis-Gundy's inequality and (\ref{X23}),
\begin{eqnarray}\label{ess14}
J_3(T)\leq~~\!\!\!\!\!\!\!\!&& \frac{\sqrt{\delta}}{\lambda(\delta)}\mathbb{E}\Bigg(\int_0^T\big\|\sigma\big(\lambda(\delta)Z_s^{\delta,h^\delta} +\bar{X}_s ,\mathcal{L}_{X^{\delta}_s}\big)\big\|^2ds\Bigg)^{\frac{1}{2}}
\nonumber\\
\leq~~\!\!\!\!\!\!\!\!&&\frac{\sqrt{\delta}}{\lambda(\delta)}C\Bigg(\mathbb{E}\int_0^T \big(1+|X^{\delta}_s|^2\big)ds\Bigg)^{\frac{1}{2}}
\nonumber\\
\leq~~\!\!\!\!\!\!\!\!&&C_{T,M}\frac{\sqrt{\delta}}{\lambda(\delta)}(1+|x|+|y|).
\end{eqnarray}
Now we recall the following estimate
\begin{equation}\label{ess16}
{\mathbb{E}\Big[\sup_{t\in[0,T]}|Z_t^{\delta,h^\delta}|^2\Big]\leq C_{T,M}(1+|x|^6+|y|^6),}
\end{equation}
whose proof is postponed in Subsection \ref{appendix2} in Appendix.

{As for $J_4(T)$, by (\ref{33}), (\ref{ess16}) and $h^\delta\in\mathcal{A}_M$,} we can see that
\begin{eqnarray}\label{ess15}
J_4(T)\leq~~\!\!\!\!\!\!\!\!&& \mathbb{E}\Bigg(\int_0^T \big(\lambda(\delta)|Z_t^{\delta,h^\delta}|+  \mathbb{W}_2(\mathcal{L}_{X^{\delta}_t},\mathcal{L}_{\bar{X}_t})\big)\cdot|h^{1,\delta}_t|dt\Bigg)
\nonumber\\
\leq~~\!\!\!\!\!\!\!\!&&\mathbb{E}\Bigg(\Big(\int_0^T\big(\lambda(\delta)^2|Z_t^{\delta,h^\delta}|^2+  \mathbb{W}_2(\mathcal{L}_{X^{\delta}_t},\mathcal{L}_{\bar{X}_t})^2\big)dt\Big)^{\frac{1}{2}} \cdot\Big(\int_0^T|h^{1,\delta}_t|^2dt\Big)^{\frac{1}{2}}\Bigg)
\nonumber\\
\leq~~\!\!\!\!\!\!\!\!&&C_{T,M}(\lambda(\delta)+\sqrt{\varepsilon}+\sqrt{\delta})(1+|x|^2+|y|^2).
\end{eqnarray}
Combining (\ref{ess11})-(\ref{ess15}), then by the Gronwall's lemma, the dominated convergence theorem and Lemma \ref{C1}, we conclude that
\begin{eqnarray*}
\lim_{\varepsilon\rightarrow 0}\EE\Big[\sup_{t\in [0,T]}|Z_t^{\delta,h^\delta}-\psi_t^{\delta,h^\delta}|\Big]=0.
\end{eqnarray*}
We complete the proof.
\end{proof}

\subsection{Proof of Hypothesis \ref{h2} (ii)}\label{sec4.2}
In this part, we prove Hypothesis \ref{h2} (ii).
Recall (\ref{e6}),  for any $0\leq t\leq T$, we denote
$$\psi^{\delta,h^\delta}_t=\text{I}^{\delta}(t)+\text{II}^{\delta}(t)+\text{III}^{\delta}(t),$$
where
\begin{eqnarray*}
\text{I}^{\delta}(t):=~~\!\!\!\!\!\!\!\!&&\frac{1}{\lambda(\delta)}\int_0^tb\big(\lambda(\delta)Z_s^{\delta,h^\delta} +\bar{X}_s ,\mathcal{L}_{X^{\delta}_s},Y^{\delta,h^\delta}_s\big)-\bar{b}\big(\lambda(\delta)Z_s^{\delta,h^\delta} +\bar{X}_s ,\mathcal{L}_{X^{\delta}_s}\big)ds,
\nonumber\\
=~~\!\!\!\!\!\!\!\!&&\frac{1}{\lambda(\delta)}\int_0^tb\big(X_s^{\delta,h^\delta}  ,\mathcal{L}_{X^{\delta}_s},Y^{\delta,h^\delta}_s\big)-\bar{b}\big(X_s^{\delta,h^\delta} ,\mathcal{L}_{X^{\delta}_s}\big)ds,
\nonumber\\
\vspace{2mm}
\text{II}^{\delta}(t):=~~\!\!\!\!\!\!\!\!&&\int_{0}^t\partial_x\bar{b}(\bar{X}_s,\mathcal{L}_{\bar{X}_s})\cdot \psi_s^{\delta,h^\delta} ds,
\nonumber\\
\text{III}^{\delta}(t):=~~\!\!\!\!\!\!\!\!&&\int_{0}^t\sigma(\bar{X}_s,\mathcal{L}_{\bar{X}_s})h_s^{1,\delta} ds.
\end{eqnarray*}

\begin{proposition}\label{p5}
Under the assumptions in Theorem \ref{t3}, let $\{ h^\delta\}_{\delta>0}\subset \mathcal{A}_M$ for
any $M<\infty$. Then for any $\eta>0$, we have
\begin{equation}\label{con2}
\lim_{\delta\to 0}\mathbb{P}\Big(d\Big(\Upsilon^\delta\Big(\sqrt{\delta}W_{\cdot}
+\lambda(\delta)\int_0^{\cdot}h^\delta_sd s\Big),\Upsilon^0\big(\int_0^{\cdot}h^\delta_sd s\big)\Big)>\eta\Big)=0,
\end{equation}
where $d(\cdot,\cdot)$ denotes the metric in the space $C([0,T];\RR^n)$, i.e.,
$$d(u_1,u_2):=\sup_{t\in [0,T]}|u_1(t)-u_2(t)|,\quad u_1,u_2\in C([0,T];\RR^n).$$
\end{proposition}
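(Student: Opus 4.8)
The plan is to transfer the convergence through the auxiliary process $\psi^{\delta,h^\delta}$ of \eref{e6} and then reduce the whole statement to the asymptotic negligibility of a single remainder term, which is analysed via the Poisson equation \eref{PE} and an It\^o formula on Wasserstein space. Since $\Upsilon^\delta\big(\sqrt{\delta}W_\cdot+\lambda(\delta)\int_0^\cdot h^\delta_s\,ds\big)=Z^{\delta,h^\delta}$ and $\Upsilon^0\big(\int_0^\cdot h^\delta_s\,ds\big)=Z^{h^\delta}$, where $Z^{h^\delta}$ solves the skeleton equation \eref{ske1} driven by $h^{1,\delta}$, it suffices to prove $\sup_{t\in[0,T]}|Z^{\delta,h^\delta}_t-Z^{h^\delta}_t|\to 0$ in probability. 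By Lemma \ref{lemma1} we already have $\mathbb{E}\big[\sup_{t\in[0,T]}|Z^{\delta,h^\delta}_t-\psi^{\delta,h^\delta}_t|\big]\to 0$, so it remains to handle $\psi^{\delta,h^\delta}-Z^{h^\delta}$. Subtracting \eref{ske1} from \eref{e6} the $\partial_x\bar b$ and $\sigma h^{1,\delta}$ terms match, leaving
$$\psi^{\delta,h^\delta}_t-Z^{h^\delta}_t=\mathrm{I}^\delta(t)+\int_0^t\partial_x\bar b(\bar X_s,\mathcal{L}_{\bar X_s})\big(\psi^{\delta,h^\delta}_s-Z^{h^\delta}_s\big)\,ds,\qquad \mathrm{I}^\delta(t):=\frac{1}{\lambda(\delta)}\int_0^t\big[b-\bar b\big](X^{\delta,h^\delta}_s,\mathcal{L}_{X^\delta_s},Y^{\delta,h^\delta}_s)\,ds.$$
Since $\partial_x\bar b$ is bounded, Gronwall's inequality gives $\sup_{t\in[0,T]}|\psi^{\delta,h^\delta}_t-Z^{h^\delta}_t|\le C_T\sup_{t\in[0,T]}|\mathrm{I}^\delta(t)|$, so everything reduces to proving $\sup_{t\in[0,T]}|\mathrm{I}^\delta(t)|\to 0$ in probability.

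For this I would substitute $b-\bar b=-\mathbf{L}^{2}_{x,\mu}\Phi$ from \eref{PE} and apply It\^o's formula on $\RR^n\times\mathcal{P}_2\times\RR^m$ to $t\mapsto\Phi(X^{\delta,h^\delta}_t,\mathcal{L}_{X^\delta_t},Y^{\delta,h^\delta}_t)$, recalling that $t\mapsto\mathcal{L}_{X^\delta_t}$ is a deterministic, absolutely continuous curve whose velocity is read off from the uncontrolled equation \eref{E2}. The contributions of the fast dynamics of $Y^{\delta,h^\delta}$ reconstruct $\frac{1}{\varepsilon}\mathbf{L}^{2}_{x,\mu}\Phi=-\frac{1}{\varepsilon}(b-\bar b)$; solving for $b-\bar b$ and dividing by $\lambda(\delta)$ expresses $\mathrm{I}^\delta(t)$ as a sum of: the boundary term $-\frac{\varepsilon}{\lambda(\delta)}(\Phi_t-\Phi_0)$; finite-variation terms of order $\frac{\varepsilon}{\lambda(\delta)}$ stemming from the $x$-drift and the second-order $x$-term of $X^{\delta,h^\delta}$, and from the measure-derivative terms (involving $\partial_\mu\Phi$ and $\partial_z\partial_\mu\Phi$) generated by the motion of $\mathcal{L}_{X^\delta_t}$; the term $\frac{\sqrt{\varepsilon}}{\sqrt{\delta}}\int_0^t\partial_y\Phi\cdot g\,h^{2,\delta}_s\,ds$, which is precisely \eref{e1}; and two It\^o integrals, $\frac{\varepsilon\sqrt{\delta}}{\lambda(\delta)}\int_0^t\partial_x\Phi\cdot\sigma\,dW^1_s$ and $\frac{\sqrt{\varepsilon}}{\lambda(\delta)}\int_0^t\partial_y\Phi\cdot g\,dW^2_s$.

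Each piece is then shown to tend to $0$ in $L^1(\Omega;C([0,T];\RR^n))$, hence in probability, by combining the regularity estimates of Proposition \ref{P3.6} (in particular $|\Phi|,\|\partial_x\Phi\|,\|\partial^2_{xx}\Phi\|,\|\partial_\mu\Phi\|_{L^2(\mu)},\|\partial_z\partial_\mu\Phi\|_{L^2(\mu)}\le C(1+|y|)$ and, crucially, the \emph{bounded} derivative $\|\partial_y\Phi\|\le C$), the a priori bounds \eref{X23}, \eref{Y24} and Lemma \ref{PMY}, the boundedness of $g$ from $(\mathbf{A3})$, the constraint $h^\delta\in\mathcal{A}_M$, and, for the two martingale terms, the Burkholder--Davis--Gundy inequality. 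The scale bookkeeping is exactly what Regime 1 supplies: $\varepsilon/\delta\to0$ and $\delta/\lambda(\delta)^2\to0$ yield $\varepsilon/\lambda(\delta)^2\to0$, $\sqrt{\varepsilon}/\lambda(\delta)\to0$, $\varepsilon/\lambda(\delta)\to0$ and $\varepsilon/\sqrt{\delta}\to0$; for example, by Lemma \ref{PMY}, $\frac{\varepsilon}{\lambda(\delta)}\mathbb{E}\big[\sup_t|Y^{\delta,h^\delta}_t|\big]\le C(1+|y|)\frac{\sqrt{\varepsilon}}{\lambda(\delta)}+C_M\frac{\varepsilon}{\sqrt{\delta}}\to0$, which disposes of the boundary term, and the term \eref{e1} is bounded by $C\sqrt{M}\,\sqrt{\varepsilon/\delta}\to0$ exactly because $\partial_y\Phi$ and $g$ are bounded. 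Together with Lemma \ref{lemma1} this gives \eref{con2}.

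The step I expect to be the main obstacle is the It\^o formula for $\Phi$ along the flow of marginals $\mathcal{L}_{X^\delta_t}$: one has to justify the chain rule on Wasserstein space and keep the measure-derivative terms under control, which needs the Lions-derivative regularity of $\Phi$ from Proposition \ref{P3.6} together with uniform moment bounds on the uncontrolled pair $(X^\delta,Y^\delta)$, and then one has to organise the several small parameters so that \emph{every} resulting term is dominated by a vanishing product of $\varepsilon,\delta,\lambda(\delta)$. The delicate point is that the pointwise quantity $\mathbb{E}[\sup_t|Y^{\delta,h^\delta}_t|^2]$ blows up like $\varepsilon^{-1}$ (Lemma \ref{PMY}); it is precisely absorbed by the prefactor $\varepsilon/\lambda(\delta)$ because $\varepsilon/\lambda(\delta)^2\to0$, which is why the sharp blow-up form of Lemma \ref{PMY} suffices here and no uniform-in-time bound on $Y^{\delta,h^\delta}$ is needed.
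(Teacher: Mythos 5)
Your proposal is correct and follows essentially the same route as the paper: reduce via Lemma \ref{lemma1} and Gronwall to the vanishing of $\sup_{t}|\mathrm{I}^{\delta}(t)|$, expand $\mathrm{I}^{\delta}$ through the Poisson equation \eref{PE} and the It\^o formula on Wasserstein space, and kill each resulting term with the regularity bounds of Proposition \ref{P3.6}, the a priori estimates \eref{X23}, \eref{Y24}, Lemma \ref{PMY} and the Regime~1 scale relations. The term-by-term bookkeeping you sketch (including absorbing the $\varepsilon^{-1}$ blow-up of Lemma \ref{PMY} by the prefactor $\varepsilon/\lambda(\delta)$, and bounding \eref{e1} by $C\sqrt{M}\sqrt{\varepsilon/\delta}$) matches the paper's Steps 2 and 3.
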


\begin{proof}
\textbf{Step 1:} In this step, we prove
\begin{equation}\label{con1}
\lim_{\delta\to 0}\mathbb{E}\Big[\sup_{t\in [0,T]}|\psi^{\delta,h^\delta}_t-Z_t^{h^\delta}|\Big]=0,
\end{equation}
where $Z^{h^\delta}$ is the solution of (\ref{ske1}) with $h^{1,\delta}$ replacing $h^1$.    In fact, if (\ref{con1}) holds, then by Lemma \ref{lemma1} we obtain for any $\eta>0$,
\begin{eqnarray*}
\!\!\!\!\!\!\!\!&&\mathbb{P}\Big(\sup_{t\in [0,T]}|Z^{\delta,h^\delta}_t-Z_t^{h^\delta}|>\eta \Big)
\nonumber\\
\leq~~\!\!\!\!\!\!\!\!&&\mathbb{P}\Big(\sup_{t\in [0,T]}|Z^{\delta,h^\delta}_t-\psi^{\delta,h^\delta}_t|>{\frac{\eta}{2}} \Big)+\mathbb{P}\Big(\sup_{t\in [0,T]}|\psi^{\delta,h^\delta}_t-Z_t^{h^\delta}|>{\frac{\eta}{2}} \Big)
\nonumber\\
\leq~~\!\!\!\!\!\!\!\!&&\frac{2\EE\Big[\sup_{t\in [0,T]}|Z_t^{\delta,h^\delta}-\psi_t^{\delta,h^\delta}|\Big]}{\eta}+
\frac{2\mathbb{E}\Big[\sup_{t\in [0,T]}|\psi^{\delta,h^\delta}_t-Z_t^{h^\delta}|\Big]}{\eta}
\to 0,~\text{as}~\delta\to 0,
\end{eqnarray*}
which implies the desired result (\ref{con2}). To prove (\ref{con1}), we recall
\begin{equation*}
\psi^{\delta,h^\delta}_t-Z_t^{h^\delta}=\text{I}^{\delta}(t)+\int_{0}^t\partial_x\bar{b}(\bar{X}_s,\mathcal{L}_{\bar{X}_s})\cdot (\psi_s^{\delta,h^\delta}-Z_s^{h^\delta}) ds.
\end{equation*}
Then,  {by Lemma \ref{C1}}, it follows that
\begin{equation}\label{esc1}
\mathbb{E}\Big[\sup_{t\in [0,T]}|\psi^{\delta,h^\delta}_t-Z_t^{h^\delta}|\Big]\leq C\mathbb{E}\Big[\sup_{t\in [0,T]}|\text{I}^{\delta}(t)|\Big]+C_T\mathbb{E}\int_{0}^T|\psi_t^{\delta,h^\delta}-Z_t^{h^\delta}| dt.
\end{equation}
Once we can prove
\begin{equation}\label{esc2}
\lim_{\delta\to 0}\mathbb{E}\Big[\sup_{t\in [0,T]}|\text{I}^{\delta}(t)|\Big]=0,
\end{equation}
by using Gronwall's inequality to (\ref{esc1}), we can get
$$\mathbb{E}\Big[\sup_{t\in [0,T]}|\psi^{\delta,h^\delta}_t-Z_t^{h^\delta}|\Big]\leq C_T\mathbb{E}\Big[\sup_{t\in [0,T]}|\text{I}^{\delta}(t)|\Big]\to 0,~\text{as}~\delta\to 0.$$

\textbf{Step 2:} In this step, we prove (\ref{esc2}). To this end, {we recall the  regularity of Poisson equation (\ref{PE}), which have been established  in Proposition \ref{P3.6},} by It\^{o}'s formula (cf.~\cite[Theorem 7.1]{BLPR}) for the function $\Phi(x,\mu,y)$, we have
\begin{eqnarray*}
&&\Phi(X_t^{\delta,h^\delta} ,\mathcal{L}_{X^{\delta}_t},Y^{\delta,h^\delta}_t)\\
=~~\!\!\!\!\!\!\!\!&&\Phi(x,\delta_x,y)
+\int^t_0 \EE\left[b(X^{\delta}_s,\mathcal{L}_{ X^{\delta}_{s}}, Y^{\delta}_s)\partial_{\mu}\Phi(x,\mathcal{L}_{X^{\delta}_{s}},y)(X^{\delta}_s)\right]\Big|_{x=X_s^{\delta,h^\delta} ,y=Y^{\delta,h^\delta}_{s}}ds\\
&&+\int^t_0 \frac{\delta}{2}\EE \text{Tr}\left[\sigma\sigma^{*}(X^{\delta}_s,\mathcal{L}_{ X^{\delta}_{s}})\partial_z\partial_{\mu}\Phi(x,\mathcal{L}_{X^{\delta}_{s}},y)(X^{\delta}_s)\right]\Big|_{x=X_s^{\delta,h^\delta} ,y=Y^{\delta,h^\delta}_{s}}ds\\
&&+\int^t_0 \mathbf{L}^{1,\delta}_{\mathcal{L}_{X^{\delta}_{s}},Y^{\delta,h^\delta}_{s}}\Phi(X_s^{\delta,h^\delta},\mathcal{L}_{X^{\delta}_{s}},Y^{\delta,h^\delta}_{s})ds\\
&&+\lambda(\delta)\int^t_0 \partial_x \Phi(X_s^{\delta,h^\delta} ,\mathcal{L}_{X^{\delta}_s},Y^{\delta,h^\delta}_s)\cdot \sigma(X^{\delta,h^\delta}_s,\mathcal{L}_{X^{\delta}_s})h^{1,\delta}_s ds\\
&&+\frac{1}{\varepsilon}\int^t_0 \mathbf{L}^{2}_{X_s^{\delta,h^\delta},\mathcal{L}_{X^{\delta}_{s}}}\Phi(X_s^{\delta,h^\delta},\mathcal{L}_{X^{\delta}_{s}},Y^{\delta,h^\delta}_{s})ds
\\
&&+\frac{\lambda(\delta)}{\sqrt{\delta\varepsilon}}\int^t_0\partial_y \Phi(X_s^{\delta,h^\delta} ,\mathcal{L}_{X^{\delta}_s},Y^{\delta,h^\delta}_s)\cdot g(X^{\delta,h^\delta}_s,\mathcal{L}_{X^{\delta}_s},Y^{\delta,h^\delta}_s)h^{2,\delta}_s ds
\\
&&+M^{1,\delta,h^\delta}_t+\frac{1}{\sqrt{\varepsilon}}M^{2,\delta,h^\delta}_t,
\end{eqnarray*}
where $\mathbf{L}^{1,\delta}_{\mu,y}\Phi(x,\mu,y):=(\mathfrak{L}^{1,\delta}_{\mu,y}\Phi_1(x,\mu,y),\ldots, \mathfrak{L}^{1,\delta}_{\mu,y}\Phi_n(x,\mu,y))$ with
\begin{eqnarray}\label{inf2}
 \mathfrak{L}^{1,\delta}_{\mu,y}\Phi_k(x,\mu,y):=~~\!\!\!\!\!\!\!\!&&\left\langle b(x,\mu,y), \partial_x \Phi_k(x,\mu,y)\right \rangle\nonumber\\
 &&+\frac{\delta}{2}\text{Tr}\left[\sigma\sigma^{*}(x,\mu)\partial^2_{xx} \Phi_k(x,\mu,y)\right ],\quad k=1,\ldots, n,
\end{eqnarray}
and $M^{1,\delta,h^\delta}_t, M^{2,\delta,h^\delta}_t$ are two local martingales which are defined by
\begin{eqnarray}
&&M^{1,\delta,h^\delta}_t:=\sqrt{\delta}\int^t_0 \partial_x \Phi(X_{s}^{\delta,h^\delta},\mathcal{L}_{X^{\delta}_{s}},Y_{s}^{\delta,h^\delta})\cdot \sigma(X^{\delta,h^\delta}_s,\mathcal{L}_{ X^{\delta}_{s}}) dW^1_s,\label{mart1}\\
&&M^{2,\delta,h^\delta}_t:=\int^t_0 \partial_y \Phi(X_{s}^{\delta,h^\delta},\mathcal{L}_{X^{\delta}_{s}},Y_{s}^{\delta,h^\delta})\cdot g(X^{\delta,h^\delta}_s,\mathcal{L}_{ X^{\delta}_{s}}, Y^{\delta,h^\delta}_s) dW^2_s\label{mart2},
\end{eqnarray}
{and
\begin{equation*}
-\mathbf{L}^{2}_{x,\mu}\Phi(x,\mu,y)=b(x,\mu,y)-\bar{b}(x,\mu),\label{PE}
\end{equation*}
where $\Phi(x,\mu,y):=(\Phi_1(x,\mu,y),\ldots, \Phi_n(x,\mu,y))$,
$$\mathbf{L}^{2}_{x,\mu}\Phi(x,\mu,y):=(\mathfrak{L}^{2}_{x,\mu}\Phi_1(x,\mu,y),\ldots, \mathfrak{L}^{2}_{x,\mu}\Phi_n(x,\mu,y))$$
and for any $k=1,\ldots,n,$
\begin{eqnarray*}
\mathfrak{L}^{2}_{x,\mu}\Phi_k(x,\mu,y):=\langle f(x,\mu,y), \partial_y \Phi_k(x,\mu,y)\rangle+\frac{1}{2}\text{Tr}[g g^{*}(x,\mu,y)\partial^2_{yy} \Phi_k(x,\mu,y)].
\end{eqnarray*}}
Then it follows that
\begin{eqnarray}\label{400}
\text{I}^{\delta}(t)=~~\!\!\!\!\!\!\!\!&&\frac{\varepsilon}{\lambda(\delta)}\Big\{-\Phi(X_t^{\delta,h^\delta} ,\mathcal{L}_{X^{\delta}_t},Y^{\delta,h^\delta}_t)+\Phi(x,\delta_x,y)
\nonumber\\
\!\!\!\!\!\!\!\!&&+\int^t_0 \EE\left[b(X^{\delta}_s,\mathcal{L}_{ X^{\delta}_{s}}, Y^{\delta}_s)\partial_{\mu}\Phi(x,\mathcal{L}_{X^{\delta}_{s}},y)(X^{\delta}_s)\right]\Big|_{x=X_s^{\delta,h^\delta} ,y=Y^{\delta,h^\delta}_{s}}ds\nonumber\\
\!\!\!\!\!\!\!\!&&+\int^t_0 {\frac{\delta}{2}}\EE \text{Tr}\left[\sigma\sigma^{*}(X^{\delta}_s,\mathcal{L}_{ X^{\delta}_{s}})\partial_z\partial_{\mu}\Phi(x,\mathcal{L}_{X^{\delta}_{s}},y)(X^{\delta}_s)\right]\Big|_{x=X_s^{\delta,h^\delta} ,y=Y^{\delta,h^\delta}_{s}}ds\nonumber\\
\!\!\!\!\!\!\!\!&&+\int^t_0 \mathbf{L}^{1,\delta}_{\mathcal{L}_{X^{\delta}_{s}},Y^{\delta,h^\delta}_{s}}\Phi(X_s^{\delta,h^\delta},\mathcal{L}_{X^{\delta}_{s}},Y^{\delta,h^\delta}_{s})ds
+M^{1,\delta,h^\delta}_t\Big\}
\nonumber\\
\!\!\!\!\!\!\!\!&&+\varepsilon\int^t_0 \partial_x \Phi(X_s^{\delta,h^\delta} ,\mathcal{L}_{X^{\delta}_s},Y^{\delta,h^\delta}_s)\cdot \sigma(X^{\delta,h^\delta}_s,\mathcal{L}_{X^{\delta}_s})h^{1,\delta}_s ds
\nonumber\\
\!\!\!\!\!\!\!\!&&+\frac{\sqrt{\varepsilon}}{\sqrt{\delta}}\int^t_0\partial_y \Phi(X_s^{\delta,h^\delta} ,\mathcal{L}_{X^{\delta}_s},Y^{\delta,h^\delta}_s)\cdot g(X^{\delta,h^\delta}_s,\mathcal{L}_{X^{\delta}_s},Y^{\delta,h^\delta}_s)h^{2,\delta}_s ds
\nonumber\\
\!\!\!\!\!\!\!\!&&
+\frac{\sqrt{\varepsilon}}{\lambda(\delta)}M^{2,\delta,h^\delta}_t\nonumber\\
=:~~\!\!\!\!\!\!\!\!&&\sum_{i=1}^{3}\text{I}_{i}^{\delta}(t)+\frac{\sqrt{\varepsilon}}{\lambda(\delta)}M^{2,\delta,h^\delta}_t.
\end{eqnarray}
We now consider the terms on the right hand side of (\ref{400}) one by one. In order to estimate $I_{1}^{\delta}(t)$, we denote
$$\text{I}_{1}^{\delta}(t)=\sum_{i=1}^{4}\text{I}_{1i}^{\delta}(t)+M^{1,\delta,h^\delta}_t,$$
where
\begin{eqnarray*}
\!\!\!\!\!\!\!\!&&\text{I}_{11}^{\delta}(t):=\frac{\varepsilon}{\lambda(\delta)}\Big[\Phi(x,\delta_x,y)-\Phi(X_t^{\delta,h^\delta} ,\mathcal{L}_{X^{\delta}_t},Y^{\delta,h^\delta}_t)\Big],
\nonumber\\
\!\!\!\!\!\!\!\!&&\text{I}_{12}^{\delta}(t):=\frac{\varepsilon}{\lambda(\delta)}\int^t_0 \EE\left[b(X^{\delta}_s,\mathcal{L}_{ X^{\delta}_{s}}, Y^{\delta}_s)\partial_{\mu}\Phi(x,\mathcal{L}_{X^{\delta}_{s}},y)(X^{\delta}_s)\right]\Big|_{x=X_s^{\delta,h^\delta} ,y=Y^{\delta,h^\delta}_{s}}ds,
\nonumber\\
\!\!\!\!\!\!\!\!&&\text{I}_{13}^{\delta}(t):={\frac{\delta\varepsilon}{2\lambda(\delta)}}\int^t_0 \EE \text{Tr}\left[\sigma\sigma^{*}(X^{\delta}_s,\mathcal{L}_{ X^{\delta}_{s}})\partial_z\partial_{\mu}\Phi(x,\mathcal{L}_{X^{\delta}_{s}},y)(X^{\delta}_s)\right]\Big|_{x=X_s^{\delta,h^\delta} ,y=Y^{\delta,h^\delta}_{s}}ds,
\nonumber\\
\!\!\!\!\!\!\!\!&&\text{I}_{14}^{\delta}(t):=\frac{\varepsilon}{\lambda(\delta)}\int^t_0 \mathbf{L}^{1,\delta}_{\mathcal{L}_{X^{\delta}_{s}},Y^{\delta,h^\delta}_{s}}\Phi(X_s^{\delta,h^\delta},\mathcal{L}_{X^{\delta}_{s}},Y^{\delta,h^\delta}_{s})ds.
\end{eqnarray*}
As for $\text{I}_{11}^{\delta}(t)$, we use the estimates (\ref{PHI1}), (\ref{X23}) and (\ref{Y11}) to get
\begin{eqnarray}\label{ess1}
\mathbb{E}\Big[\sup_{t\in[0,T]}|\text{I}_{11}^{\delta}(t)|^2\Big]\leq\!\!\!\!\!\!\!\!&&~~~\frac{C\varepsilon^2}{\lambda(\delta)^2}\Bigg(1+|y|^2+\mathbb{E}\Big[\sup_{t\in[0,T]}|Y^{\delta,h^\delta}_t|^2\Big]\Bigg)
\nonumber\\
\leq\!\!\!\!\!\!\!\!&&~~~\frac{C_T(1+|y|^2)\varepsilon}{\lambda(\delta)^2}+\frac{C_M\varepsilon^2}{\delta}.
\end{eqnarray}
As for $\text{I}_{12}^{\delta}(t)$, by (\ref{PHI1}) and (\ref{Y24})
\begin{eqnarray}\label{ess2}
\!\!\!\!\!\!\!\!&&\mathbb{E}\Big[\sup_{t\in[0,T]}|\text{I}_{12}^{\delta}(t)|^2\Big]
\nonumber\\
\leq\!\!\!\!\!\!\!\!&&~~~\frac{\varepsilon^2}{\lambda(\delta)^2}\mathbb{E}\Bigg(\int_0^T\Big(\mathbb{E}|b(X^{\delta}_s,\mathcal{L}_{ X^{\delta}_{s}}, Y^{\delta}_s)|^2\Big)^{\frac{1}{2}}\cdot\|\partial_{\mu}\Phi(X_s^{\delta,h^\delta},\mathcal{L}_{X^{\delta}_{s}},Y^{\delta,h^\delta}_{s})\|_{{L^2(\mathcal{L}_{X^{\delta}_{s}})}}ds\Bigg)^2
\nonumber\\
\leq\!\!\!\!\!\!\!\!&&~~~\frac{C\varepsilon^2}{\lambda(\delta)^2}\Big(\sup_{s\in[0,T]}\big(1+\mathbb{E}|X^{\delta}_s|^2+\mathbb{E}|Y^{\delta}_s|^2\big)\Big)\cdot
\Big(\mathbb{E}\int_0^T\big(1+|Y^{\delta,h^\delta}_{s}|^2\big)ds\Big)
\nonumber\\
\leq\!\!\!\!\!\!\!\!&&~~~\frac{C_{T,M}\varepsilon^2}{\lambda(\delta)^2}(1+|x|^4+|y|^4),
\end{eqnarray}
{where we recall the following estimates of $X^{\delta},Y^{\delta}$ established in \cite[Lemma 3.1]{RSX1},
$$\mathbb{E}\Big[\sup_{t\in[0,T]}\big(|X^{\delta}_t|^4+|Y^{\delta}_t|^4\big)\Big]\leq C_T(1+|x|^4+|y|^4).$$}

Similarly, for $\text{I}_{13}^{\delta}(t)$ and $\text{I}_{14}^{\delta}(t)$ we obtain
\begin{eqnarray}\label{ess3}
\!\!\!\!\!\!\!\!&&\mathbb{E}\Big[\sup_{t\in[0,T]}|\text{I}_{13}^{\delta}(t)|^2\Big]
\nonumber\\
\leq\!\!\!\!\!\!\!\!&&~~~\frac{C\delta^2\varepsilon^2}{\lambda(\delta)^2}\mathbb{E}\Bigg(\int_0^T\Big(\mathbb{E}\|\sigma(X^{\delta}_s,\mathcal{L}_{ X^{\delta}_{s}})\|^4\Big)^{\frac{1}{2}}\cdot\|\partial_{z}\partial_{\mu}\Phi(X_s^{\delta,h^\delta},\mathcal{L}_{X^{\delta}_{s}},Y^{\delta,h^\delta}_{s})\|_{L^2(\Omega)}ds\Bigg)^2
\nonumber\\
\leq\!\!\!\!\!\!\!\!&&~~~\frac{C_T\delta^2\varepsilon^2}{\lambda(\delta)^2}\Big(\sup_{s\in[0,T]}\big(1+\mathbb{E}|X^{\delta}_s|^4\big)\Big)\cdot
\Big(\mathbb{E}\int_0^T\big(1+|Y^{\delta,h^\delta}_{s}|^2\big)ds\Big)
\nonumber\\
\leq\!\!\!\!\!\!\!\!&&~~~\frac{C_{T,M}\delta^2\varepsilon^2}{\lambda(\delta)^2}(1+|x|^6+|y|^6)
\end{eqnarray}
and
\begin{eqnarray}\label{ess4}
\!\!\!\!\!\!\!\!&&\mathbb{E}\Big[\sup_{t\in[0,T]}|\text{I}_{14}^{\delta}(t)|^2\Big]
\nonumber\\
\leq\!\!\!\!\!\!\!\!&&~~~\frac{C\varepsilon^2}{\lambda(\delta)^2}\mathbb{E}\Bigg(\int_0^T\Big(1+|X^{\delta,h^\delta}_s|+(\mathbb{E}| X^{\delta}_{s}|^2)^{\frac{1}{2}}+|Y^{\delta,h^\delta}_s|\Big)\Big(1+|Y^{\delta,h^\delta}_s|\Big)ds\Bigg)^2
\nonumber\\
\!\!\!\!\!\!\!\!&&~~~+\frac{C\varepsilon^2\delta^2}{\lambda(\delta)^2}\mathbb{E}\Bigg(\int_0^T\Big(1+\mathbb{E}| X^{\delta}_{s}|^2\Big)\Big(1+|Y^{\delta,h^\delta}_s|\Big)ds\Bigg)^2
\nonumber\\
\leq\!\!\!\!\!\!\!\!&&~~~\frac{C_{T,M}\varepsilon^2}{\lambda(\delta)^2}(1+|x|^4+|y|^4)+\frac{C_{T,M}\varepsilon^2\delta^2}{\lambda(\delta)^2}(1+|x|^6+|y|^6).
\end{eqnarray}
As for $M^{1,\delta,h^\delta}_t$, by (\ref{Y24}) and Burkholder-Davis-Gundy's inequality,
\begin{eqnarray}\label{ess5}
\!\!\!\!\!\!\!\!&&\mathbb{E}\Big[\sup_{t\in[0,T]}|M^{1,\delta,h^\delta}_t|^2\Big]
\nonumber\\
\leq\!\!\!\!\!\!\!\!&&~~~\frac{\varepsilon^2\delta^2}{\lambda(\delta)^2}\mathbb{E}\Bigg(\int_0^T
\|\partial_x \Phi(X_{s}^{\delta,h^\delta},\mathcal{L}_{X^{\delta}_{s}},Y_{s}^{\delta,h^\delta})\|^2\cdot \|\sigma(X^{\delta,h^\delta}_s,\mathcal{L}_{ X^{\delta}_{s}})\|^2ds\Bigg)
\nonumber\\
\leq\!\!\!\!\!\!\!\!&&~~~\frac{C\varepsilon^2\delta^2}{\lambda(\delta)^2}\mathbb{E}\Bigg(\int_0^T\big(1+|Y_{s}^{\delta,h^\delta}|^2\big)\cdot\big(1+\mathbb{E}|X^{\delta}_{s}|^2\big)ds\Bigg)
\nonumber\\
\leq\!\!\!\!\!\!\!\!&&~~~\frac{C_{T,M}\varepsilon^2\delta^2}{\lambda(\delta)^2}(1+|x|^4+|y|^4).
\end{eqnarray}
Combining (\ref{ess1})-(\ref{ess5}), by Jensen's inequality, we deduce that
\begin{eqnarray}\label{ess6}
\mathbb{E}\Big[\sup_{t\in[0,T]}|\text{I}_{1}^{\delta}(t)|^2\Big]\leq\frac{C_{T,M}\varepsilon}{\lambda(\delta)^2}(1+|x|^6+|y|^6)+\frac{C_M\varepsilon^2}{\delta}.
\end{eqnarray}
Finally, if we can prove
\begin{eqnarray}\label{ess7}
\sum_{i=2}^{3}\mathbb{E}\Big[\sup_{t\in[0,T]}|\text{I}_{i}^{\delta}(t)|^2\Big]\to0,~\text{as}~\delta\to 0,
\end{eqnarray}
and
\begin{eqnarray}\label{ess9}
\mathbb{E}\Big[\sup_{t\in[0,T]}|M^{2,\delta,h^\delta}_t|^2\Big]\leq C_T,
\end{eqnarray}
then combining (\ref{ess6})-(\ref{ess9}), by the assumption in Regime 1 of (\ref{regime}), we can conclude that (\ref{esc2}) holds.

\textbf{Step 3:} In this step, we prove (\ref{ess7}) and (\ref{ess9}). First, due to (\ref{A11}) and (\ref{PHI1}), we deduce
\begin{eqnarray}\label{ess8}
\!\!\!\!\!\!\!\!&&\mathbb{E}\Big[\sup_{t\in[0,T]}|\text{I}_{2}^{\delta}(t)|^2\Big]
\nonumber\\
\leq \!\!\!\!\!\!\!\!&&~~~\varepsilon^2\mathbb{E}\Bigg(\int_0^T\big(1+|Y_{s}^{\delta,h^\delta}|\big)\big(1+(\mathbb{E}|X_{s}^{\delta}|^2)^{\frac{1}{2}}\big)|h_s^{1,\delta}|ds\Bigg)^2
\nonumber\\
\leq \!\!\!\!\!\!\!\!&&~~~\varepsilon^2\mathbb{E}\Bigg(\int_0^T\big(1+|Y_{s}^{\delta,h^\delta}|^2\big)\big(1+\mathbb{E}|X_{s}^{\delta}|^2\big)ds\cdot\int_0^T\big(1+|h_s^{1,\delta}|^2\big)ds\Bigg)
\nonumber\\
\leq \!\!\!\!\!\!\!\!&&~~~C_{T,M}\varepsilon^2\mathbb{E}\int_0^T\big(1+|Y_{s}^{\delta,h^\delta}|^2\big)\big(1+\mathbb{E}|X_{s}^{\delta}|^2\big)ds
\nonumber\\
\leq \!\!\!\!\!\!\!\!&&~~~C_{T,M}{(1+|x|^2)}\varepsilon^2\mathbb{E}\int_0^T\big(1+|Y_{s}^{\delta,h^\delta}|^2\big)ds
\nonumber\\
\leq \!\!\!\!\!\!\!\!&&~~~C_{T,M}{(1+|x|^2)}\varepsilon^2(1+|x|^4+|y|^4).
\end{eqnarray}
Similarly, for $\text{I}_{3}^{\delta}(t)$ we have
\begin{eqnarray}\label{ess10}
\mathbb{E}\Big[\sup_{t\in[0,T]}|\text{I}_{3}^{\delta}(t)|^2\Big]\leq \frac{C_{T,M}\varepsilon}{\delta}.
\end{eqnarray}
Finally, by Burkholder-Davis-Gundy's inequality, $(\mathbf{A3})$ and (\ref{PHI3}), it is easy to see that
\begin{eqnarray*}
\mathbb{E}\Big[\sup_{t\in[0,T]}|M^{2,\delta,h^\delta}_t|^2\Big]\leq\mathbb{E}\int^T_0 \|\partial_y \Phi(X_{s}^{\delta,h^\delta},\mathcal{L}_{X^{\delta}_{s}},Y_{s}^{\delta,h^\delta})\|^2\cdot \|g(X^{\delta,h^\delta}_s,\mathcal{L}_{ X^{\delta}_{s}}, Y^{\delta,h^\delta}_s)\|^2 ds
\leq C_T.
\end{eqnarray*}
We complete the proof.
\end{proof}
\subsection{Proof of Hypothesis \ref{h2} (i)} \label{sec4.3}
In this part, we prove Hypothesis \ref{h2} (i) by the following proposition.
\begin{proposition}\label{p4}
Under the assumptions of Theorem \ref{t3}, let $\{h^\delta\}_{\delta>0}\subset S_M$ for any $M<\infty$ such that $h^\delta$ converges to element $h$ in $S_M$ as $\delta\to0$, then
$\Upsilon^0\big(h^\delta\big)$ converges to $\Upsilon^0\big(h\big)$ in  $C([0,T];\RR^n)$.
\end{proposition}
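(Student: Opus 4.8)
The plan is to use the linearity of the skeleton equation \eref{ske1} to reduce the claim to a single convergence for the deterministic ``convolution'' term. Throughout, $\Upsilon^0(h^\delta)$ abbreviates $\Upsilon^0\big(\int_0^{\cdot}h^\delta_s\,ds\big)=Z^{h^\delta}$, which is well defined since $h^\delta\in S_M\subset L^2([0,T];\RR^{d_1+d_2})$, and likewise $\Upsilon^0(h)=Z^h$; existence, uniqueness and the bound $\sup_{h\in S_M}\sup_{t\in[0,T]}|Z^h_t|\le C_{T,M}(1+|x|)$ are provided by Lemma \ref{existence of skeleton}. Setting $h^{1,\delta}:=P_1h^\delta$ and
$$\Psi^\delta_t:=\int_0^t\sigma(\bar X_s,\mathcal L_{\bar X_s})h^{1,\delta}_s\,ds,\qquad \Psi_t:=\int_0^t\sigma(\bar X_s,\mathcal L_{\bar X_s})h^1_s\,ds,$$
the difference solves $Z_t^{h^\delta}-Z_t^h=\int_0^t\partial_x\bar b(\bar X_s,\mathcal L_{\bar X_s})(Z_s^{h^\delta}-Z_s^h)\,ds+(\Psi^\delta_t-\Psi_t)$, so the boundedness of $\partial_x\bar b$ (which holds under $(\mathbf{A1})$ and $(\mathbf{A2})$) together with Gronwall's inequality gives $\sup_{t\in[0,T]}|Z_t^{h^\delta}-Z_t^h|\le C_T\sup_{t\in[0,T]}|\Psi^\delta_t-\Psi_t|$. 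Hence it suffices to prove $\Psi^\delta\to\Psi$ in $C([0,T];\RR^n)$.

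For this I would combine an equicontinuity bound with pointwise convergence coming from the weak $L^2$-convergence $h^\delta\rightharpoonup h$ in $S_M$. Since $\bar X$ is a fixed continuous path on $[0,T]$ and $\sigma$ is continuous, $K:=\sup_{r\in[0,T]}\|\sigma(\bar X_r,\mathcal L_{\bar X_r})\|<\infty$; then for $0\le s\le t\le T$ the Cauchy--Schwarz inequality and $h^\delta\in S_M$ give $|\Psi^\delta_t-\Psi^\delta_s|\le K M^{1/2}(t-s)^{1/2}$, so $\{\Psi^\delta\}_{\delta>0}$ is uniformly bounded and equicontinuous on $[0,T]$. On the other hand, for each fixed $t\in[0,T]$ and $v\in\RR^n$ the function $s\mapsto\mathbf 1_{[0,t]}(s)\,\sigma^{*}(\bar X_s,\mathcal L_{\bar X_s})v$ belongs to $L^2([0,T];\RR^{d_1})$, so the weak convergence $h^{1,\delta}\rightharpoonup h^1$ in $L^2$ yields $\langle v,\Psi^\delta_t\rangle\to\langle v,\Psi_t\rangle$, hence $\Psi^\delta_t\to\Psi_t$ for every $t$. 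By Arzel\`a--Ascoli, every subsequence of $\{\Psi^\delta\}$ admits a further subsequence converging in $C([0,T];\RR^n)$, whose limit must equal $\Psi$ by the pointwise convergence; therefore $\Psi^\delta\to\Psi$ uniformly on $[0,T]$, and the proof is complete.

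The only genuine obstacle is upgrading the pointwise-in-$t$ convergence that weak $L^2$-convergence supplies directly to uniform convergence in $C([0,T];\RR^n)$; this is precisely what the equicontinuity estimate, which uses only the uniform $L^2$-bound $M$ and the boundedness of $\sigma$ along $\bar X$, accomplishes. I would also emphasise that, because $\bar X$ is deterministic with $\mathcal L_{\bar X_t}=\delta_{\bar X_t}$ and the skeleton equation is linear in $Z^h$ and affine in $h^1$, no Wasserstein-type or mean-field complications arise here, and the second control component $h^{2,\delta}$ plays no role in Regime 1.
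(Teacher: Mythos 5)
Your proof is correct and follows essentially the same route as the paper: both arguments rest on a uniform H\"older-$\tfrac12$ (Arzel\`a--Ascoli) estimate obtained from Cauchy--Schwarz and the $S_M$ bound, combined with the weak $L^2$-convergence $h^{1,\delta}\rightharpoonup h^1$ tested against the deterministic integrand $\mathbf 1_{[0,t]}(\cdot)\,\sigma^{*}(\bar X_\cdot,\mathcal L_{\bar X_\cdot})v$ to identify the limit. The only (harmless) difference is organizational: you first apply Gronwall to the difference equation to reduce everything to the forcing terms $\Psi^\delta$ and then run compactness on those, whereas the paper runs compactness directly on the solutions $Z^{h^\delta}$ and identifies the limit through the linear skeleton equation by uniqueness.
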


\begin{proof}
Recall $Z^h,Z^{h^\delta}$ are the solutions of (\ref{ske1}) corresponding to $h,h^\delta\in S_M $ respectively. It suffices to prove the following result
$$\lim_{\delta\to 0}\sup_{t\in[0,T]}|Z^{h^\delta}_t-Z^{h}_t|=0.$$
Since (\ref{ske1}) is a linear equation, the proof is standard and separated into the following two steps:

\vspace{2mm}
\textbf{Step 1:} In this step, we claim that $\{Z^{h^\delta}\}_{\delta>0}$  is pre-compact in $C([0,T];\mathbb{R}^n)$. Firstly, by (\ref{es19}), we know $\{Z^{h^\delta}\}_{\delta>0}$ is uniformly bounded. Next, for any $s,t\in[0,T]$ with $s<t$, by
\begin{eqnarray*}
|Z^{h^\delta}_t-Z^{h^\delta}_s|\leq~~\!\!\!\!\!\!\!\!&& \int_s^t\|\partial_x\bar{b}(\bar{X}_r,\mathcal{L}_{\bar{X}_r})\|\cdot |Z_r^{h^\delta}|dr+\Big(\int_s^t|h^{1,\delta}_r|^2dr\Big)^{\frac{1}{2}}\Big(\int_s^t\|\sigma(\bar{X}_r,\mathcal{L}_{\bar{X}_r})\|^2dr\Big)^{\frac{1}{2}}
\nonumber\\
\leq~~\!\!\!\!\!\!\!\!&& {C_{M,T,x}}|t-s|^{\frac{1}{2}},
\end{eqnarray*}
where $C_{M,T}>0$ is independent of $\delta$. Thus $\{Z^{h^\delta}\}_{\delta>0}$ is equi-continuous in $C([0,T];\mathbb{R}^n)$. The claim follows.

\vspace{2mm}
\textbf{Step 2:}  Let $\bar{Z}$ be a limit of some subsequence denoted by $\{Z^{h^{\delta_i}}\}_{i\geq 1}$ of  $\{Z^{h^\delta}\}_{\delta>0}$ in $C([0,T];\mathbb{R}^n)$, i.e.,
 \begin{equation}\label{esz0}
 \lim_{i\to\infty}\sup_{t\in[0,T]}|Z^{h^{\delta_i}}_t-\bar{Z}_t|=0.
 \end{equation}
 We will show that $\bar{Z}=Z^h$.
Note that
\begin{equation*}
\sup_{t\in[0,T]}|\bar{Z}_t|\leq \sup_{i\geq 1}\sup_{t\in[0,T]}|Z^{h^{\delta_i}}_t|\leq \sup_{h\in S_M}\sup_{t\in[0,T]}|Z^{h}_t|\leq C_{M,T}<\infty.
\end{equation*}
Since $h^{\delta_i}$ converges to $h$ weakly in $S_M$ as $i\to \infty$, for any $e\in\mathbb{R}^{n}$, we obtain
$$\int_0^t\langle \sigma(\bar{X}_s,\mathcal{L}_{\bar{X}_s})P_1h^{\delta_i}_s,e\rangle ds\to \int_0^t\langle \sigma(\bar{X}_s,\mathcal{L}_{\bar{X}_s})P_1h_s,e\rangle ds,~i\to \infty,$$
which implies
 \begin{equation}\label{esz2}
\int_0^t \sigma(\bar{X}_s,\mathcal{L}_{\bar{X}_s})P_1h^{\delta_i}_s ds\to \int_0^t \sigma(\bar{X}_s,\mathcal{L}_{\bar{X}_s})P_1h_s ds,~i\to \infty.
 \end{equation}
Recall
 \begin{equation}\label{esz3}
dZ_t^{h^{\delta_i}}=  \partial_x\bar{b}(\bar{X}_t,\mathcal{L}_{\bar{X}_t})\cdot Z_t^{h^{\delta_i}}dt+\sigma(\bar{X}_t,\mathcal{L}_{\bar{X}_t})P_1h^{\delta_i}_tdt,~Z_0^{h^{\delta_i}}=0.
 \end{equation}
Now letting $i\to\infty$, combining (\ref{esz0}) and (\ref{esz2}) and the uniqueness of solutions to (\ref{esz3}), it leads to $\bar{Z}=Z^h$.
We complete the proof.
\end{proof}

\vspace{3mm}
Now we are able to finish the proof of Theorem \ref{t3}.

\vspace{2mm}
\textbf{Proof of Theorem \ref{t3}}. Note that Propositions \ref{p5} and \ref{p4}  imply conditions (i) and (ii) in Hypothesis \ref{h2} hold respectively, then by Lemma \ref{app1} we get  $\{X^{\delta}\}_{\delta>0}$ satisfies the MDP in $C([0,T];\RR^n)$ with a good rate function $I$ defined in (\ref{rf}).

\section{Proof of Theorem \ref{thj1}}\label{sec5}
\subsection{Tightness of controlled processes}\label{sec5.1}

In the following proposition, we show the tightness of $\{(Z^{\delta,h^\delta},\mathbf{P}^{\delta,\Delta})\}_{\delta>0}$ and uniform integrability of $\{\mathbf{P}^{\delta,\Delta}\}_{\delta>0}$.
\begin{proposition}\label{prj1}
Suppose $\{h^\delta\}_{\delta>0}\subset\mathcal{A}_M$ for any $M<\infty$.  We have

\vspace{1mm}
{\rm(i) } The family $\{(Z^{\delta,h^\delta},\mathbf{P}^{\delta,\Delta})\}_{\delta>0}$ is tight.

\vspace{1mm}
{\rm(ii)} Define the set
$$\mathbb{U}_N:=\Big\{(h^1,h^2,y)\in \mathcal{Z}_1\times\mathcal{Z}_2\times\mathcal{Y}:|h^1|> N,|h^2|> N,|y|> N\Big\}.$$

Then the family $\{\mathbf{P}^{\delta,\Delta}\}_{\delta>0}$ is uniformly integrable in the sense that
$$\lim_{N\rightarrow\infty}\sup_{\delta>0}\mathbb{E}\Bigg\{\int_{\mathbb{U}_N\times[0,T]}\Big[|h^1|+|h^2|+|y|\Big] \mathbf{P}^{\delta,\Delta}(dh^1dh^2dydt)\Bigg\}=0.$$
\end{proposition}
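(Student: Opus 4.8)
The plan is to establish the two assertions via the moment estimates already obtained in Section 3, together with standard tightness criteria for $C([0,T];\RR^n)$ and for spaces of probability measures. For part (i), the tightness of $\{Z^{\delta,h^\delta}\}_{\delta>0}$ in $C([0,T];\RR^n)$ I would prove by checking the Aldous-type/Kolmogorov criterion: from the second-moment bound $\mathbb{E}[\sup_{t\in[0,T]}|Z^{\delta,h^\delta}_t|^2]\leq C_{T,M}(1+|x|^2+|y|^2)$ (stated in (\ref{ess16}) and proved in the appendix), together with the time-increment estimate for $Z^{\delta,h^\delta}$ analogous to Lemma \ref{xg}, one gets $\mathbb{E}|Z^{\delta,h^\delta}_{t+\theta}-Z^{\delta,h^\delta}_t|^2\leq C_{T,M}\,\theta$ uniformly in $\delta$; here the contribution of the term $\sqrt\gamma\,\partial_y\Phi_g\cdot h^{2,\delta}$ appearing in the limit dynamics is handled at the prelimit level using the integral representation of $\mathrm{I}^\delta(t)$ from (\ref{400}) and the regularity bounds (\ref{PHI1})--(\ref{PHI3}) for $\Phi$, exactly as in the proof of Proposition \ref{p5}, so that the troublesome term $\frac{\sqrt\varepsilon}{\sqrt\delta}\int_t^{t+\theta}\partial_y\Phi_g\cdot h^{2,\delta}_s\,ds$ is $L^2$-controlled by $\frac{\varepsilon}{\delta}\theta\int_0^T|h^{2,\delta}_s|^2\,ds\leq C_M\theta$. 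The tightness of $\{\mathbf{P}^{\delta,\Delta}\}_{\delta>0}$ in $\mathcal{P}(\mathcal{Z}_1\times\mathcal{Z}_2\times\mathcal{Y}\times[0,T])$ follows once we exhibit a tightness function, i.e. a measurable $g\geq 0$ with pre-compact sublevel sets such that $\sup_{\delta}\mathbb{E}\int g\,d\mathbf{P}^{\delta,\Delta}<\infty$; the natural choice is $g(h^1,h^2,y,t)=|h^1|^2+|h^2|^2+|y|^2$, and the required uniform bound is
$$
\mathbb{E}\int_{\mathcal{Z}_1\times\mathcal{Z}_2\times\mathcal{Y}\times[0,T]}\!\!\big(|h^1|^2+|h^2|^2+|y|^2\big)\,\mathbf{P}^{\delta,\Delta}(dh^1dh^2dydt)
=\mathbb{E}\int_0^T\frac{1}{\Delta}\int_t^{t+\Delta}\!\!\big(|h^{1,\delta}_s|^2+|h^{2,\delta}_s|^2+|Y^{\delta,h^\delta}_s|^2\big)\,ds\,dt,
$$
using the identity (\ref{p1}). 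By Fubini and the convention $h^\delta_s=0$ for $s>T$ this is bounded by $\mathbb{E}\int_0^T(|h^{1,\delta}_s|^2+|h^{2,\delta}_s|^2)\,ds+\mathbb{E}\int_0^{T+\Delta}|Y^{\delta,h^\delta}_s|^2\,ds\leq M+C_{T,M}(1+|y|^2)$ by $h^\delta\in\mathcal{A}_M$ and the integral estimate (\ref{Y24}). Joint tightness of the pair then follows since tightness is preserved under finite products.

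For part (ii), the uniform-integrability statement, I would argue directly from the same $L^2$ bound by a Markov/truncation argument: on the set $\mathbb{U}_N$ each of $|h^1|,|h^2|,|y|$ exceeds $N$, hence $|h^1|+|h^2|+|y|\leq \frac{1}{N}\big(|h^1|^2+|h^2|^2+|y|^2\big)\cdot 3$ pointwise on $\mathbb{U}_N$ — more precisely $|h^i|\leq |h^i|^2/N$ there — so
$$
\mathbb{E}\int_{\mathbb{U}_N\times[0,T]}\!\!\big(|h^1|+|h^2|+|y|\big)\,\mathbf{P}^{\delta,\Delta}(dh^1dh^2dydt)
\leq \frac{1}{N}\,\mathbb{E}\int_{\mathcal{Z}_1\times\mathcal{Z}_2\times\mathcal{Y}\times[0,T]}\!\!\big(|h^1|^2+|h^2|^2+|y|^2\big)\,\mathbf{P}^{\delta,\Delta}(dh^1dh^2dydt).
$$
By the bound just established in part (i), the right-hand side is at most $C_{T,M}(1+|x|^2+|y|^2)/N$, which is independent of $\delta$ and tends to $0$ as $N\to\infty$; taking $\sup_{\delta>0}$ and then $N\to\infty$ gives the claim.

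The main obstacle, and the only place real care is needed, is the time-increment estimate for $Z^{\delta,h^\delta}$ used in part (i): unlike $X^{\delta,h^\delta}$ in Lemma \ref{xg}, the process $Z^{\delta,h^\delta}$ carries the singular prefactors $1/\lambda(\delta)$ and $\sqrt{\delta}/\lambda(\delta)$, so one cannot simply bound increments by Cauchy--Schwarz on the drift. I would instead pass through the Poisson-equation decomposition (\ref{esZ})--(\ref{400}): the ``bad'' term $\mathrm{I}^\delta$ is rewritten, via It\^o's formula applied to $\Phi$, as a sum of terms each carrying a factor $\varepsilon/\lambda(\delta)$, $\sqrt\varepsilon/\sqrt\delta$, or $\sqrt\varepsilon/\lambda(\delta)$ times an $L^2$-bounded integrand, and in Regime 2 ($\varepsilon/\delta\to\gamma$) these prefactors are bounded; the increment over $[t,t+\theta]$ of each such term is then $O(\theta^{1/2})$ or $O(\theta)$ uniformly in $\delta$ by exactly the estimates (\ref{ess1})--(\ref{ess5}), (\ref{ess8}), (\ref{ess10}). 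Once this uniform modulus-of-continuity bound is in hand, Kolmogorov's tightness criterion closes part (i), and part (ii) is then immediate as above.
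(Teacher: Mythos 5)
Your overall scheme mirrors the paper's exactly: uniform second-moment bounds plus a time-increment criterion for $\{Z^{\delta,h^\delta}\}$, a tightness function for $\{\mathbf{P}^{\delta,\Delta}\}$ via Theorem A.3.17 of \cite{de} together with the a priori bound on $\mathbb{E}\int(|h^1|^2+|h^2|^2+|y|^2)\,\mathbf{P}^{\delta,\Delta}$, and a Markov/truncation argument for part (ii). Those last two pieces agree with the paper line for line.

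The gap is in the modulus-of-continuity estimate for $Z^{\delta,h^\delta}$. You claim that $\mathbb{E}|Z^{\delta,h^\delta}_{t+\theta}-Z^{\delta,h^\delta}_t|^2\leq C_{T,M}\theta$ suffices via the Kolmogorov tightness criterion, but that criterion requires $\mathbb{E}|\cdot|^p\leq C|t-s|^{1+\beta}$ with some $\beta>0$; the exponent $1$ is borderline and does \emph{not} control $\sup_{|s-t|<\delta_0}|Z_s-Z_t|$ — one can construct families of continuous processes satisfying $\mathbb{E}|Z^n_t-Z^n_s|^2\leq C|t-s|$ uniformly in $n$ which fail to be tight in $C([0,T];\RR^n)$. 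The paper avoids this by treating the terms of the decomposition $Z^{\delta,h^\delta}=\sum_{i=1}^5\mathcal{K}_i^\delta$ with different exponents: $\mathcal{K}_3^\delta$, $\mathcal{K}_4^\delta$ and all of $\mathcal{K}_1^\delta$ except the $\sqrt{\varepsilon/\delta}$-term $\mathrm{I}_3^\delta$ are shown to vanish in $L^2(\Omega;C([0,T]))$; the linear drift $\mathcal{K}_2^\delta$ satisfies $\mathbb{E}|\Delta\mathcal{K}_2^\delta|^2\leq C_T|t_1-t_2|^2$; and the two control-driven terms $\mathcal{K}_5^\delta$ and $\mathrm{I}_3^\delta$ are estimated in \emph{fourth} moment, $\mathbb{E}|\Delta|^4\leq C_{M,T}|t_1-t_2|^2$, which follows from H\"older's inequality together with the pathwise bound $\int_0^T|h^\delta_s|^2\,ds\leq M$ built into $\mathcal{A}_M$, and which puts you genuinely in the Kolmogorov regime ($p=4$, $\beta=1$). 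Your argument would close if you replaced the single second-moment increment estimate by these term-by-term estimates — or, equivalently, observed that $h^\delta\in\mathcal{A}_M$ forces a \emph{deterministic} H\"older-$1/2$ modulus $|\int_{t_2}^{t_1}(\cdot)h^\delta_s\,ds|\leq C_M|t_1-t_2|^{1/2}$ for the control terms — but as written the claimed bound $\mathbb{E}|\Delta Z|^2\leq C\theta$ is a step that would fail.
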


\begin{proof}
(i) \textbf{Tightness  of $\{Z^{\delta,h^\delta}\}_{\delta>0}$:} For reader's convenience, we recall
\begin{eqnarray}\label{zz1}
Z_t^{\delta,h^\delta}=\!\!\!\!\!\!\!\!&&~~~\frac{1}{\lambda(\delta)}\int_0^tb\big(\lambda(\delta)Z_s^{\delta,h^\delta} +\bar{X}_s ,\mathcal{L}_{X^{\delta}_s},Y^{\delta,h^\delta}_s\big)-\bar{b}\big(\lambda(\delta)Z_s^{\delta,h^\delta} +\bar{X}_s ,\mathcal{L}_{X^{\delta}_s}\big)ds
\nonumber\\
\!\!\!\!\!\!\!\!&&~~~
+\frac{1}{\lambda(\delta)}\int_0^t\bar{b}\big(\lambda(\delta)Z_s^{\delta,h^\delta} +\bar{X}_s ,\mathcal{L}_{X^{\delta}_s}\big)-\bar{b}(\bar{X}_s,\mathcal{L}_{X^{\delta}_s})ds
\nonumber\\
\!\!\!\!\!\!\!\!&&~~~+\frac{1}{\lambda(\delta)}\int_0^t\bar{b}(\bar{X}_s,\mathcal{L}_{X^{\delta}_s})-\bar{b}(\bar{X}_s,\mathcal{L}_{\bar{X}_s})ds
\nonumber\\
\!\!\!\!\!\!\!\!&&~~~+\frac{\sqrt{\delta}}{\lambda(\delta)}\int_0^t\sigma\big(\lambda(\delta)Z_s^{\delta,h^\delta} +\bar{X}_s ,\mathcal{L}_{X^{\delta}_s}\big)dW_s^1
\nonumber\\
\!\!\!\!\!\!\!\!&&~~~+\int_0^t\sigma\big(\lambda(\delta)Z_s^{\delta,h^\delta} +\bar{X}_s ,\mathcal{L}_{X^{\delta}_s}\big)h^{1,\delta}_s ds,
\nonumber\\
=:\!\!\!\!\!\!\!\!&&~~~\sum_{i=1}^5\mathcal{K}_i^{\delta}(t).
\end{eqnarray}
Recall the uniform moment estimate (\ref{ess16}). In light of the criterion of tightness (cf.~\cite[Theorem 7.3]{B0}), it suffices to prove that for any positive $\theta,\epsilon$, there exist constants $\delta_0$ such that
\begin{equation}\label{tight1}
\sup_{\delta\in(0,1)}\mathbb{P}\Big(\sup_{t_1,t_2\in[0,T], |t_1-t_2|<\delta_0}|Z^{\delta,h^\delta}_{t_1}-Z^{\delta,h^\delta}_{t_2}|\geq \theta\Big)\leq \epsilon.
\end{equation}
Note that in light of (\ref{esa2}) and (\ref{esa3}) below, we know that the terms
$\mathcal{K}_3^{\delta}(t)$ and $\mathcal{K}_4^{\delta}(t)$ vanish in probability in $C([0,T];\mathbb{R}^n)$ as $\delta\to 0$.  Hence, it is sufficient to show (\ref{tight1}) holds for the remaining terms on the right hand side of (\ref{zz1}). Specifically, as for $\mathcal{K}_2^{\delta}(t)$, {by Lemma \ref{C1}} and (\ref{ess16}), for any $t_1,t_2\in[0,T]$,
\begin{eqnarray}\label{tt1}
\mathbb{E}|\mathcal{K}_2^{\delta}(t_1)-\mathcal{K}_2^{\delta}(t_2)|^2
\leq~~\!\!\!\!\!\!\!\!&&\mathbb{E}\Bigg(\int_{t_2}^{t_1}|Z_s^{\delta,h^\delta}|ds\Bigg)^2\leq C_T|t_1-t_2|^2,
\end{eqnarray}
which implies (\ref{tight1}) holds by the Kolmogorov's continuity criterion. As for $\mathcal{K}_5^{\delta}(t)$, due to $(\mathbf{A3})$ and the fact that $h^\delta\in\mathcal{A}_M$,
\begin{eqnarray}\label{tt2}
\mathbb{E}|\mathcal{K}_5^{\delta}(t_1)-\mathcal{K}_5^{\delta}(t_2)|^4
\leq~~\!\!\!\!\!\!\!\!&&\mathbb{E}\Bigg(\int_{t_2}^{t_1}\|\sigma\big(\lambda(\delta)Z_s^{\delta,h^\delta} +\bar{X}_s ,\mathcal{L}_{X^{\delta}_s}\big)\|\cdot|h^{1,\delta}_s|ds\Bigg)^4
\nonumber\\
\leq~~\!\!\!\!\!\!\!\!&& \Big(\mathbb{E}\big[\sup_{t\in[0,T]}|X^{\delta}_t|^2\big]\Big)^2\mathbb{E}\Bigg(\int_{t_2}^{t_1}|h^{1,\delta}_s|ds\Bigg)^4
\nonumber\\
\leq~~\!\!\!\!\!\!\!\!&& C_T|t_1-t_2|^2\mathbb{E}\Bigg(\int_{0}^{T}|h^{1,\delta}_s|^2ds\Bigg)^2
\nonumber\\
\leq~~\!\!\!\!\!\!\!\!&& C_{M,T}|t_1-t_2|^2.
\end{eqnarray}
{As for $\mathcal{K}_1^{\delta}(t)$, by Proposition \ref{P3.6} and (\ref{Y11}), we know
$$\mathbb{E}\Big[\sup_{t\in [0, T]}|\text{I}_{11}^{\delta}(t)|^2\Big]\leq C_{M,T}(1+|y|^2)(\frac{\varepsilon}{\lambda(\delta)^2}+\frac{\varepsilon^2}{\delta}),$$
which implies  that  $I^{\delta}_{11}(t)$    vanishes in probability in $C([0,T];\mathbb{R}^n)$, as $\delta\to 0$. }
 Thus in light of  the proof of Proposition \ref{p5}, we only need to show the following term
$$\text{I}_3^{\delta}(t):=\frac{\sqrt{\varepsilon}}{\sqrt{\delta}}\int^t_0\partial_y \Phi(X_s^{\delta,h^\delta} ,\mathcal{L}_{X^{\delta}_s},Y^{\delta,h^\delta}_s)\cdot g(X^{\delta,h^\delta}_s,\mathcal{L}_{X^{\delta}_s},Y^{\delta,h^\delta}_s)h^{2,\delta}_s ds$$
satisfies (\ref{tight1}). Indeed, by $(\mathbf{A3})$ and (\ref{PHI3}), we can obtain
\begin{eqnarray*}
\mathbb{E}|\text{I}_3^{\delta}(t_1)-\text{I}_3^{\delta}(t_2)|^4\leq~~\!\!\!\!\!\!\!\!&&\frac{C\varepsilon^4}{\delta^4}|t_1-t_2|^2\mathbb{E}\Bigg(\int_{t_2}^{t_1}|h^\delta_s|^2ds\Bigg)^2
\nonumber\\
\leq~~\!\!\!\!\!\!\!\!&& \frac{C_{M,T}\varepsilon^4}{\delta^4}|t_1-t_2|^2,
\end{eqnarray*}
then according to the regime $\varepsilon=O(\delta)$, it follows that (\ref{tight1}) holds.

\vspace{2mm}
(ii) \textbf{Tightness  of  $\{\mathbf{P}^{\delta,\Delta}\}_{\delta>0}$:}
 We first claim that the function
$$\Psi(\Pi):=\int_{\mathcal{Z}_1\times\mathcal{Z}_2\times\mathcal{Y}\times[0,T]}\Big[|h^1|^2+|h^2|^2+|y|^2\Big]\Pi(dh^1dh^2dydt), ~~\Pi\in\mathcal{P}(\mathcal{Z}_1\times\mathcal{Z}_2\times\mathcal{Y}\times[0,T])$$
is a tightness function  by the fact that it is nonnegative and that the level set  $$\mathcal{R}_k:=\Big\{\Pi\in\mathcal{P}(\mathcal{Z}_1\times\mathcal{Z}_2\times\mathcal{Y}\times[0,T]):\Psi(\Pi)\leq k\Big\}$$
is relatively compact in $\mathcal{P}(\mathcal{Z}_1\times\mathcal{Z}_2\times\mathcal{Y}\times[0,T])$, for each $k<\infty$. To prove the relative compactness, we observe that by Chebyshev's inequality
$$\sup_{\Pi\in \mathcal{R}_k}\Pi\Big(\big\{(h^1,h^2,y,t)\in\mathbb{U}_N\times[0,T]\big\}\Big)\leq\sup_{\Pi\in \mathcal{R}_k}\frac{\Psi(\Pi)}{N^2}\leq\frac{k}{N^2}.$$
Hence, $\mathcal{R}_k$ is tight and thus relatively compact as a subset of $\mathcal{P}(\mathcal{Z}_1\times\mathcal{Z}_2\times\mathcal{Y}\times[0,T])$.

Since $\Psi$ is a tightness function, by Theorem A.3.17 of \cite{de}, the tightness of $\{\mathbf{P}^{\delta,\Delta}\}_{\delta>0}$ holds if we can prove that $\sup_{\delta>0}\mathbb{E}\Big[\Psi(\mathbf{P}^{\delta,\Delta})\Big]<\infty$.
Indeed,
\begin{align}\label{L2es}
  \sup_{\delta>0}\mathbb{E}\Big[\Psi(\mathbf{P}^{\delta,\Delta})\Big] & = \sup_{\delta>0}\mathbb{E}\int_{\mathcal{Z}_1\times\mathcal{Z}_2\times\mathcal{Y}\times[0,T]}\Big[|h^1|^2+|h^2|^2+|y|^2\Big]\mathbf{P}^{\delta,\Delta}(dh^1dh^2dydt)\nonumber\\
  & = \sup_{\delta>0}\mathbb{E}\int_0^T\frac{1}{\Delta}\int_t^{t+\Delta}\Big[|h^{1,\delta}_s|^2+|h^{2,\delta}_s|^2+|Y^{\delta,h^\delta}_s|^2\Big]dsdt\nonumber\\
   &\leq C\sup_{\delta>0}\mathbb{E}\int_0^{T+\Delta}\Big[|h^{1,\delta}_s|^2+|h^{2,\delta}_s|^2+|Y^{\delta,h^\delta}_s|^2\Big]ds<\infty.
\end{align}

(ii) \textbf{Uniform integrability of  $\{\mathbf{P}^{\delta,\Delta}\}_{\delta>0}$:} This statement follows from the last display and the following observation
\begin{align*}
 &\mathbb{E}\left[\int_{\mathbb{U}_N\times[0,T]}\Big[|h^1|+|h^2|+|y|\Big] \mathbf{P}^{\delta,\Delta}(dh^1dh^2dydt)\right] \\
  \leq& \frac{C}{N}\mathbb{E}\left[\int_{\mathcal{Z}_1\times\mathcal{Z}_2\times\mathcal{Y}\times[0,T]}\Big[|h^1|^2+|h^2|^2+|y|^2\Big]  \mathbf{P}^{\delta,\Delta}(dh^1dh^2dydt)\right].
\end{align*}

The proof is complete.
\end{proof}

\vspace{3mm}
Under Proposition \ref{prj1}, for any subsequence of $\{(Z^{\delta,h^\delta},\mathbf{P}^{\delta,\Delta})\}_{\delta>0}$, there exists a subsequence (still denoted  by $(Z^{\delta,h^\delta},\mathbf{P}^{\delta,\Delta})$) such that
$$(Z^{\delta,h^\delta},\mathbf{P}^{\delta,\Delta})\Rightarrow(\bar{Z},\bar{\mathbf{P}}),~\delta\to 0,$$
where we used ``$\Rightarrow$'' to denote the weak convergence of random variables throughout this section. Applying the Skorokhod representation theorem, it is possible to construct another probability space and random variables (still denoted by $(Z^{\delta,h^\delta},\mathbf{P}^{\delta,\Delta})$) such that
\begin{equation}\label{con3}
(Z^{\delta,h^\delta},\mathbf{P}^{\delta,\Delta})\to(\bar{Z},\bar{\mathbf{P}}),~\mathbb{P}\text{-a.s.},~\delta\to 0.
\end{equation}
Therefore, our next aim is to prove that
the accumulation point $(\bar{Z},\bar{\mathbf{P}})$ is a viable pair w.r.t~$(\Theta,\nu^{\bar{X},\mathcal{L}_{\bar{X}}})$ in the sense of Definition \ref{defj1}, i.e. $(\bar{Z},\bar{\mathbf{P}})\in\mathcal{V}_{(\Theta,\nu^{\bar{X},\mathcal{L}_{\bar{X}}})}$.

\subsection{Existence of viable pair}\label{sec5.2}


Note that by Fatou's Lemma and (\ref{L2es}), we have
\begin{align*}
 &\mathbb{E}\int_{\mathcal{Z}_1\times\mathcal{Z}_2\times\mathcal{Y}\times[0,T]}\Big[|h^1|^2 +|h^2|^2+|y|^2 \Big]\bar{\mathbf{P}}(dh^1dh^2dydt)\\
 \leq& \liminf_{\delta\to 0} \mathbb{E}\int_{\mathcal{Z}_1\times\mathcal{Z}_2\times\mathcal{Y}\times[0,T]}\Big[|h^1|^2+|h^2|^2 +|y|^2 \Big] \mathbf{P}^{\delta,\Delta}(dh^1dh^2dydt)\\
 \leq&\sup_{\delta>0}\mathbb{E}\Big[\Psi(\mathbf{P}^{\delta,\Delta})\Big]<\infty,
\end{align*}
which then implies that
$$\int_{\mathcal{Z}_1\times\mathcal{Z}_2\times\mathcal{Y}\times[0,T]}\Big[|h^1|^2 +|h^2|^2+|y|^2 \Big] \bar{\mathbf{P}}(dh^1dh^2dydt)<\infty,~\mathbb{P}\text{-a.s.}.$$
Therefore, the first statement in Definition \ref{defj1} holds.  It remains to show that $(\bar{Z},\bar{\mathbf{P}})$ satisfy (\ref{j1})-(\ref{j2}).

\vspace{2mm}
\textbf{Proof of (\ref{j3}):} We first prove (\ref{j3}) which is helpful in proving (\ref{j1}). By the fact that $\mathbf{P}^{\delta,\Delta}(\mathcal{Z}_1\times\mathcal{Z}_2\times\mathcal{Y}\times[0,t])=t$, along with $\bar{\mathbf{P}}(\mathcal{Z}_1\times\mathcal{Z}_2\times\mathcal{Y}\times \{t\})=0$ and the continuity of $t\mapsto\bar{\mathbf{P}}(\mathcal{Z}_1\times\mathcal{Z}_2\times\mathcal{Y}\times[0,t])$ to deal with null sets, we can get (\ref{j3}) holds.
\hspace{\fill}$\Box$

\vspace{2mm}
{\textbf{Proof of (\ref{j1}):}} Recall (\ref{zz1}).
We next deal with the convergence of the terms $\mathcal{K}_i^{\delta}(t)$, $i=1,\ldots,5$.
Notice that in light of (\ref{esa2}) and (\ref{esa3}) below, we know that the terms
$\mathcal{K}_3^{\delta}(t)$ and $\mathcal{K}_4^{\delta}(t)$ vanish in probability in $C([0,T];\mathbb{R}^n)$, as $\delta\to 0$. Thus it is sufficient to show the convergence of the remaining terms. We will separate the proof into following Lemmas \ref{lemcon1}-\ref{lemcon3}.

\begin{lemma}\label{lemcon1}
The following limit is valid with probability $1$:
\begin{equation}\label{prop1}
\lim_{\delta\rightarrow0}\sup_{t\in[0,T]}\left|\mathcal{K}_5^{\delta}(t)-\int_{\mathcal{Z}_1\times\mathcal{Z}_2\times\mathcal{Y}\times[0,t]}\sigma(\bar{X}_s,\mathcal{L}_{\bar{X}_s})h^1\bar{\mathbf{P}}(dh^1dh^2dyds)\right|=0.
\end{equation}
\end{lemma}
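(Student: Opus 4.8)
The plan is to decompose the quantity inside the supremum in (\ref{prop1}) into three pieces and control each one on the probability space of (\ref{con3}), on which $Z^{\delta,h^\delta}\to\bar Z$ in $C([0,T];\RR^n)$, $\mathbf{P}^{\delta,\Delta}\to\bar{\mathbf{P}}$, and $h^\delta\in S_M$ hold $\mathbb{P}$-a.s. First I would freeze the diffusion coefficient. By the Lipschitz bound (\ref{A11}), Cauchy--Schwarz, $h^\delta\in\mathcal{A}_M$ and the averaging estimate (\ref{33}),
$$\sup_{t\in[0,T]}\Big|\int_0^t\big(\sigma(\lambda(\delta)Z_s^{\delta,h^\delta}+\bar X_s,\mathcal{L}_{X^{\delta}_s})-\sigma(\bar X_s,\mathcal{L}_{\bar X_s})\big)h_s^{1,\delta}\,ds\Big|\le C\sqrt M\Big(\int_0^T\big(\lambda(\delta)|Z_s^{\delta,h^\delta}|+\mathbb{W}_2(\mathcal{L}_{X^{\delta}_s},\mathcal{L}_{\bar X_s})\big)^2\,ds\Big)^{1/2},$$
and the right-hand side tends to $0$ $\mathbb{P}$-a.s., because $\lambda(\delta)\to0$ while $\sup_{s\in[0,T]}|Z_s^{\delta,h^\delta}|$ stays bounded $\mathbb{P}$-a.s.\ (it converges to $\sup_s|\bar Z_s|$), and $\sup_{s\in[0,T]}\mathbb{W}_2(\mathcal{L}_{X^{\delta}_s},\mathcal{L}_{\bar X_s})\to0$ by (\ref{33}).

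Next I would insert the $\Delta$-time average that appears in the occupation measure. Since $s\mapsto\sigma(\bar X_s,\mathcal{L}_{\bar X_s})$ is uniformly continuous on $[0,T]$ with some modulus $\omega_\sigma$, a Fubini computation combined with Jensen's inequality and $\int_0^T|h_s^{1,\delta}|^2\,ds\le M$ gives
$$\sup_{t\in[0,T]}\Big|\int_0^t\sigma(\bar X_s,\mathcal{L}_{\bar X_s})h_s^{1,\delta}\,ds-\frac1\Delta\int_0^t\!\!\int_r^{r+\Delta}\sigma(\bar X_r,\mathcal{L}_{\bar X_r})h_s^{1,\delta}\,ds\,dr\Big|\le C\sqrt M\big(\omega_\sigma(\Delta)+\sqrt\Delta\big)\longrightarrow0,$$
the term $\sqrt\Delta$ accounting for the boundary strips near $0$ and $t$ produced by Fubini. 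By (\ref{p1}), applied with the outer time variable in the role of the last coordinate, the remaining double integral equals $\int\phi^{(t)}\,d\mathbf{P}^{\delta,\Delta}$ for the test function $\phi^{(t)}(h^1,h^2,y,r):=\mathbf{1}_{[0,t]}(r)\,\sigma(\bar X_r,\mathcal{L}_{\bar X_r})h^1$, whereas the target quantity is $\int\phi^{(t)}\,d\bar{\mathbf{P}}$.

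It then remains to show $\sup_{t\in[0,T]}\big|\int\phi^{(t)}\,d\mathbf{P}^{\delta,\Delta}-\int\phi^{(t)}\,d\bar{\mathbf{P}}\big|\to0$ $\mathbb{P}$-a.s., which I expect to be the main obstacle because $\phi^{(t)}$ grows linearly in $h^1$ and is discontinuous in $r$ at $r=t$. On the a.s.\ event where $h^\delta\in S_M$ one has $\sup_\delta\int|h^1|^2\,d\mathbf{P}^{\delta,\Delta}\le\int_0^T|h_s^{1,\delta}|^2\,ds\le M$, which supplies the uniform integrability needed to integrate the linearly growing $\phi^{(t)}$ against the weakly convergent $\mathbf{P}^{\delta,\Delta}$ after a continuous cut-off $|h^1|\le N$ (the tail being $O(M/N)$); and the last marginal of $\bar{\mathbf{P}}$ is Lebesgue measure by (\ref{j3}), already established, so $\mathbf{1}_{[0,t]}(r)$ is continuous $\bar{\mathbf{P}}$-a.e.\ and may be squeezed between continuous functions — this yields the convergence for each fixed $t$. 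Finally, each of the three displayed quantities is, as a function of $t$, equicontinuous uniformly in $\delta$ (for instance $\big|\int\phi^{(t_1)}\,d\mathbf{P}^{\delta,\Delta}-\int\phi^{(t_2)}\,d\mathbf{P}^{\delta,\Delta}\big|\le C\sqrt M\,|t_1-t_2|^{1/2}$ by Jensen), so pointwise a.s.\ convergence along a countable dense subset of $[0,T]$ upgrades to the uniform a.s.\ convergence claimed in (\ref{prop1}).
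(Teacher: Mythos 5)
Your proposal is correct, but it organizes the decomposition in a different order than the paper, and the reordering brings a genuine simplification. The paper splits $\mathcal{K}_5^\delta$ as $\mathcal{K}_{51}^\delta+\mathcal{K}_{52}^\delta+\mathcal{K}_{53}^\delta$, where $\mathcal{K}_{51}^\delta$ (the ``$ds$ versus $\mathbf{P}^{\delta,\Delta}$'' step) is done \emph{before} freezing $\sigma$ at $(\bar X_s,\mathcal{L}_{\bar X_s})$; because $\sigma$ is still evaluated at the controlled process there, the paper must invoke the H\"older-in-time estimate (\ref{Holder}) on $X^{\delta,h^\delta}$ and settle for an $L^2(\Omega)$ bound. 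You instead freeze $\sigma$ first (your first display, using \eref{A11}, Cauchy--Schwarz, and \eref{33}), and only then insert the $\Delta$-time average, so that the time-averaging step involves only the deterministic, continuous path $s\mapsto\sigma(\bar X_s,\mathcal{L}_{\bar X_s})$ — its modulus of continuity $\omega_\sigma$ plus a $\sqrt\Delta$ boundary term suffice, with no need for \eref{Holder}. You also argue $\mathbb P$-a.s.\ throughout rather than in $L^2$, exploiting that on the Skorokhod space $\sup_s|Z^{\delta,h^\delta}_s|$ converges a.s.\ (hence is a.s.\ bounded), which is slightly cleaner given that \eref{prop1} is asserted with probability one. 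For the final passage from $\mathbf{P}^{\delta,\Delta}$ to $\bar{\mathbf{P}}$ the paper appeals to $\mathbb W_1$-convergence coming from uniform integrability, while you use truncation of $h^1$ plus a continuous squeeze of $\mathbf 1_{[0,t]}$ justified by \eref{j3}; both are valid, and your argument is more explicit. You are also more careful than the paper on one point: the paper's display \eref{eA53} only records convergence at $t=T$, whereas \eref{prop1} involves $\sup_{t\in[0,T]}$; your $|t_1-t_2|^{1/2}$-equicontinuity estimate uniform in $\delta$, combined with pointwise a.s.\ convergence on a countable dense set, properly closes this gap.
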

\begin{proof}
Note that
\begin{eqnarray}\label{eA5}
\mathcal{K}_5^{\delta}(t)=~~\!\!\!\!\!\!\!\!&&\Bigg(\int_0^t\sigma\big(\lambda(\delta)Z_s^{\delta,h^\delta}+\bar{X}_s,\mathcal{L}_{X^{\delta}_s}\big){h}^{1,\delta}_sds
\nonumber \\
\!\!\!\!\!\!\!\!&&
-\int_{\mathcal{Z}_1\times\mathcal{Z}_2\times\mathcal{Y}\times [0,t]}\sigma\big(\lambda(\delta)Z_s^{\delta,h^\delta}+\bar{X}_s,\mathcal{L}_{X^{\delta}_s}\big){h}^1\mathbf{P}^{\delta,\Delta}(dh^1dh^2dyds)\Bigg)
\nonumber \\
\!\!\!\!\!\!\!\!&&+\Bigg(\int_{\mathcal{Z}_1\times\mathcal{Z}_2\times\mathcal{Y}\times [0,t]}\sigma\big(\lambda(\delta)Z_s^{\delta,h^\delta}+\bar{X}_s,\mathcal{L}_{X^{\delta}_s}\big){h}^1\mathbf{P}^{\delta,\Delta}(dh^1dh^2dyds)
\nonumber \\
\!\!\!\!\!\!\!\!&&-\int_{\mathcal{Z}_1\times\mathcal{Z}_2\times\mathcal{Y}\times [0,t]}\sigma\big(\bar{X}_s,\mathcal{L}_{\bar{X}_s}\big){h}^1\mathbf{P}^{\delta,\Delta}(dh^1dh^2dyds)\Bigg)
\nonumber \\
\!\!\!\!\!\!\!\!&&+\int_{\mathcal{Z}_1\times\mathcal{Z}_2\times\mathcal{Y}\times [0,t]}\sigma\big(\bar{X}_s,\mathcal{L}_{\bar{X}_s}\big){h}^1\mathbf{P}^{\delta,\Delta}(dh^1dh^2dyds)
\nonumber \\
=:~~\!\!\!\!\!\!\!\!&&\sum_{i=1}^{3}\mathcal{K}_{5i}^{\delta}(t).
\end{eqnarray}

\textbf{Step 1:} In this step, we estimate $\mathcal{K}_{51}^{\delta}(t)$. First, we observe that
\begin{eqnarray*}
\!\!\!\!\!\!\!\!&&\int_{\mathcal{Z}_1\times\mathcal{Z}_2\times\mathcal{Y}\times [0,t]}\sigma\big(\lambda(\delta)Z_s^{\delta,h^\delta}+\bar{X}_s,\mathcal{L}_{X^{\delta}_s}\big){h}^1\mathbf{P}^{\delta,\Delta}(dh^1dh^2dyds)
\nonumber \\
=~~\!\!\!\!\!\!\!\!&&\Bigg(\int_0^t\frac{1}{\Delta}\int_s^{s+\Delta}\sigma\big(\lambda(\delta)Z_s^{\delta,h^\delta}+\bar{X}_s,\mathcal{L}_{X^{\delta}_s}\big){h}^{1,\delta}_rdrds
\nonumber \\
\!\!\!\!\!\!\!\!&&-\int_0^t\frac{1}{\Delta}\int_s^{s+\Delta}\sigma\big(\lambda(\delta)Z_r^{\delta,h^\delta}+\bar{X}_r,\mathcal{L}_{X^{\delta}_r}\big){h}^{1,\delta}_rdrds\Bigg)
\nonumber \\
\!\!\!\!\!\!\!\!&&+\int_0^t\frac{1}{\Delta}\int_s^{s+\Delta}\sigma\big(\lambda(\delta)Z_r^{\delta,h^\delta}+\bar{X}_r,\mathcal{L}_{X^{\delta}_r}\big){h}^{1,\delta}_rdrds.
\nonumber \\
=:~~\!\!\!\!\!\!\!\!&&\mathcal{H}_1(t)+\mathcal{H}_2(t).
\end{eqnarray*}
As for $\mathcal{H}_1(t)$, due to (\ref{Holder}) we can deduce that
\begin{eqnarray}\label{oc1}
\mathbb{E}\sup_{t\in[0,T]}|\mathcal{H}_1(t)|^2{\leq}~~\!\!\!\!\!\!\!\!&&\frac{1}{\Delta^2}\mathbb{E}\Bigg(\int_0^T\int_s^{s+\Delta}\big(|X_s^{\delta,h^\delta}-X_r^{\delta,h^\delta}|^2+\mathbb{E}|X_s^{\delta}-X_r^{\delta}|^2\big)drds
\nonumber \\
\!\!\!\!\!\!\!\!&&
\cdot\int_0^T\int_s^{s+\Delta}|{h}^{1,\delta}_r|^2drds\Bigg)
\nonumber \\
\leq~~\!\!\!\!\!\!\!\!&&\frac{C_{M,T}}{\Delta}\int_0^T\int_s^{s+\Delta}\big(\mathbb{E}|X_s^{\delta,h^\delta}-X_r^{\delta,h^\delta}|^2+\mathbb{E}|X_s^{\delta}-X_r^{\delta}|^2\big)drds
\nonumber \\
\leq~~\!\!\!\!\!\!\!\!&&\frac{C_{M,T}}{\Delta}\int_0^T\int_s^{s+\Delta}(r-s)drds
\nonumber \\
\leq~~\!\!\!\!\!\!\!\!&&C_{M,T}\Delta\to 0,~~\text{as}~\delta\to 0,
\end{eqnarray}
where the second step is due to for any $t\in[0,T]$,
\begin{eqnarray*}
\int_0^t\int_s^{s+\Delta}|{h}^{1,\delta}_r|^2drds=~~\!\!\!\!\!\!\!\!&&\int_0^{\Delta}\int_0^r|{h}^{1,\delta}_r|^2dsdr+\int_{\Delta}^t\int_{r-\Delta}^r|{h}^{1,\delta}_r|^2dsdr
\nonumber \\
\!\!\!\!\!\!\!\!&&+\int_t^{t+\Delta}\int_{r-\Delta}^t|{h}^{1,\delta}_r|^2dsdr
\nonumber \\
=~~\!\!\!\!\!\!\!\!&&\int_0^{\Delta}|{h}^{1,\delta}_r|^2rdr+\Delta\int_{\Delta}^t|{h}^{1,\delta}_r|^2dr+{\int_t^{t+\Delta}(t-r+\Delta)|{h}^{1,\delta}_r|^2dr}
\nonumber \\
\leq~~\!\!\!\!\!\!\!\!&&{3\Delta\int_{0}^t|{h}^{1,\delta}_r|^2dr.}
\end{eqnarray*}

On the other hand,  note that $\mathcal{H}_2(t)$ have the composition
\begin{eqnarray*}
\mathcal{H}_2(t)=~~\!\!\!\!\!\!\!\!&&\int_0^{\Delta}\frac{1}{\Delta}\int_0^{r}\sigma\big(\lambda(\delta)Z_r^{\delta,h^\delta}+\bar{X}_r,\mathcal{L}_{X^{\delta}_r}\big){h}^{1,\delta}_rdsdr
\nonumber \\
\!\!\!\!\!\!\!\!&&+\int_{\Delta}^t\frac{1}{\Delta}\int_{r-\Delta}^{r}\sigma\big(\lambda(\delta)Z_r^{\delta,h^\delta}+\bar{X}_r,\mathcal{L}_{X^{\delta}_r}\big){h}^{1,\delta}_rdsdr
\nonumber \\
\!\!\!\!\!\!\!\!&&+\int_{t}^{t+\Delta}\frac{1}{\Delta}\int_{r-\Delta}^{t}\sigma\big(\lambda(\delta)Z_r^{\delta,h^\delta}+\bar{X}_r,\mathcal{L}_{X^{\delta}_r}\big){h}^{1,\delta}_rdsdr.
\nonumber \\
=:~~\!\!\!\!\!\!\!\!&&\sum_{i=1}^3\mathcal{H}_{2i}(t).
\end{eqnarray*}
In light of $(\mathbf{A4})$, it is easy to see that
\begin{equation}\label{oc2}
\mathbb{E}\sup_{t\in[0,T]}|\mathcal{H}_{21}(t)|^2+\mathbb{E}\sup_{t\in[0,T]}|\mathcal{H}_{23}(t)|^2\to 0,~~\text{as}~\delta\to 0.
\end{equation}
Therefore, for the term $\mathcal{K}_{51}^\delta(t)$ in (\ref{eA5})  it follows that
\begin{eqnarray*}
\mathbb{E}\sup_{t\in[0,T]}|\mathcal{K}_{51}^{\delta}(t)|^2\leq~~\!\!\!\!\!\!\!\!&&C\mathbb{E}\Big|\int_0^\Delta\sigma\big(\lambda(\delta)Z_r^{\delta,h^\delta}+\bar{X}_r,\mathcal{L}_{X^{\delta}_r}\big){h}^{1,\delta}_rdr\Big|^2
\nonumber \\
\!\!\!\!\!\!\!\!&&+C\mathbb{E}\sup_{t\in[0,T]}\Big|\int_\Delta^t\sigma\big(\lambda(\delta)Z_r^{\delta,h^\delta}+\bar{X}_r,\mathcal{L}_{X^{\delta}_r}\big){h}^{1,\delta}_rdr-{\mathcal{H}_{22}(t)}\Big|^2
\nonumber \\
\!\!\!\!\!\!\!\!&&+C\mathbb{E}\sup_{t\in[0,T]}|\mathcal{H}_1(t)|^2+C\mathbb{E}\sup_{t\in[0,T]}|\mathcal{H}_{21}(t)|^2+C\mathbb{E}\sup_{t\in[0,T]}|\mathcal{H}_{23}(t)|^2.
\end{eqnarray*}
Then by $(\mathbf{A4})$, (\ref{oc1}), (\ref{oc2}) and the definition of $\mathcal{H}_{22}(t)$, it leads to
\begin{equation}\label{eA51}
\mathbb{E}\sup_{t\in[0,T]}|\mathcal{K}_{51}^{\delta}(t)|^2\to 0,~~~\text{as}~\delta\to 0.
\end{equation}

\textbf{Step 2:} In this step, we focus on $\mathcal{K}_{52}^{\delta}(t)$ and $\mathcal{K}_{53}^{\delta}(t)$. As for $\mathcal{K}_{52}^{\delta}(t)$,
\begin{eqnarray*}
\!\!\!\!\!\!\!\!&&\mathbb{E}\sup_{t\in[0,T]}|\mathcal{K}_{52}^{\delta}(t)|^2
\nonumber \\
\leq~~\!\!\!\!\!\!\!\!&&\mathbb{E}\sup_{t\in[0,T]}\Big|\int_{\mathcal{Z}_1\times\mathcal{Z}_2\times\mathcal{Y}\times [0,t]}\Big(\sigma\big(\lambda(\delta)Z_s^{\delta,h^\delta}+\bar{X}_s,\mathcal{L}_{X^{\delta}_s}\big)-\sigma\big(\bar{X}_s,\mathcal{L}_{\bar{X}_s}\big)\Big){h}^1\mathbf{P}^{\delta,\Delta}(dh^1dh^2dyds)\Big|^2
\nonumber \\
\leq~~\!\!\!\!\!\!\!\!&&\mathbb{E}\Bigg(\int_{\mathcal{Z}_1\times\mathcal{Z}_2\times\mathcal{Y}\times [0,T]}\|\sigma\big(\lambda(\delta)Z_s^{\delta,h^\delta}+\bar{X}_s,\mathcal{L}_{X^{\delta}_s}\big)-\sigma\big(\bar{X}_s,\mathcal{L}_{\bar{X}_s}\big)\|^2\mathbf{P}^{\delta,\Delta}(dh^1dh^2dyds)
\nonumber \\
\!\!\!\!\!\!\!\!&&\cdot\int_{\mathcal{Z}_1\times\mathcal{Z}_2\times\mathcal{Y}\times [0,T]}    |{h}^1|^2\mathbf{P}^{\delta,\Delta}(dh^1dh^2dyds)\Bigg)
\nonumber \\
\leq~~\!\!\!\!\!\!\!\!&&C_{M,T}\lambda(\delta)^2\mathbb{E}\Big[\sup_{s\in[0,T]}|Z_s^{\delta,h^\delta}|^2\Big]+C_{M,T}\mathbb{E}\Big[\sup_{s\in[0,T]}|X^{\delta}_s-\bar{X}_s|^2\Big].
\end{eqnarray*}
According to (\ref{33}) and (\ref{ess16}), we deduce
\begin{equation}\label{eA52}
\mathbb{E}\sup_{t\in[0,T]}|\mathcal{K}_{52}^{\delta}(t)|^2\to 0,~~~\text{as}~\delta\to 0.
\end{equation}
Now we turn to study the limit of  $\mathcal{K}_{53}^{\delta}(t)$. Recall that as stated in Subsection \ref{sec5.1}, due to the use of Skorohod's representation theorem, we have construct a sequence $\{\mathbf{P}^{\delta,\Delta}\}_{\delta>0}$, as random variable in $\mathcal{P}(\mathcal{Z}_1\times\mathcal{Z}_2\times\mathcal{Y}\times[0,T])$, converge almost surely to $\bar{\mathbf{P}}$. Notice that $\{\mathbf{P}^{\delta,\Delta}\}_{\delta>0}$ is uniformly integrable by Theorem \ref{prj1}, thus we can get that $\mathbf{P}^{\delta,\Delta}$, as random variable in $\mathcal{P}_1(\mathcal{Z}_1\times\mathcal{Z}_2\times\mathcal{Y}\times[0,T])$, converges almost surely to $\bar{\mathbf{P}}$, i.e. $\mathbb{P}$-a.s.
$$\mathbb{W}_1(\mathbf{P}^{\delta,\Delta},\bar{\mathbf{P}})\to 0,~~~\text{as}~\delta\to 0.$$
Hence, due to the assumption $(\mathbf{A4})$, we obtain the following convergence with probability $1$,
\begin{eqnarray}\label{eA53}
\!\!\!\!\!\!\!\!&&\int_{\mathcal{Z}_1\times\mathcal{Z}_2\times\mathcal{Y}\times[0,T]}\sigma(\bar{X}_s,\mathcal{L}_{\bar{X}_s})h^1\mathbf{P}^{\delta,\Delta}(dh^1dh^2dyds)
\nonumber \\
\!\!\!\!\!\!\!\!&&\to\int_{\mathcal{Z}_1\times\mathcal{Z}_2\times\mathcal{Y}\times[0,T]}\sigma(\bar{X}_s,\mathcal{L}_{\bar{X}_s})h^1\bar{\mathbf{P}}(dh^1dh^2dyds),~\text{as}~\delta\to 0.
\end{eqnarray}
Finally, combining (\ref{eA51})-(\ref{eA53}), it follows that (\ref{prop1}) holds. We complete the proof.
\end{proof}

\begin{lemma}\label{lemcon2}
The following limit is valid with probability $1$:
\begin{equation}\label{prop2}
\lim_{\delta\rightarrow0}\sup_{t\in[0,T]}\left|\mathcal{K}_1^\delta(t)
-\int_{\mathcal{Z}_1\times\mathcal{Z}_2\times\mathcal{Y}\times[0,t]}\sqrt{\gamma}\partial_y\Phi_ g(\bar{X}_s,\mathcal{L}_{\bar{X}_s},y)h^2\bar{\mathbf{P}}(dh^1dh^2dyds)\right|=0,
\end{equation}
where $\partial_y\Phi_ g$ is defined by (\ref{partial Phig}).
\end{lemma}

\begin{proof}
Recall
\begin{eqnarray*}
\mathcal{K}_1^\delta(t)=~~\!\!\!\!\!\!\!\!&&\frac{\varepsilon}{\lambda(\delta)}\Big\{-\Phi(X_t^{\delta,h^\delta} ,\mathcal{L}_{X^{\delta}_t},Y^{\delta,h^\delta}_t)+\Phi(x,\delta_x,y)
\nonumber\\
\!\!\!\!\!\!\!\!&&+\int^t_0 \mathbb{E}\left[b(X^{\delta}_s,\mathcal{L}_{ X^{\delta}_{s}}, Y^{\delta}_s)\partial_{\mu}\Phi(x,\mathcal{L}_{X^{\delta}_{s}},y)(X^{\delta}_s)\right]\Big|_{x=X_s^{\delta,h^\delta} ,y=Y^{\delta,h^\delta}_{s}}ds\nonumber\\
\!\!\!\!\!\!\!\!&&+\int^t_0 {\frac{\delta}{2}}\mathbb{E}\text{Tr}\left[\sigma\sigma^{*}(X^{\delta}_s,\mathcal{L}_{ X^{\delta}_{s}})\partial_z\partial_{\mu}\Phi(x,\mathcal{L}_{X^{\delta}_{s}},y)(X^{\delta}_s)\right]\Big|_{x=X_s^{\delta,h^\delta} ,y=Y^{\delta,h^\delta}_{s}}ds\nonumber\\
\!\!\!\!\!\!\!\!&&+\int^t_0 \mathbf{L}^{1,\delta}_{\mathcal{L}_{X^{\delta}_{s}},Y^{\delta,h^\delta}_{s}}\Phi(X_s^{\delta,h^\delta},\mathcal{L}_{X^{\delta}_{s}},Y^{\delta,h^\delta}_{s})ds
+M^{1,\delta,h^\delta}_t\Big\}
\nonumber\\
\!\!\!\!\!\!\!\!&&+\varepsilon\int^t_0 \partial_x \Phi(X_s^{\delta,h^\delta} ,\mathcal{L}_{X^{\delta}_s},Y^{\delta,h^\delta}_s)\cdot \sigma(X^{\delta,h^\delta}_s,\mathcal{L}_{X^{\delta}_s}){h}^{1,\delta}_s ds
\nonumber\\
\!\!\!\!\!\!\!\!&&+\frac{\sqrt{\varepsilon}}{\sqrt{\delta}}\int^t_0\partial_y \Phi(X_s^{\delta,h^\delta} ,\mathcal{L}_{X^{\delta}_s},Y^{\delta,h^\delta}_s)\cdot g(X^{\delta,h^\delta}_s,\mathcal{L}_{X^{\delta}_s},Y^{\delta,h^\delta}_s){h}^{2,\delta}_s ds
\nonumber\\
\!\!\!\!\!\!\!\!&&
+\frac{\sqrt{\varepsilon}}{\lambda(\delta)}M^{2,\delta,h^\delta}_t.
\end{eqnarray*}
{Collecting the calculations of Steps 2 and 3 in Proposition \ref{p5}, (\ref{prop2}) holds if we can prove
\begin{eqnarray}\label{lem5.2}
\!\!\!\!\!\!\!\!&&~~~\lim_{\delta\rightarrow0}\sup_{t\in[0,T]}\Bigg|\frac{\sqrt{\varepsilon}}{\sqrt{\delta}}\int^t_0\partial_y \Phi_g(X_s^{\delta,h^\delta} ,\mathcal{L}_{X^{\delta}_s},Y^{\delta,h^\delta}_s){h}^{2,\delta}_s ds
\nonumber\\
\!\!\!\!\!\!\!\!&&
-\int_{\mathcal{Z}_1\times\mathcal{Z}_2\times\mathcal{Y}\times[0,t]}\sqrt{\gamma}\partial_y\Phi_ g(\bar{X}_s,\mathcal{L}_{\bar{X}_s},y)h^2\bar{\mathbf{P}}(dh^1dh^2dyds)\Bigg|=0.
\end{eqnarray}
We have the following  composition
\begin{eqnarray*}
\!\!\!\!\!\!\!\!&&\frac{\sqrt{\varepsilon}}{\sqrt{\delta}}\int^t_0\partial_y \Phi_g(X_s^{\delta,h^\delta} ,\mathcal{L}_{X^{\delta}_s},Y^{\delta,h^\delta}_s){h}^{2,\delta}_s ds-\int_{\mathcal{Z}_1\times\mathcal{Z}_2\times\mathcal{Y}\times[0,t]}\sqrt{\gamma}\partial_y\Phi_ g(\bar{X}_s,\mathcal{L}_{\bar{X}_s},y)h^2\bar{\mathbf{P}}(dh^1dh^2dyds)
\nonumber \\
=~~\!\!\!\!\!\!\!\!&&\frac{\sqrt{\varepsilon}}{\sqrt{\delta}}\Bigg(\int^t_0\partial_y \Phi_g(X_s^{\delta,h^\delta} ,\mathcal{L}_{X^{\delta}_s},Y^{\delta,h^\delta}_s){h}^{2,\delta}_s ds
\nonumber \\
\!\!\!\!\!\!\!\!&&
-\int_{\mathcal{Z}_1\times\mathcal{Z}_2\times\mathcal{Y}\times [0,t]}\partial_y \Phi_g(X_s^{\delta,h^\delta} ,\mathcal{L}_{X^{\delta}_s},y){h}^{2} \mathbf{P}^{\delta,\Delta}(dh^1dh^2dyds)  \Bigg)
\nonumber \\
\!\!\!\!\!\!\!\!&&
+\frac{\sqrt{\varepsilon}}{\sqrt{\delta}}\Bigg(\int_{\mathcal{Z}_1\times\mathcal{Z}_2\times\mathcal{Y}\times [0,t]}\partial_y \Phi_g(X_s^{\delta,h^\delta} ,\mathcal{L}_{X^{\delta}_s},y){h}^{2} \mathbf{P}^{\delta,\Delta}(dh^1dh^2dyds)
\nonumber \\
\!\!\!\!\!\!\!\!&&
-\int_{\mathcal{Z}_1\times\mathcal{Z}_2\times\mathcal{Y}\times [0,t]}\partial_y \Phi_g(\bar{X}_s ,\mathcal{L}_{\bar{X}_s},y){h}^{2} \mathbf{P}^{\delta,\Delta}(dh^1dh^2dyds) \Bigg)
\nonumber \\
\!\!\!\!\!\!\!\!&&+\Bigg(\frac{\sqrt{\varepsilon}}{\sqrt{\delta}}\int_{\mathcal{Z}_1\times\mathcal{Z}_2\times\mathcal{Y}\times [0,t]}\partial_y \Phi_g(\bar{X}_s ,\mathcal{L}_{\bar{X}_s},y){h}^{2} \mathbf{P}^{\delta,\Delta}(dh^1dh^2dyds)
\nonumber \\
\!\!\!\!\!\!\!\!&&-\int_{\mathcal{Z}_1\times\mathcal{Z}_2\times\mathcal{Y}\times[0,t]}\sqrt{\gamma}\partial_y\Phi_ g(\bar{X}_s,\mathcal{L}_{\bar{X}_s},y)h^2\bar{\mathbf{P}}(dh^1dh^2dyds)\Bigg)
\nonumber \\
=:~~\!\!\!\!\!\!\!\!&&\sum_{i=1}^{3}\mathcal{O}_{i}^{\delta}(t).
\end{eqnarray*}
Therefore, (\ref{lem5.2}) holds if we prove that
$$\mathbb{E}\Big[\sup_{t\in[0,T]}\Big|\sum_{i=1}^{3}\mathcal{O}_{i}^{\delta}(t)\Big|\Big]\to 0,~\text{as}~\delta\to 0.$$

First, we claim that $\partial_y\Phi_ g$ is bounded and joint Lipschitz continuous in $(x,\mu,y)$. Indeed, by Proposition \ref{P3.6} it implies that $\partial_y\Phi$ is bounded and Lipschitz continuous w.r.t.~$(x,\mu,y)$, thus by condition $({\mathbf{A}}{\mathbf{3}})$, it is easy to see that  $\partial_y\Phi_g$ is bounded. Moreover, by $({\mathbf{A}}{\mathbf{1}})$ and $({\mathbf{A}}{\mathbf{3}})$, for all $x_1,x_2\in\mathbb{R}^n, \mu_1,\mu_2\in \mathcal{P}_2, y_1,y_2\in\mathbb{R}^m$, we have
\begin{align*}
 &\|\partial_y\Phi(x_1,\mu_1,y_1)g(x_1,\mu_1,y_1)-\partial_y\Phi(x_2,\mu_2,y_2)g(x_2,\mu_2,y_2)\|\\
 \leq& \|g(x_1,\mu_1,y_1)\|\cdot\|\partial_y\Phi(x_1,\mu_1,y_1)-\partial_y\Phi(x_2,\mu_2,y_2)\|+\|\partial_y\Phi(x_2,\mu_2,y_2)\|\cdot\|g(x_1,\mu_1,y_1)
 -g(x_2,\mu_2,y_2)\|\\
  \leq& C\Big(|x_1-x_2|+|y_1-y_2|+\mathbb{W}_2(\mu_1, \mu_2)\Big).
\end{align*}

Nextly, we deal with terms $\mathcal{O}_{i}^{\delta}(t), i=1,2,3$.
First, we estimate the term $\mathcal{O}^{\delta}_{1}(t)$. By (\ref{p1}), we observe that
\begin{align*}
 &\int_{\mathcal{Z}_1\times\mathcal{Z}_2\times\mathcal{Y}\times [0,t]}\partial_y \Phi_g(X_s^{\delta,h^\delta} ,\mathcal{L}_{X^{\delta}_s},y){h}^{2}
 \mathbf{P}^{\delta,\Delta}(dh^1dh^2dyds) \\
  =& \Bigg(\int_0^t\frac{1}{\Delta}\int_s^{s+\Delta}\partial_y \Phi_g(X_s^{\delta,h^\delta} ,\mathcal{L}_{X^{\delta}_s},Y_r^{\delta,h^\delta} ){h}^{2,\delta}_rdrds\\
 &-\int_0^t\frac{1}{\Delta}\int_s^{s+\Delta}\partial_y \Phi_g(X_r^{\delta,h^\delta} ,\mathcal{L}_{X^{\delta}_r},Y_r^{\delta,h^\delta} ){h}^{2,\delta}_rdrds\Bigg)\\
 &+\int_0^t\frac{1}{\Delta}\int_s^{s+\Delta}\partial_y \Phi_g(X_r^{\delta,h^\delta} ,\mathcal{L}_{X^{\delta}_r},Y_r^{\delta,h^\delta} ){h}^{2,\delta}_rdrds\\
 =:&~{A}^\delta_1(t)+A^\delta_2(t).
\end{align*}
As for $A_1^\delta(t)$, due to the Lipschitz continuity of $\partial_y\Phi_g$ and  (\ref{Holder}),  we can deduce that
\begin{eqnarray}\label{hfx5}
\mathbb{E}\sup_{t\in[0,T]}|A_1^\delta(t)|^2\leq~~\!\!\!\!\!\!\!\!&&\frac{1}{\Delta^2}\mathbb{E}\Bigg(\int_0^T\int_s^{s+\Delta}\big(|X_s^{\delta,h^\delta}-X_r^{\delta,h^\delta}|^2+\mathbb{E}|X_s^{\delta}-X_r^{\delta}|^2\big)drds
\cdot\int_0^T\int_s^{s+\Delta}|{h}^{2,\delta}_r|^2drds\Bigg)
\nonumber \\
\leq~~\!\!\!\!\!\!\!\!&&\frac{C_{M,T}}{\Delta}\int_0^T\int_s^{s+\Delta}\big(\mathbb{E}|X_s^{\delta,h^\delta}-X_r^{\delta,h^\delta}|^2+\mathbb{E}|X_s^{\delta}-X_r^{\delta}|^2\big)drds
\nonumber \\
\leq~~\!\!\!\!\!\!\!\!&&\frac{C_{M,T}}{\Delta}\int_0^T\int_s^{s+\Delta}(r-s)drds
\nonumber \\
\leq~~\!\!\!\!\!\!\!\!&&C_{M,T}\Delta\to 0,~~\text{as}~\delta\to 0.
\end{eqnarray}
On the other hand,  note that $A^\delta_2(t)$ have the composition
\begin{align*}
A^\delta_2(t)=& \int_0^{\Delta}\frac{1}{\Delta}\int_0^{r}\partial_y \Phi_g(X_r^{\delta,h^\delta} ,\mathcal{L}_{X^{\delta}_r},Y_r^{\delta,h^\delta} ){h}^{2,\delta}_rdsdr \\
&+\int_{\Delta}^t\frac{1}{\Delta}\int_{r-\Delta}^{r}\partial_y \Phi_g(X_r^{\delta,h^\delta} ,\mathcal{L}_{X^{\delta}_r},Y_r^{\delta,h^\delta} ){h}^{2,\delta}_rdsdr \\
&+\int_{t}^{t+\Delta}\frac{1}{\Delta}\int_{r-\Delta}^{t}\partial_y \Phi_g(X_r^{\delta,h^\delta} ,\mathcal{L}_{X^{\delta}_r},Y_r^{\delta,h^\delta} ){h}^{2,\delta}_rdsdr \\
=:&\sum_{i=1}^3A_{2i}(t).
\end{align*}
Due to the boundedness of $\partial_y \Phi_g$ and ${h}^{2,\delta}\in\mathcal{A}_M$, it is easy to see that
\begin{equation}\label{hfx6}
\mathbb{E}\sup_{t\in[0,T]}|A_{21}(t)|^2+\mathbb{E}\sup_{t\in[0,T]}|A_{23}(t)|^2\to 0,~~\text{as}~\delta\to 0.
\end{equation}
Therefore, for the term $\mathcal{O}_{1}^\delta(t)$,  it follows that
\begin{align*}
  \mathbb{E}\sup_{t\in[0,T]}|\mathcal{O}_{1}^\delta(t)|^2\leq&~ \frac{\varepsilon}{\delta}C \mathbb{E}\Big|\int_0^\Delta\partial_y \Phi_g(X_r^{\delta,h^\delta} ,\mathcal{L}_{X^{\delta}_r},Y^{\delta,h^\delta}_r){h}^{2,\delta}_r dr\Big|^2\\
 &+ \frac{\varepsilon}{\delta}C\mathbb{E}\sup_{t\in[0,T]}\Big|\int_\Delta^t\partial_y \Phi_g(X_r^{\delta,h^\delta} ,\mathcal{L}_{X^{\delta}_r},Y^{\delta,h^\delta}_r){h}^{2,\delta}_r dr-A_{22}(t)\Big|^2\\
 &+ \frac{\varepsilon}{\delta}C\Big(\mathbb{E}\sup_{t\in[0,T]}|A^\delta_1(t)|^2+\mathbb{E}\sup_{t\in[0,T]}|A_{21}(t)|^2+\mathbb{E}\sup_{t\in[0,T]}|A_{23}(t)|^2\Big).
\end{align*}
Then by the boundedness of $\partial_y \Phi_g$, ${h}^{2,\delta}\in\mathcal{A}_M$ , (\ref{hfx5}), (\ref{hfx6}) and the definition of $A_{22}(t)$, it leads to
\begin{equation}\label{hfx7}
\mathbb{E}\sup_{t\in[0,T]}|\mathcal{O}_{1}^{\delta}(t)|^2\to 0,~~~\text{as}~\delta\to 0.
\end{equation}

For the term $\mathcal{O}_{2}^{\delta}(t)$ and $\mathcal{O}_{3}^{\delta}(t)$, following the same argument as in the proof of Step 2 in Lemma \ref{lemcon1}, it is straightforward that
$$\mathbb{E}\Big[\sup_{t\in[0,T]}\Big|\sum_{i=1}^{2}\mathcal{O}_{i}^{\delta}(t)\Big|\Big]\to 0,~\text{as}~\delta\to 0.$$
The proof is complete. }
\end{proof}

 \begin{lemma}\label{lemcon3}
 The following limit is valid with probability $1$:
\begin{align}\label{prop3}
&\lim_{\delta\rightarrow0}\sup_{t\in[0,T]}\left|\mathcal{K}_2^\delta(t)-\int_{\mathcal{Z}_1\times\mathcal{Z}_2\times\mathcal{Y}\times[0,t]}\partial_x\bar{b}(\bar{X}_s,\mathcal{L}_{\bar{X}_s})\cdot\bar{Z}_s\bar{\mathbf{P}}(dh^1dh^2dyds)
\right|=0.
\end{align}
\end{lemma}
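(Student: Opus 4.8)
\textbf{Proof proposal for Lemma \ref{lemcon3}.}
The plan is to mimic the three-step decomposition already used for $\mathcal{K}_5^\delta$ in Lemma \ref{lemcon1}, adapted to the drift term $\mathcal{K}_2^\delta(t)=\frac{1}{\lambda(\delta)}\int_0^t\bar{b}(\lambda(\delta)Z_s^{\delta,h^\delta}+\bar{X}_s,\mathcal{L}_{X^{\delta}_s})-\bar{b}(\bar{X}_s,\mathcal{L}_{X^{\delta}_s})\,ds$. First I would Taylor-expand $\bar{b}$ in its first argument: writing $\bar{b}(\lambda(\delta)Z_s^{\delta,h^\delta}+\bar{X}_s,\mathcal{L}_{X^{\delta}_s})-\bar{b}(\bar{X}_s,\mathcal{L}_{X^{\delta}_s})=\lambda(\delta)\int_0^1\partial_x\bar b(r\lambda(\delta)Z_s^{\delta,h^\delta}+\bar X_s,\mathcal{L}_{X^\delta_s})\,dr\cdot Z_s^{\delta,h^\delta}$, so that
\[
\mathcal{K}_2^\delta(t)=\int_0^t\Big(\int_0^1\partial_x\bar b(r\lambda(\delta)Z_s^{\delta,h^\delta}+\bar X_s,\mathcal{L}_{X^\delta_s})\,dr\Big)Z_s^{\delta,h^\delta}\,ds.
\]
The $\frac{1}{\lambda(\delta)}$ is exactly cancelled, which is the whole point of this bookkeeping. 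Then the error between the integrand and $\partial_x\bar b(\bar X_s,\mathcal{L}_{\bar X_s})Z_s^{\delta,h^\delta}$ is controlled by the joint continuity of $\partial_x\bar b$ (cited from \cite[Subsection 5.2]{HLLS}), the averaging estimate \eref{33}, $\lambda(\delta)\to 0$, and the uniform bound \eref{ess16} on $Z^{\delta,h^\delta}$, exactly as in the treatment of $J_1(T)$ in Lemma \ref{lemma1}; in $L^1$ this tends to $0$.

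Next I would handle the replacement of $Z_s^{\delta,h^\delta}$ by its weak limit $\bar Z_s$. Using \eref{con3} we have $\sup_{s\in[0,T]}|Z_s^{\delta,h^\delta}-\bar Z_s|\to 0$ $\mathbb P$-a.s., and since $\partial_x\bar b$ is bounded by $(\mathbf{A1})$, $\int_0^t\partial_x\bar b(\bar X_s,\mathcal{L}_{\bar X_s})Z_s^{\delta,h^\delta}\,ds\to\int_0^t\partial_x\bar b(\bar X_s,\mathcal{L}_{\bar X_s})\bar Z_s\,ds$ uniformly in $t$, almost surely. Finally I rewrite this limit in the occupation-measure form: since the integrand $\partial_x\bar b(\bar X_s,\mathcal{L}_{\bar X_s})\bar Z_s$ does not depend on the $(h^1,h^2,y)$ variables, property \eref{j3} (just proved) gives
\[
\int_0^t\partial_x\bar b(\bar X_s,\mathcal{L}_{\bar X_s})\bar Z_s\,ds=\int_{\mathcal{Z}_1\times\mathcal{Z}_2\times\mathcal{Y}\times[0,t]}\partial_x\bar b(\bar X_s,\mathcal{L}_{\bar X_s})\bar Z_s\,\bar{\mathbf P}(dh^1dh^2dyds),
\]
which is the right-hand side of \eref{prop3}. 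Piecing the three convergences together yields the claim.

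The main obstacle I expect is the first step, specifically making the Taylor-remainder argument rigorous \emph{uniformly in $t$} while only having an $L^1$- or $L^2$-type a priori bound on $Z^{\delta,h^\delta}$ and a mean-square bound \eref{33} on $X^\delta-\bar X$: one must be careful that the bound $(\mathbb E|Z_s^{\delta,h^\delta}|^2)^{1/2}$ is uniform in $s$ (which is \eref{ess16}) and that the continuity modulus of $\partial_x\bar b$ can be taken uniform on the relevant compact set of parameters. A secondary subtlety is that, strictly speaking, after the Skorokhod construction in \eref{con3} the processes $Z^{\delta,h^\delta}$ live on a new probability space, so the a priori estimates \eref{ess16} and \eref{33} must be invoked as statements about laws (they transfer since they are distributional). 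Once these uniformity points are in place, the rest is a routine dominated-convergence and Gronwall-type argument, entirely parallel to Lemma \ref{lemma1} and Lemma \ref{lemcon1}.
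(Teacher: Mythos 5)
Your proposal is correct, and its first half is exactly the paper's: the paper also begins by invoking the Taylor-expansion estimate \eref{ess12} from Lemma \ref{lemma1} to reduce \eref{prop3} to the convergence of $\int_0^t\partial_x\bar b(\bar X_s,\mathcal{L}_{X^\delta_s})\cdot Z_s^{\delta,h^\delta}\,ds$ (this is \eref{s1}). Where you diverge is in the second half. The paper runs the same three-way decomposition it used for $\mathcal{K}_5^\delta$ in Lemma \ref{lemcon1}: it first replaces the time integral by the corresponding integral against the prelimit occupation measure $\mathbf{P}^{\delta,\Delta}$, then swaps $(\mathcal{L}_{X^\delta_s},Z^{\delta,h^\delta}_s)$ for $(\mathcal{L}_{\bar X_s},\bar Z_s)$, and finally passes from $\mathbf{P}^{\delta,\Delta}$ to $\bar{\mathbf{P}}$ via the a.s.\ Wasserstein convergence and uniform integrability (see \eref{con4}). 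You instead exploit the fact that the integrand here does not depend on $(h^1,h^2,y)$: the a.s.\ uniform convergence $Z^{\delta,h^\delta}\to\bar Z$ from \eref{con3} together with the boundedness of $\partial_x\bar b$ gives the limit as an ordinary time integral, and the marginal property \eref{j3} converts that time integral into the occupation-measure form for free. Your route is more elementary and sidesteps the $\Delta$-averaging and measure-convergence machinery for this particular term; the paper's route buys uniformity of presentation with the genuinely measure-dependent terms $\mathcal{K}_1^\delta$ and $\mathcal{K}_5^\delta$. Your closing caveats, on transferring the a priori bounds \eref{ess16} and \eref{33} to the Skorokhod space and on the mild mismatch between the $L^1$ convergence of the Taylor remainder and the almost-sure statement of the lemma, are legitimate, and the paper's own proof glosses over the same points.
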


\begin{proof}
Note that by  (\ref{ess12}), we know
\begin{align}\label{s1}
 \mathbb{E}\sup_{t\in[0,T]}\left|\mathcal{K}_2^\delta(t)
-\int_0^t\partial_x\bar{b}(\bar{X}_s,\mathcal{L}_{X^{\delta}_s})\cdot Z_s^{\delta,h^\delta}ds\right|\to0,~\text{as}~\delta\to 0.
\end{align}
Thus in order to prove (\ref{prop3}), it suffices to show the following convergence
\begin{eqnarray}
\!\!\!\!\!\!\!\!&&\int_0^t\partial_x\bar{b}(\bar{X}_s,\mathcal{L}_{X^{\delta}_s})\cdot Z_s^{\delta,h^\delta}ds-\int_{\mathcal{Z}_1\times\mathcal{Z}_2\times\mathcal{Y}\times[0,t]}\partial_x\bar{b}(\bar{X}_s,\mathcal{L}_{\bar{X}_s})\cdot \bar{Z}_s\bar{\mathbf{P}}(dh^1dh^2dyds)
\nonumber \\
=~~\!\!\!\!\!\!\!\!&&\Bigg(\int_0^t\partial_x\bar{b}(\bar{X}_s,\mathcal{L}_{X^{\delta}_s})\cdot Z_s^{\delta,h^\delta}ds
\nonumber \\
\!\!\!\!\!\!\!\!&&
-\int_{\mathcal{Z}_1\times\mathcal{Z}_2\times\mathcal{Y}\times[0,t]}\partial_x\bar{b}(\bar{X}_s,\mathcal{L}_{X^{\delta}_s})\cdot Z_s^{\delta,h^\delta}\mathbf{P}^{\delta,\Delta}(dh^1dh^2dyds)\Bigg)
\nonumber \\
\!\!\!\!\!\!\!\!&&+\Bigg(\int_{\mathcal{Z}_1\times\mathcal{Z}_2\times\mathcal{Y}\times[0,t]}\partial_x\bar{b}(\bar{X}_s,\mathcal{L}_{X^{\delta}_s})\cdot Z_s^{\delta,h^\delta}\mathbf{P}^{\delta,\Delta}(dh^1dh^2dyds)
\nonumber \\
\!\!\!\!\!\!\!\!&&
-\int_{\mathcal{Z}_1\times\mathcal{Z}_2\times\mathcal{Y}\times[0,t]}\partial_x\bar{b}(\bar{X}_s,\mathcal{L}_{\bar{X}_s})\cdot \bar{Z}_s\mathbf{P}^{\delta,\Delta}(dh^1dh^2dyds)\Bigg)
\nonumber \\
\!\!\!\!\!\!\!\!&&+\Bigg(\int_{\mathcal{Z}_1\times\mathcal{Z}_2\times\mathcal{Y}\times[0,t]}\partial_x\bar{b}(\bar{X}_s,\mathcal{L}_{\bar{X}_s})\cdot\bar{Z}_s\mathbf{P}^{\delta,\Delta}(dh^1dh^2dyds)
\nonumber \\
\!\!\!\!\!\!\!\!&&
-\int_{\mathcal{Z}_1\times\mathcal{Z}_2\times\mathcal{Y}\times[0,t]}\partial_x\bar{b}(\bar{X}_s,\mathcal{L}_{\bar{X}_s})\cdot\bar{Z}_s\bar{\mathbf{P}}(dh^1dh^2dyds)\Bigg)
\nonumber \\
=:~~\!\!\!\!\!\!\!\!&&\sum_{i=1}^{3}\mathcal{O}_{i}^{\delta}(t)
\to 0,~\text{as}~\delta\to 0.\label{con4}
\end{eqnarray}
{First, by (\ref{p1}) we have
\begin{align*}
  \int_{\mathcal{Z}_1\times\mathcal{Z}_2\times\mathcal{Y}\times[0,t]}\partial_x\bar{b}(\bar{X}_s,\mathcal{L}_{X^{\delta}_s})\cdot Z_s^{\delta,h^\delta}\mathbf{P}^{\delta,\Delta}(dh^1dh^2dyds)=&\int_0^t\frac{1}{\Delta}\int_s^{s+\Delta}\partial_x\bar{b}(\bar{X}_s,\mathcal{L}_{X^{\delta}_s})\cdot Z_s^{\delta,h^\delta}drds\\
 =&\int_0^t\partial_x\bar{b}(\bar{X}_s,\mathcal{L}_{X^{\delta}_s})\cdot Z_s^{\delta,h^\delta}ds.
\end{align*}
Therefore, it clear that $\mathcal{O}_{1}^{\delta}(t)= 0.$

Next, making use of the { boundedness and joint continuity }of $\partial_x\bar{b}$, (\ref{ess16}) and $Z_s^{\delta,h^\delta}\rightarrow\bar{Z}_s$, we get
\begin{align*}
 &\mathbb{E}\sup_{t\in[0,T]}\Bigg| \int_{\mathcal{Z}_1\times\mathcal{Z}_2\times\mathcal{Y}\times[0,t]}\partial_x\bar{b}(\bar{X}_s,\mathcal{L}_{X^{\delta}_s})\cdot Z_s^{\delta,h^\delta}\mathbf{P}^{\delta,\Delta}(dh^1dh^2dyds)\\
 &-\int_{\mathcal{Z}_1\times\mathcal{Z}_2\times\mathcal{Y}\times[0,t]}\partial_x\bar{b}(\bar{X}_s,\mathcal{L}_{\bar{X}_s})\cdot \bar{Z}_s\mathbf{P}^{\delta,\Delta}(dh^1dh^2dyds)\Bigg| \\
 \leq&~\mathbb{E} \int_{\mathcal{Z}_1\times\mathcal{Z}_2\times\mathcal{Y}\times[0,T]}\Big| \partial_x\bar{b}(\bar{X}_s,\mathcal{L}_{X^{\delta}_s})\cdot Z_s^{\delta,h^\delta}-\partial_x\bar{b}(\bar{X}_s,\mathcal{L}_{\bar{X}_s})\cdot Z_s^{\delta,h^\delta}\Big|\mathbf{P}^{\delta,\Delta}(dh^1dh^2dyds)\\
 &+\mathbb{E} \int_{\mathcal{Z}_1\times\mathcal{Z}_2\times\mathcal{Y}\times[0,T]}\Big| \partial_x\bar{b}(\bar{X}_s,\mathcal{L}_{\bar{X}_s})\cdot Z_s^{\delta,h^\delta}-\partial_x\bar{b}(\bar{X}_s,\mathcal{L}_{\bar{X}_s})\cdot \bar{Z}_s\Big|\mathbf{P}^{\delta,\Delta}(dh^1dh^2dyds)\\
\rightarrow&~ 0,~\text{as}~\delta\to 0.
\end{align*}
Finally, by (\ref{j3}) we can  deduce that
$\mathcal{O}_{3}^{\delta}(t)= 0.$
The proof is complete.}
\end{proof}

Note that Lemmas \ref{lemcon1}-\ref{lemcon3} imply that the limit $(\bar{Z},\bar{\mathbf{P}})$ of sequence  $\{(Z^{\delta,h^\delta},\mathbf{P}^{\delta,\Delta})\}_{\delta>0}$  satisfies the following integral equation
\begin{align*}
 \bar{Z}_t=&\int_{\mathcal{Z}_1\times\mathcal{Z}_2\times\mathcal{Y}\times[0,t]}\partial_x\bar{b}(\bar{X}_s,\mathcal{L}_{\bar{X}_s})\cdot \bar{Z}_s+\sigma(\bar{X}_s,\mathcal{L}_{\bar{X}_s}){h}^1\\
 &+\sqrt{\gamma}{\partial_y\Phi_g}(\bar{X}_s,\mathcal{L}_{\bar{X}_s},y){h}^2\bar{\mathbf{P}}(dh^1dh^2dyds) \\
 =& \int_{\mathcal{Z}_1\times\mathcal{Z}_2\times\mathcal{Y}\times[0,t]}\Theta(\bar{Z}_s,\bar{X}_s,\mathcal{L}_{\bar{X}_s},y,h^1,h^2)\bar{\mathbf{P}}(dh^1dh^2dyds),
\end{align*}
with probability 1. Hence, $(\bar{Z},\bar{\mathbf{P}})$ satisfies (\ref{j1}) in Definition \ref{defj1}. \hspace{\fill}$\Box$

\vspace{2mm}
\textbf{Proof of (\ref{j2}):}  It is sufficient to show that the third and fourth marginals of $\bar{\mathbf{P}}$ are given by the product of the invariant  measure $\nu^{\bar{X}_t,\mathcal{L}_{\bar{X}_t}}$ and Lebesgue measure. This will be included in Lemma \ref{va3}. Before giving that, we need some preliminary lemmas, i.e.~Lemmas \ref{va1} and \ref{va2} below.

Recall the controlled fast process $Y^{\delta,h^\delta}$ satisfying the equation
\begin{align*}
dY^{\delta,h^\delta}_t=&\frac{1}{\varepsilon}f(X^{\delta,h^\delta}_t,\mathcal{L}_{X^{\delta}_t},Y^{\delta,h^\delta}_t)dt
+\frac{\lambda(\delta)}{\sqrt{\delta\varepsilon}}g(X^{\delta,h^\delta}_t,\mathcal{L}_{X^{\delta}_t},Y^{\delta,h^\delta}_t){h}^{2,\delta}_t dt\\
&~~~~~~~~~~~~~+\frac{1}{\sqrt{\varepsilon}}g(X^{\delta,h^\delta}_t,\mathcal{L}_{X^{\delta}_t},Y^{\delta,h^\delta}_t)dW_t^2, \quad Y^{\delta,h^\delta}_0=y.
\end{align*}
Let $\tilde{Y}^\delta$ denote the uncontrolled fast process depending on the controlled
slow process $X^{\delta,h^{\delta}}$, which is given by
$$d\tilde{Y}_t^\delta=\frac{1}{\varepsilon}f(X^{\delta,h^{\delta}}_t,\mathcal{L}_{X^{\delta}_t},\tilde{Y}_t^\delta)dt
+\frac{1}{\sqrt{\varepsilon}}g(X^{\delta,h^{\delta}}_t,\mathcal{L}_{X^{\delta}_t},\tilde{Y}_t^\delta)d W_t^2,\quad\tilde{Y}_0^\delta=y.$$

In the following lemma, we show that the processes $Y^{\delta,h^\delta}_t$ and  $\tilde{Y}_t^\delta$ are close in $L^2$-sense.
\begin{lemma}\label{va1}
Let $M>0, \{h^\delta\}_{\delta>0}\subset\mathcal{A}_M$,  $\Delta$ as in Definition (\ref{delta}). Then
\begin{equation}\label{t1}
\frac{1}{\Delta}\mathbb{E}\Bigg[\int_0^T|Y^{\delta,h^{\delta}}_t-\tilde{Y}_t^\delta|^2dt\Bigg]\rightarrow0,~~\text{as}~\delta\rightarrow0.
\end{equation}
\end{lemma}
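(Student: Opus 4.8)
The plan is to estimate the difference $\Xi_t^{\delta}:=Y^{\delta,h^{\delta}}_t-\tilde{Y}_t^\delta$ by applying It\^o's formula to $|\Xi_t^{\delta}|^2$, exploiting the dissipativity condition \eref{sm} to control the drift and diffusion terms, and then integrating the resulting inequality against $\frac{1}{\Delta}$ over $[0,T]$. First I would write the SDE satisfied by $\Xi^{\delta}$: since the two equations share the same driving noise $W^2$, the diffusion term becomes $\frac{1}{\sqrt{\varepsilon}}[g(X^{\delta,h^\delta}_t,\mathcal{L}_{X^{\delta}_t},Y^{\delta,h^\delta}_t)-g(X^{\delta,h^\delta}_t,\mathcal{L}_{X^{\delta}_t},\tilde Y_t^\delta)]dW_t^2$, while the drift contains the difference of the $f$-terms, of order $\frac{1}{\varepsilon}$, plus the genuinely new control term $\frac{\lambda(\delta)}{\sqrt{\delta\varepsilon}}g(X^{\delta,h^\delta}_t,\mathcal{L}_{X^{\delta}_t},Y^{\delta,h^\delta}_t)h^{2,\delta}_t$. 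Applying It\^o's formula and taking expectations, the key point is that \eref{sm} gives
\[
\frac{d}{dt}\mathbb{E}|\Xi_t^{\delta}|^2\le -\frac{\kappa}{\varepsilon}\mathbb{E}|\Xi_t^{\delta}|^2+\frac{2\lambda(\delta)}{\sqrt{\delta\varepsilon}}\mathbb{E}\big[\langle g(X^{\delta,h^\delta}_t,\mathcal{L}_{X^{\delta}_t},Y^{\delta,h^\delta}_t)h^{2,\delta}_t,\Xi_t^{\delta}\rangle\big],
\]
where I have used that the cross terms between the $f$-increment and $\Xi^\delta$ together with the $g$-increment square are absorbed by $-\kappa|\Xi^\delta|^2/\varepsilon$ exactly as in the proof of \eref{sm}-type estimates.

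Next I would handle the control term by Young's inequality: bounding $\frac{2\lambda(\delta)}{\sqrt{\delta\varepsilon}}|g||h^{2,\delta}_t||\Xi_t^\delta|\le \frac{\kappa}{2\varepsilon}|\Xi_t^\delta|^2+\frac{C\lambda(\delta)^2}{\delta}|h^{2,\delta}_t|^2$ using the boundedness of $g$ from $(\mathbf{A3})$. This yields
\[
\frac{d}{dt}\mathbb{E}|\Xi_t^{\delta}|^2\le -\frac{\kappa}{2\varepsilon}\mathbb{E}|\Xi_t^{\delta}|^2+\frac{C\lambda(\delta)^2}{\delta}\mathbb{E}|h^{2,\delta}_t|^2.
\]
Since $\Xi_0^\delta=0$, Gronwall's lemma gives $\mathbb{E}|\Xi_t^{\delta}|^2\le \frac{C\lambda(\delta)^2}{\delta}\int_0^t e^{-\frac{\kappa}{2\varepsilon}(t-s)}\mathbb{E}|h^{2,\delta}_s|^2ds$. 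Integrating in $t$ over $[0,T]$, swapping the order of integration, and using $\int_s^T e^{-\frac{\kappa}{2\varepsilon}(t-s)}dt\le \frac{2\varepsilon}{\kappa}$ together with $h^\delta\in\mathcal{A}_M$ (so $\mathbb{E}\int_0^T|h^{2,\delta}_s|^2ds\le M$), I obtain
\[
\mathbb{E}\int_0^T|\Xi_t^{\delta}|^2dt\le \frac{C\lambda(\delta)^2}{\delta}\cdot\frac{2\varepsilon}{\kappa}\cdot M=\frac{C_M\lambda(\delta)^2\varepsilon}{\delta}.
\]
Dividing by $\Delta$ gives $\frac{1}{\Delta}\mathbb{E}\int_0^T|\Xi_t^{\delta}|^2dt\le \frac{C_M\lambda(\delta)^2\varepsilon}{\delta\Delta}$, and since in Regime 2 we have $\varepsilon/\delta\to\gamma\in(0,\infty)$ (in particular $\varepsilon/\delta$ is bounded) while $\lambda^2(\delta)/\Delta\to0$ by \eref{delta}, the right-hand side tends to $0$, establishing \eref{t1}.

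The main obstacle is the appearance of the control term with the singular prefactor $\lambda(\delta)/\sqrt{\delta\varepsilon}$, which a priori could destroy the smallness; the resolution is the balance built into the two scaling hypotheses — the dissipativity provides an $\varepsilon$-gain via the exponential kernel that precisely cancels one power of $1/\varepsilon$, turning the prefactor-squared $\lambda^2(\delta)/(\delta\varepsilon)$ into $\lambda^2(\delta)/\delta$ times $\varepsilon$, after which $\lambda^2(\delta)/\Delta\to0$ and the boundedness of $\varepsilon/\delta$ in Regime 2 close the argument. One technical care point is justifying that the martingale term in the It\^o expansion has zero expectation; this is routine once one notes the integrand involves $g$ (bounded) and $\Xi^\delta$, whose second moment is finite by the a priori estimates \eref{Y24} and the analogous bound for $\tilde Y^\delta$, so a standard localization argument applies.
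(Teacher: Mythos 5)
Your proof is correct and follows essentially the same route as the paper: Itô's formula for $|Y^{\delta,h^\delta}_t-\tilde Y^\delta_t|^2$, dissipativity \eref{sm} to absorb the $f$- and $g$-increments, Young's inequality with a small fraction of $\kappa/\varepsilon$ to handle the control term, Gronwall with the exponential kernel, integration in $t$ to extract the factor $\varepsilon$, and finally the scaling assumptions of Regime 2 together with \eref{delta} to send $\lambda(\delta)^2\varepsilon/(\delta\Delta)$ to zero. The only cosmetic differences are the choice of constants (you use $\kappa/2$ where the paper introduces $\beta=\kappa-\epsilon_0$) and the notation for the difference process.
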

\begin{proof}
Let $\rho_t:=Y^{\delta,h^\delta}_t-\tilde{Y}_t^\delta$, using It\^{o}'s formula for $|\rho_t|^2$ and then taking expectation, we derive that
\begin{eqnarray*}
\frac{d}{dt}\mathbb{E}|\rho_t|^2
=~~\!\!\!\!\!\!\!\!&&\frac{2}{\varepsilon}\mathbb{E}\Big[\langle f(X^{\delta,h^\delta}_t,\mathcal{L}_{X^{\delta}_t},Y^{\delta,h^\delta}_t)-f(X^{\delta,h^\delta}_t,\mathcal{L}_{X^{\delta}_t},\tilde{Y}_t^\delta),\rho_t\rangle\Big]\nonumber\\
\!\!\!\!\!\!\!\!&&+\frac{1}{\varepsilon}\mathbb{E}\|g(X^{\delta,h^\delta}_t,\mathcal{L}_{X^{\delta}_t},Y^{\delta,h^\delta}_t)
-g(X^{\delta,h^\delta}_t,\mathcal{L}_{X^{\delta}_t},\tilde{Y}_t^\delta)\|^2
\nonumber\\
\!\!\!\!\!\!\!\!&&+\frac{2\lambda(\delta)}{\sqrt{\delta\varepsilon}}\mathbb{E}\Big[\langle g(X^{\delta,h^\delta}_t,\mathcal{L}_{X^{\delta}_t},Y^{\delta,h^\delta}_t){h}^{2,\delta}_t,\rho_t\rangle\Big].
\nonumber\\
=:~~\!\!\!\!\!\!\!\!&&\sum_{i=1}^3I_i.
\end{eqnarray*}
In view of the condition \eref{sm}, we can deduce that
\begin{equation*}
I_1+I_2\leq-\frac{\kappa}{\varepsilon}\mathbb{E}|\rho_t|^2.
\end{equation*}
Moreover, we have
\begin{eqnarray*}
I_3\!\!\!\!\!\!\!\!&&~~~\leq\frac{C\lambda(\delta)}{\sqrt{\delta\varepsilon}}\mathbb{E}\Big[|{h}^{2,\delta}_t||\rho_t|\Big]
\leq\frac{C\lambda(\delta)^2}{\delta}\mathbb{E}|{h}^{2,\delta}_t|^2+\frac{\epsilon_0}{\varepsilon}\mathbb{E}|\rho_t|^2,
\end{eqnarray*}
where $\epsilon_0\in(0,\kappa)$.

Then we have
\begin{eqnarray*}
\frac{d}{dt}\mathbb{E}|\rho_t|^2
\leq~~\!\!\!\!\!\!\!\!&&-\frac{\beta}{\varepsilon}\mathbb{E}|\rho_t|^2+\frac{C\lambda(\delta)^2}{\delta}\mathbb{E}|{h}^{2,\delta}_t|^2,
\end{eqnarray*}
where $\beta:=\kappa-\epsilon_0>0$. The Gronwall lemma leads to
\begin{eqnarray*}
\mathbb{E}|\rho_t|^2\leq~~\!\!\!\!\!\!\!\!&&
\frac{C\lambda(\delta)^2}{\delta}\int_0^te^{-\frac{\beta}{\varepsilon}(t-s)}\mathbb{E}|{h}^{2,\delta}_s|^2ds.
\end{eqnarray*}
Therefore, for $h^{\delta}\in\mathcal{A}_M$, we obtain
\begin{eqnarray*}
\mathbb{E}\Big[\int_0^T|\rho_t|^2dt\Big]
~\leq
\!\!\!\!\!\!\!\!&&~~~\frac{C\lambda(\delta)^2}{\delta}\mathbb{E}\Big[\int_0^T\int_0^te^{-\frac{\beta}{\varepsilon}(t-s)}|{h}^{2,\delta}_s|^2dsdt\Big]
\leq\frac{C_{M,T}\varepsilon\lambda(\delta)^2}{\delta}.
\end{eqnarray*}
Note that $\frac{\varepsilon\lambda(\delta)^2}{\delta\Delta}\rightarrow0$ as $\delta\rightarrow0$, we complete the proof of (\ref{t1}).
\end{proof}

Similarly, for $s\geq t$ we can define the two parameter process $Y^{\delta,X^{\delta,h^{\delta}}_t}(s;t)$ solving
\begin{align*}
dY^{\delta,X^{\delta,h^{\delta}}_t}(s;t)&=\frac{1}{\varepsilon}f(X^{\delta,h^{\delta}}_t,\mathcal{L}_{X^{\delta}_t},Y^{\delta,X^{\delta,h^{\delta}}_t}(s;t))ds
+\frac{1}{\sqrt{\varepsilon}}g(X^{\delta,h^{\delta}}_t,\mathcal{L}_{X^{\delta}_t},Y^{\delta,X^{\delta,h^{\delta}}_t}(s;t))d W_s^2,\\
Y^{\delta,X^{\delta,h^{\delta}}_t}(t;t)&=\tilde{Y}^\delta_t.
\end{align*}
The following lemma shows that the process $\tilde{Y}_t^\delta$  is close to the  process $Y^{\delta,X^{\delta,h^{\delta}}_t}(s;t)$
in  $L^2$-sense on the interval $s\in[t,t+\Delta]$.
\begin{lemma}\label{va2}
Let $M>0, \{h^\delta\}_{\delta>0}\subset\mathcal{A}_M$,  $\Delta$ as in Definition (\ref{delta}). Then
\begin{equation}\label{t2}
\frac{1}{\Delta}\mathbb{E}\Bigg[\int_t^{t+\Delta}|\tilde{Y}_s^\delta-Y^{\delta,X^{\delta,h^{\delta}}_t}(s;t)|^2ds\Bigg]\rightarrow0,~~\text{as}~\delta\rightarrow0.
\end{equation}
\end{lemma}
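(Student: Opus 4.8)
The plan is to control the difference $\tilde Y^\delta_s - Y^{\delta,X^{\delta,h^\delta}_t}(s;t)$ on the short interval $[t,t+\Delta]$ by an argument entirely analogous to Lemma \ref{va1}, except that now the ``drift'' of the discrepancy comes not from the control term (the two-parameter process is uncontrolled and so is $\tilde Y^\delta$) but from the fact that $\tilde Y^\delta$ has a time-varying slow input $X^{\delta,h^\delta}_s$, whereas $Y^{\delta,X^{\delta,h^\delta}_t}(s;t)$ freezes that input at time $t$. First I would set $\rho_s := \tilde Y_s^\delta - Y^{\delta,X^{\delta,h^\delta}_t}(s;t)$ for $s\in[t,t+\Delta]$, note $\rho_t = 0$, and apply It\^o's formula to $|\rho_s|^2$. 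Taking expectations, the $f$-difference and the $g$-difference split into a part where only the $y$-variable differs — handled by the dissipativity condition \eqref{sm}, producing a term $-\tfrac{\kappa}{\varepsilon}\mathbb{E}|\rho_s|^2$ — plus a part where the slow inputs $X^{\delta,h^\delta}_s$ and $X^{\delta,h^\delta}_t$ differ, which by the Lipschitz bounds \eqref{A11}, \eqref{A21} is of size $\tfrac{C}{\varepsilon}\mathbb{E}\big[(|X^{\delta,h^\delta}_s - X^{\delta,h^\delta}_t| + \mathbb{W}_2(\mathcal{L}_{X^\delta_s},\mathcal{L}_{X^\delta_t}))|\rho_s|\big]$ from the drift and $\tfrac{C}{\varepsilon}\mathbb{E}|X^{\delta,h^\delta}_s-X^{\delta,h^\delta}_t|^2$ from the diffusion (after Young's inequality the cross term is absorbed into a small multiple of $\tfrac{1}{\varepsilon}\mathbb{E}|\rho_s|^2$).

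After absorbing, one arrives at a differential inequality of the form
\begin{equation*}
\frac{d}{ds}\mathbb{E}|\rho_s|^2 \le -\frac{\beta}{\varepsilon}\mathbb{E}|\rho_s|^2 + \frac{C}{\varepsilon}\mathbb{E}|X^{\delta,h^\delta}_s - X^{\delta,h^\delta}_t|^2 + \frac{C}{\varepsilon}\mathbb{W}_2(\mathcal{L}_{X^\delta_s},\mathcal{L}_{X^\delta_t})^2
\end{equation*}
for some $\beta\in(0,\kappa)$. By Gr\"onwall on $[t, t+\Delta]$, $\mathbb{E}|\rho_s|^2 \le \tfrac{C}{\varepsilon}\int_t^s e^{-\frac{\beta}{\varepsilon}(s-r)}\big(\mathbb{E}|X^{\delta,h^\delta}_r - X^{\delta,h^\delta}_t|^2 + \mathbb{W}_2(\mathcal{L}_{X^\delta_r},\mathcal{L}_{X^\delta_t})^2\big)dr$. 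Now I invoke the H\"older-continuity estimate Lemma \ref{xg}, which gives $\mathbb{E}|X^{\delta,h^\delta}_r - X^{\delta,h^\delta}_t|^2 \le C_{M,T}(1+|x|^2+|y|^2)(r-t) \le C_{M,T}(1+|x|^2+|y|^2)\Delta$ for $r\in[t,t+\Delta]$, and the same bound controls the Wasserstein term (either by the analogous estimate for $X^\delta$, or by noting $\mathbb{W}_2(\mathcal{L}_{X^\delta_r},\mathcal{L}_{X^\delta_t})^2 \le \mathbb{E}|X^\delta_r - X^\delta_t|^2$ and using the corresponding H\"older bound from \cite{HLLS}). Hence $\mathbb{E}|\rho_s|^2 \le C_{M,T}(1+|x|^2+|y|^2)\Delta \cdot \tfrac{C}{\varepsilon}\int_t^s e^{-\frac{\beta}{\varepsilon}(s-r)}dr \le C_{M,T}(1+|x|^2+|y|^2)\Delta$, uniformly in $s\in[t,t+\Delta]$ and $t\in[0,T]$.

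Integrating over $s\in[t,t+\Delta]$ and then over $t\in[0,T]$, and dividing by $\Delta$:
\begin{equation*}
\frac{1}{\Delta}\mathbb{E}\Bigg[\int_t^{t+\Delta}|\rho_s|^2 ds\Bigg] \le C_{M,T}(1+|x|^2+|y|^2)\,\Delta \longrightarrow 0, \quad\text{as } \delta\to0,
\end{equation*}
since $\Delta = \Delta(\delta)\to0$ by \eqref{delta}. This gives \eqref{t2}. The main obstacle I anticipate is bookkeeping the interplay of the three time scales: the exponential relaxation rate $1/\varepsilon$ in the Gr\"onwall kernel must be shown to exactly cancel the $1/\varepsilon$ prefactor on the inhomogeneous term so that no negative power of $\varepsilon$ survives, and one must be careful that the H\"older increment is measured over a window of length $\Delta$ rather than the full horizon — this is precisely why the estimate degrades only like $\Delta$ and not like a constant. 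A minor subtlety is that the slow input enters $\tilde Y^\delta$ both through the law $\mathcal{L}_{X^\delta_s}$ (which is the \emph{uncontrolled} law, the same in both processes up to the freezing) and through the pathwise controlled slow process; both must be frozen consistently at time $t$, which is exactly how $Y^{\delta,X^{\delta,h^\delta}_t}(s;t)$ is defined, so no extra discrepancy term appears beyond the ones listed.
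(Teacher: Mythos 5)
Your proposal follows exactly the route the paper takes: it invokes the H\"older continuity estimates for $X^{\delta,h^\delta}$ (Lemma \ref{xg}) and for $X^\delta$ (from \cite{RSX1}), splits the coefficient differences into a $y$-part handled by the dissipativity \eqref{sm} and a slow-input part handled by Lipschitz continuity, and then runs the same It\^o--Gr\"onwall argument as in Lemma \ref{va1}, with the $1/\varepsilon$ relaxation rate in the kernel cancelling the $1/\varepsilon$ prefactor on the inhomogeneous term. The paper's proof is terse (``follow the similar argument as in Lemma \ref{va1}''); your write-up correctly fills in what that argument is, and the small inconsistency of mentioning an extra $t$-integration does not affect the conclusion.
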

\begin{proof}
By \cite[Lemma 3.2]{RSX1} and Lemma \ref{xg}, we know for any $s\in[t,t+\Delta]$,
$$\mathbb{E}|X_{t}^{\delta,h^\delta}-X_{s}^{\delta,h^\delta}|^2\leq C_{M,T}(1+|x|^2+|y|^2)\Delta$$
and
$$\mathbb{E}|X_{t}^{\delta}-X_{s}^{\delta}|^2\leq C_{T}(1+|x|^2+|y|^2)\Delta.$$
With these estimates at hand, we can follow the similar argument as in Lemma \ref{va1}  to obtain (\ref{t2}).
\end{proof}

We now have all the necessary ingredients to show that (\ref{j2}) holds.
\begin{lemma}\label{va3}
 $\mathbf{\bar{P}}$ has the  decomposition  (\ref{j2}), i.e.~for any  $F\in C_b(\mathcal{Y})$,
 $$\int_{\mathcal{Z}_1\times\mathcal{Z}_2\times\mathcal{Y}\times[0,T]}F(y)\mathbf{\bar{P}}(dh^1dh^2dydt)=\int_0^T\int_{\mathcal{Y}}F(y)\nu^{\bar{X}_t,\mathcal{L}_{\bar{X}_t}}(dy)dt.$$
\end{lemma}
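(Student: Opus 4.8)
The plan is to identify $\mathbf{\bar P}$ as the limit of the occupation measures $\mathbf{P}^{\delta,\Delta}$ and to compute the third/fourth marginal by passing to the limit in a suitable test identity. Fix $F\in C_b(\mathcal{Y})$ and, crucially, a smooth compactly supported test function $\varphi$ on $\mathcal{Y}$ (or a function in the domain of the frozen generator). The starting point is the definition \eqref{occupation} and the identity \eqref{p1}: with probability one,
\begin{eqnarray*}
\int_{\mathcal{Z}_1\times\mathcal{Z}_2\times\mathcal{Y}\times[0,T]}F(y)\,\mathbf{P}^{\delta,\Delta}(dh^1dh^2dydt)
=\int_0^T\frac{1}{\Delta}\int_t^{t+\Delta}F(Y^{\delta,h^\delta}_s)\,ds\,dt.
\end{eqnarray*}
By the almost sure convergence $\mathbf{P}^{\delta,\Delta}\to\mathbf{\bar P}$ (from \eqref{con3}) and boundedness/continuity of $F$, the left side converges to $\int F(y)\,\mathbf{\bar P}(dh^1dh^2dydt)$. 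So the whole problem reduces to showing that the right side converges, in $L^1(\mathbb{P})$ say, to $\int_0^T\int_{\mathcal{Y}}F(y)\,\nu^{\bar X_t,\mathcal{L}_{\bar X_t}}(dy)\,dt$.

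To handle the right side I would use the three approximation layers already set up: first replace $Y^{\delta,h^\delta}_s$ on $[t,t+\Delta]$ by the uncontrolled process $\tilde Y^\delta_s$ using Lemma \ref{va1}, then replace $\tilde Y^\delta_s$ by the frozen two-parameter process $Y^{\delta,X^{\delta,h^\delta}_t}(s;t)$ with coefficients frozen at time $t$ using Lemma \ref{va2}; both errors vanish after division by $\Delta$ by the stated $L^2$ estimates together with uniform continuity of $F$ (approximate $F$ by Lipschitz functions, or use that $|F(a)-F(b)|$ is small off a small set controlled by $|a-b|^2$). After these two replacements the inner average becomes $\frac{1}{\Delta}\int_t^{t+\Delta}F(Y^{\delta,X^{\delta,h^\delta}_t}(s;t))\,ds$. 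Now rescale time: $Y^{\delta,X^{\delta,h^\delta}_t}(t+\varepsilon u;t)$ is, given $X^{\delta,h^\delta}_t$ and $\mathcal{L}_{X^\delta_t}$, a copy of the frozen equation \eqref{FEQ2} with $x=X^{\delta,h^\delta}_t$, $\mu=\mathcal{L}_{X^\delta_t}$ run for time $\Delta/\varepsilon\to\infty$ (since $\Delta\to 0$ but $\Delta/\varepsilon\to\infty$ in Regime 2 because $\varepsilon=O(\delta)$ and $\lambda^2/\Delta\to 0$ — here one should double-check $\Delta/\varepsilon\to\infty$, which is the role of condition \eqref{delta} combined with $\varepsilon/\delta\to\gamma$ and $\delta/\lambda^2\to 0$, hence $\varepsilon/\Delta \le C\delta/\Delta \le C(\delta/\lambda^2)(\lambda^2/\Delta)\to 0$). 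By the exponential ergodicity of the frozen process (consequence of the dissipativity \eqref{sm}, as recalled around \eqref{FEQ2}), the ergodic average $\frac{\varepsilon}{\Delta}\int_0^{\Delta/\varepsilon}F(Y^{X^{\delta,h^\delta}_t,\mathcal{L}_{X^\delta_t},\tilde Y^\delta_t}_u)\,du$ converges to $\int_{\mathcal{Y}}F\,d\nu^{X^{\delta,h^\delta}_t,\mathcal{L}_{X^\delta_t}}$, uniformly in the initial condition $\tilde Y^\delta_t$ on the event where $|\tilde Y^\delta_t|$ is not too large, with a quantitative rate; the moment bounds \eqref{Y24} control the exceptional event.

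Finally I would replace $\int_{\mathcal{Y}}F\,d\nu^{X^{\delta,h^\delta}_t,\mathcal{L}_{X^\delta_t}}$ by $\int_{\mathcal{Y}}F\,d\nu^{\bar X_t,\mathcal{L}_{\bar X_t}}$, using the Lipschitz dependence of $x\mapsto\nu^{x,\mu}$ (which follows from \eqref{A21}, \eqref{sm} — this is standard, e.g. as in \cite{RSX1}) together with the averaging estimate \eqref{33} that gives $\mathbb{E}\sup_{t}|X^{\delta,h^\delta}_t-\bar X_t|\to 0$ (here one uses that the control perturbation $\lambda(\delta)\sigma h^{1,\delta}$ in \eqref{e9} is $L^2$-small, so $X^{\delta,h^\delta}$ is close to $X^\delta$, which is close to $\bar X$ by \eqref{33}); then $\mathbb{W}_2(\mathcal{L}_{X^\delta_t},\mathcal{L}_{\bar X_t})\to 0$ handles the measure argument. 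Collecting the bounds, $\frac{1}{\Delta}\int_0^T\int_t^{t+\Delta}F(Y^{\delta,h^\delta}_s)\,ds\,dt\to\int_0^T\int_{\mathcal{Y}}F(y)\,\nu^{\bar X_t,\mathcal{L}_{\bar X_t}}(dy)\,dt$ in $L^1(\mathbb{P})$, and combined with the first paragraph this yields the claim; the decomposition \eqref{j2} with a stochastic kernel $\eta$ then follows from a standard disintegration argument since the third-and-fourth marginal of $\mathbf{\bar P}$ has been pinned down. The main obstacle is the ergodic-averaging step: one must get a rate of convergence to equilibrium for the frozen process that is uniform enough (in the frozen parameters $x,\mu$ and in the random initial data) to survive the expectation and the division by $\Delta$, and must verify carefully that the scale separation \eqref{delta} genuinely gives $\varepsilon/\Delta\to 0$ in Regime 2 so that the window $[t,t+\Delta]$ is asymptotically many mixing times of the fast process.
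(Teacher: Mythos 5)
Your proposal is correct and follows essentially the same route as the paper: reduce via the a.s.\ limit $\mathbf{P}^{\delta,\Delta}\to\bar{\mathbf{P}}$, then pass $Y^{\delta,h^\delta}\to\tilde Y^\delta\to Y^{\delta,X^{\delta,h^\delta}_t}(\cdot;t)$ using Lemmas~\ref{va1}--\ref{va2}, time-rescale the frozen process and invoke its ergodicity, and finally identify the frozen parameter with $(\bar X_t,\mathcal{L}_{\bar X_t})$. The only cosmetic difference is the order of the last two steps---you apply ergodicity with the random parameter $(X^{\delta,h^\delta}_t,\mathcal{L}_{X^\delta_t})$ and then swap to $(\bar X_t,\mathcal{L}_{\bar X_t})$ at the level of invariant measures via Lipschitz dependence, while the paper swaps at the process level first (its term $J^\delta_5$) and applies ergodicity with the deterministic parameter $\bar X_t$ (its $J^\delta_4$)---and your explicit check that $\varepsilon/\Delta\to 0$ under~\eqref{delta} is a useful detail the paper leaves implicit.
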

\begin{proof}
 Without loss of generality, we assume that $F$ is Lipschitz continuous. Note that we have the following decomposition
\begin{align*}
 &\int_{\mathcal{Z}_1\times\mathcal{Z}_2\times\mathcal{Y}\times[0,T]}F(y)\mathbf{\bar{P}}(dh^1dh^2dydt)-\int_0^T\int_{\mathcal{Y}}F(y)\nu^{\bar{X}_t,\mathcal{L}_{\bar{X}_t}}(dy)dt\\
 =&\int_{\mathcal{Z}_1\times\mathcal{Z}_2\times\mathcal{Y}\times[0,T]}F(y)\mathbf{\bar{P}}(dh^1dh^2dydt)- \int_{\mathcal{Z}_1\times\mathcal{Z}_2\times\mathcal{Y}\times[0,T]}F(y)\mathbf{P}^{\delta,\Delta}(dh^1dh^2dydt)\\
 &+\int_{\mathcal{Z}_1\times\mathcal{Z}_2\times\mathcal{Y}\times[0,T]}F(y)\mathbf{P}^{\delta,\Delta}(dh^1dh^2dydt)
-\int_0^T\int_{\mathcal{Y}}F(y)\nu^{\bar{X}_t,\mathcal{L}_{\bar{X}_t}}(dy)dt\\
=&\left(\int_{\mathcal{Z}_1\times\mathcal{Z}_2\times\mathcal{Y}\times[0,T]}F(y)\mathbf{\bar{P}}(dh^1dh^2dydt)- \int_{\mathcal{Z}_1\times\mathcal{Z}_2\times\mathcal{Y}\times[0,T]}F(y)\mathbf{P}^{\delta,\Delta}(dh^1dh^2dydt)\right)\\
 &+\left(\int_0^T\frac{1}{\Delta}\int_t^{t+\Delta}F(Y^{\delta, h^\delta}_s)dsdt
- \int_0^T\frac{1}{\Delta}\int_t^{t+\Delta}F(\tilde{Y}^{\delta}_s)dsdt\right)\\
&+\left(\int_0^T\frac{1}{\Delta}\int_t^{t+\Delta}F(\tilde{Y}^{\delta}_s)dsdt
-\int_0^T\frac{1}{\Delta}\int_t^{t+\Delta}F(Y^{\delta,X^{\delta,h^{\delta}}_t}(s;t))dsdt\right)\\
&+\left(\int_0^T\frac{1}{\Delta}\int_t^{t+\Delta}F(Y^{\delta,\bar{X}_t}(s;t))dsdt
-\int_0^T\int_{\mathcal{Y}}F(y)\nu^{\bar{X}_t,\mathcal{L}_{\bar{X}_t}}(dy)dt\right)\\
&+\left(\int_0^T\frac{1}{\Delta}\int_t^{t+\Delta}F(Y^{\delta,X^{\delta,h^{\delta}}_t}(s;t))dsdt
-\int_0^T\frac{1}{\Delta}\int_t^{t+\Delta}F(Y^{\delta,\bar{X}_t}(s;t))dsdt\right)\\
=:&\sum_{i=1}^5J^{\delta}_i.
\end{align*}

We intend to show that the terms $J^{\delta}_i$, $i=1,\ldots,5$,  vanish in probability as $\delta\rightarrow0$.
First, it obvious that $J^{\delta}_1$ tends to zero in probability, as $\delta\rightarrow0$, by (\ref{con3}). By Lemma \ref{va1}  and the dominated convergence theorem, we can deduce that $J^{\delta}_2$ tends to zero in probability as $\delta\rightarrow0$. Similarly, Lemma \ref{va2}  and the dominated convergence theorem show that $J^{\delta}_3$  tends to zero in probability. We next deal with the  term $J^{\delta}_4$. We  define the following two parameter process $Y^{\delta,\bar{X}_t}(s;t)$ solving
\begin{align*}
dY^{\delta,\bar{X}_t}(s;t)=&\frac{1}{\varepsilon}f(\bar{X}_t,\mathcal{L}_{\bar{X}_t},Y^{\delta,\bar{X}_t}(s;t))ds
+\frac{1}{\sqrt{\varepsilon}}g(\bar{X}_t,\mathcal{L}_{\bar{X}_t},Y^{\delta,\bar{X}_t}(s;t))dW_s^2,~
Y^{\delta,\bar{X}_t}(t;t)=\tilde{Y}^\delta_t,
\end{align*}
and the  time-rescaled process $Y^{\bar{X}_t}_s:=Y^{\delta,\bar{X}_t}(t+\varepsilon s;t)$
\begin{align*}
dY^{\bar{X}_t}_s=&f(\bar{X}_t,\mathcal{L}_{\bar{X}_t},Y^{\bar{X}_t}_s)ds
+g(\bar{X}_t,\mathcal{L}_{\bar{X}_t},Y^{\bar{X}_t}_s)d W_s^2,~~
Y^{\bar{X}_t}_0=\tilde{Y}^\delta_t,\quad 0\leq s\leq \frac{\Delta}{\varepsilon}.
\end{align*}
We notice that
$$\frac{1}{\Delta}\int_t^{t+\Delta}F(Y^{\delta,\bar{X}_t}(s;t))ds=\frac{\varepsilon}{\Delta}\int_0^{\frac{\Delta}{\varepsilon}}F(Y^{\bar{X}_t}_s)ds.$$
Hence, {according to the ergodic theorem \cite[Theorem 4.2]{khas},} we obtain that
\begin{equation}\label{eser}
\lim_{\delta\rightarrow0}\frac{\varepsilon}{\Delta}\int_0^{\frac{\Delta}{\varepsilon}}F(Y^{\bar{X}_t}_s)ds=\int_{\mathcal{Y}}F(y)\nu^{\bar{X}_t,\mathcal{L}_{\bar{X}_t}}(dy),
\end{equation}
which together with the dominated convergence theorem imply that $J^{\delta}_4$  tends to zero in probability.
Finally, it remains to study the term
\begin{align*}
  J^{\delta}_5=&\int_0^T\frac{1}{\Delta}\int_t^{t+\Delta}F(Y^{\delta,X^{\delta,h^{\delta}}_t}(s;t))dsdt
-\int_0^T\frac{1}{\Delta}\int_t^{t+\Delta}F(Y^{\delta,\bar{X}_t}(s;t))dsdt.
\end{align*}
Recall that
$$\lim_{\delta\rightarrow0}\mathbb{E}|X_{t}^{\delta,h^\delta}-\bar{X}_t|=0$$
and
$$\lim_{\delta\rightarrow0}\mathbb{E}|X_{t}^{\delta}-\bar{X}_t|=0.$$
With these estimates at hand, we can then proceed using the same argument as we did in Lemma \ref{va1} to obtain
$$\frac{1}{\Delta}\int_t^{t+\Delta}F(Y^{\delta,X^{\delta,h^\delta}_t}(s;t))ds-
\frac{1}{\Delta}\int_t^{t+\Delta}F(Y^{\delta,\bar{X}_t}(s;t))ds\rightarrow0,~\text{as}~\delta\rightarrow0.$$
By the dominated convergence theorem, it follows that $J^{\delta}_5$  tends to zero in probability. We finish the proof.
\end{proof}

\subsection{Laplace principle lower bound}
We now prove the Laplace principle lower bound. More precisely, we want to show that for all bounded and continuous  functions $\Lambda:C([0,T];\mathbb{R}^n)\to\mathbb{R}$,
\begin{align*}
 & \liminf_{\delta\rightarrow0}\left(-\frac{\delta}{\lambda^2(\delta)}\log\mathbb{E}\left[\exp\left\{-\frac{\lambda^2(\delta)}{\delta}\Lambda(Z^\delta)\right\}\right]\right)  \\
  \geq &\inf_{(\varphi,\mathbf{P})\in\mathcal{V}_{(\Theta,\nu^{\bar{X},\mathcal{L}_{\bar{X}}})}}\left[\frac{1}{2}\int_{\mathcal{Z}\times\mathcal{Z}\times\mathcal{Y}\times[0,T]}\Big(|h^1|^2+|h^2|^2\Big){\mathbf{P}}(dh^1dh^2dydt)+\Lambda(\varphi)\right].
\end{align*}
To this end, it is sufficient to prove the lower limit along any subsequence such that
$$-\frac{\delta}{\lambda^2(\delta)}\log\mathbb{E}\left[\exp\left\{-\frac{\lambda^2(\delta)}{\delta}\Lambda(Z^\delta)\right\}\right]$$
converges.  Such a subsequence exists since $$\Bigg|-\frac{\delta}{\lambda^2(\delta)}\log\mathbb{E}\left[\exp\left\{-\frac{\lambda^2(\delta)}{\delta}\Lambda(Z^\delta)\right\}\right]\Bigg|\leq C\|\Lambda\|_{\infty}.$$
By  \cite[Theorem 3.17]{bd},  it follows that for any $\eta>0$ there exists $M>0$ such that for any $\delta>0$ there exist $h^{1,\delta},h^{2,\delta}\in\mathcal{A}_M$, we have
$$-\frac{\delta}{\lambda^2(\delta)}\log\mathbb{E}\left[\exp\left\{-\frac{\lambda^2(\delta)}{\delta}\Lambda(Z^\delta)\right\}\right]\geq\mathbb{E}\left[\left(\frac{1}{2}\int_0^T\Big(|h^{1,\delta}_s|^2+|h^{2,\delta}_s|^2\Big)ds+\Lambda(Z^{\delta,h^\delta})\right)\right]-\eta.$$
Hence if we use this family of controls $h^\delta=(h^{1,\delta},h^{2,\delta})$ and the associated controlled process $Z^{\delta,h^\delta}$
to construct occupation measures $\mathbf{P}^{\delta,\Delta}$, then by Theorem \ref{prj1} the
family $\{(Z^{\delta,h^\delta},\mathbf{P}^{\delta,\Delta})\}_{\delta>0}$ is tight. Thus given any sequence in $\{\delta\}$ there is a subsequence for which
$$(Z^{\delta,h^\delta},\mathbf{P}^{\delta,\Delta})\Rightarrow(\bar{Z},\bar{\mathbf{P}}),$$
with $(\bar{Z},\bar{\mathbf{P}})\in\mathcal{V}_{(\Theta,\nu^{\bar{X},\mathcal{L}_{\bar{X}}})}$.  By Fatou's lemma, we  obtain
\begin{align*}
  &\liminf_{\delta\rightarrow0}\left(-\frac{\delta}{\lambda^2(\delta)}\log\mathbb{E}\left[\exp\left\{-\frac{\lambda^2(\delta)}{\delta}\Lambda(Z^\delta)\right\}\right]\right) \\
  \geq  & \liminf_{\delta\rightarrow0}\mathbb{E}\left[\left(\frac{1}{2}\int_0^T\Big(|h^{1,\delta}_s|^2+|h^{2,\delta}_s|^2\Big)ds+\Lambda(Z^{\delta,h^\delta})\right)\right]-\eta\\
   \geq&\liminf_{\delta\rightarrow0}\mathbb{E}\left[\frac{1}{2}\int_0^T\frac{1}{\Delta}\int_t^{t+\Delta}\Big( |h^{1,\delta}_s|^2+|h^{2,\delta}_s|^2\Big)dsdt+\Lambda(Z^{\delta,h^\delta})\right]-\eta\\
  =&\liminf_{\delta\rightarrow0}\mathbb{E}\left[\frac{1}{2}\int_{\mathcal{Z}_1\times\mathcal{Z}_2\times\mathcal{Y}\times[0,T]}\Big(| h^1|^2+|h^2|^2\Big)\mathbf{P}^{\delta,\Delta}(dh^1dh^2dydt)+\Lambda(Z^{\delta,h^\delta})\right]-\eta\\
  \geq & \mathbb{E}\left[\frac{1}{2} \int_{\mathcal{Z}_1\times\mathcal{Z}_2\times\mathcal{Y}\times[0,T]}\Big(|h^1|^2+|h^2|^2\Big)\bar{\mathbf{P}}(dh^1dh^2dydt)+\Lambda(\bar{Z})\right]-\eta\\
  \geq & \inf_{(\varphi,\mathbf{P})\in\mathcal{V}_{(\Theta,\nu^{\bar{X},\mathcal{L}_{\bar{X}}})}}\left[\frac{1}{2}\int_{\mathcal{Z}_1\times\mathcal{Z}_2\times\mathcal{Y}\times[0,T]}\Big(|h^1|^2+|h^2|^2\Big){\mathbf{P}}(dh^1dh^2dydt)+\Lambda(\varphi)\right]-\eta\\
\geq&\inf_{\varphi\in C([0,T];\mathbb{R}^n )}\big[I(\varphi)+\Lambda(\varphi)\big]-\eta.
\end{align*}
Since $\eta>0$ is arbitrary, the lower bound is proved. \hspace{\fill}$\Box$

\subsection{Compactness of level sets of $I(\cdot)$}
We want to prove that for each $s<\infty$, the level set
$$\Gamma(s):=\Big\{\varphi\in C([0,T];\mathbb{R}^n): I(\varphi)\leq s\Big\}$$
is a compact subset of $C([0,T];\mathbb{R}^n)$. The proof is analogous to the proof of the Laplace principle lower bound. Specifically, in Lemma \ref{com1} we show the pre-compactness of $\Phi(s)$ and  in Lemma \ref{com3} we show that it is closed, then the assertion follows.

\begin{lemma}\label{com1}
Fix $K<\infty$ and consider any sequence $\{(\varphi^k,\mathbf{P}^k)\}_{k\in\mathbb{N}}\subset\mathcal{V}_{(\Theta,\nu^{\bar{X},\mathcal{L}_{\bar{X}}})}$ such that for every $k\in\mathbb{N}$, $(\varphi^k,\mathbf{P}^k)$
 is viable and
\begin{equation}\label{esuni}
\int_{\mathcal{Z}_1\times\mathcal{Z}_2\times\mathcal{Y}\times [0,T]}\Big(|h^1|^2+|h^2|^2+|y|^2\Big)\mathbf{P}^k(dh^1dh^2dydt)\leq K.
\end{equation}
Then $\{(\varphi^k,\mathbf{P}^k)\}_{k\in\mathbb{N}}$ is pre-compact.
\end{lemma}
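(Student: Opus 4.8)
The plan is to establish pre-compactness of the two coordinates of $(\varphi^k,\mathbf{P}^k)$ separately and then conclude by Tychonoff. For the measure coordinate, recall from the proof of Proposition \ref{prj1} that
\[
\Psi(\Pi):=\int_{\mathcal{Z}_1\times\mathcal{Z}_2\times\mathcal{Y}\times[0,T]}\Big[|h^1|^2+|h^2|^2+|y|^2\Big]\Pi(dh^1dh^2dydt)
\]
is a tightness function on $\mathcal{P}(\mathcal{Z}_1\times\mathcal{Z}_2\times\mathcal{Y}\times[0,T])$, i.e.~its sublevel set $\mathcal{R}_K$ is relatively compact. Since \eref{esuni} gives $\Psi(\mathbf{P}^k)\le K$ for all $k$, the sequence $\{\mathbf{P}^k\}$ lies in $\mathcal{R}_K$ and is therefore pre-compact; moreover, the uniform bound on the second moments makes $\{\mathbf{P}^k\}$ uniformly integrable, so any weak limit point is in fact a limit in the $\mathbb{W}_1$-topology on $\mathcal{P}_1$, which will be convenient in Lemma \ref{com3}.

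For the trajectory coordinate I would use Arzel\`a--Ascoli in $C([0,T];\RR^n)$, deriving both uniform boundedness and equicontinuity from the viability identity \eref{j1}, the explicit form of $\Theta$ in \eref{theta}, and the available uniform bounds: $\partial_x\bar b$ is bounded by $(\mathbf{A2})$; $\sigma(\bar X_\cdot,\mathcal{L}_{\bar X_\cdot})$ is bounded on $[0,T]$ because $\bar X$ is a fixed continuous path; and $\partial_y\Phi_g$ is bounded by \eref{PHI3} together with $(\mathbf{A3})$. Using that the last marginal of $\mathbf{P}^k$ is Lebesgue measure (by \eref{j3}) and Cauchy--Schwarz, $\int(|h^1|+|h^2|)\,\mathbf{P}^k\le\sqrt{T}\,\Psi(\mathbf{P}^k)^{1/2}\le\sqrt{TK}$, whence $|\varphi^k_t|\le C\int_0^t|\varphi^k_s|\,ds+C\sqrt{TK}$ and Gronwall yields $\sup_k\sup_{t\in[0,T]}|\varphi^k_t|\le C_{T,K}$. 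For equicontinuity, for $0\le t_1<t_2\le T$ one has $\mathbf{P}^k(\mathcal{Z}_1\times\mathcal{Z}_2\times\mathcal{Y}\times(t_1,t_2])=t_2-t_1$ by \eref{j3}, so the same estimates applied to the restricted measure give $|\varphi^k_{t_2}-\varphi^k_{t_1}|\le C_{T,K}(t_2-t_1)+C_K\sqrt{t_2-t_1}\le C_{T,K}\sqrt{t_2-t_1}$, a modulus of continuity uniform in $k$. Arzel\`a--Ascoli then gives pre-compactness of $\{\varphi^k\}$ in $C([0,T];\RR^n)$.

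Combining the two, $\{(\varphi^k,\mathbf{P}^k)\}_{k\in\mathbb{N}}$ is pre-compact in $C([0,T];\RR^n)\times\mathcal{P}(\mathcal{Z}_1\times\mathcal{Z}_2\times\mathcal{Y}\times[0,T])$. I do not expect a genuine obstacle in this lemma; the only points that require care are bookkeeping of which constants depend on $(T,K)$ and invoking Cauchy--Schwarz against the correct (Lebesgue) time-marginal of $\mathbf{P}^k$ rather than a generic probability measure. The heavier work for the compactness of the level set $\Gamma(s)$ is deferred to Lemma \ref{com3}, where one must pass to the limit inside the viability conditions \eref{j1}--\eref{j2} along a convergent subsequence and use lower semicontinuity of $\Psi$ (via Fatou) to keep the cost below $s$.
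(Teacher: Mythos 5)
Your argument is correct and follows essentially the same route as the paper: Arzel\`a--Ascoli for $\{\varphi^k\}$ via the viability identity \eref{j1} plus Cauchy--Schwarz against the Lebesgue time-marginal, and pre-compactness of $\{\mathbf{P}^k\}$ from the tightness function $\Psi$ of Proposition \ref{prj1}. You are in fact a bit more careful than the paper, which jumps directly to the H\"older modulus $|\varphi^k_{t_2}-\varphi^k_{t_1}|\le C(t_2-t_1)^{1/2}$ and leaves the uniform bound on $\sup_k\sup_t|\varphi^k_t|$ (which that constant $C$ depends on) implicit; your explicit Gronwall step closes that small gap.
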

\begin{proof}
Note that for any $0\leq t_1<t_2\leq T$ and $k\in\mathbb{N}$, {by Lemma \ref{C1}, we have}
\begin{align*}
  |\varphi^k_{t_2}-\varphi^k_{t_1}|=&\left|\int_{\mathcal{Z}_1\times\mathcal{Z}_2\times\mathcal{Y}\times [t_1,t_2]}\Theta(\varphi^k_s,\bar{X}_s,\mathcal{L}_{\bar{X}_s},y,h^1,h^2)\mathbf{P}^k(dh^1dh^2dyds)\right|\\
\leq & C(t_2-t_1)^{\frac{1}{2}}\Bigg(\int_{\mathcal{Z}_1\times\mathcal{Z}_2\times\mathcal{Y}\times [t_1,t_2]}|\partial_x \bar{b}(\bar{X}_s,\mathcal{L}_{\bar{X}_s})\varphi_s^k|\mathbf{P}(dh^1dh^2dyds)\Bigg)^{\frac{1}{2}}\\
&+C(t_2-t_1)^{\frac{1}{2}}\Bigg(\int_{\mathcal{Z}_1\times\mathcal{Z}_2\times\mathcal{Y}\times [t_1,t_2]}|h^1|^2+|h^2|^2\mathbf{P}(dh^1dh^2dyds)\Bigg)^{\frac{1}{2}}\\
\leq & C(t_2-t_1)^{\frac{1}{2}}.
\end{align*}
This and the fact that $\varphi^k_0=0$ imply the pre-compactness of $\{\varphi^k\}_{k\in\mathbb{N}}$ by Arzel\`{a}-Ascoli theorem.

Pre-compactness of
$\{\mathbf{P}^k\}_{k\in\mathbb{N}}$ follows from (\ref{esuni}) and the exactly same argument as in Proposition \ref{prj1}.
\end{proof}

In order to show the level set $\Gamma(s)$ is closed, we give an important lemma  which states that the limit of any sequence of viable pair is also viable.

\begin{lemma}\label{com2}
Fix $K<\infty$ and consider any  convergent sequence $\{(\varphi^k,\mathbf{P}^k)\}_{k\in\mathbb{N}}$ such that for every $k\in\mathbb{N}$, $(\varphi^k,\mathbf{P}^k)$
 is viable and
 \begin{equation}\label{ass1}
  \int_{\mathcal{Z}_1\times\mathcal{Z}_2\times\mathcal{Y}\times [0,T]}\Big(|h^1|^2+|h^2|^2+|y|^2\Big)\mathbf{P}^k(dh^1dh^2dydt)\leq K.
 \end{equation}
 Then  the limit $(\varphi,\mathbf{P})$ is a viable pair.
\end{lemma}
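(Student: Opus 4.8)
The plan is to show that the viable pair property is preserved under the limit operation by passing to the limit in each of the three defining conditions (i)--(iii) of Definition \ref{defj1}. Since $\{(\varphi^k,\mathbf{P}^k)\}_{k\in\mathbb{N}}$ converges, write $(\varphi,\mathbf{P})$ for its limit in $C([0,T];\mathbb{R}^n)\times\mathcal{P}(\mathcal{Z}_1\times\mathcal{Z}_2\times\mathcal{Y}\times[0,T])$; by Lemma \ref{com1} we already know such a limit point exists in this space. First I would handle the finite-second-moment condition (i): the functional $\Psi(\Pi):=\int\big(|h^1|^2+|h^2|^2+|y|^2\big)\Pi(dh^1dh^2dydt)$ is nonnegative and lower semicontinuous on $\mathcal{P}(\mathcal{Z}_1\times\mathcal{Z}_2\times\mathcal{Y}\times[0,T])$ (as an increasing limit of bounded continuous functionals obtained by truncation), so by the portmanteau theorem and the uniform bound (\ref{ass1}) we get $\Psi(\mathbf{P})\leq\liminf_{k\to\infty}\Psi(\mathbf{P}^k)\leq K<\infty$, which is (i).

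Next I would pass to the limit in the decomposition condition (iii), i.e.~(\ref{j2})--(\ref{j3}). The equality $\mathbf{P}^k(\mathcal{Z}_1\times\mathcal{Z}_2\times\mathcal{Y}\times[0,t])=t$ for all $k$ and all $t$ is preserved in the limit at continuity points of $t\mapsto\mathbf{P}(\mathcal{Z}_1\times\mathcal{Z}_2\times\mathcal{Y}\times[0,t])$, and since this map is nondecreasing the exceptional set is at most countable; combined with the fact that the marginal on $[0,T]$ is then Lebesgue measure (hence $\mathbf{P}(\mathcal{Z}_1\times\mathcal{Z}_2\times\mathcal{Y}\times\{t\})=0$) we recover (\ref{j3}) and hence the continuity needed to upgrade convergence to all $t$. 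For (\ref{j2}) itself, the stochastic-kernel disintegration $\mathbf{P}(A_1\times A_2\times A_3\times A_4)=\int_{A_4}\int_{A_3}\eta(A_1\times A_2\mid y,t)\,\nu^{\bar X_t,\mathcal{L}_{\bar X_t}}(dy)\,dt$ is equivalent to the statement that for every $F\in C_b(\mathcal{Y})$ one has $\int F(y)\,\mathbf{P}(dh^1dh^2dydt)=\int_0^T\int_{\mathcal{Y}}F(y)\,\nu^{\bar X_t,\mathcal{L}_{\bar X_t}}(dy)\,dt$ (the existence of the kernel $\eta$ is then automatic by disintegration, cf.~\cite[Appendix B.2]{bd}); since each $\mathbf{P}^k$ satisfies the right-hand identity and $F$ extends to a bounded continuous function on the product space depending only on $y$, weak convergence $\mathbf{P}^k\Rightarrow\mathbf{P}$ gives this identity for $\mathbf{P}$.

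It then remains to pass to the limit in the dynamics (\ref{j1}). Fix $t\in[0,T]$. We must show $\varphi_t=\int_{\mathcal{Z}_1\times\mathcal{Z}_2\times\mathcal{Y}\times[0,t]}\Theta(\varphi_s,\bar X_s,\mathcal{L}_{\bar X_s},y,h^1,h^2)\,\mathbf{P}(dh^1dh^2dyds)$, starting from the same identity with $(\varphi^k,\mathbf{P}^k)$. Recalling $\Theta(\varphi,x,\mu,y,h^1,h^2)=\partial_x\bar b(x,\mu)\varphi+\sigma(x,\mu)h^1+\sqrt{\gamma}\,\partial_y\Phi_g(x,\mu,y)h^2$, I would split the integrand into its three summands. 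For the $\partial_x\bar b\cdot\varphi^k$ term, use that $\varphi^k\to\varphi$ uniformly and that $\partial_x\bar b(\bar X_s,\mathcal{L}_{\bar X_s})$ is bounded and continuous in $s$, together with $\mathbf{P}^k\Rightarrow\mathbf{P}$ and the uniform bound (\ref{ass1}) to control the non-compactness of the $(h^1,h^2,y)$-variables by a standard truncation argument. For the $\sigma(\bar X_s,\mathcal{L}_{\bar X_s})h^1$ and $\sqrt{\gamma}\,\partial_y\Phi_g(\bar X_s,\mathcal{L}_{\bar X_s},y)h^2$ terms, the integrands are continuous in $(s,y,h^1,h^2)$ with at most linear growth in $(h^1,h^2)$ (using the boundedness of $\partial_y\Phi_g$ from (\ref{PHI3}) and (\ref{A11})), so again weak convergence plus uniform $L^2$-integrability from (\ref{ass1}) — exactly the uniform-integrability mechanism of Proposition \ref{prj1}(ii) — lets us pass to the limit. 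The main obstacle is precisely this last step: the maps $(y,h^1,h^2)\mapsto\sigma h^1$ and $\mapsto\sqrt{\gamma}\partial_y\Phi_g\,h^2$ are unbounded, so ordinary weak convergence does not immediately yield convergence of the integrals; one must invoke the uniform second-moment bound (\ref{ass1}) to justify truncating the controls at level $N$, estimate the tail by $\tfrac{C}{N}\sup_k\Psi(\mathbf{P}^k)\le\tfrac{CK}{N}$, pass to the limit for the truncated (bounded continuous) integrand, and then let $N\to\infty$. Assembling the three pieces gives (\ref{j1}) for $(\varphi,\mathbf{P})$, so the limit is a viable pair, completing the proof. \hspace{\fill}$\Box$
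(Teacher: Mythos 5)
Your proposal is correct and follows essentially the same route as the paper's (much terser) proof: Fatou/lower semicontinuity of the second-moment functional for condition (i), continuity of $\Theta$ in $(\varphi,y,s)$ together with its affine dependence on $(h^1,h^2)$ plus the uniform integrability supplied by (\ref{ass1}) to pass to the limit in (\ref{j1}), and convergence of the $(y,t)$-marginals for (\ref{j2}). You simply spell out the truncation and marginal-testing details that the paper leaves implicit.
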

\begin{proof}
Since $(\varphi^k,\mathbf{P}^k)$ is viable, we know that
\begin{equation}\label{co11}
 \varphi^k_t=\int_{\mathcal{Z}_1\times\mathcal{Z}_2\times\mathcal{Y}\times [0,t]}\Theta(\varphi^k_s,\bar{X}_s,\mathcal{L}_{\bar{X}_s},y,h^1,h^2)\mathbf{P}^k(dh^1dh^2dyds)
\end{equation}
for every $t\in[0,T]$, and
\begin{equation}\label{co12}
 \mathbf{P}^k(dh^1dh^2dydt)=\eta^k(dh^1dh^2|y,t)\nu^{\bar{X}_t,\mathcal{L}_{\bar{X}_t}}(dy)dt.
\end{equation}
where $\eta^k$ is a sequence of stochastic kernels.

First, applying Fatou's Lemma we can get $\mathbf{P}$ has a finite second moment in the sense of (i) in Definition \ref{defj1}. Moreover, observe that the function
 $\Theta(\varphi,x,\mu,y,h^1,h^2)$ and the operator $\mathbf{L}^2_{x,\mu}$ are continuous in $\varphi,x,\mu,y$ and affine in $h^1,h^2$.  In addition, one can prove the uniform integrability of $\mathbf{P}^k$ analogously to Theorem \ref{prj1}. Hence by assumption (\ref{ass1}) and the convergence $\mathbf{P}^k\rightarrow\mathbf{P}$ and $\varphi^k\rightarrow\varphi$, we conclude that $(\varphi,\mathbf{P})$ satisfy (\ref{j1}). Due to the same reason, it is clear that $ \mathbf{P}$ satisfies (\ref{j2}).
We complete the proof.
\end{proof}

\begin{lemma}\label{com3}
The functional  $I(\varphi)$  is lower semicontinuous.
\end{lemma}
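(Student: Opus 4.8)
The plan is to prove lower semicontinuity of $I(\cdot)$ via the characterization (\ref{thjj1}) together with the compactness results already established. Concretely, fix $s<\infty$ and take a sequence $\{\varphi^k\}_{k\in\mathbb{N}}\subset C([0,T];\mathbb{R}^n)$ with $\varphi^k\to\varphi$ in $C([0,T];\mathbb{R}^n)$ and $\liminf_k I(\varphi^k)\le s$; I must show $I(\varphi)\le s$. Passing to a subsequence, I may assume $I(\varphi^k)\to L:=\liminf_k I(\varphi^k)\le s$, and in particular $\sup_k I(\varphi^k)<\infty$. For each $k$, by the definition (\ref{thjj1}) and the fact that the infimum is over a nonempty set (otherwise $I(\varphi^k)=+\infty$), choose a nearly optimal viable pair $(\varphi^k,\mathbf{P}^k)\in\mathcal{V}_{(\Theta,\nu^{\bar X,\mathcal{L}_{\bar X}})}$ with
\[
\frac12\int_{\mathcal{Z}_1\times\mathcal{Z}_2\times\mathcal{Y}\times[0,T]}\big(|h^1|^2+|h^2|^2\big)\mathbf{P}^k(dh^1dh^2dydt)\le I(\varphi^k)+\frac1k.
\]

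Next I would produce a uniform second-moment bound so as to invoke Lemmas \ref{com1} and \ref{com2}. The control part $|h^1|^2+|h^2|^2$ is uniformly bounded in $\mathbf{P}^k$-integral by the previous display (uniformly in $k$, since $I(\varphi^k)$ is bounded). For the $|y|^2$ part, property (iii) of the viable pair (equation (\ref{j2})) forces the $\mathcal{Y}$-marginal of $\mathbf{P}^k$ to be $\int_0^T\nu^{\bar X_t,\mathcal{L}_{\bar X_t}}(\cdot)\,dt$, which is independent of $k$ and has finite second moment because $\nu^{x,\mu}$ has uniformly bounded moments under $(\mathbf{A3})$ (this is exactly the estimate used, e.g., in the bound (\ref{33}) and Proposition \ref{P3.6}). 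Hence $\int(|h^1|^2+|h^2|^2+|y|^2)\mathbf{P}^k\le K$ for some $K<\infty$ uniformly in $k$. By Lemma \ref{com1}, $\{(\varphi^k,\mathbf{P}^k)\}$ is pre-compact; extract a further subsequence so that $(\varphi^k,\mathbf{P}^k)\to(\varphi,\mathbf{\bar P})$ (the first coordinate limit must be $\varphi$ since $\varphi^k\to\varphi$ already). By Lemma \ref{com2}, the limit $(\varphi,\mathbf{\bar P})$ is again a viable pair, i.e.\ $(\varphi,\mathbf{\bar P})\in\mathcal{V}_{(\Theta,\nu^{\bar X,\mathcal{L}_{\bar X}})}$.

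Finally I would pass to the limit in the cost. The map $\Pi\mapsto\int(|h^1|^2+|h^2|^2)\Pi(dh^1dh^2dydt)$ is lower semicontinuous along weakly convergent sequences (it is the integral of a nonnegative lower semicontinuous — indeed continuous — function, so weak lower semicontinuity holds by the portmanteau theorem, or alternatively by the very tightness-function argument in Proposition \ref{prj1}(ii)). Therefore
\[
I(\varphi)\le \frac12\int_{\mathcal{Z}_1\times\mathcal{Z}_2\times\mathcal{Y}\times[0,T]}\big(|h^1|^2+|h^2|^2\big)\mathbf{\bar P}(dh^1dh^2dydt)
\le \liminf_{k\to\infty}\Big(I(\varphi^k)+\tfrac1k\Big)=L\le s,
\]
where the first inequality is the definition (\ref{thjj1}) applied to the admissible pair $(\varphi,\mathbf{\bar P})$. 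This shows $\{\varphi:I(\varphi)\le s\}$ is closed; combined with Lemma \ref{com1} it is compact, so $I$ is a good rate function, in particular lower semicontinuous.

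The main obstacle I anticipate is organizational rather than technical: Lemma \ref{com2} as stated already does the heavy lifting of showing the limit pair is viable, so one must be careful that its hypothesis (\ref{ass1}) — the uniform bound including the $|y|^2$ term — is genuinely available, which is why the argument must explicitly exploit property (iii) of the viable pair to pin down the $\mathcal{Y}$-marginal and thereby control $\int|y|^2\mathbf{P}^k$ uniformly; without that observation one only controls the control-energy terms. A secondary point to handle cleanly is the weak lower semicontinuity of the quadratic cost functional on $\mathcal{P}(\mathcal{Z}_1\times\mathcal{Z}_2\times\mathcal{Y}\times[0,T])$, which follows from truncation ($|h^1|^2\wedge N$ continuous and bounded, then let $N\to\infty$ by monotone convergence) exactly as in the tightness-function verification already carried out.
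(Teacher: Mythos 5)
Your proof is correct and follows essentially the same route as the paper: select near-optimal viable pairs $(\varphi^k,\mathbf{P}^k)$, obtain a uniform second-moment bound, invoke Lemmas~\ref{com1} and~\ref{com2} to pass to a viable limit $(\varphi,\bar{\mathbf{P}})$, and conclude via lower semicontinuity (Fatou) of the quadratic cost in the weak topology. The one place where you go beyond the paper's write-up is a genuine improvement in rigor: the paper simply asserts a uniform bound $\sup_k\frac12\int(|h^1|^2+|h^2|^2+|y|^2)\,\mathbf{P}^k\le M+1$ as a consequence of choosing near-optimal controls, but the control cost alone only bounds the $|h^1|^2+|h^2|^2$ part; you correctly observe that the $|y|^2$ part must be handled separately, and that it is uniformly bounded because property~(iii) of Definition~\ref{defj1} forces the $\mathcal{Y}\times[0,T]$-marginal of every $\mathbf{P}^k$ to equal $\nu^{\bar{X}_t,\mathcal{L}_{\bar{X}_t}}(dy)\,dt$, which has finite second moment independent of $k$ under the dissipativity assumption. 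This is exactly what makes the hypothesis~(\ref{ass1}) of Lemma~\ref{com2} available, so your version closes a small gap in the paper's argument rather than taking a different path.
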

\begin{proof}
Consider a sequence $\varphi^k$ with limit $\varphi$. We want to prove
$$\liminf_{k\rightarrow\infty }I(\varphi^k)\geq I(\varphi).$$
It suffices to consider the case when $I(\varphi^k)$ has a finite limit, i.e., there exists a $M<\infty$ such that
$\lim_{k\rightarrow\infty }I(\varphi^k)\leq M$.
We recall the definition
$$I(\varphi)=\inf_{(\varphi,\mathbf{P})\in\mathcal{V}_{(\Theta,\nu^{\bar{X},\mathcal{L}_{\bar{X}}})}}\Bigg\{\frac{1}{2}\int_{\mathcal{Z}_1\times\mathcal{Z}_2\times\mathcal{Y}\times [0,T]}\Big(|h^1|^2+|h^2|^2\Big)\mathbf{P}(dh^1dh^2dydt)\Bigg\}.$$
{Hence there exists a sequence $\{\mathbf{P}^k\}_{k\in\mathbb{N}}$ satisfying $(\varphi^k,\mathbf{P}^k)\in\mathcal{V}_{(\Theta,\nu^{\bar{X},\mathcal{L}_{\bar{X}}})}$ and
$$\sup_{k\in\mathbb{N}}\frac{1}{2}\int_{\mathcal{Z}_1\times\mathcal{Z}_2\times\mathcal{Y}\times [0,T]}\Big(|h^1|^2+|h^2|^2\Big)\mathbf{P}^k(dh^1dh^2dydt)\leq M+1+\frac{1}{2}\int_0^T\int_\mathcal{Y}|y|^2\nu^{\bar{X}_{t},\mathcal{L}_{\bar{X}_{t}}}(dy)dt,$$
and such that
$$I(\varphi^k)\geq\frac{1}{2}\int_{\mathcal{Z}_1\times\mathcal{Z}_2\times\mathcal{Y}\times [0,T]}\Big(|h^1|^2+|h^2|^2\Big)\mathbf{P}^k(dh^1dh^2dydt)-\frac{1}{k}.$$
By  the reference \cite[(4.28)]{HLL4} (see also \cite[Theorem 4.3.9]{LR1}), we have $\int_0^T\int_\mathcal{Y}|y|^2\nu^{\bar{X}_{t},\mathcal{L}_{\bar{X}_{t}}}(dy)dt<\infty$. Hence there exists a constant $M'>0$ such that
$$\sup_{k\in\mathbb{N}}\frac{1}{2}\int_{\mathcal{Z}_1\times\mathcal{Z}_2\times\mathcal{Y}\times [0,T]}\Big(|h^1|^2+|h^2|^2\Big)\mathbf{P}^k(dh^1dh^2dydt)\leq M'.$$}
It follows from Lemma \ref{com1} that we can consider a subsequence along which $(\varphi^k,\mathbf{P}^k)$ converges
to a limit $(\varphi,\mathbf{P})$. By Lemma \ref{com2}, we know that $(\varphi,\mathbf{P})$ is viable. Hence by Fatou's Lemma
\begin{align*}
 \liminf_{k\rightarrow\infty }I(\varphi^k)\geq & \liminf_{k\rightarrow\infty }\Big(\frac{1}{2}\int_{\mathcal{Z}_1\times\mathcal{Z}_2\times\mathcal{Y}\times [0,T]}\Big[|h^1|^2+|h^2|^2\Big]\mathbf{P}^k(dh^1dh^2dydt)-\frac{1}{k}\Big)\\
  \geq & \frac{1}{2}\int_{\mathcal{Z}_1\times\mathcal{Z}_2\times\mathcal{Y}\times [0,T]}\Big(|h^1|^2+|h^2|^2\Big)\mathbf{P}(dh^1dh^2dydt)\\
   \geq & \inf_{(\varphi,\mathbf{P})\in\mathcal{V}_{(\Theta,\nu^{\bar{X},\mathcal{L}_{\bar{X}}})}}\Bigg\{\frac{1}{2}\int_{\mathcal{Z}_1\times\mathcal{Z}_2\times\mathcal{Y}\times [0,T]}\Big(|h^1|^2+|h^2|^2\Big)\mathbf{P}(dh^1dh^2dydt)\Bigg\}\\
   =&I(\varphi),
\end{align*}
which concludes the proof of lower-semicontinuity of $I$.
\end{proof}

\subsection{Laplace principle upper bound}

The proof of the Laplace principle upper bound is quite involved  for which we have to establish the nearly optimal controls   that achieve the bound.

Define function
$$I(\varphi):=\inf_{\mathbf{P}\in\Xi_{\varphi}}\frac{1}{2}\int_0^T\int_{\mathcal{Z}_1\times\mathcal{Z}_2\times\mathcal{Y}}\Big(|h^1|^2+|h^2|^2\Big)\mathbf{P}_t(dh^1dh^2dy)dt,$$
where the set
\begin{equation*}
  \Xi_{\varphi}:=\left\{
\begin{array}{ll}
\mathbf{P}:[0,T]&\rightarrow\mathcal{P}(\mathcal{Z}_1\times\mathcal{Z}_2\times\mathcal{Y}):\\
&\mathbf{P}_t(A_1\times A_2\times A_3)=\int_{A_3}\eta(A_1\times A_2|y,t)\nu^{\bar{X}_t,\mathcal{L}_{\bar{X}_t}}(dy),\\
&\int_0^T\int_{\mathcal{Z}_1\times\mathcal{Z}_2\times\mathcal{Y}}\Big(|h^1|^2+|h^2|^2+|y|^2\Big)\mathbf{P}_t(dh^1dh^2dy)dt<\infty, \\ &\varphi_t=\int_0^t\int_{\mathcal{Z}_1\times\mathcal{Z}_2\times\mathcal{Y}}\Theta(\varphi_s,\bar{X}_s,\mathcal{L}_{\bar{X}_s},y,h^1,h^2)\mathbf{P}_s(dh^1dh^2dy)ds.
\end{array}
\right\}.
\end{equation*}
Moreover,  we also define
\begin{equation}\label{L0}
\tilde{I}(\varphi):=\inf_{(z^1,z^2)\in\tilde{\Xi}_{\varphi}}\frac{1}{2}\int_0^T\int_{\mathcal{Y}}|z^1_t(y)|^2+|z^2_t(y)|^2\nu^{\bar{X}_t,\mathcal{L}_{\bar{X}_t}}(dy)dt,
\end{equation}
where the set
\begin{equation*}
  \tilde{\Xi}_{\varphi}
  :=\left\{
\begin{array}{ll}
  z:=(z^1,z^2):&[0,T]\times\mathcal{Y}\to\mathbb{R}^{d_1}\times\mathbb{R}^{d_2}: \\ &\int_0^T\int_{\mathcal{Y}}\Big(|z^1_t(y)|^2+|z^2_t(y)|^2+|y|^2\Big)\nu^{\bar{X}_t,\mathcal{L}_{\bar{X}_t}}(dy)dt<\infty, \\
   &\varphi_t=\int_0^t\int_{\mathcal{Y}}\Theta(\varphi_s,\bar{X}_s,\mathcal{L}_{\bar{X}_s},y,z^1_s(y),z^2_s(y))\nu^{\bar{X}_s,\mathcal{L}_{\bar{X}_s}}(dy)ds.
   \end{array}
\right\}.
\end{equation*}

\textbf{Claim:}
$I(\varphi)=\tilde{I}(\varphi)$, $\varphi\in C([0,T];\mathbb{R}^n)$.
\begin{proof}
First, for any given $z:=(z^1,z^2)\in\tilde{\Xi}_{\varphi}$, we can define $\mathbf{P}\in \Xi_{\varphi}$ by
$$ \mathbf{P}_t(dh^1dh^2dy):=\delta_{(z^1_t(y),z^2_t(y))}(dh^1dh^2)\nu^{\bar{X}_t,\mathcal{L}_{\bar{X}_t}}(dy).$$
Hence, it is easy to see that
 $$I(\varphi)\leq\tilde{I}(\varphi).$$
For the converse, for any given $\mathbf{P}\in \Xi_{\varphi}$,  we define
$$z_t(y):=(z^1_t(y),z^2_t(y)):=\int_{\mathcal{Z}_1\times\mathcal{Z}_2}h\eta(dh^1dh^2|y,t),$$
where we denote  $h:=(h^1,h^2)$,         $\eta(dh^1dh^2|y,t)$ is the conditional distribution of $\mathbf{P}$, so that $z\in\tilde{\Xi}_{\varphi}$.
 Then applying Jensen's inequality we get that
\begin{align*}
  \int_0^T\int_{\mathcal{Z}_1\times\mathcal{Z}_2\times\mathcal{Y}}|h|^2\mathbf{P}_t(dh^1dh^2dy)dt\geq & \int_0^T\int_{\mathcal{Y}}\left|\int_{\mathcal{Z}_1\times\mathcal{Z}_2}h\eta(dh^1dh^2|y,t)\right|^2\nu^{x,\mu}(dy)dt\\
  = & \int_0^T\int_{\mathcal{Y}}|z_t(y)|^2\nu^{x,\mu}(dy)dt.
\end{align*}
Then we can deduce that $$I(\varphi)\geq\tilde{I}(\varphi).$$ The claim follows.
\end{proof}

The following lemma gives an explicit representation of infimization problem (\ref{L0}), which plays an essential role in proving the Laplace principle upper bound.
\begin{lemma}
The control $h:=(h^1,h^2):[0,T]\times\mathcal{Y}\to\mathcal{Z}_1\times\mathcal{Z}_2$ defined by
\begin{eqnarray*}
\!\!\!\!\!\!\!\!&&
h^1_t(y):=\sigma^*(\bar{X}_t,\mathcal{L}_{\bar{X}_t})Q_2^{-1}(\bar{X}_t,\mathcal{L}_{\bar{X}_t})(\dot{\varphi}_t-\partial_x \bar{b}(\bar{X}_t,\mathcal{L}_{\bar{X}_t})\varphi_t),
\\
\!\!\!\!\!\!\!\!&&
h^2_t(y):=\sqrt{\gamma}(\partial_y\Phi_g)^*(\bar{X}_t,\mathcal{L}_{\bar{X}_t},y)Q_2^{-1}(\bar{X}_t,\mathcal{L}_{\bar{X}_t})(\dot{\varphi}_t-\partial_x \bar{b}(\bar{X}_t,\mathcal{L}_{\bar{X}_t})\varphi_t),
\end{eqnarray*}
attains the infimum in  (\ref{L0}), where
$$Q_2(\bar{X}_t,\mathcal{L}_{\bar{X}_t}):=\int_{\mathcal{Y}}\sigma\sigma^*(\bar{X}_t,\mathcal{L}_{\bar{X}_t})+\gamma(\partial_y\Phi_g)(\partial_y\Phi_g)^*(\bar{X}_t,\mathcal{L}_{\bar{X}_t},y)\nu^{\bar{X}_t,\mathcal{L}_{\bar{X}_t}}(dy).$$

Furthermore, the infimization problem (\ref{L0})  has the explicit solution
\begin{equation}\label{eqes}
 \tilde{I}(\varphi)=
 \frac{1}{2}\int_0^T|Q_2^{-1/2}(\bar{X}_t,\mathcal{L}_{\bar{X}_t})(\dot{\varphi}_t-\partial_x \bar{b}(\bar{X}_t,\mathcal{L}_{\bar{X}_t})\varphi_t)|^2dt.
\end{equation}
\end{lemma}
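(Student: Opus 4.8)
The plan is to recognize the infimization \eref{L0} as a continuum of mutually independent least-squares problems---one for each time $t$---and to solve each by the standard Hilbert-space minimum-norm formula, with the matrix $Q_2$ playing the role of the (invertible) Gram matrix.

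First I would reduce to a pointwise-in-$t$ problem. If $\tilde{\Xi}_{\varphi}=\emptyset$ then $\tilde{I}(\varphi)=+\infty$; on the other hand the constraint defining $\tilde{\Xi}_{\varphi}$ exhibits $\varphi$ as $\int_0^{\cdot}(\cdots)\,ds$, hence forces $\varphi$ to be absolutely continuous, and a Cauchy--Schwarz estimate on the inner integral together with the boundedness of $\partial_x\bar{b}$ along the bounded continuous path $\bar{X}$ shows $\int_0^T|\dot{\varphi}_t|^2\,dt<\infty$, so in the degenerate case the right-hand side of \eref{eqes} is also $+\infty$, matching the corresponding branch of \eref{thjj2}. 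So assume $\varphi$ absolutely continuous with $\int_0^T|\dot{\varphi}_t|^2\,dt<\infty$, and put $\rho_t:=\dot{\varphi}_t-\partial_x\bar{b}(\bar{X}_t,\mathcal{L}_{\bar{X}_t})\varphi_t$. Differentiating the constraint in $\tilde{\Xi}_{\varphi}$ and cancelling the $\partial_x\bar{b}\,\varphi_t$ term, a pair $z=(z^1,z^2)$ with finite cost lies in $\tilde{\Xi}_{\varphi}$ if and only if for a.e.\ $t\in[0,T]$
\begin{equation*}
\int_{\mathcal{Y}}\Big(\sigma(\bar{X}_t,\mathcal{L}_{\bar{X}_t})z^1_t(y)+\sqrt{\gamma}\,\partial_y\Phi_g(\bar{X}_t,\mathcal{L}_{\bar{X}_t},y)z^2_t(y)\Big)\nu^{\bar{X}_t,\mathcal{L}_{\bar{X}_t}}(dy)=\rho_t ,
\end{equation*}
and in addition $\int_0^T\int_{\mathcal{Y}}(|z^1_t|^2+|z^2_t|^2+|y|^2)\,\nu^{\bar{X}_t,\mathcal{L}_{\bar{X}_t}}(dy)\,dt<\infty$ (the $|y|^2$-part being finite irrespective of $z$, since $\nu^{x,\mu}$ has second moments bounded uniformly in $(x,\mu)$ by the dissipativity \eref{sm}). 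Since the cost separates over $t$, it suffices to minimise, for a.e.\ fixed $t$, the squared $L^2(\nu^{\bar{X}_t,\mathcal{L}_{\bar{X}_t}})$-norm of $(z^1,z^2)$ subject to the single linear constraint displayed above.

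Second I would solve the frozen problem. Fix $t$, abbreviate $\nu:=\nu^{\bar{X}_t,\mathcal{L}_{\bar{X}_t}}$, and introduce the bounded linear operator $L_t:L^2(\nu;\RR^{d_1})\times L^2(\nu;\RR^{d_2})\to\RR^n$, $L_t(z^1,z^2)=\int_{\mathcal{Y}}\big(\sigma z^1+\sqrt{\gamma}\,\partial_y\Phi_g(\cdot,y)z^2\big)\nu(dy)$, coefficients at $(\bar{X}_t,\mathcal{L}_{\bar{X}_t})$. Computing the adjoint gives $L_t^*\xi=\big(\sigma^*\xi,\ \sqrt{\gamma}(\partial_y\Phi_g(\cdot,y))^*\xi\big)$, hence
\begin{equation*}
L_tL_t^*\xi=\int_{\mathcal{Y}}\Big(\sigma\sigma^*+\gamma(\partial_y\Phi_g)(\partial_y\Phi_g)^*(\bar{X}_t,\mathcal{L}_{\bar{X}_t},y)\Big)\nu(dy)\,\xi=Q_2(\bar{X}_t,\mathcal{L}_{\bar{X}_t})\xi .
\end{equation*}
By $(\mathbf{A4})$ we have $Q_2(\bar{X}_t,\mathcal{L}_{\bar{X}_t})\geq c_1 I$, so $L_tL_t^*$ is boundedly invertible with $\|Q_2^{-1}\|\leq c_1^{-1}$, uniformly in $t$. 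The minimum-norm solution of $L_tz=\rho_t$ is therefore $z_t=L_t^*Q_2^{-1}\rho_t$, that is
\begin{equation*}
z^1_t(y)=\sigma^*Q_2^{-1}\rho_t,\qquad z^2_t(y)=\sqrt{\gamma}(\partial_y\Phi_g(\cdot,y))^*Q_2^{-1}\rho_t ,
\end{equation*}
which upon substituting $\rho_t=\dot{\varphi}_t-\partial_x\bar{b}(\bar{X}_t,\mathcal{L}_{\bar{X}_t})\varphi_t$ is exactly the claimed feedback $(h^1,h^2)$. Optimality is the orthogonality argument: any competitor $z\in\tilde{\Xi}_{\varphi}$ satisfies $L_t(z_t-h_t)=0$ for a.e.\ $t$, while $h_t\in\mathrm{range}(L_t^*)$ is orthogonal to $\ker L_t$, so $\|z_t\|^2=\|h_t\|^2+\|z_t-h_t\|^2\geq\|h_t\|^2$; integrating over $[0,T]$ shows $h$ attains the infimum. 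The minimal pointwise cost equals
\begin{equation*}
\tfrac12\int_{\mathcal{Y}}\big(|h^1_t|^2+|h^2_t|^2\big)\nu(dy)=\tfrac12\big\langle L_tL_t^*Q_2^{-1}\rho_t,Q_2^{-1}\rho_t\big\rangle=\tfrac12\big\langle\rho_t,Q_2^{-1}\rho_t\big\rangle=\tfrac12\big|Q_2^{-1/2}\rho_t\big|^2 ,
\end{equation*}
and integrating in $t$ yields precisely \eref{eqes}.

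Finally, two admissibility checks close the argument. The feedback $h$ indeed belongs to $\tilde{\Xi}_{\varphi}$: plugging $h$ into the constraint gives $L_th_t=Q_2Q_2^{-1}\rho_t=\rho_t$ for a.e.\ $t$, and then $\int_0^t\big(\partial_x\bar{b}(\bar{X}_s,\mathcal{L}_{\bar{X}_s})\varphi_s+\rho_s\big)\,ds=\int_0^t\dot{\varphi}_s\,ds=\varphi_t$, using $\varphi_0=0$. And $h$ has finite cost: $\sigma(\bar{X}_{\cdot},\mathcal{L}_{\bar{X}_{\cdot}})$ is bounded on $[0,T]$ (by $(\mathbf{A1})$ and $(\mathbf{A3})$, since $\mathcal{L}_{\bar{X}_t}=\delta_{\bar{X}_t}$ and $\bar{X}$ is a bounded continuous path), $\partial_y\Phi_g$ is bounded by \eref{PHI3} together with $(\mathbf{A3})$, and $\|Q_2^{-1}\|\leq c_1^{-1}$, whence $|h^1_t|^2+|h^2_t|^2\leq C|\rho_t|^2$ and $\int_0^T(|h^1_t|^2+|h^2_t|^2)\,dt\leq C\int_0^T|\rho_t|^2\,dt<\infty$; joint measurability of $(t,y)\mapsto h_t(y)$ is immediate from continuity of $t\mapsto\bar{X}_t$ and of the coefficients. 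I do not expect a serious obstacle in this lemma: the only mildly delicate points are the measurable-selection/Fubini justification that the pointwise-in-$t$ minimisers assemble into a genuine global minimiser of \eref{L0}, the uniform (in $t$) invertibility of $Q_2$, which is exactly what $(\mathbf{A4})$ supplies, and the bookkeeping for the degenerate case in which $\varphi$ fails to be absolutely continuous.
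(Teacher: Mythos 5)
Your proposal is correct and follows essentially the same route as the paper: both identify $Q_2$ as the Gram matrix of the linear constraint, obtain the lower bound $\tfrac12\langle\rho_t,Q_2^{-1}\rho_t\rangle$ pointwise in $t$, and verify that the stated feedback control attains it. The only difference is expository — the paper cites the H\"older inequality for integrals of matrices from \cite[Lemma 5.1]{ds} for the lower bound, whereas you rederive it from scratch via the minimum-norm/orthogonal-projection argument in $L^2(\nu^{\bar{X}_t,\mathcal{L}_{\bar{X}_t}})$ and additionally spell out the admissibility checks that the paper leaves implicit.
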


\begin{proof}
Note that for any  $z:=(z^1,z^2)\in\tilde{\Xi}_{\varphi}$, we have
\begin{align*}
\dot{\varphi}_t&=\int_{\mathcal{Y}}\Theta(\varphi_t,\bar{X}_t,\mathcal{L}_{\bar{X}_t},y,z^1(y),z^2(y))\nu^{\bar{X}_t,\mathcal{L}_{\bar{X}_t}}(dy) \\
   &=\partial_x \bar{b}(\bar{X}_t,\mathcal{L}_{\bar{X}_t})\varphi_t+\int_{\mathcal{Y}}\left[\sigma(\bar{X}_t,\mathcal{L}_{\bar{X}_t}) z^1(y)+\sqrt{\gamma}(\partial_y\Phi_g)(\bar{X}_t,\mathcal{L}_{\bar{X}_t},y)z^2(y)\right]\nu^{\bar{X}_t,\mathcal{L}_{\bar{X}_t}}(dy).
\end{align*}
Then applying the H\"{o}lder inequality for integrals of matrices (cf.~\cite[Lemma 5.1]{ds}) yields that for any  $z:=(z^1,z^2)\in\tilde{\Xi}_{\varphi}$,
$$\int_0^T\int_{\mathcal{Y}}|z(y)|^2\nu^{\bar{X}_t,\mathcal{L}_{\bar{X}_t}}(dy)dt\geq \int_0^T(\dot{\varphi}_t-\partial_x \bar{b}(\bar{X}_t,\mathcal{L}_{\bar{X}_t})\varphi_t)^*Q_2^{-1}(\bar{X}_t,\mathcal{L}_{\bar{X}_t})(\dot{\varphi}_t-\partial_x \bar{b}(\bar{X}_t,\mathcal{L}_{\bar{X}_t})\varphi_t)dt.$$
Furthermore, for any $t\in[0,T]$, if we take
\begin{eqnarray}
\!\!\!\!\!\!\!\!&&
h^1_t(y):=\sigma^*(\bar{X}_t,\mathcal{L}_{\bar{X}_t})Q_2^{-1}(\dot{\varphi}_t-\partial_x \bar{b}(\bar{X}_t,\mathcal{L}_{\bar{X}_t})\varphi_t),\label{eqh1}
\\
\!\!\!\!\!\!\!\!&&
h^2_t(y):=\sqrt{\gamma}(\partial_y\Phi_g)^*(\bar{X}_t,\mathcal{L}_{\bar{X}_t},y)Q_2^{-1}(\dot{\varphi}_t-\partial_x \bar{b}(\bar{X}_t,\mathcal{L}_{\bar{X}_t})\varphi_t),\label{eqh2}
\end{eqnarray}
then $h:=(h^1,h^2)\in\tilde{\Xi}_{\varphi}$ and
$$\int_0^T\int_{\mathcal{Y}}|h_t(y)|^2\nu^{\bar{X}_t,\mathcal{L}_{\bar{X}_t}}(dy)dt=\int_0^T(\dot{\varphi}_t-\partial_x \bar{b}(\bar{X}_t,\mathcal{L}_{\bar{X}_t})\varphi_t)^*Q_2^{-1}(\bar{X}_t,\mathcal{L}_{\bar{X}_t})(\dot{\varphi}_t-\partial_x \bar{b}(\bar{X}_t,\mathcal{L}_{\bar{X}_t})\varphi_t)dt,$$
which implies that (\ref{eqes}) holds and the infimum of (\ref{L0}) is achieved in $h=(h^1,h^2)$ defined by (\ref{eqh1}) and (\ref{eqh2}).
\end{proof}


Now we have all the ingredients to prove the Laplace principle upper bound and hence to complete the proof of Theorem \ref{thj1}. {We intend to show that for all bounded, continuous functions $\Lambda$ mapping $C([0,T];\mathbb{R}^n )$ into $\mathbb{R}$
\begin{equation}\label{lpup}
\limsup_{\delta\rightarrow0}\left(-\frac{\delta}{\lambda^2(\delta)}\log\mathbb{E}\left[\exp\left\{-\frac{\lambda^2(\delta)}{\delta}\Lambda(Z^\delta)\right\}\right]\right) \leq\inf_{\varphi\in C([0,T];\mathbb{R}^n )}\Big[I(\varphi)+\Lambda(\varphi)\Big]=\inf_{\varphi\in C([0,T];\mathbb{R}^n )}\Big[\tilde{I}(\varphi)+\Lambda(\varphi)\Big].
\end{equation}
Note that for any  $\eta>0$, there exists $\psi\in C([0,T];\mathbb{R}^n )$ with $\psi_0=0$  such that
\begin{equation}\label{e3}
\tilde{I}(\psi)+\Lambda(\psi)\leq\inf_{\varphi\in C([0,T];\mathbb{R}^n )}\Big[\tilde{I}(\varphi)+\Lambda(\varphi)\Big]+\eta<\infty,
\end{equation}
and for each $(z^1,z^2)\in\tilde{\Xi}_{\psi}$,
\begin{equation}\label{hfx1}
\psi_t=\int_0^t\int_{\mathcal{Y}}\Theta(\psi_s,\bar{X}_s,\mathcal{L}_{\bar{X}_s},y,z^1_s(y),z^2_s(y))\nu^{\bar{X}_s,\mathcal{L}_{\bar{X}_s}}(dy)ds.
\end{equation}}
Since $\Lambda$ is bounded, this implies that $I(\psi)<\infty$ and hence $\psi$ is absolutely continuous by the definition of $I$.  For this given function $\psi$, we  define $\bar{h}_t(y):=(\bar{h}^1_t(y),\bar{h}^2_t(y))$, where
$$\bar{h}^1_t(y):=\sigma^*(\bar{X}_t,\mathcal{L}_{\bar{X}_t})Q_2^{-1}(\bar{X}_t,\mathcal{L}_{\bar{X}_t})(\dot{\psi}_t-\partial_x \bar{b}(\bar{X}_t,\mathcal{L}_{\bar{X}_t})\psi_t),$$
$$\bar{h}^2_t(y):=\sqrt{\gamma}(\partial_y\Phi_g)^*(\bar{X}_t,\mathcal{L}_{\bar{X}_t},y)Q_2^{-1}(\bar{X}_t,\mathcal{L}_{\bar{X}_t})(\dot{\psi}_t-\partial_x \bar{b}(\bar{X}_t,\mathcal{L}_{\bar{X}_t})\psi_t),$$
then we have $\bar{h}_{\cdot}(y)\in L^2([0,T];\mathcal{Z}_1\times\mathcal{Z}_2)$ uniformly in $y\in\mathcal{Y}$. Applying a standard mollification argument, without loss of generality,  we can  assume that
\begin{eqnarray}\label{eh2}
\bar{h}~\text{is Lipschitz continuous in}~ t\in[0,T].
\end{eqnarray}
More precisely, let $0\leq \chi\in C^{\infty}_0(\mathbb{R})$ with support contained in $\{r:|r|\leq 1\}$ such that $\int_{\mathbb{R}}\chi(r)dr=1$, and for any $k\geq 1$, let $\chi_k(r):=k\chi(kr)$ and define
$$\bar{h}^k_t(y):=\int_{\mathbb{R}}\bar{h}_r(y)\chi_k(t-r)dr.$$
By the property of convolutions (cf.~e.g.~\cite[(4.26)]{HW2019}), we know that for any $t_1,t_2\in[0,T]$,
$$|\bar{h}^k_{t_1}(y)-\bar{h}^k_{t_2}(y)|\leq c_k|t_1-t_2|,~y\in\mathcal{Y}$$
and
$$\|\bar{h}^k_{\cdot}(y)-\bar{h}_{\cdot}(y)\|_{L^2([0,T];\mathcal{Z}_1\times\mathcal{Z}_2)}\to 0,~k\to \infty,~~\text{uniformly in}~y\in\mathcal{Y}.$$
Moreover, from the conditions $(\mathbf{A1})$-$(\mathbf{A4})$ and (\ref{PHI3}), we can get that
\begin{eqnarray}\label{eh1}
\bar{h}~\text{is  Lipschitz continuous  and bounded in}~y\in\mathcal{Y},
\end{eqnarray}
{the detailed proof is postponed in Subsection \ref{app2} in Appendix.}

Hence, by  (\ref{eh2}) and (\ref{eh1}), we can also deduce that the same properties hold for the function $$\varpi(\cdot,\cdot):=|\bar{h}_{\cdot}(\cdot)|^2:[0,T]\times\mathcal{Y}\to\mathbb{R}.$$

Now we define a control in  feedback form by
$$\bar{h}^\delta_t:=\bar{h}_t(\bar{Y}_t^{\delta})=\big(\bar{h}^1_t(\bar{Y}_t^{\delta}),\bar{h}^2_t(\bar{Y}_t^{\delta})\big).$$
Here $\bar{Y}^{\delta}$ is the solution of the following auxiliary equation
$$
d\bar{Y}_{t}^{\delta}=\frac{1}{\varepsilon}f\left(\bar{X}_{t(\Delta)},\mathcal{L}_{\bar{X}_{t(\Delta)}},\bar{Y}_{t}^{\delta}\right)dt+\frac{1}{\sqrt{\varepsilon}}g\left(\bar{X}_{t(\Delta)},\mathcal{L}_{\bar{X}_{t(\Delta)}},\bar{Y}_{t}^{\delta}\right)dW_t^2,~\bar{Y}_{0}^{\delta}=y,
$$
where $t(\Delta):=[\frac{t}{\Delta}]\Delta$ and $[s]$ denotes the
integer part of $s$. By using Khasminskii's time discretization scheme,  we can show the following convergence
\begin{equation}\label{eqav1}
\lim_{\delta\to 0} \mathbb{E}\int_0^T\varpi(t,\bar{Y}_t^{\delta}) dt=\int_0^T\int_{\mathcal{Y}}\varpi(t,y) \nu^{\bar{X}_{t},\mathcal{L}_{\bar{X}_{t}}}(dy)dt,
\end{equation}
whose proof is postponed in Subsection \ref{secapp3} in Appendix. {Moreover, let ${\psi}\in C([0,T];\mathbb{R}^n )$ be the unique solution to the control problem (\ref{hfx1}) with $\bar{h}_t(y)$, we can deduce that
\begin{eqnarray}\label{e2}
Z^{\delta,\bar{h}^\delta}\Rightarrow\!\!\!\!\!\!\!\!&&~~~~{\psi},~~~~\text{in}~ C([0,T];\mathbb{R}^n),~\text{as}~\delta\to 0,
\end{eqnarray}
whose proof is postponed in Subsection  \ref{secapp4} in Appendix.}

Now  we can prove (\ref{lpup}). By (\ref{eqes}), (\ref{e3}), (\ref{eqav1}) and (\ref{e2}), we have
\begin{align}
 &\limsup_{\delta\rightarrow0}\left(-\frac{\delta}{\lambda^2(\delta)}\log\mathbb{E}\left[\exp\left\{-\frac{\lambda^2(\delta)}{\delta}\Lambda(Z^\delta)\right\}\right]\right)\nonumber\\ &=\limsup_{\delta\rightarrow0}\inf_{h\in \mathcal{A}}\mathbb{E}\left[\frac{1}{2}\int_0^T|h_s|^2ds+\Lambda(Z^{\delta,h})\right] \nonumber\\
 &\leq\limsup_{\delta\rightarrow0}\mathbb{E}\left[\frac{1}{2}\int_0^T|\bar{h}^\delta_s|^2ds+\Lambda(Z^{\delta,\bar{h}^\delta})\right]\nonumber\\
 &=\mathbb{E}\left[\frac{1}{2}\int_0^T\int_{\mathcal{Y}}|\bar{h}_s(y)|^2\nu^{\bar{X}_s,\mathcal{L}_{\bar{X}_s}}(dy)ds+\Lambda(\psi)\right]\nonumber\\
 &=I(\psi)+\Lambda(\psi)\nonumber\\
 &\leq\inf_{\varphi\in C([0,T];\mathbb{R}^n )}\big[I(\varphi)+\Lambda(\varphi)\big]+\eta.
\end{align}
Since $\eta$ is arbitrary, we complete the proof of the Laplace principle upper bound.   \hspace{\fill}$\Box$

\vspace{5mm}
{
\textbf{Conclusions and Future Work:}
In this paper,  we have derived the moderate deviation principle for McKean-Vlasov stochastic differential equations with a separation of fast and slow components and small noise in the slow component. Depending on the interaction of the fast scale with the intensity of the noise, the rate functions are different in two regimes. In particular, we show that it is strongly affected by the noise of the fast component in Regime 2, which is essentially different from the Regime 1. As a by-product, the explicit representation formulas of the rate functions in all of regimes are also given.  Our
strategies here are mainly based on the weak convergence approach developed by Budhiraja et al.~\cite{BDM} and the functional occupation measure approach developed by Dupuis and Spiliopoulos~\cite{ds}.  It is worth noting that {the above methods does not work in the case of Regime 3 anymore,  as such a scale relationship makes} the analysis very challenging and it is also an open issue even in the distribution independence case.

An interesting extension of this work would be to proving the large and moderate deviation principle in the
setting where the coefficients of fast and slow components depend also on the law of the fast process, i.e.  $\mathcal{L}_{Y^{\delta}_t}$.   Secondly,  as evidenced in \cite{ms}, one would also consider a more general system by adding a homogenization term $\frac{\sqrt{\delta}}{\sqrt{\varepsilon}}K(X^{\delta}_t,\mathcal{L}_{X^{\delta}_t},Y^{\delta}_t)$
in the drift of the slow equation, and it is sufficient to apply the weak convergence rate to derive the corresponding LDP and MDP. On the other hand, it is interesting to apply the results of the current paper to establish  the importance sampling scheme for the multi-scale McKean-Vlasov SDEs.
}

\section{Appendix}

 \subsection{The definition of Lions derivative on $\mathcal{P}_2$}\label{appendix1}
A matrix-valued function $u(x,y)$ defined on $\RR^n\times\RR^m$, we use $\partial_x u$ and $\partial_y u$ to denote the first order partial derivative of $u$ with respect to (w.r.t.)  $x$ and $y$ respectively, $\partial^2_{xx} u$, $\partial^2_{yy} u$ and $\partial^2_{xy} u$ to denote its second order derivatives of $u$.

From \cite[Section 6]{C}, for $u: \mathcal{P}_2\rightarrow \RR$ we denote by $U$ its ``extension" to $L^2(\Omega, \PP;\RR^n)$ defined by
$$
U(X)=u(\mathcal{L}_{X}),\quad X\in L^2(\Omega,\PP;\RR^n).
$$
We call $u$ is differentiable at $\mu\in\mathcal{P}_2$ if there exists $X\in L^2(\Omega,\PP;\RR^n)$ such that $\mathcal{L}_{X}=\mu$ and $U$ is Fr\'echet differentiable at $X$. Applying the Riesz theorem, the Fr\'echet derivative $DU(X)$, viewed as an element of $L^2(\Omega,\PP;\RR^n)$, can be represented as
\begin{equation}\label{esfre}
DU(X)=\partial_{\mu}u(\mathcal{L}_{X})(X),
\end{equation}
where $\partial_{\mu}u(\mathcal{L}_{X}):\RR^n\rightarrow \RR^n$, which is called Lions derivative of $u$ at $\mu= \mathcal{L}_{X}$. We note that the map $\partial_{\mu}u(\mathcal{L}_{X})$ depends only on $\mathcal{L}_X$, not on $X$.
In addition, $\partial_{\mu}u(\mu)\in L^2(\mu;\RR^n)$ for $\mu\in\mathcal{P}_2$. If $\partial_{\mu}u(\mu)(z):\RR^n\rightarrow \RR^n$ is differentiable at $z\in\RR^n$, we denote its derivative by $\partial_{z}\partial_{\mu}u(\mu)(z):\RR^n\rightarrow \RR^n\times\RR^n$.

\vspace{0.1cm}
We say that a matrix-valued function $u(\mu)=(u_{ij}(\mu))$ differentiable at $\mu\in\mathcal{P}_2$, if all its components are  differentiable at $\mu$, and set $$ \partial_{\mu}u(\mu)=(\partial_{\mu}u_{ij}(\mu)), ~~   \|\partial_{\mu}u(\mu)\|^2_{L^2(\mu)}=\sum_{i,j}\int_{\RR^n}|\partial_{\mu}u_{ij}(\mu)(z)|^2\mu(dz). $$
In addition, we call $\partial_{\mu}u(\mu)(z)$ differentiable at $z\in\RR^n$, if all its components are differentiable at $z$, and set $$\partial_{z}\partial_{\mu}u(\mu)(z)=(\partial_{z}\partial_{\mu}u_{ij}(\mu)(z)), ~~ \|\partial_{z}\partial_{\mu}u(\mu)\|^2_{L^2(\mu)}=\sum_{i,j}\int_{\RR^n}\|\partial_{z}\partial_{\mu}u_{ij}(\mu)(z)\|^2\mu(dz).$$

\vspace{0.1cm}
For the reader's convenience, we recall the following definitions.

 \begin{definition} For a  map $u(\cdot): \mathcal{P}_2 \to \RR$, we say $u\in C^{(1,1)}(\mathcal{P}_2; \RR)$, if this map is continuously differentiable at any $\mu\in\mathcal{P}_2$ and its derivative $\partial_{\mu}u(\mu)(z):\RR^n\rightarrow \RR^n$ is continuously differentiable at any $z\in\RR^n$. We say $u\in C^{(1,1)}_b(\mathcal{P}_2; \RR)$, if $u\in C^{(1,1)}(\mathcal{P}_2; \RR)$, moreover the derivatives $\partial_{\mu}u(\mu)(z)$ and $\partial_z\partial_{\mu}u(\mu)(z)$ are jointly continuous at any $(\mu,z)$, and uniformly bounded w.r.t.~$(\mu,z)$,~i.e., $\sup_{\mu\in\mathcal{P}_2,z\in\RR^n}|\partial_{\mu}u(\mu)(z)|<\infty$ and $\sup_{\mu\in\mathcal{P}_2,z\in\RR^n}\|\partial_z\partial_{\mu}u(\mu)(z)\|<\infty$. For a matrix-valued map $u(\cdot): \mathcal{P}_2 \to \RR^{l_1}\otimes \RR^{l_2}$, where $l_1,l_2\in \mathbb{N}_{+}$, we say $u\in C^{(1,1)}(\mathcal{P}_2;\RR^{l_1}\otimes \RR^{l_2})$ (resp. $C^{(1,1)}_b(\mathcal{P}_2;\RR^{l_1}\otimes \RR^{l_2})$) if all the components belong to $C^{(1,1)}(\mathcal{P}_2;\RR)$ (resp. $C^{(1,1)}_b(\mathcal{P}_2;\RR)$).
\end{definition}

\begin{definition} For a  map $u(\cdot): \RR^n \to \RR$, we say $u\in C^{2}_b(\RR^n; \RR)$, if the derivatives $\partial_{x} u(x)$,  $\partial^2_{xx} u(x)$ are bounded and continuous at any $x$. For a  map $u(\cdot,\cdot): \RR^n\times\RR^m \to \RR$, we say $u\in C^{2,2}(\RR^n\times\RR^m; \RR)$, if the partial derivatives $\partial_{x} u(x,y)$, $\partial_{y} u(x,y)$, $\partial^2_{xx} u(x,y)$, $\partial^2_{xy} u(x,y)$ and $\partial^2_{yy} u(x,y)$ exist at any $(x,y)$. We say $u\in C^{2,2}_b(\RR^n\times\RR^m; \RR)$, if $u\in C^{2,2}(\RR^n\times\RR^m; \RR)$ and the partial derivatives $\partial_{x} u(x,y)$, $\partial_{y} u(x,y)$, $\partial^2_{xx} u(x,y)$, $\partial^2_{xy} u(x,y)$ and $\partial^2_{yy} u(x,y)$ are jointly continuous at any $(x,y)$ and uniformly bounded w.r.t.  $(x,y)$. For a matrix-valued map $u(\cdot): \RR^n \to \RR^{l_1}\otimes \RR^{l_2}$, we say $u\in C^{2}_b(\RR^n;\RR^{l_1}\otimes \RR^{l_2})$ if all the components belong to $C^{2}_b(\RR^n;\RR)$. Similarly, we say $u\in C^{2,2}(\RR^n\times\RR^m;\RR^{l_1}\otimes \RR^{l_2})$ (resp. $C^{2,2}_b(\RR^n\times\RR^m;\RR^{l_1}\otimes \RR^{l_2})$) if all the components belong to $C^{2,2}(\RR^n\times\RR^m;\RR)$ (resp. $C^{2,2}_b(\RR^n\times\RR^m;\RR)$).

\end{definition}

\begin{definition}\label{de1}
For a matrix-valued map $u(\cdot,\cdot): \RR^n\times\mathcal{P}_2  \to \RR^{l_1}\otimes \RR^{l_2}$, we say $u\in C^{2,(1,1)}_b(\RR^n\times\mathcal{P}_2;\RR^{l_1}\otimes \RR^{l_2})$ if $u(x,\cdot)\in C^{(1,1)}_b(\mathcal{P}_2;\RR^{l_1}\otimes \RR^{l_2})$ for any $x\in\RR^n$ and $u(\cdot,\mu)\in C^{2}_b(\RR^n;\RR^{l_1}\otimes \RR^{l_2})$ for any $\mu\in\mathcal{P}_2$, moreover, the derivatives $\partial_{x} u(x,\mu)$, $\partial^2_{xx} u(x,\mu)$,  $\partial_{\mu} u(x,\mu)(z)$, $\partial_{z}\partial_{\mu} u(x,\mu)(z)$ are jointly continuous at any $(x,\mu,z)$ and uniformly bounded w.r.t.~$(x,\mu,z)$.
\end{definition}

\begin{definition}\label{de2} For a matrix-valued map $u(\cdot,\cdot,\cdot): \RR^n\times\mathcal{P}_2 \times\RR^m \to \RR^{l_1}\otimes \RR^{l_2}$, we say $u\in C^{2,(1,1),2}(\RR^n\times\mathcal{P}_2\times\RR^m;\RR^{l_1}\otimes \RR^{l_2})$ if $u(x,\cdot,y)\in C^{(1,1)}(\mathcal{P}_2;\RR^{l_1}\otimes \RR^{l_2})$ for any $(x,y)\in\RR^n\times \RR^m$ and $u(\cdot,\mu,\cdot)\in C^{2,2}(\RR^n\times\RR^m;\RR^{l_1}\otimes \RR^{l_2})$ for any $\mu\in\mathcal{P}_2$. We say $u\in C^{2,(1,1),2}_b(\RR^n\times\mathcal{P}_2\times\RR^m;\RR^{l_1}\otimes \RR^{l_2})$, if $u\in C^{2,(1,1),2}(\RR^n\times\mathcal{P}_2\times\RR^m;\RR^{l_1}\otimes \RR^{l_2})$, moreover the partial derivatives $\partial_{x} u(x,\mu,y)$, $\partial_{y} u(x,\mu,y)$, $\partial^2_{xx} u(x,\mu,y)$, $\partial^2_{xy} u(x,\mu,y)$, $\partial^2_{yy} u(x,\mu,y)$, $\partial_{\mu}u(x,\mu,y)(z)$ and $\partial_z\partial_{\mu}u(x,\mu,y)(z)$ are uniformly bounded w.r.t.  $(x,\mu,y,z)$ and uniformly continuous on $\RR^n\times\mathcal{P}_2 \times\RR^m\times\RR^n$.
\end{definition}

\subsection{Proof of (\ref{ess16})}\label{appendix2}

First, recall (\ref{esZ}) we can get
\begin{eqnarray}\label{esa0}
\!\!\!\!\!\!\!\!&&\mathbb{E}\Big[\sup_{t\in[0,T]}|Z_t^{\delta,h^\delta}|^2\Big]
\nonumber\\
\leq~~\!\!\!\!\!\!\!\!&& \frac{1}{\lambda(\delta)^2}\mathbb{E}\Big[\sup_{t\in[0,T]}\Big|\int_0^tb\big(\lambda(\delta)Z_s^{\delta,h^\delta} +\bar{X}_s ,\mathcal{L}_{X^{\delta}_s},Y^{\delta,h^\delta}_s\big)
\nonumber\\
\!\!\!\!\!\!\!\!&&
-\bar{b}\big(\lambda(\delta)Z_s^{\delta,h^\delta} +\bar{X}_s ,\mathcal{L}_{X^{\delta}_s}\big)ds\Big|^2\Big]
\nonumber\\
\!\!\!\!\!\!\!\!&&+\frac{1}{\lambda(\delta)^2}\mathbb{E}\Big[\sup_{t\in[0,T]}\Big|\int_0^t\bar{b}\big(\lambda(\delta)Z_s^{\delta,h^\delta} +\bar{X}_s ,\mathcal{L}_{X^{\delta}_s}\big)-\bar{b}(\bar{X}_s,\mathcal{L}_{X^{\delta}_s})ds\Big|^2\Big]
\nonumber\\
\!\!\!\!\!\!\!\!&&+\frac{1}{\lambda(\delta)^2}\mathbb{E}\Big[\sup_{t\in[0,T]}\Big|\int_0^t\bar{b}(\bar{X}_s,\mathcal{L}_{X^{\delta}_s})-\bar{b}(\bar{X}_s,\mathcal{L}_{\bar{X}_s})ds\Big|^2\Big]
\nonumber\\
\!\!\!\!\!\!\!\!&&+\frac{\delta}{\lambda(\delta)^2}\mathbb{E}\Big[\sup_{t\in[0,T]}\Big|\int_0^t\sigma\big(\lambda(\delta)Z_s^{\delta,h^\delta} +\bar{X}_s ,\mathcal{L}_{X^{\delta}_s}\big)dW_s^1\Big|^2\Big]
\nonumber\\
\!\!\!\!\!\!\!\!&&+\mathbb{E}\Big[\sup_{t\in[0,T]}\Big|\int_0^t\sigma\big(\lambda(\delta)Z_s^{\delta,h^\delta} +\bar{X}_s ,\mathcal{L}_{X^{\delta}_s}\big)h^{1,\delta}_s\Big|^2\Big]
\nonumber\\
=:~~\!\!\!\!\!\!\!\!&&\text{I}+\text{II}+\text{III}+\text{IV}+\text{V}.
\end{eqnarray}
For $\text{II}$, we use the Lipschitz continuity of $\bar{b}$ to get
\begin{eqnarray}\label{esa1}
\text{II}\leq \mathbb{E}\int_0^T|Z_t^{\delta,h^\delta}|^2dt.
\end{eqnarray}
For $\text{III}$, we recall the convergence order result (\ref{33}), thus
\begin{eqnarray}\label{esa2}
\text{III}\leq\frac{C_T}{\lambda(\delta)^2}\sup_{t\in[0,T]}\mathbb{W}_2(\mathcal{L}_{X^{\delta}_t},\mathcal{L}_{\bar{X}_t})^2\leq \frac{C_T(\varepsilon+\delta)}{\lambda(\delta)^2}\to 0,~\text{as}~\delta\to0.
\end{eqnarray}
For $\text{IV}$, by Burkholder-Davis-Gundy's inequality, we obtain
\begin{eqnarray}\label{esa3}
\text{IV}\leq\frac{C\delta}{\lambda(\delta)^2}\mathbb{E}\int_0^T\|\sigma\big(\lambda(\delta)Z_s^{\delta,h^\delta} +\bar{X}_s ,\mathcal{L}_{X^{\delta}_s}\big)\|^2ds
\leq\frac{C_T\delta}{\lambda(\delta)^2}.
\end{eqnarray}
For $\text{V}$, by H\"{o}lder's inequality, we have
\begin{eqnarray}\label{esa4}
\text{V}\leq\mathbb{E}\Big[\int_0^T\|\sigma\big(\lambda(\delta)Z_s^{\delta,h^\delta} +\bar{X}_s ,\mathcal{L}_{X^{\delta}_s}\big)\|^2ds\cdot\int_0^T|h^\delta_s|^2ds\Big]
\leq C_{M,T}.
\end{eqnarray}
Applying Gronwall's inequality in (\ref{esa0}), it leads to
\begin{eqnarray*}
\mathbb{E}\Big[\sup_{t\in[0,T]}|Z_t^{\delta,h^\delta}|^2\Big]\leq \Big(\text{I}+\frac{C_T(\varepsilon+\delta)}{\lambda(\delta)^2}+C_{M,T}\Big)e^{C_{M,T}\lambda(\delta)}.
\end{eqnarray*}
Recall the fact that $\lambda(\delta)\to 0,\frac{\delta}{\lambda(\delta)^2}\to 0$ as $\delta\to 0$. Thus for (\ref{ess16}), it is sufficient to prove
$$\text{I}\leq C_{M,T}(1+|x|^6+|y|^6).$$
To prove this estimate, we recall the Poisson equation (\ref{PE}) and get
\begin{eqnarray*}
\text{I}=~~\!\!\!\!\!\!\!\!&&\frac{1}{\lambda(\delta)^2}\mathbb{E}\Bigg[\sup_{t\in[0,T]}\Big|-\int^t_0 \mathbf{L}^{2}_{X_{s}^{\delta,h^\delta},\mathcal{L}_{X^{\delta}_{s}}}\Phi(X_{s}^{\delta,h^\delta},\mathcal{L}_{X^{\delta}_{s}},Y^{\delta,h^\delta}_{s})ds\Big|^2\Bigg]
\nonumber\\
=~~\!\!\!\!\!\!\!\!&&\frac{\varepsilon^2}{\lambda(\delta)^2}\mathbb{E}\Bigg[\sup_{t\in[0,T]}\Big|-\Phi(X_t^{\delta,h^\delta} ,\mathcal{L}_{X^{\delta}_t},Y^{\delta,h^\delta}_t)+\Phi(x,\delta_x,y)
\nonumber\\
\!\!\!\!\!\!\!\!&&+\int^t_0 \EE\left[b(X^{\delta}_s,\mathcal{L}_{ X^{\delta}_{s}}, Y^{\delta}_s)\partial_{\mu}\Phi(x,\mathcal{L}_{X^{\delta}_{s}},y)(X^{\delta}_s)\right]\Big|_{x=X_s^{\delta,h^\delta} ,y=Y^{\delta,h^\delta}_{s}}ds\nonumber\\
\!\!\!\!\!\!\!\!&&+\int^t_0 {\frac{\delta}{2}}\EE \text{Tr}\left[\sigma\sigma^{*}(X^{\delta}_s,\mathcal{L}_{ X^{\delta}_{s}})\partial_z\partial_{\mu}\Phi(x,\mathcal{L}_{X^{\delta}_{s}},y)(X^{\delta}_s)\right]\Big|_{x=X_s^{\delta,h^\delta} ,y=Y^{\delta,h^\delta}_{s}}ds\nonumber\\
\!\!\!\!\!\!\!\!&&+\int^t_0 \mathbf{L}^{1,\delta}_{\mathcal{L}_{X^{\delta}_{s}},Y^{\delta,h^\delta}_{s}}\Phi(X_s^{\delta,h^\delta},\mathcal{L}_{X^{\delta}_{s}},Y^{\delta,h^\delta}_{s})ds
+M^{1,\delta,h^\delta}_t\Big|^2\Bigg]
\nonumber\\
\!\!\!\!\!\!\!\!&&+\varepsilon^2\mathbb{E}\Bigg[\sup_{t\in[0,T]}\Big|\int^t_0 \partial_x \Phi(X_s^{\delta,h^\delta} ,\mathcal{L}_{X^{\delta}_s},Y^{\delta,h^\delta}_s)\cdot \sigma(X^{\delta,h^\delta}_s,\mathcal{L}_{X^{\delta}_s})h^{1,\delta}_s ds\Big|^2\Bigg]
\nonumber\\
\!\!\!\!\!\!\!\!&&+\frac{\varepsilon}{\delta}\mathbb{E}\Bigg[\sup_{t\in[0,T]}\Big|\int^t_0\partial_y \Phi(X_s^{\delta,h^\delta} ,\mathcal{L}_{X^{\delta}_s},Y^{\delta,h^\delta}_s)\cdot g(X^{\delta,h^\delta}_s,\mathcal{L}_{X^{\delta}_s},Y^{\delta,h^\delta}_s)h^{2,\delta}_s ds\Big|^2\Bigg]
\nonumber\\
\!\!\!\!\!\!\!\!&&
+\frac{\varepsilon}{\delta}\mathbb{E}\Big[\sup_{t\in[0,T]}|M^{2,\delta,h^\delta}_t|^2\Big]
\end{eqnarray*}
Collecting the calculations of Steps 2 and 3 in Proposition \ref{p5}, we can get that
\begin{eqnarray*}
\text{I}\leq C_{M,T}\Big(\frac{\varepsilon}{\lambda(\delta)^2}+\frac{\varepsilon}{\delta}\Big)(1+|x|^6+|y|^6).
\end{eqnarray*}
By the assumptions of Theorems \ref{t3} and \ref{thj1}, we can obtain the desired estimate. We complete the proof.

{\subsection{Proof of (\ref{eh1})}\label{app2}
Recall
$$Q_2(x,\mu)=\int_{\mathcal{Y}}\sigma\sigma^*(x,\mu)+\gamma(\partial_y\Phi_g)(\partial_y\Phi_g)^*(x,\mu,y)\nu^{x,\mu}(dy).$$
By the assumptions $(\mathbf{A1})$ and $(\mathbf{A4})$, we know that $\sigma\sigma^*$ is bounded and Lipschitz continuous. {By the regularity }of $\Phi$ and the assumption $(\mathbf{A3})$ on $g$, we also obtain the boundedness and Lipschitz continuity of $\partial_y\Phi_g$.
Thus $Q_2$ is bounded and  Lipschitz continuous by \cite[Section A.2]{RSX1}, which implies the boundedness of {inverse  matrix} $Q_2^{-1}$.

\vspace{1mm}
Note that we have used the mollification argument w.r.t.~the time variable $t$ in $\bar{h}$, thus we can deduce that
$\bar{h}~\text{is  bounded in}~y\in\mathcal{Y}.$ From the same reason, we can conclude the Lipschitz continuity in $y$ of $\bar{h}$. }

\subsection{Proof of (\ref{eqav1})}\label{secapp3}
Notice that
\begin{eqnarray}\label{15}
\!\!\!\!\!\!\!\!&&\int_0^T\varpi(t,\bar{Y}_t^{\delta}) dt-\int_0^T\int_{\mathcal{Y}}\varpi(t,y) \nu^{\bar{X}_{t},\mathcal{L}_{\bar{X}_{t}}}(dy)dt
\nonumber \\
=~~\!\!\!\!\!\!\!\!&&\int_0^T\varpi(t,\bar{Y}_t^{\delta})dt-\int_0^T\varpi(t(\Delta),\bar{Y}_t^{\delta}) dt
\nonumber \\
 \!\!\!\!\!\!\!\!&&+ \int_0^T\varpi(t(\Delta),\bar{Y}_t^{\delta}) dt-\int_0^T\int_{\mathcal{Y}}\varpi(t(\Delta),y) \nu^{\bar{X}_{t(\Delta)},\mathcal{L}_{\bar{X}_{t(\Delta)}}}(dy)dt
 \nonumber \\
 \!\!\!\!\!\!\!\!&& + \int_0^T\int_{\mathcal{Y}}\varpi(t(\Delta),y) \nu^{\bar{X}_{t(\Delta)},\mathcal{L}_{\bar{X}_{t(\Delta)}}}(dy)dt-\int_0^T\int_{\mathcal{Y}}\varpi(t,y) \nu^{\bar{X}_{t},\mathcal{L}_{\bar{X}_{t}}}(dy)dt
 \nonumber \\
=:~~\!\!\!\!\!\!\!\!&&\mathcal{I}_1(T)+\mathcal{I}_2(T)+\mathcal{I}_3(T).
\end{eqnarray}
First, by the Lipschitz continuity of $\varpi$ and \cite[(A.2)]{RSX1}, we can get
\begin{eqnarray}
\EE|\mathcal{I}_{1}(T)+\mathcal{I}_{3}(T)|^2
\leq
C_T\Delta^2.
\label{p6}
\end{eqnarray}
We now deal with the term $\mathcal{I}_2(t)$. Note that
\begin{eqnarray}
 |\mathcal{I}_2(T)|^2=~~\!\!\!\!\!\!\!\!&&\left|\sum_{k=0}^{[T/\Delta]-1} \int_{k\Delta} ^{(k+1)\Delta} \Big(\varpi(t(\Delta),\bar{Y}_t^{\delta})-\int_{\mathcal{Y}}\varpi(t(\Delta),y) \nu^{\bar{X}_{t(\Delta)},\mathcal{L}_{\bar{X}_{t(\Delta)}}}(dy)\Big) dt\right.\nonumber \\
 \!\!\!\!\!\!\!\!&& +\left.\int_{T(\Delta)} ^{T} \Big(\varpi(t(\Delta),\bar{Y}_t^{\delta})-\int_{\mathcal{Y}}\varpi(t(\Delta),y) \nu^{\bar{X}_{t(\Delta)},\mathcal{L}_{\bar{X}_{t(\Delta)}}}(dy)\Big) dt\right|^2\nonumber \\
 \leq~~\!\!\!\!\!\!\!\!&&\frac{C_T}{\Delta}\sum_{k=0}^{[T/\Delta]-1}\left|\int_{k\Delta} ^{(k+1)\Delta} \Big(\varpi(t(\Delta),\bar{Y}_t^{\delta})-\int_{\mathcal{Y}}\varpi(t(\Delta),y) \nu^{\bar{X}_{t(\Delta)},\mathcal{L}_{\bar{X}_{t(\Delta)}}}(dy)\Big)dt\right|^2\nonumber \\
 \!\!\!\!\!\!\!\!&& +2\left|\int_{T(\Delta)} ^{T} \Big(\varpi(t(\Delta),\bar{Y}_t^{\delta})-\int_{\mathcal{Y}}\varpi(t(\Delta),y) \nu^{\bar{X}_{t(\Delta)},\mathcal{L}_{\bar{X}_{t(\Delta)}}}(dy)\Big) dt\right|^2\nonumber \\
=:~~\!\!\!\!\!\!\!\!&&\mathcal{O}_{1}(t)+\mathcal{O}_{2}(t).  \label{p12}
\end{eqnarray}
By the boundedness of $\varpi$,  it follows that
\begin{eqnarray}
\EE\mathcal{O}_{2}(t)
 \leq C_T\Delta^2.\label{p8}
\end{eqnarray}
Repeating the same argument as in Proposition 4.2 in \cite{HLLS}, we can see that
\begin{eqnarray}
\EE\mathcal{O}_{1}(t)
\leq C_T\frac{\varepsilon}{\Delta}.\label{w3}
\end{eqnarray}
{By the assumptions $\frac{\lambda^2}{\Delta}\rightarrow0, \frac{\delta}{\lambda^2}\rightarrow0$ and $\frac{\varepsilon}{\delta}\rightarrow\gamma$, we have $\frac{\varepsilon}{\Delta}\rightarrow0$. Collecting estimates (\ref{15})-(\ref{w3}), we conclude that (\ref{eqav1}) holds.
  \hspace{\fill}$\Box$

\subsection{Proof of (\ref{e2})}\label{secapp4}
Recall that
\begin{eqnarray}
Z_t^{\delta,\bar{h}^\delta}=~~\!\!\!\!\!\!\!\!&&\frac{1}{\lambda(\delta)}\int_0^tb\big(\lambda(\delta)Z_s^{\delta,\bar{h}^\delta} +\bar{X}_s ,\mathcal{L}_{X^{\delta}_s},Y^{\delta,\bar{h}^\delta}_s\big)-\bar{b}\big(\lambda(\delta)Z_s^{\delta,\bar{h}^\delta} +\bar{X}_s ,\mathcal{L}_{X^{\delta}_s}\big)ds
\nonumber\\
\!\!\!\!\!\!\!\!&&
+\frac{1}{\lambda(\delta)}\int_0^t\bar{b}\big(\lambda(\delta)Z_s^{\delta,\bar{h}^\delta} +\bar{X}_s ,\mathcal{L}_{X^{\delta}_s}\big)-\bar{b}(\bar{X}_s,\mathcal{L}_{X^{\delta}_s})ds
\nonumber\\
\!\!\!\!\!\!\!\!&&+\frac{1}{\lambda(\delta)}\int_0^t\bar{b}(\bar{X}_s,\mathcal{L}_{X^{\delta}_s})-\bar{b}(\bar{X}_s,\mathcal{L}_{\bar{X}_s})ds
\nonumber\\
\!\!\!\!\!\!\!\!&&+\frac{\sqrt{\delta}}{\lambda(\delta)}\int_0^t\sigma\big(\lambda(\delta)Z_s^{\delta,\bar{h}^\delta} +\bar{X}_s ,\mathcal{L}_{X^{\delta}_s}\big)dW_s^1
\nonumber\\
\!\!\!\!\!\!\!\!&&+\int_0^t\sigma\big(\lambda(\delta)Z_s^{\delta,\bar{h}^\delta} +\bar{X}_s ,\mathcal{L}_{X^{\delta}_s}\big)\bar{h}^1_s(\bar{Y}_s^{\delta})ds,
\nonumber\\
=:~~\!\!\!\!\!\!\!\!&&\sum_{i=1}^5\mathcal{K}_i^{\delta}(t)\nonumber
\end{eqnarray}
and
\begin{align*}
{\psi}_t=&\int_0^t\partial_x\bar{b}(\bar{X}_s,\mathcal{L}_{\bar{X}_s})\cdot{\psi}_sds+\int_0^t\int_{\mathcal{Y}}\sigma(\bar{X}_s,\mathcal{L}_{\bar{X}_s})\bar{h}^1_s(y)\nu^{\bar{X}_s,\mathcal{L}_{\bar{X}_s}}(dy)ds \\
 & +\int_0^t\int_{\mathcal{Y}}\sqrt{\gamma}\partial_y\Phi_ g(\bar{X}_s,\mathcal{L}_{\bar{X}_s},y)\bar{h}^2_s(y)\nu^{\bar{X}_s,\mathcal{L}_{\bar{X}_s}}(dy)ds.
\end{align*}
It follows that
 \begin{align*}
Z_t^{\delta,h^\delta}-{\psi}_t=&~\mathcal{K}_1^{\delta}(t)-\int_0^t\int_{\mathcal{Y}}\sqrt{\gamma}\partial_y\Phi_ g(\bar{X}_s,\mathcal{L}_{\bar{X}_s},y)\bar{h}^2_s(y)\nu^{\bar{X}_s,\mathcal{L}_{\bar{X}_s}}(dy)ds\\
&+\mathcal{K}_2^{\delta}(t)-\int_0^t\partial_x\bar{b}(\bar{X}_s,\mathcal{L}_{\bar{X}_s})\cdot{\psi}_sds\\
&+\mathcal{K}_3^{\delta}(t)+\mathcal{K}_4^{\delta}(t)\\
&+\mathcal{K}_5^{\delta}(t)-\int_0^t\int_{\mathcal{Y}}\sigma(\bar{X}_s,\mathcal{L}_{\bar{X}_s})\bar{h}^1_s(y)\nu^{\bar{X}_s,\mathcal{L}_{\bar{X}_s}}(dy)ds.
\end{align*}

Notice that in light of (\ref{esa2}) and (\ref{esa3}), we know that the terms $\mathcal{K}_3^{\delta}(t)$ and $\mathcal{K}_4^{\delta}(t)$ vanish in probability in $C([0,T];\mathbb{R}^n)$, as $\delta\rightarrow0$. Moreover, using a same argument as  in (\ref{ess12}), we get
\begin{align}\label{hfxj1}
  & \mathbb{E} \sup_{t\in[0,T]} \Bigg|\mathcal{K}_2^{\delta}(t)-\int_0^t\partial_x\bar{b}(\bar{X}_s,\mathcal{L}_{\bar{X}_s})\cdot{\psi}_sds\Bigg|\nonumber\\
\leq&~ C_{T,M}(1+|x|^3+|y|^3)\int_0^T\int_0^1\Big(\mathbb{E}\|\partial_x\bar{b}\big(r\lambda(\delta)Z_s^{\delta,\bar{h}^\delta} +\bar{X}_s ,\mathcal{L}_{X^{\delta}_s}\big)-\partial_x\bar{b}(\bar{X}_s,\mathcal{L}_{\bar{X}_s})\|^2\Big)^{\frac{1}{2}}drds\nonumber\\
&+C\mathbb{E}\Bigg(\int_0^T|Z_s^{\delta,\bar{h}^\delta}-\psi_s|ds\Bigg).
\end{align}

 Thus, it suffices to prove the following convergence
\begin{equation}\label{hfx8}
 \lim_{\delta\rightarrow0}\mathbb{E}\sup_{t\in[0,T]}\Big|\mathcal{K}_1^{\delta}(t)-\int_0^t\int_{\mathcal{Y}}\sqrt{\gamma}\partial_y\Phi_ g(\bar{X}_s,\mathcal{L}_{\bar{X}_s},y)\bar{h}^2_s(y)\nu^{\bar{X}_s,\mathcal{L}_{\bar{X}_s}}(dy)ds\Big|=0
\end{equation}
and
\begin{equation}\label{hfx9}
\lim_{\delta\rightarrow0}\mathbb{E}\sup_{t\in[0,T]}\Big|\mathcal{K}_5^{\delta}(t)-\int_0^t\int_{\mathcal{Y}}\sigma(\bar{X}_s,\mathcal{L}_{\bar{X}_s})\bar{h}^1_s(y)\nu^{\bar{X}_s,\mathcal{L}_{\bar{X}_s}}(dy)ds\Big|=0.
\end{equation}
Note that collecting the calculations of Steps 2 and 3 in Proposition 4.1, (\ref{hfx8}) holds if we can prove
\begin{eqnarray}\label{hfx10}
\lim_{\delta\rightarrow0}~~~\!\!\!\!\!\!\!\!&&\mathbb{E}\sup_{t\in[0,T]}\Bigg|\frac{\sqrt{\varepsilon}}{\sqrt{\delta}}\int^t_0\partial_y \Phi_g(X_s^{\delta,\bar{h}^\delta} ,\mathcal{L}_{X^{\delta}_s},Y^{\delta,\bar{h}^\delta}_s)\bar{h}^2_s(\bar{Y}_s^{\delta}) ds
\nonumber\\
\!\!\!\!\!\!\!\!&&
-\int_0^t\int_{\mathcal{Y}}\sqrt{\gamma}\partial_y\Phi_ g(\bar{X}_s,\mathcal{L}_{\bar{X}_s},y)\bar{h}^2_s(y)\nu^{\bar{X}_s,\mathcal{L}_{\bar{X}_s}}(dy)ds\Bigg|=0.
\end{eqnarray}
On the one hand, it is clear that $\partial_y\Phi_g(x,\mu,y)$ is  Lipschitz continuous and bounded. Since $\bar{h}(y)$ is  Lipschitz continuous  and bounded in $y\in\mathcal{Y}$, we obtain  $\partial_y\Phi_g(x,\mu,y)\bar{h}(z)$ is also  Lipschitz continuous w.r.t.~$(x,\mu,y,z)$ and  $\sigma(x,\mu)\bar{h}(y)$ is locally  Lipschitz continuous. Finally, (\ref{hfx9}) and (\ref{hfx10}) follow from same argument as in the proof of \cite[(3.38)]{HLLS}.

Collecting the arguments above, by the continuity of $\partial_x\bar{b}$ in (\ref{hfxj1}) and  Gronwall's inequality, we  deduce that (\ref{e2}) holds. }
\hspace{\fill}$\Box$

\begin{funding}
W. Hong is supported by  NSFC (No.~12401177),  NSF of Jiangsu Province (No.~BK20241048).   S. Li is supported by NSFC (No.~12371147) and the PAPD of Jiangsu Higher Education Institutions.
\end{funding}

\vspace{5mm}
\noindent\textbf{Acknowledgements} { The authors would like to thank the referees for  very constructive suggestions
and valuable comments.}

\section*{Statements and Declarations}
\noindent\textbf{Data availability:} Data sharing not applicable to this article as no datasets were generated or analysed during the current study.

\noindent\textbf{Conflict of interest:}  On behalf of all authors, the corresponding author states that there is no conflict of interest.

\end{document}